\documentclass[leqno,11pt]{amsart}
\usepackage[top=1.25in, bottom=1.25in, left=1in, right=1in]{geometry}
\usepackage{amssymb,amsmath,latexsym,amsfonts,amsbsy,amsthm,dsfont,mathrsfs}
\usepackage{hyperref}
\usepackage{color}
\usepackage{graphicx}
\usepackage{comment}
\usepackage{enumerate}
\usepackage{xcolor,soul}
\usepackage{booktabs}
\usepackage{subcaption}
\usepackage{tikz}
\usetikzlibrary{arrows.meta, calc}

\definecolor{pku}{RGB}{139,0,18}

\let\pa=\partial
\let\f=\frac

\let\pa=\partial
\let\na=\nabla

\let\al=\alpha
\let\b=\beta
\let\e=\varepsilon
\let\d=\delta
\let\D=\Delta
\let\g=\gamma
\let\G=\Gamma
\let\ka=\kappa
\let\lam=\lambda
\let\Lam=\Lambda
\let\om=\omega
\let\Om=\Omega
\let\va=\varphi

\let\r=\rho
\let\s=\sigma
\let\th=\theta

\let\z=\zeta

\DeclareMathOperator*{\esssup}{esssup}

\newcommand{\beq}{\begin{equation}}
\newcommand{\eeq}{\end{equation}}
\newcommand{\beqo}{\begin{equation*}}
\newcommand{\eeqo}{\end{equation*}}
\newcommand{\ben}{\begin{eqnarray}}
\newcommand{\een}{\end{eqnarray}}
\newcommand{\beno}{\begin{eqnarray*}}
\newcommand{\eeno}{\end{eqnarray*}}

\numberwithin{equation}{section}

\newtheorem{thm}{Theorem}[section]
\newtheorem{lem}{Lemma}[section]
\newtheorem{cor}{Corollary}[section]
\newtheorem{prop}{Proposition}[section]

\theoremstyle{definition}

\theoremstyle{remark}

\newtheorem{case}{Case}
\newtheorem{rmk}{Remark}[section]

\newcommand{\pv}{\mathrm{p.v.}}

\newcommand{\CC}{\mathcal{C}}

\newcommand{\CI}{\mathcal{I}}

\newcommand{\CS}{\mathcal{S}}

\newcommand{\CT}{\mathcal{T}}

\newcommand{\CM}{\mathcal{M}}

\newcommand{\R}{\mathbb{R}}

\newcommand{\BC}{\mathbb{C}}
\newcommand{\BW}{\mathbb{W}}
\newcommand{\BR}{\mathbb{R}}
\newcommand{\BZ}{\mathbb{Z}}
\newcommand{\BT}{\mathbb{T}}
\newcommand{\BH}{\mathbb{H}}

\newcommand{\BM}{\mathbb{M}}

\newcommand{\bfR}{\mathbf{R}}

\allowdisplaybreaks

\begin{document}
\title[Uniformly Rotating Vortex Patches with 90-Degree Corners]{Uniformly Rotating Vortex Patches with 90-Degree Corners}
\author{De Huang}
\address{School of Mathematical Sciences, Peking University, Beijing 100871, China}
\email{dhuang@math.pku.edu.cn}

\author{Jiajun Tong}
\address{Beijing International Center for Mathematical Research, Peking University, Beijing 100871, China}
\email{tongj@bicmr.pku.edu.cn}

\author{Xiaopeng Zheng}
\address{School of Mathematical Sciences, Peking University, Beijing 100871, China}
\email{zhengxiaopeng@stu.pku.edu.cn}

\begin{abstract}
For the 2-D incompressible Euler equation, we prove the existence of $m$-fold symmetric uniformly rotating vortex patches with 90-degree corners for all $m\geq 12$.
Properties of these patches and the induced stationary flows in the co-rotating frame are characterized.
Our construction is based on a novel fixed-point method.
\end{abstract}

\date{\today}
\maketitle

\tableofcontents

\section{Introduction}

\subsection{Problem formulation and main results}
\label{sec: problem formulation}

Consider the vorticity-stream formulation of the 2-D incompressible Euler equation in $\BR^2$:
\beq
\pa_t \om + u \cdot \na \om = 0,\quad u= -\na^\perp\varphi,
\label{eqn: 2D incompressible Euler}
\eeq
where $u = u(x,t)$ and $\om = \om(x,t)$ denote the velocity and the vorticity, respectively, and where $\varphi$ solves
\beq
-\D \varphi(x,t) = \om(x,t)\mbox{ in }\BR^2,\quad |\na \varphi|\to 0\mbox{ as }|x|\to +\infty.
\label{eqn: stream function of 2D incompressible Euler}
\eeq
In this paper, we shall study its uniformly rotating vortex patch solutions.
A bounded simply connected domain $D_0\subset \BR^2$ is called a \emph{uniformly rotating vortex patch} or a \emph{(rotating) V-state} to the 2-D incompressible Euler equation in $\BR^2$, if for some angular velocity $a \in \BR$,
\[
\om(x,t) := \mathds{1}_{D_t}(x),\quad D_t := \begin{pmatrix}
\cos (a t) & -\sin (a t)\\
\sin (a t) & \cos (a t)
\end{pmatrix} D_0,
\]
gives a weak solution to \eqref{eqn: 2D incompressible Euler} and \eqref{eqn: stream function of 2D incompressible Euler} in the Yudovich sense \cite{Yudovich1963} (see Section \ref{sec: related works}).
Here $\mathds{1}_{D_t}(x)$ denotes the characteristic function of the set $D_t$.
In other words, this solution $\om = \om(x,t)$ to the 2-D Euler equation corresponds to a vortex patch rotating with constant angular velocity with its shape being time-invariant.
The goal of this paper is to rigorously construct V-states that have 90-degree corners along their boundaries, and also characterize their  properties.
We will discuss the historical background of this problem and related topics in Section \ref{sec: related works} below.

In order to formulate this problem mathematically, we assume that $D_0\subset \BR^2$ is a V-state with angular velocity $a\in \BR$, and that $\pa D_0$ has only one connected component.
Let $\phi = \phi(x)$ solve
\beq
-\D \phi = \om(x,0) = \mathds{1}_{D_0}(x)\mbox{ in }\BR^2,\quad |\na \phi|\to 0\mbox{ as }|x|\to +\infty.
\label{eqn: equation for phi}
\eeq
We shall call $\phi$ the \textit{stream function} in $\BR^2$; it can be determined up to an additive constant.
Define the \textit{modified stream function} by
\begin{equation}\label{eqt: def of phi_dagger}
\phi_\dag := \phi + \f{a}2 |x|^2.
\end{equation}
In the \emph{co-rotating frame}, which is the coordinate rotating with the patch, it induces a stationary flow field
\beq
v = -\na^\perp \phi_\dag  = u - a x^\perp.
\label{eqn: flow field in the corotating frame}
\eeq
Here $\na^\perp \phi_\dag = (\na \phi_\dag)^\perp := (-\pa_{x_2}\phi_\dag, \, \pa_{x_1}\phi_\dag)$.
Since $D_0$ is a V-state, it is time-invariant under the flow $v$ in the co-rotating frame.
By the classic theory \cite{MajdaBertozzi2001}, $\phi_\dag$ must be constant along $\pa D_0$.
As we are pursuing V-states with 90-degree corners, without loss of generality, we may assume that
\[
\mbox{$(1,0)\in \pa D_0$, and $\pa D_0$ admits a right angle at $(1,0)$.}
\]
This gives $\phi_\dag \equiv \phi_\dag(1,0)$ along $\pa D_0$,
i.e.,
\beq
\phi(x)-\phi(1,0) = \f{a}{2}\big(1-|x|^2\big) \quad \mbox{along }\pa D_0.
\label{eqn: constraint satisfied by phi along the patch boundary}
\eeq
Moreover, by the steadiness of $D_0$ in the co-rotating frame and the fact that $\pa D_0$ has a right angle at $(1,0)$, $v(1,0) = (0,0)$.
This together with \eqref{eqn: flow field in the corotating frame} implies
\beq
a = -\pa_{x_1}\phi(1,0),\quad 0 = \pa_{x_2}\phi(1,0).
\label{eqn: formula for angular velocity}
\eeq
Note that, using the polar coordinate notation, the first formula in \eqref{eqn: formula for angular velocity} can be written as $a= -(\f{1}{r}\pa_r \phi)|_{(1,0)}$, which agrees with the heuristics of the angular velocity.
Therefore, \eqref{eqn: constraint satisfied by phi along the patch boundary} becomes
\beq
\phi(x) - \phi(1,0)
= -\f12 \pa_{x_1}\phi(1,0)\big(1-|x|^2\big)\quad \mbox{along }\pa D_0.
\label{eqn: constraint satisfied by phi along the patch boundary final}
\eeq

In this paper, we seek such V-states $D_0$ with the following additional properties:
\begin{enumerate}
\item[(A)] $D_0$ can be represented in polar coordinates, i.e., for some bounded $f:\BT\to (0,+\infty)$,
\beq
D_0 = D_0(f) := \big\{(r\cos\th,r \sin \th)\in \BR^2:\,\th\in \BT,\,r\in [0,f(\th))\big\}.
\label{eqn: form of D_0}
\eeq
Here $\BT:= \BR/(2\pi \BZ)$, i.e., $\BT$ is $[-\pi,\pi]$ with the two end-points identified.

\item[(B)] $D_0$ has $m$-fold symmetry with respect to the origin for some $m\geq 2$ $(m\in \BZ_+)$, i.e.,
\[
\mbox{with }R_m:=
\begin{pmatrix}
\cos \f{2\pi}{m} & -\sin \f{2\pi}{m}\\
\sin \f{2\pi}{m} & \cos \f{2\pi}{m}
\end{pmatrix},
\mbox{ it holds that $R_m x\in D_0$ whenever $x\in D_0$.}
\]
\item[(C)] $D_0$ is symmetric with respect to the horizontal axis, i.e., $(x_1,-x_2)\in D_0$ whenever $(x_1,x_2)\in D_0$.
Note that under this assumption, the second equation of \eqref{eqn: formula for angular velocity} holds automatically.
\end{enumerate}
We are thus motivated to define with $m\geq 2$ $(m\in \BZ_+)$ that
\beq
\begin{split}
\BM_0:= \big\{f:\BT\to (0,1]:&\; f\mbox{ is $\f{2\pi}{m}$-periodic and even,}\\
&\;\mbox{$f$ is non-increasing on $[0,\f{\pi}{m}]$,} \\
&\;\mbox{$f<1$ in $(0,\f{2\pi}{m})$, and } f(0) = \lim_{\th\to 0}f(\th) = 1
\big\}.
\end{split}
\label{eqn: function set M_0}
\eeq
Here the evenness of $f = f(\th)$ is defined with respect to $\th = 0$.
Our goal is to find suitable $f = f(\th)\in \BM_0\cap C(\BT)$, such that if we introduce $D_0 = D_0(f)$ by \eqref{eqn: form of D_0} and define $\phi$ by \eqref{eqn: equation for phi}, then \eqref{eqn: constraint satisfied by phi along the patch boundary final} holds, which means that $D_0$ is a V-state with the angular velocity $a= -\pa_{x_1}\phi(1,0)$, and moreover, $\pa D_0$ admits a 90-degree corner at $(1,0)$.
Let us remark that the monotonicity of $f$ assumed in \eqref{eqn: function set M_0} will play a crucial role in our construction, which will be clear later.

Our first result is that, for all suitably large $m\in \BZ_+$, there exist such $m$-fold symmetric V-states $D_0$ with 90-degree corners.
We also characterize their geometric properties as well as their angular velocities.

\begin{thm}[Existence of V-states with 90-degree corners and their  characterizations]
\label{thm: main existence theorem}
Let $\BM_0$ be defined as in \eqref{eqn: function set M_0}.
There exists a universal integer $2\leq m_0\leq 12$, such that for each $m \geq m_0$ $(m\in \BZ_+)$, there exists $f\in \BM_0\cap C(\BT)$ that depends on $m$, such that the following holds.

\begin{enumerate}[(i)]
\item \label{statement: rotating patch}
If we define $D_0 = D_0(f)$ by \eqref{eqn: form of D_0} and define $\phi$ by \eqref{eqn: equation for phi}, then \eqref{eqn: constraint satisfied by phi along the patch boundary final} holds.

\item \label{statement: lower and upper bound for f}
For any $\th\in [0,\f{2\pi}{m}]$,
\[
\max\left\{e^{-\f{M}{m}},\, e^{-\f{\Lam}{m}(\pi^2-(m\th-\pi)^2)}\right\}
\leq
f(\th)\leq e^{-\f{\lam}{m}(\pi^2-(m\th-\pi)^2)},
\]
where $M>0$ and $\Lam\geq 1\geq \lam>0$ are universal constants independent of $m$.

\item \label{statement: local analyticity of f and piecewise C^1}
$f\in C^1([0, \f{2\pi}{m}])$, and $f$ is analytic in $(0, \f{2\pi}{m})$.

\item \label{statement: upper bound for f'}
$f'(\th)<0$ for $\th\in(0,\f{\pi}{m})$, $f'(\f{\pi}{m})=0$, and $\|f'\|_{L^\infty(\BT)}\leq C$, where $C>0$ is a universal constant independent of $m$.

\item \label{statement: derivative at the end points}
It holds that
\begin{align*}
f'_+(0) := \lim_{\th\to 0^+} \f{f(\th)-f(0)}{\th-0} = -1,\quad &
f'_-(0) := \lim_{\th\to 0^-} \f{f(\th)-f(0)}{\th-0} = 1,
\\
f'(0^+):= \lim_{\th\to 0^+}f'(\th) = -1,\quad &
f'(0^-):= \lim_{\th\to 0^-}f'(\th) = 1.
\end{align*}
Moreover,
\[
\limsup_{\th\to 0}\left|\left(1+\f{\ln f(\th)}{|\th|}\right) \ln |m\th| \right| \leq C
\]
for some universal constant $C>0$ independent of $m$.
\end{enumerate}
In other words, for any such $f$, the resulting $D_0(f)$ gives a uniformly rotating vortex patch with $m$-fold symmetry of the 2-D incompressible Euler equation,
which has 90-degree corners along its boundary at exactly $m$ points $\{(\cos\f{2k\pi}{m},\sin \f{2k\pi}{m}):\, k = 0,1,\cdots,m-1\}$.

Moreover, the angular velocity of the constructed rotating vortex patch is $a=-\pa_{x_1}\phi(1,0)>0$, which enjoys the estimate
\beq
-\f{M}{m}\leq a-\f12 \leq -\f{C\lam}{m}.
\label{eqn: estimate for angular velocity}
\eeq
Here $M$ and $\lam$ are the universal constants introduced above, and $C>0$ is another universal constant independent of $m$.

In addition, for all $m\geq 12$, the above statements hold with $M=4$.

\begin{figure} \centering
    \begin{subfigure}[b]{0.45\textwidth}

    \begin{tikzpicture}
        \node[inner sep=0pt] (img)
        {
        \includegraphics[width=1\textwidth]{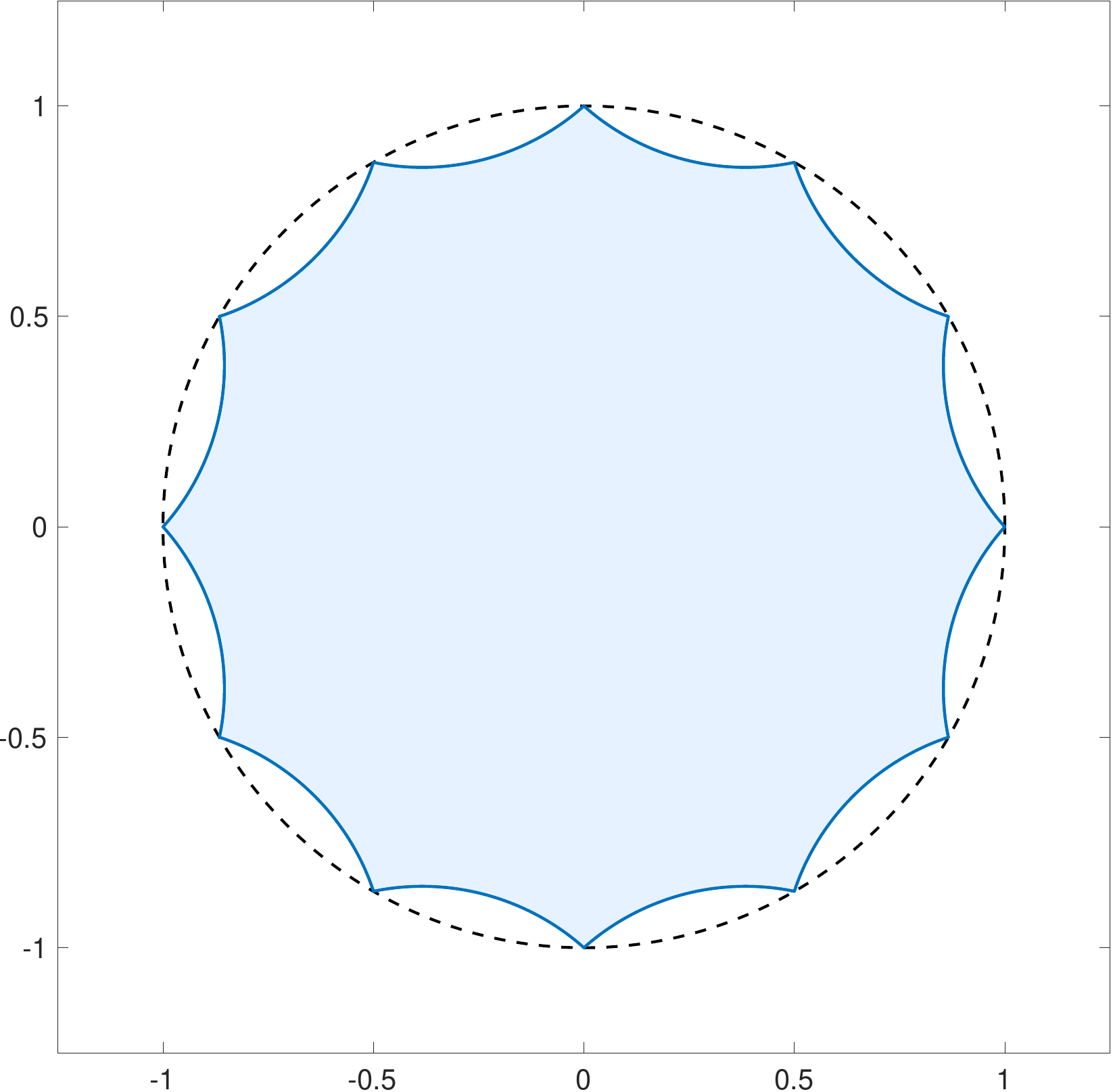}
        };
        \node[align=center, font=\bfseries]
        at($(img.center) + (0.0275\textwidth,0.02\textwidth)$){$\om \equiv 1$};
    \end{tikzpicture}
    \caption{\small A 12-fold V-state with 90-degree corners}
    \end{subfigure}\quad
    \begin{subfigure}[b]{0.45\textwidth}         \includegraphics[width=1\textwidth]{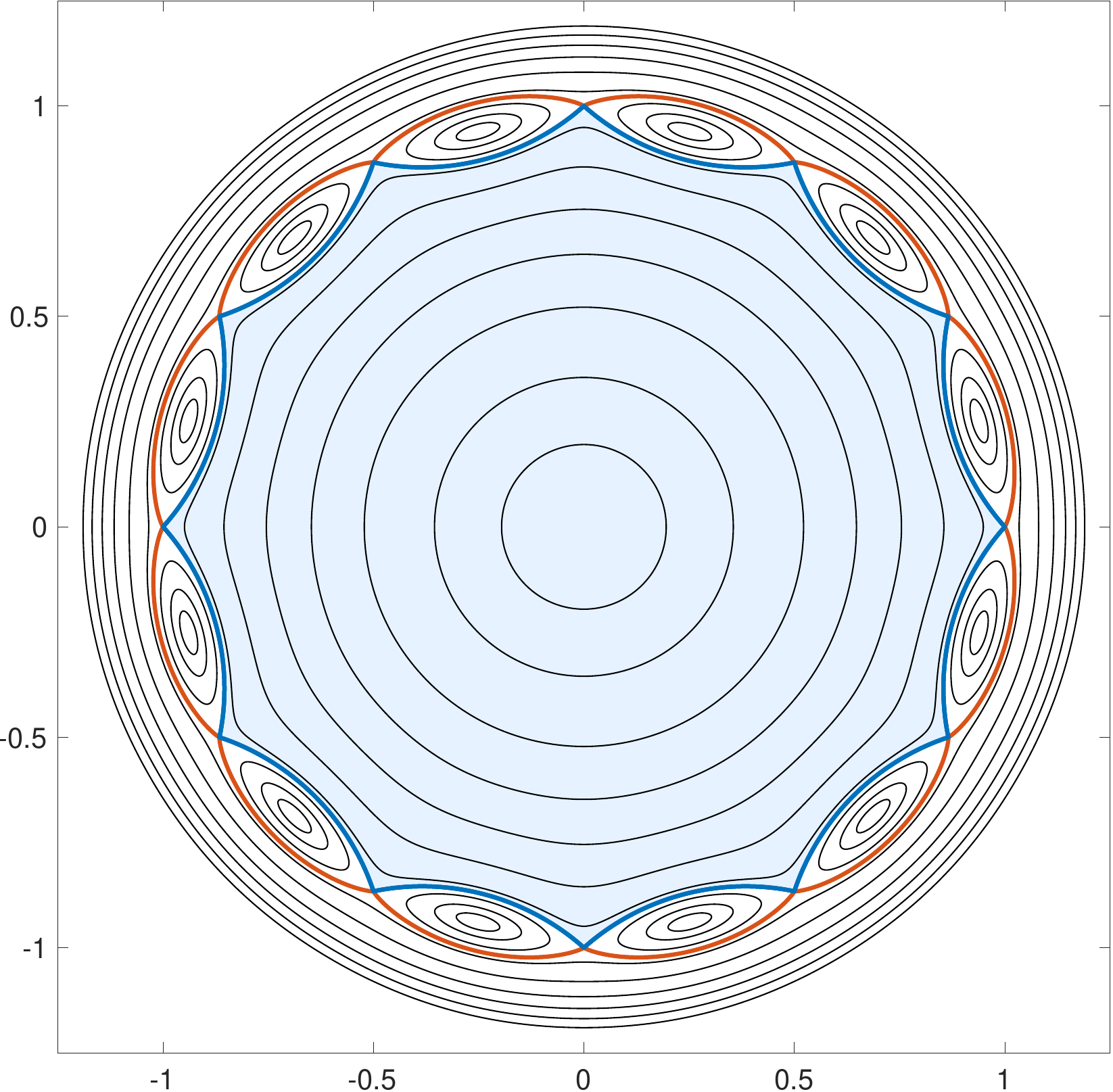}
        \caption{\small Level sets of $\phi_\dagger$}
    \end{subfigure}
    \caption[12-fold_intro]{\small
    (A) A numerically computed 12-fold uniformly rotating vortex patch (V-state) with 90-degree corners. The dashed black curve represents the unit circle; and the light blue region indicates the interior of the patch $D_0$ where $\om\equiv 1$.
    (B) Level sets of the corresponding modified stream function $\phi_\dagger$ (defined in \eqref{eqn: modified stream function repeat}). The solid blue curve is the boundary of the V-state and also the part of $\s_0$ (a special level set of $\phi_\dag$ defined in \eqref{eqn: modified stream function repeat} and \eqref{eqn: general level set in main thm}) inside the unit circle; the solid red curve is the part of $\s_0$ outside the unit circle; and the solid black curves represent selected level sets of $\phi_\dagger$ other than $\s_0$.
    }
    \label{fig:12-fold_intro}
\end{figure}

\begin{rmk}
For simplicity, we only stated the properties of $f=f(\th)$ for $\theta$ lying in the intervals such as $(0,\f{2\pi}{m})$ and $(0,\f{\pi}{m})$, etc., but one can easily extend them to their corresponding full ranges by the $\f{2\pi}{m}$-periodicity and evenness of $f$ which is assumed in the definition of $\BM_0$.
\end{rmk}

\begin{rmk}
\label{rmk: claim optimality m = 12}
The claim $m_0\leq 12$ means that all the statements of Theorem \ref{thm: main existence theorem} hold at least for all $m\geq 12$, i.e., we have proved the existence of $m$-fold symmetric rotating V-states with 90-degree corners for all $m\geq 12$.
This explicit threshold is achieved as a result of carefully choosing the parameters to balance several constraints arising in the argument. This will be explained in Remark \ref{rmk: optimality of m=12 explained} after we finish proving Theorem \ref{thm: main existence theorem}.
\end{rmk}

\begin{rmk}
Our results on the regularity of the patch boundary agree with those established in \cite{WangZhangZhou2026} for general rotating vortex patches with Lipschitz boundary, and the characterizations of the patch boundary near the corners also agree with the formal analysis in \cite{Overman1986}.
Moreover, our estimate for the angular velocity $a$ as well as the upper and lower bounds for $f$ agree with those in \cite{Park2022} derived for general simply-connected $m$-fold symmetric V-states.
\end{rmk}

\begin{rmk}
The proof of Theorem \ref{thm: main existence theorem} (and other results of this paper) is purely analytic and involves no computer assistance.
Yet, as an illustration, a $12$-fold V-state with 90-degree corners, which is computed numerically, is shown in Figure \ref{fig:12-fold_intro}(A).
We will explain our numerical method and present more numerical results in Appendix \ref{sec: numerical}.
It is noteworthy that, although the uniqueness of the $m$-fold symmetric V-states with 90-degree corners has not been rigorously established, we can robustly obtain the presented numerical solutions in the simulations with no other solutions found, which suggests the uniqueness in a suitable class of patches.
See the discussions in Appendix \ref{sec: numerical}.
\end{rmk}
\end{thm}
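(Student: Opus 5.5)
We reformulate \eqref{eqn: constraint satisfied by phi along the patch boundary final} as a fixed-point problem for $f$ in (a closed convex subset of) $\BM_0\cap C(\BT)$. Write $f = e^{-g/m}$ with $g\ge 0$, $g(0)=0$. Along $\pa D_0$ the constraint reads $\phi(f(\th),\th)-\phi(1,0) = -\tfrac12\pa_{x_1}\phi(1,0)(1-f(\th)^2)$. Since $\phi$ is superharmonic and, thanks to the $m$-fold symmetry and the monotonicity in $\BM_0$, $r\mapsto\phi(r,\th)$ is strictly decreasing near $r\approx 1$, this can be solved for $f(\th)$. We will rather use the equivalent ``level set'' form: given $f\in\BM_0$, form $\phi=\phi[f]$ and $\phi_\dag = \phi + \f a2|x|^2$ with $a=-\pa_{x_1}\phi(1,0)$, and define $T[f](\th)$ as the radius $r\le 1$ at which $\phi_\dag(r,\th)=\phi_\dag(1,0)$ (the curve $\s_0$). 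A fixed point $f=T[f]$ gives (i).

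The key steps, in order, are as follows.

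First, decompose $\phi = -\tfrac14|x|^2 + \psi$ inside, where $\psi$ is the potential of $\mathds 1_{B_1}-\mathds 1_{D_0}$. This is supported in the thin region $f<r<1$ of angular width $2\pi/m$ per period. Expanding in Fourier modes $e^{ikm\th}$, its effect on $\phi_\dag$ near $r=1$ is of order $1/m$, giving $a=\tfrac12-O(1/m)$. The positivity of $1-f$ gives the sign $a-\tfrac12\le -C\lam/m$, which yields \eqref{eqn: estimate for angular velocity}.

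Second, with these estimates, show that $T$ maps the set $\mathcal K=\{f\in\BM_0: e^{-\Lam(\pi^2-(m\th-\pi)^2)/m}\vee e^{-M/m}\le f\le e^{-\lam(\pi^2-(m\th-\pi)^2)/m}\}$ into itself. The parabolic profile comes from solving $\tfrac12(1-r^2)(\tfrac12 - a)\approx\psi$-oscillation, where $\psi$ restricted to the circle behaves like the $1/m$-scaled periodic solution of $\pa_\th^2$ against the indicator deficit. Monotonicity (non-increasing on $[0,\pi/m]$, evenness) is preserved by a reflection/maximum principle argument on $\pa_\th\phi$ in the sector $(0,\pi/m)$: $\pa_\th\phi$ is harmonic off the patch and has the right sign on the sector boundaries.

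Third, show that $T$ is continuous and compact on $\mathcal K$ in $C(\BT)$, using uniform Lipschitz bounds on $T[f]$ from gradient bounds on $\phi$. Schauder's theorem then yields a fixed point. Choosing the constants $\lam,\Lam,M$ compatibly is where explicit numerics give $m\ge12$ and $M=4$.

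Regularity comes afterward. Analyticity in $(0,2\pi/m)$ follows since $\nabla\phi_\dag\ne0$ there and the boundary is a level set of an analytic-up-to-boundary free boundary (Kinderlehrer–Nirenberg type). The $C^1$ bound and $f'<0$ follow from the implicit function theorem and the strict monotonicity from step two. The corner behaviour (v) follows from a local analysis at $(1,0)$, a stagnation point where $\phi_\dag$ has Hessian trace $-1+2a\approx0$ and leading quadratic form. The right angle forces $f'(0^\pm)=\mp1$, and the $\ln|m\th|$ correction comes from the logarithmic term in the expansion of the potential of a corner domain.

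The main obstacle is the invariance step, in particular preserving monotonicity of $f$ and the strict corner at exactly $\th=0$ (i.e. $T[f]<1$ on $(0,2\pi/m)$ with $T[f](0)=1$). Here I would first try to prove $\pa_\th\phi_\dag<0$ in the sector via the maximum principle, using the boundary signs.
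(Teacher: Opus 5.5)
Your proposal has a genuine gap, and it is in the definition of the map $T$ itself. After the change of variables $\zeta$, the condition $\phi_\dag(r,\th)=\phi_\dag(1,0)$ with $r\le 1$ is exactly $F_\dag(x_1,x_2;g)=0$, i.e.\ $F(x_1,x_2;g)=\pa_{x_2}\Psi(\pi,0;g)\,\frac{1-e^{-2\mu x_2}}{2\mu x_2}$ with $F=\Psi/x_2$. \emph{Both} sides of this equation are decreasing in $x_2$.

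This reflects a leading-order cancellation. Inside the disk $\pa_r\phi\approx -r/2$ while $ar\approx r/2$, so $\pa_r\phi_\dag$ is only $O(1/m)$, and it vanishes at the corners. Nothing in your outline shows that, for an arbitrary $f\in\mathcal K$, each ray meets this level set in $r\le 1$ exactly once, that $\pa_r\phi_\dag\neq 0$ there, or that $T[f]$ is monotone, continuous, or uniformly Lipschitz.

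``Gradient bounds on $\phi$'' only bound $|\na\phi_\dag|$ from above. A Lipschitz bound on a level set needs a lower bound on the transversal derivative, and that is exactly what is unavailable for general inputs; the paper obtains it only at special points of a fixed point. The global invariance $T(\mathcal K)\subset\mathcal K$ is likewise asserted rather than derived. The $1/m$ Fourier heuristic also misses the structure: in the rescaled variables the deficit region $\Om$ has size of order one, so the problem is genuinely nonlinear there. Hence your Schauder step in $C(\BT)$ has no input. Your later appeal to the implicit function theorem for $C^1$ regularity and $f'<0$ presupposes the same missing non-degeneracy.

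The idea you are missing is the paper's frozen-coefficient map. It defines $\bfR(g)$ by $F(x_1,\bfR(g)(x_1);g)=\pa_{x_2}\Psi(\pi,0;g)\frac{1-e^{-2\mu g(x_1)}}{2\mu g(x_1)}$, whose right-hand side does not depend on $x_2$. Existence, uniqueness, evenness and strict monotonicity of $\bfR(g)$ then follow from the nontrivial fact that $F=\Psi/x_2$ is strictly decreasing in both variables (Proposition \ref{prop: monotonicity of F}).

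Even then the paper proves neither invariance nor a priori regularity. It works in the $L^2$-compact class $\BW$ of monotone lower semicontinuous functions. It uses only single-point barrier comparisons (Propositions \ref{prop: upper barrier new} and \ref{prop: lower barrier}) through the truncated map $\hat{\bfR}$. Continuity is established only a posteriori, via non-degeneracy of $\pa_{x_2}F_\dag$ at points with $x_1=g^{-1}(x_2)$ on the graph of the fixed point (Propositions \ref{prop: non-degeneracy along level set} and \ref{prop: non-degeneracy along level set x_1 greater than pi over 2}). That step requires $2\mu M<\ln 2$. Together with Corollary \ref{cor: M=4 and mu leq 1/12}, which is proved analytically rather than numerically, it is what yields $m\ge 12$ with $M=4$.

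Finally, the corner is not governed by a quadratic form. The dominant term is $-\frac{\cos 2\al}{2\pi}A(r;g)\,r^2\ln r$ (Lemma \ref{lem: local expansion}), and it is the barrier-induced lower bound on $A(r;g)$ that forces $\cos 2\al\to 0$.
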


For a uniformly rotating vortex patch $D_0$ obtained in Theorem \ref{thm: main existence theorem}, its boundary curve $\pa D_0$ lies inside the unit disk and forms a streamline in the co-rotating frame connecting the neighboring corners.
In the next theorem, we show that there is another streamline lying outside the unit disk that also connects the neighboring corners, and moreover, these two streamlines together form the level set of the modified stream function $\phi_\dag$ with the value $\phi_\dag(1,0)$.

\begin{thm}[A streamline outside the unit disk that connects the neighboring corners]
\label{thm: heteroclinic trajectory}
Let $m_0\in \BZ_+$ be given by Theorem \ref{thm: main existence theorem}.
Given any integer $m\geq m_0$, fix an $f\in \BM_0\cap C(\BT)$ such that it satisfies all the conditions of Theorem \ref{thm: main existence theorem}.
Let $D_0 = D_0(f)$ and $\phi$ be defined as in \eqref{eqn: form of D_0} and \eqref{eqn: equation for phi}, respectively.
Define the modified stream function $\phi_\dag$ as (see \eqref{eqt: def of phi_dagger} and \eqref{eqn: formula for angular velocity})
\beq
\phi_\dag(x) :=\phi(x) - \frac{1}{2}\pa_{x_1}\phi(1,0)|x|^2.
\label{eqn: modified stream function repeat}
\eeq
Then the following holds.
\begin{itemize}
\item
There exists a unique $\hat f:\BT\to [1,+\infty)$ with $1/\hat f \in \BM_0 \cap C(\BT)$ corresponding to $f$, such that
\begin{enumerate}[(i)]
\item For all $\th \in \BT$,
\[
\phi_\dag\left(\hat{f}(\th)\cos\th,\, \hat{f}(\th)\sin\th \right) \equiv \phi_\dag(1,0) = \phi(1,0) - \frac{1}{2}\pa_{x_1}\phi(1,0).
\]

\item $\hat{f}\in C^1([0, \f{2\pi}{m}])$, and $\hat{f}$ is analytic in $(0, \f{2\pi}{m})$.

\item $\hat{f}'(\th)>0$ for $\th\in(0,\f{\pi}{m})$, and $\hat{f}'(\f{\pi}{m})=0$.

\item
It holds that
$\hat{f}'_+(0) = \hat{f}'(0^+) = 1$, $\hat{f}'_-(0) = \hat{f}'(0^-) = -1$,
and
\[
\limsup_{\th\to 0}\left|\left(1-\f{\ln \hat{f}(\th)}{|\th|}\right) \ln |m\th| \right| \leq C
\]
for some universal constant $C>0$ independent of $m$.
\end{enumerate}

\item
Denote $\hat{D}_0:=\{(r\cos\th,r\sin\th):\,\th\in \BT,\, r\in [0,\hat{f}(\th))\}$.
Then
\[
\phi_\dag(x) - \phi_\dag(1,0)
\begin{cases}
< 0, & \mbox{if }x\in \hat{D}_0\setminus\overline{D_0},\\
= 0, & \mbox{if }x\in \pa D_0\cup \pa \hat{D}_0,\\
> 0, & \mbox{otherwise.}
\end{cases}
\]
In particular, if we denote
\beq
\s_0:=\big\{x\in \BR^2:\;\phi_\dag(x) - \phi_\dag(1,0) = 0\big\},
\label{eqn: general level set in main thm}
\eeq
and let $B_1$ denote the unit open disk centered at the origin, then
\begin{align*}
\s_0\cap \overline{B_1}&= \pa D_0 =
\big\{(f(\th)\cos\th,f(\th)\sin \th):\, \th\in\BT\big\} ,\\
\s_0\cap B_1^c &= \pa \hat{D}_0 = \big\{(\hat{f}(\th)\cos\th,\hat{f}(\th)\sin \th):\, \th\in\BT\big\} .
\end{align*}
\end{itemize}

\begin{rmk}\label{rmk: 4 90-degree angles at corner}
The theorem states that, there are two curves $\pa D_0$ and $\pa \hat{D}_0$, which lie inside and outside the unit circle respectively, representing the only two streamlines that connect the neighboring corners of the patch.
In fact, they together form the level set of $\phi_\dag$ with the value $\phi_\dag(1,0)$.
They meet only at the corner points $\{(\cos \f{2k\pi}{m},\,\sin \f{2k\pi}{m}):\,k = 0,1,\cdots, m-1\}$ and form four $90$-degree angles at each corner point.
It is not difficult to further justify that, in a neighborhood of each corner point, $\pa D_0\cup \pa \hat{D}_0$ can be re-parameterized as a union of two $C^1$-curves which intersect perpendicularly.

As an illustration, for the numerically computed 12-fold V-state with 90-degree corners shown in Figure \ref{fig:12-fold_intro}(A), we plot some level sets of its modified stream function $\phi_\dag$ in Figure \ref{fig:12-fold_intro}(B).
In particular, $\pa D_0$ and $\pa \hat D_0$ are plotted there as the solid blue curve and the solid red curve, respectively.
\end{rmk}
\end{thm}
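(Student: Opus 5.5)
We study $\phi_\dag$ along each ray $\{r(\cos\th,\sin\th):\, r\ge 1\}$, after normalizing by subtracting $\phi_\dag(1,0)$. Write $g(r,\th):=\phi_\dag(r\cos\th,r\sin\th)-\phi_\dag(1,0)$. By Theorem \ref{thm: main existence theorem}, $g(f(\th),\th)=0$, and $D_0\subset \overline{B_1}$ because $f\le 1$. Outside $\overline{D_0}$ we have $\D\phi_\dag = -\mathds{1}_{D_0}+2a = 2a>0$, so $\phi_\dag$ is subharmonic there. In polar coordinates,
\[
\pa_r(r\pa_r g)=2ar-r^{-1}\pa_\th^2\phi .
\]
As $r\to\infty$ we have $\phi\sim -\f{|D_0|}{2\pi}\ln r$, hence $g\to+\infty$ like $\f a2 r^2$. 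So every ray meets $\{g=0\}$ at some $r\ge 1$.

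For uniqueness along a ray, we aim to show that $\pa_r g>0$ wherever $g=0$ and $r>1$. Equivalently, we show that $r\mapsto g(r,\th)$ has a single sign change on $[1,\infty)$ for each $\th\in(0,\f{\pi}m]$. The route is to compute
\[
\pa_r\phi_\dag = ar-\f{1}{2\pi r}\int_{D_0}\f{r-y\cdot e_\th}{|x-y|^2}\cdot r\,dy ,
\]
where $e_\th=(\cos\th,\sin\th)$, and to compare it with the radial profile. The bound $a\le \f12-\f{C\lam}{m}$ together with the upper bound on $f$ from Theorem \ref{thm: main existence theorem}\eqref{statement: lower and upper bound for f} should show that $g(1,\th)<0$ for $\th\in(0,\f{2\pi}m)$, using the same estimate that gave $f<1$. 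Then we need $\pa_r^2 g>0$, or some convexity of $g$ in $\ln r$, beyond the first zero. Once this holds, there is a unique root $\hat f(\th)>1$, with $\pa_r g(\hat f(\th),\th)>0$.

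The implicit function theorem then gives $\hat f$ analytic in $(0,\f{2\pi}m)$, since $\phi_\dag$ is real-analytic outside $\overline{D_0}$. For monotonicity, $\hat f'=-\pa_\th g/\pa_r g$. We would show $\pa_\th\phi<0$ in the sector $\{0<\th<\f\pi m,\ r>1\}$ by the reflection argument used for $f$. The function $\pa_\th\phi$ is harmonic outside $D_0$ and vanishes on the rays $\th=0,\f\pi m$ by symmetry. Its sign comes from the monotone decrease of $f$ in $\BM_0$, via the odd-reflection maximum principle with the density $\pa_\th\mathds 1_{D_0}\le 0$. This yields $\hat f'>0$ on $(0,\f\pi m)$. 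Evenness, $\f{2\pi}m$-periodicity, and $\hat f'(\f\pi m)=0$ follow from the symmetry of $g$. Therefore $1/\hat f\in \BM_0\cap C(\BT)$.

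Near the corner, expand $g$ at $(1,0)$. The point $(1,0)$ is a stagnation point, and by Theorem \ref{thm: main existence theorem}\eqref{statement: derivative at the end points} the zero set there contains $\pa D_0$, with tangent directions at $\pm45^\circ$ to the ray. The local structure of $\phi_\dag$ near a right corner is harmonic $r^2\cos 2\th$-type plus logarithmic corrections. With this, the same asymptotic analysis that gave the $\ln|m\th|$ estimate for $f$ yields $\hat f'(0^\pm)=\pm1$ and the stated $\limsup$ bound. Applied to the outer branch, it shows that $\{g=0\}$ near $(1,0)$ consists of exactly the two perpendicular curves.

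Finally, the sign description holds for the following reasons. $g<0$ on $\hat D_0\setminus\overline{D_0}$ by the single sign change on each ray together with $g(1,\th)<0$. Inside $D_0$, we have $g>0$ because $\D g=-1+2a<0$ there and $g=0$ on $\pa D_0$. Beyond $\hat f$, $g>0$. The level-set identities follow.

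The main obstacle is the uniqueness of the root on each ray, i.e. ruling out extra zeros of $g$ for $r>1$. I would first try to prove $\pa_r(r\pa_r g)>0$ for $r\ge1$, using $\D\phi_\dag=2a$ together with a bound $|\pa_\th^2\phi|\lesssim m^{-1}$ outside the unit disk that comes from the near-circularity of $f$. If that fails near the corners, I would fall back on a separate local argument there.
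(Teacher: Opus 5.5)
The gap is exactly at the step you flag as the main obstacle, and it is the core of the theorem: uniqueness and nondegeneracy of the zero of your $g(r,\th)=\phi_\dag(r\cos\th,r\sin\th)-\phi_\dag(1,0)$ on $r\ge 1$. Your route to $\pa_r(r\pa_r g)>0$ rests on the bound $|\pa_\th^2\phi|\lesssim m^{-1}$ for $|x|\ge 1$, and that bound is false.

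Under $\zeta$ one has $\pa_\th^2\phi=-\pa_{x_1}^2\Phi$. So on the unit circle $\pa_\th^2\phi(\cos\th,\sin\th)=-h''(m\th-\pi)$. For $|x|>1$ with $\zeta(x)=(x_1,-x_2)$, Lemma \ref{lem: Phi in the lower half plane} shows it equals $\pa_{x_2}^2\Psi_2(x_1,x_2)$. Since $h(\pi)=0$ and $h'(0)=0$, integrating by parts gives $\int_0^\pi(\pi-t)h''(t)\,dt=-h(0)$. Hence $\sup_{\BT}(-h'')\ge 2h(0)/\pi^2$, and this is bounded below independently of $m$ because $h(0)\ge c>0$ uniformly (cf.\ the proof of Lemma \ref{lem: lower bound for F_2 x_2}). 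Near-circularity of $f$ does not help: each notch of $B_1\setminus D_0$ has size $\sim 1/m$ in both directions, so its potential has an $O(1)$ Hessian next to it.

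The logarithmic blow-up of $h''$ at $\pm\pi$, coming from the $r\ln r$ term in Lemma \ref{lem: local expansion of Psi2}, actually has the favorable sign. So the corners are not where the trouble lies, and your fallback does not address it. Since $\hat f(\th)-1=O(1/m)$, the roots sit precisely in the layer where $\pa_\th^2\phi$ is of order one and of the unfavorable sign. Rescuing convexity would need a sharp bound of the type $\sup_{\BT}(-h'')<2\mu\big(B+\pa_{x_2}\Psi_2(\pi,0)\big)\approx 1$, which nothing in your proposal supplies. Without it you also lack $\pa_r g\neq 0$ at the root. That nondegeneracy is needed for your implicit-function step, for the analyticity of $\hat f$, and for the formula $\hat f'=-\pa_\th g/\pa_r g$.

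The missing idea is to stay at first order and prove $x\cdot\na\phi_\dag>0$ on $\{|x|\ge 1\}\setminus\CS$, i.e.\ strict monotonicity along every ray. Because $r\pa_r(\f{a}{2}r^2)=ar^2\ge a=-\pa_{x_1}\phi(1,0)$, it suffices that $r\pa_r\phi>\pa_{x_1}\phi(1,0)$ on $\{|x|\ge1\}\setminus\CS$. Since $r\pa_r\phi$ is bounded and harmonic in $\{|x|>1\}$, the maximum principle reduces this to the boundary comparison $\pa_r\phi(\cos\th,\sin\th)>\pa_r\phi(1,0)$ for $\th\notin\f{2\pi}{m}\BZ$.

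The paper proves this in the strip as Lemma \ref{lem: derivative comparison in lower plane}: $\pa_{x_2}\Psi_2(x_1,x_2)\le\pa_{x_2}\Psi_2(\pi,x_2)<\pa_{x_2}\Psi_2(\pi,0)$. The first inequality comes from the monotonicity of $\tilde J$ in $x_1$, the second from an explicit Poisson-kernel computation using $h\ge 0$. Combined with $e^{2\mu x_2}\ge1$ and $\pa_{x_2}\Psi_2(\pi,0)+B>0$, this gives $\pa_{x_2}\Psi_\dag\le\pa_{x_2}\Psi_2(x_1,x_2)-\pa_{x_2}\Psi_2(\pi,0)<0$ for $\Psi_\dag$ as in \eqref{eqn: def of Psi_dag}. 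Existence, uniqueness and nondegeneracy of $\hat f(\th)$ then follow at once.

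Two smaller repairs are needed.
\begin{itemize}
\item $g(1,\th)<0$ should come from your own reflection argument ($\pa_\th\phi<0$ on the arc $0<\th<\f{\pi}{m}$ of the unit circle, equivalently $h>0$), not from the bounds on $a$ and $f$.
\item On the part of $\hat D_0\setminus\overline{D_0}$ inside $B_1$, no argument along rays with $r\ge1$ applies. Use instead that $\phi_\dag$ is subharmonic in $\hat D_0\setminus\overline{D_0}$ with zero boundary values, as the paper does.
\end{itemize}
Your sector-reflection proof of the sign of $\pa_\th\phi$ and your corner asymptotics are in line with the paper, once the nondegeneracy above is in hand.
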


In the final theorem, we describe the stationary flows in the co-rotating frame induced by the rotating vortex patches.

\begin{thm}[Characterizations of the flow field in the co-rotating frame]
\label{thm: flow field}
Let $m_0$, $m$, $f$, $D_0$, and $\phi_\dag$ be given as in the condition of Theorem \ref{thm: heteroclinic trajectory}.
Denote the flow field in the co-rotating frame by $v := -\na^\perp \phi_\dag$  (see \eqref{eqn: flow field in the corotating frame}).
For $x\neq 0$, define $v_r(x) := v(x)\cdot \f{x}{|x|}$ and $v_\th(x) := v(x)\cdot \f{x^\perp}{|x|}$.

\begin{itemize}
\item
Denote $\mathcal{S}:= \{(\cos \f{2k\pi}{m},\,\sin \f{2k\pi}{m}):\,k = 0,1,\cdots, m-1\}$, which is the set of singular points (corners) along $\pa D_0$.
Then \begin{enumerate}[(i)]
\item $v(x) = (0,0)$ for $x\in \{(0,0)\}\cup \CS$.

\item For $x\neq (0,0)$,
\beq
v_r(x)
\begin{cases}
<0,&\mbox{if }x\in \bigcup_{k = 0}^{m-1} \left\{\left(r\cos \th,\,r\sin\th\right):\, r>0,\, \th \in \big(\f{2k\pi}{m},\f{(2k+1)\pi}{m}\big)\right\},\\
>0,&\mbox{if }x\in \bigcup_{k = 0}^{m-1} \left\{\left(r\cos \th,\,r\sin\th\right):\, r>0,\, \th \in \big(\f{(2k+1)\pi}{m},\f{(2k+2)\pi}{m}\big)\right\},\\
=0,&\mbox{otherwise.}
\end{cases}
\label{eqn: sign of v_r}
\eeq

\item \label{statement: v_theta is negative outside B_1}
$v_\theta(x) < 0$ for any $x\in \BR^2$ with $|x|\geq 1$ and $x\not \in \mathcal{S}$.
\item \label{statement: v_theta is negative near corners inside D_0}
$v_\theta(x) > 0$ for any
\beq
\begin{split}
x\in &\; \bigcup_{k = 0}^{m-1} \left\{\left(r\cos \f{(2k+1)\pi}{m},\,r\sin\f{(2k+1)\pi}{m}\right):\, r\in \left(0,f\left(\f{\pi}{m}\right)\right]\right\}\\
&\; \cup \bigcup_{k = 0}^{m-1} \left\{(r\cos \th,\,r\sin\th)\in D_0:\, r\cos\left(\th-\f{2k\pi}{m}\right)\in [r_*,1)
\right\},
\end{split}
\label{eqn: v_theta is positive in suitable regions}
\eeq
where $r_*\in (0,1)$ is a universal constant independent of $m$.
\end{enumerate}

\item
Denote the set of critical points of $\phi_\dag$ by $\CC := \{x\in\BR^2:\,\na \phi_\dag(x) = (0,0)\}$, which is also the set of stagnation points of $v$, i.e., $\CC = \{x\in \BR^2:\, v(x) = (0,0)\}$.
Then $\CC$ has $m$-fold symmetry with respect to the origin, and it is a finite subset of \beq
\begin{split}
\mathcal{S} &\; \cup \bigcup_{k = 0}^{m-1} \left\{\left(r\cos\f{2k\pi}{m},\, r\sin \f{2k\pi}{m}\right):\, r\in [0,r_*)\right\}\\
&\; \cup \bigcup_{k = 0}^{m-1}\left\{\left(r\cos\f{(2k+1)\pi}{m},\, r\sin \f{(2k+1)\pi}{m}\right):\,r\in \left(f\left(\f{\pi}{m}\right),1\right)\right\},
\end{split}
\label{eqn: location of stagnation points}
\eeq
where $r_*\in (0,1)$ is the universal constant introduced above.

\item
If additionally, \beq
2\cos \f{2\pi}{m} + 1> e^{M/m},
\label{eqn: condition for folding one petal}
\eeq
where $M>0$ is the universal constant introduced in Theorem \ref{thm: main existence theorem}, then $v_\th(x) > 0$ for all $x\in \overline{B_{r_m}}\setminus \{(0,0)\}$, where $r_m:=f(\f{\pi}{m})\cos\f{\pi}{m}$.

In particular, \eqref{eqn: condition for folding one petal} holds for all $m\geq 12$ with $M = 4$, for which the existence and characterization of the $m$-fold symmetric V-states with 90-degree corners has been established in Theorem \ref{thm: main existence theorem}.

\item
Lastly, there exists a universal $m_1\in \BZ_+$, such that whenever $m\geq m_1$ ($m\in \BZ_+$), $v_\th(x)>0$ for all $x\in D_0\setminus \{(0,0)\}$, which is an improvement over \eqref{eqn: v_theta is positive in suitable regions}.
Moreover, $\CC$ is a finite subset of (cf.\;\eqref{eqn: location of stagnation points})
\[
\CS\cup \{(0,0)\}\cup \bigcup_{k = 0}^{m-1} \left\{\left(r\cos\f{(2k+1)\pi}{m},\, r\sin \f{(2k+1)\pi}{m}\right):\,r\in \left(f\left(\f{\pi}{m}\right),1\right)\right\}. \]
\end{itemize}

\begin{rmk}
The results in this theorem are motivated by what is illustrated in Figure \ref{fig:12-fold_intro}(B).
In \eqref{eqn: sign of v_r}, we fully characterize the sign of the radial velocity $v_r$ in $\BR^2\setminus \{(0,0)\}$, which agrees with the numerical observation.
However, the sign of $v_\th$ (i.e., the direction of the rotational flow) is more complicated to study.
We have proved that $v_\th<0$ outside the unit disk, $v_\th>0$ in some regions inside $D_0$ and yet close to the corners, and moreover, when $m\geq m_1$, we can show that $v_\th > 0$ all over $D_0\setminus \{(0,0)\}$.
When $m$ is not so large but the assumption \eqref{eqn: condition for folding one petal} is satisfied, we can prove $v_\th>0$ at least in a deleted neighborhood of the origin.
This can be treated as a characterization of $v_\th$ away from the corners, which is expected in view of the numerical result.
For small $m$, we have not ruled out the possible presence of stagnation points of $v$ along the line segments $\{(r\cos\f{2k\pi}{m},\, r\sin \f{2k\pi}{m}):\,r\in (0,1)\}$ $(k=0,\cdots,m-1)$, as this seems to depend on detailed geometric properties of $D_0$.
Regarding the flow in $B_1\setminus D_0$, we can show that there are at most finitely many stagnation points of $v$ and they are only located along the line segments $\{(r\cos\f{(2k+1)\pi}{m},\, r\sin \f{(2k+1)\pi}{m}):\,r\in (f(\f{\pi}{m}),1)\}$ $(k=0,\cdots,m-1)$.
In fact, there is at least one along each of these line segments, which corresponds to the global minima of $\phi_\dag$; see Theorem \ref{thm: heteroclinic trajectory} and Figure \ref{fig:12-fold_intro}(B).
Nevertheless, it is non-trivial to confirm that there is only one stagnation point in $\{(r\cos\th,r\sin\th)\in \hat{D}_0\setminus \overline{D_0}:\;\th\in (0,\f{2\pi}{m})\}$, which is one of the cat's-eye-shaped regions enclosed by $\pa D_0$ and $\pa\hat{D}_0$ shown in Figure \ref{fig:12-fold_intro}(B).
This issue is related to the conjectured cat's-eye-type structure in the flow field of an $m$-fold symmetric V-state \cite[Conjecture 1.4]{HassainiaMasmoudiWheeler2020}, but unfortunately, such desired feature crucially relies on more detailed geometric properties of $D_0$ as well as $\hat{D}_0$.
\end{rmk}

\begin{rmk}
Theorem \ref{thm: heteroclinic trajectory} and Theorem \ref{thm: flow field} imply that, in the original physical coordinate, passive particles initially lying in $\hat{D}_0\setminus \overline{D_0}$ will rotate with the patch despite that they are always outside the support of vorticity, while those passive particles starting outside $\overline{\hat{D}_0}$ lag behind as they rotate with a slower speed.
This phenomenon is in the same spirit as the notion ``vortex atmosphere" defined for traveling vortices \cite{ChoiJeongSim2025}.
We note that a rigorous definition of a corresponding notion for the rotating vortices requires more considerations.
\end{rmk}
\end{thm}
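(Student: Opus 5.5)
We prove the statements about $v$ in order, always writing $v_r=-\f1r\pa_\th\phi_\dag=-\f1r\pa_\th\phi$ and $v_\th=\pa_r\phi_\dag=\pa_r\phi+ar$. The representation
\[
\phi(x)=-\f1{2\pi}\int_{D_0}\ln|x-y|\,dy
\]
and the properties of $f$ from Theorem \ref{thm: main existence theorem} are the main tools.

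\textbf{The sign of $v_r$.} This is a reflection argument using the monotonicity of $f$ in $\BM_0$. Fix a ray with angle $\th\in(0,\f\pi m)$ and a point $x$ on it. We compute $\pa_\th\phi(x)$ by differentiating under the integral. We then pair each $y\in D_0$ with its reflection $y'$ across the line through the origin at angle $\th$.

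Because $f$ is even, $\f{2\pi}m$-periodic and non-increasing on $[0,\f\pi m]$, the reflected domain satisfies a strict containment: the portion on the side of the corner at angle $0$ dominates the other side. The kernel $\pa_\th\ln|x-y|$ is odd under this reflection and has a definite sign on each half-plane. Hence $\pa_\th\phi(x)>0$, which gives $v_r<0$. On the rays $\th=\f{k\pi}m$, symmetry (C) together with $m$-fold symmetry gives $\pa_\th\phi=0$. This is the standard moving-plane / Steiner-symmetrization type argument, and strictness follows from $f<1$ in $(0,\f{2\pi}m)$.

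\textbf{Stagnation points and the sign of $v_\th$ outside $B_1$.} By the above, $v=0$ forces $x$ to lie on a symmetry ray, and $v(0)=0$ by symmetry. On the ray at angle $0$, $v_\th(1,0)=0$ by \eqref{eqn: formula for angular velocity}.

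For $|x|\ge1$ we show $v_\th<0$. We write $\pa_r\phi(x)=-\f{1}{2\pi}\int_{D_0}\f{(x-y)\cdot\hat x}{|x-y|^2}dy$. We compare this with its value at $(1,0)$ via the formula $a=-\pa_{x_1}\phi(1,0)$. We also use the bound on $a$ in \eqref{eqn: estimate for angular velocity} together with the decay of $\f1r\pa_r\phi$ in $r$ outside the support. For $r\ge1$, the quantity $-\f1r\pa_r\phi(r,\th)$ is at most $\f{|D_0|}{2\pi r^2}$-like corrections. The key inequality to establish is that $-\f1r\pa_r\phi(r\hat x)<a$ for $r\ge 1$, $x\notin\CS$. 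This follows from the fact that $r\mapsto \f{1}{r}\pa_r\phi$ is monotone outside, together with an angular comparison showing that the corner direction maximizes $-\pa_r\phi$ on the unit circle. The latter again uses the reflection/monotonicity structure.

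\textbf{Positivity of $v_\th$ inside $D_0$.} Inside $D_0$ near the corners, expand $\phi_\dag$ near $(1,0)$. Since $\phi_\dag-\phi_\dag(1,0)$ vanishes on the two perpendicular curves $\pa D_0,\pa\hat D_0$ (Theorem \ref{thm: heteroclinic trajectory}), it behaves like a saddle with a definite sign of $\pa_r\phi_\dag$ in the quadrant inside $D_0$. We turn this into quantitative bounds uniform in $m$ using items (ii)--(v) of Theorem \ref{thm: main existence theorem}, which yields $r_*$.

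On the ray at angle $\f\pi m$ inside $D_0$, and on $\overline{B_{r_m}}$ under \eqref{eqn: condition for folding one petal}, we bound $\pa_r\phi$ from below by comparing $D_0$ with the disk of radius $e^{-M/m}$ it contains, plus the petals. We need $\pa_r\phi+ar>0$, i.e. $-\f1r\pa_r\phi<a$. For a disk this quantity is exactly $\f12$, and the petal contributions are controlled by the geometry. Condition \eqref{eqn: condition for folding one petal} is precisely what makes a single petal's contribution small enough. The $m\ge m_1$ statement follows by making all these error terms $O(1/m)$ against the gap $\f12-a\ge C\lam/m$. Finiteness of $\CC$ follows from analyticity of $\phi_\dag$ off $\pa D_0$ along the rays.

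\textbf{Main obstacle.} The main obstacle is the $v_\th$ sign statements. Unlike $v_r$, they are not pure symmetry consequences and need delicate quantitative comparison between $a$ and $-\f1r\pa_r\phi$. I would first try the perturbative expansion around the disk of radius $e^{-M/m}$, using the explicit bounds on $f$.
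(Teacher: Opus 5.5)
Your proposal has genuine gaps, and they sit exactly where the paper needs its two main tools: the $m$-fold periodized kernel (i.e.\ the map $\zeta$) and a moving-plane argument.

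\textbf{The sign of $v_r$ and the exterior.} The reflection argument for $v_r$ fails as stated. For $\theta\in(0,\frac{\pi}{m})$, the domination $f(\theta-\alpha)\ge f(\theta+\alpha)$ for $\alpha\in(0,\pi)$ is false for every $m\ge 2$. At $\alpha=\frac{2\pi}{m}-\theta$ the far side contains the corner at angle $\frac{2\pi}{m}$, where $f=1$, while its mirror image lies at angle $2\theta-\frac{2\pi}{m}$, where $f<1$. In fact $\alpha\mapsto f(\theta-\alpha)-f(\theta+\alpha)$ is $\frac{2\pi}{m}$-periodic with zero mean, so it must change sign. Hence $\mathds{1}_{D_0}(y)-\mathds{1}_{D_0}(y')$ has no sign against your odd kernel.

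The missing step is to sum the kernel over the $m$ rotations first, $\sum_k\ln|x-e^{2\pi ik/m}y|=\ln|x^m-y^m|$, which is what $\zeta$ does. In the cylinder $\BT\times\BR$ one period carries a single petal, and every horizontal slice of $\Omega$ is the single interval $[-g^{-1}(y_2),g^{-1}(y_2)]$. Integrating $\partial_{y_1}$ of the kernel over a slice then produces the factor $\cos(x_1-c)-\cos(x_1+c)=2\sin x_1\sin c\ge0$ (Proposition~\ref{prop: monotonicity of F}, Lemmas~\ref{lem: monotonicity of h} and~\ref{lem: monotonicity in lower plane}).

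You need the same device for your claim that the corner direction maximizes $-\partial_r\phi$ on circles outside $B_1$ (Lemma~\ref{lem: derivative comparison in lower plane}). Your radial step there, monotonicity of $r\mapsto\frac1r\partial_r\phi$ along every ray, is asserted without proof and is not what the paper uses. The paper compares $r\partial_r\phi=-\frac1m(\partial_{x_2}\Psi_2+B)$, which is harmonic outside $B_1$, along the corner ray via the Poisson kernel and $h\ge0$. This gives $-r\partial_r\phi(x)<a$ for $|x|\ge1$, $x\notin\CS$, which is stronger than what you need.

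Your sign bookkeeping also must be fixed. With $\nabla^\perp\phi=(-\partial_{x_2}\phi,\partial_{x_1}\phi)$ one has $v_\theta=-\partial_r\phi_\dag=-\partial_r\phi-a|x|$, and $\partial_\theta\phi<0$ for $\theta\in(0,\frac\pi m)$. As written, you use the same inequality $-\frac1r\partial_r\phi<a$ both for $v_\theta<0$ outside and for $v_\theta>0$ inside. Inside it must read $-\frac1r\partial_r\phi>a$, which is also the only version compatible with your disk value $\frac12>a$.

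\textbf{Positivity of $v_\theta$.} Your route cannot deliver the statements claimed. Any saddle expansion at the corner (cf.\ Lemma~\ref{lem: local expansion}) is valid only at the petal scale, i.e.\ in a physical neighbourhood of $(1,0)$ of size $O(1/m)$. So it can only give $1-r_*=O(1/m)$. By contrast, $\{x\in D_0:x_1\in[r_*,1)\}$ with $r_*$ independent of $m$ reaches heights $\sim m|\ln r_*|$ in the cylinder.

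Likewise, your perturbative comparison for $\overline{B_{r_m}}$ and for $m\ge m_1$ pits errors of size $O(1/m)$ against the gap $\frac12-a\ge C\lambda/m$, which has the same order. It cannot close without explicit constants, and it degenerates at the corners, where both sides tend to $a$.

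Also, $2\cos\frac{2\pi}m+1>e^{M/m}$ is not a smallness condition on a petal's contribution. Since $2\cos\frac{2\pi}m+1=4\cos^2\frac\pi m-1$ and $f(\frac\pi m)\ge e^{-M/m}$, it says that the part of the petal beyond the chord $\{x_1=r_m\}$ reflects across that chord into $D_0$ (Lemma~\ref{lem: reflected point still lies in D_0}).

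The paper's mechanism is moving planes. For $r\in[r_*,1)$, using the uniform Lipschitz bound on $f$, or for $r=r_m$, the cap of $D_0$ (resp.\ of the petal) beyond $\{x_1=r\}$ reflects into $D_0$. So $\phi_\dag$ and its reflection solve the same equation on the reflected cap $Q_r$, and their difference is harmonic there. It vanishes on $\{x_1=r\}$ and is positive on the rest of $\partial Q_r$, because $\phi_\dag>\phi_\dag(1,0)$ in $D_0$ (Theorem~\ref{thm: heteroclinic trajectory}). The Hopf lemma then gives $\partial_{x_1}\phi_\dag<0$ on $\{x_1=r\}\cap D_0$. Combined with the sign of the angular derivative, this gives $\partial_r\phi_\dag<0$ there.

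For the disks one propagates inward. Once $\partial_r\phi_\dag(\rho,0)<0$ for some $\rho\le f(\frac\pi m)$, three facts give $\partial_r\phi_\dag<0$ on $\overline{B_\rho}\setminus\{0\}$ (Lemma~\ref{lem: monotonicity of Psi dag tilde}): harmonicity of $\partial_{x_2}\Psi$ above $g(0)$, its monotonicity in $x_1$ (Lemma~\ref{lem: monotonicity of Psi_x_2 in x_1}), and the Poisson kernel. One takes $\rho=r_m$, or $\rho=f(\frac\pi m)\ge r_*/\cos\frac\pi m$ when $m\ge m_1$. The mid-gap ray is handled by the direct computation $\partial_{x_2}\Psi(0,x_2)<0$ for $x_2\ge g(0)$.
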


\subsection{Historical background}
\label{sec: related works}
Vortex patch solutions are an important subject in the study of the 2-D incompressible Euler equation.
It refers to the solutions to \eqref{eqn: 2D incompressible Euler} and \eqref{eqn: stream function of 2D incompressible Euler} with the initial data of the form $\om(x,0) = \mathds{1}_{\Om_0}(x)$, where $\Om_0\subset \BR^2$ is a bounded region.
Since $\om(x,0)\in L^1\cap L^\infty(\BR^2)$, by the classic Yudovich theory \cite{Yudovich1963}, the global weak solution is uniquely well-defined, and in addition, $\om(x,t) = \mathds{1}_{\Omega_t}(x)$ for all $t\in \BR$ with $\Om_t$ being a time-varying bounded region.
Vortex patch solutions (and more generally, patch-type solutions) can also arise in the Euler equation posed on bounded domains \cite{CaoWan2022,CaoWanWangZhan2021,HassainiaRoulley2025BoundaryEffects,KiselevLi2019,LongWangZeng2019,Turkington1983a} and manifolds (notably, spheres) \cite{GarciaHassainiaRoulley2025,KimuraOkamoto1987,PolvaniDritschel1993}, or in the context of other related active scalar equations, such as the surface quasi-geostrophic equation and its generalizations \cite{CastroCordobaGomezSerrano2016SQG,ChaeConstantinCordobaGancedoWu2012,CordobaCordobaGancedo2018,CordobaFontelosManchoRodrigo2005,Gancedo2008,GancedoPatel2021,HassainiaHmidi2015,HassainiaHmidiMasmoudi2025KAM,KiselevLuo2025,KiselevRyzhikYaoZlatos2016,KiselevYaoZlatos2017}, but in what follows, we shall only focus on the vortex patches in the 2-D incompressible Euler equation in $\BR^2$.

Dynamics of general vortex patch solutions has been studied extensively.
For patches with initially regular boundary, whether they may develop finite-time singularity was once a subject of investigation and debate both numerically and analytically \cite{Alinhac1991,Bertozzi1991, Buttke1989, ConstantinTiti1988, Dritschel1985, Dritschel1988, DritschelMcIntyre1990, Majda1986, ZabuskyHughesRoberts1979,ZouOvermanWuZabusky1988}.
In the early 1990s, Chemin \cite{Chemin1990_1991,Chemin1993} and Bertozzi--Constantin \cite{BertozziConstantin1993} resolved this issue by proving that, if the initial patch $\Om_0$ has $C^{1,\al}$-boundary with $\al\in (0,1)$, then $\pa \Omega_t$ will remain $C^{1,\al}$ for all time; also see \cite{Serfati1994}.
Recently, \cite{KiselevLuo2023} showed ill-posedness of $C^2$-vortex patches, while \cite{Lee2025} established local well-posedness of $C^{1,\va}$-vortex patches where $C^{1,\va}$ is defined using certain modulus of continuity.
If the initial patch boundary has singularities, however, Danchin \cite{Danchin1997} proved that the singularities in a closed subset simply get transported by the flow, and away from that, the boundary regularity persists for all time.
He also obtained stability of cusp-like structures \cite{Danchin2000}.
In \cite{CarrilloSoler2000,CohenDanchin2000}, dynamics of corners was studied numerically, and it was observed that acute angles along the patch boundary shrink instantaneously as $t$ becomes positive, while obtuse angles widen.
Singular vortex patches with multiple corners emanating from the origin were investigated systematically in the monograph by Elgindi--Jeong \cite{ElgindiJeong2023} (also see the companion paper \cite{ElgindiJeong2020} discussing long-time dynamics of such patches), where
several well-posedness results were established under certain $m$-fold symmetry condition (with $m \geq 3$). In addition, they also proved ill-posedness results which imply that isolated acute and obtuse corners cannot evolve continuously in time, and that isolated 90-degree corners are generically ill-posed as well.
Very recently, \cite{ElgindiJo2025} proved the cusp formation of vortex patches with acute angles; also see similar results in \cite{ElgindiJeong2023,HoffPerepelitsa2009} under different symmetry assumptions.
From these results, it is clear that isolated corners on the patch boundary which can persist for some time can only be of 90 degrees, but there has been no rigorous construction for such an example to the best of our knowledge.

Uniformly rotating vortex patches (or rotating V-states) is a special class of vortex patch solutions which has been studied for a long time.
Back in the 19th century, disks (also called the Rankine vortices in the literature) and Kirchhoff ellipses \cite{Kirchhoff1876} were already known as explicit examples.
In 1880, Thomson (Lord Kelvin) \cite{Thomson1880ColumnarVortex} analyzed the linearized equation for infinitesimal perturbations around a 3-D cylindrical vortex.
The result suggests that, for the 2-D Euler equation, $m$-fold symmetric rotating vortex patches ($m\geq 3$) may be obtained by bifurcating from the unit disk at the angular velocity $\f{m-1}{2m}$ (see Lamb \cite[Art.~158]{Lamb1932Hydrodynamics}).
Systematic search of such rotating patches started from Deem--Zabusky \cite{DeemZabusky1978}, where the term ``V-state" was introduced and several $m$-fold symmetric rotating patches were numerically computed by bifurcating from the disk.
In 1982, Burbea \cite{Burbea1982Motions} provided the first existence proof using the Crandall--Rabinowitz theorem, where for each $m\geq 3$, a branch of $m$-fold symmetric V-states was found bifurcating from the disk at the angular velocity predicted by Lord Kelvin.
In \cite{BurbeaLandau1982}, $m$-fold V-states further along the bifurcation branches were numerically computed for $m \in \{3,4,5,6\}$, with their stability examined.
Wu--Overman~II--Zabusky  \cite{WuOvermanZabusky1984} developed more accurate numerical methods, and for the first time, they obtained limiting V-states with corners.
It was further confirmed in \cite{Overman1986} that these corners must be of $90$ degrees by an asymptotic analysis near the corners of a hypothetical vortex patch.
Hmidi--Mateu--Verdera \cite{HmidiMateuVerdera2013} fixed the defect in the argument of \cite{Burbea1982Motions}, and further showed that the V-states along the local bifurcation curve have $C^\infty$-boundary, which was later improved to be analytic \cite{CastroCordobaGomezSerrano2016}.
In a milestone work \cite{HassainiaMasmoudiWheeler2020}, Hassainia--Masmoudi--Wheeler extended the earlier local bifurcation results into a global one for all $m\geq 3$, and provided characterization of the solutions along the whole branches.
They performed numerical simulations to confirm the results by Wu--Overman~II--Zabusky \cite{WuOvermanZabusky1984}, and proposed a conjecture that, for every $m\geq 3$, $m$-fold symmetric V-states with 90-degree corners should exist as the weak limits of patches along each bifurcation branch.
As far as we know, this conjecture remains open.
Recently, using computer-assisted proof techniques, Castro-L\'{o}pez--G\'{o}mez-Serrano \cite{CastroLopezGomezSerrano2025} constructed a non-convex V-state with 6-fold symmetry and analytic boundary.
Let us also mention some works searching for new V-states based on the Kirchhoff ellipses \cite{CastroCordobaGomezSerrano2016,CerretelliWilliamson2003,
HmidiMateu2016,Kamm1987,LuzzattoFegizWilliamson2010}, studying doubly-connected rotating vortex patches \cite{deLaHozEtAl2016DoublyConnected, HmidiMateu2016DegenerateBifurcation, WangXuZhou2024DegenerateDoublyConnected}, and constructing quasi-periodic vortex patch solutions using the KAM theory \cite{BertiHassainiaMasmoudi2023TimeQuasiperiodic, HassainiaHmidiRoulley2024InvariantKAM}.

Besides these studies which focus on the existence of various rotating V-states, there are many analytic studies devoted to establishing their mathematical and physical properties.
For example, rigidity results have been rigorously proved in \cite{FanWangZhan2025RadialSymmetryEuler, Fraenkel2000, GomezSerranoParkShiYao2021, Hmidi2015, Huang2025Rigidity}, and stability and instability issues were investigated in \cite{ChoiJeong2022, GuoHallstromSpirn2004, Love1893, Tang1987, Wan1986, WanPulvirenti1985CircularPatchStability}.
Park \cite{Park2022} derived some quantitative estimates for the V-states.
Very recently, with the observation that uniformly rotating vortex patches give rise to sign-changing unstable free boundary problems \cite{AnderssonShahgholianWeiss2012, MonneauWeiss2007}, Wang--Zhang--Zhou \cite{WangZhangZhou2026} proved that for a V-state with Lipschitz boundary, its singular set can contain at most finitely many points; moreover, the patch boundary near each singular point should consist of two $C^1$-arcs meeting at the right angle, while it is smooth elsewhere.
However, it was not known whether such V-states with Lipschitz boundary and non-empty singular set do exist, although for the (one-phase) unstable free boundary problem, examples of cross-shaped singularities have been constructed in a bounded domain by choosing suitable boundary data \cite{AnderssonWeiss2006}.

\subsection{Key ideas in the proof}
\label{sec: scheme of the proof}
Recall that in Theorems \ref{thm: main existence theorem}-\ref{thm: flow field}, we have presented the existence of $m$-fold symmetric V-states with 90-degree corners for all suitably large $m$, and also characterized these patches as well as the flows they induce in the co-rotating frames.
This provides not only the first rigorous construction of rotating V-states in $\BR^2$ with singular boundaries which fulfills the assumptions of \cite{WangZhangZhou2026}, but also the first example of a single vortex patch with persistent 90-degree corners, which are very fragile according to \cite{ElgindiJeong2023}.
Besides, these V-states may also help resolve the above-mentioned conjecture in \cite{HassainiaMasmoudiWheeler2020}.
In this part, let us explain the key ideas involved in the proofs of the main results, especially the existence.

The proof of the existence uses a novel fixed-point argument that is inspired by an earlier work of the first two authors \cite{HuangTong2025} on finding the so-called Sadovskii vortex patch \cite{ChoiJeongSim2025Sadovskii,Sadovskii1971}.
In fact, the problems of rigorously constructing the Sadovskii vortex patch and the $m$-fold symmetric V-states with 90-degree angles share some similar difficulties: we seek some non-trivial solutions that are far away from any known explicit solutions, and meanwhile, we want them to have certain geometric features.
Compared with the more classic approaches of constructing solutions, such as the bifurcation method and the variational framework, the direct fixed-point method we are going to introduce below has the strength that it has better control of detailed geometric properties of the solution (e.g., monotonicity of functions, and size of certain angles at some special points), while it relies on little knowledge of the closed-form solutions.
In this paper, the proof of the existence proceeds as follows.
\begin{enumerate}
\item
By introducing a change of coordinates (see \eqref{eqn: def of zeta}, \eqref{eqn: def of Phi}, and Lemma \ref{lem: integral formula for Psi}), we can transform the task of constructing the $m$-fold symmetric V-states with 90-degree corners into the following one: find a nontrivial continuous $g:\BT\to [0,+\infty)$, such that, if we define $\mu:=\f{1}{m}$, $\Omega:=\{(x_1,x_2)\in \BT\times \BR:\, x_1\in \BT,\, x_2\in[0,g(x_1)]\}$, and (see \eqref{eqn: integral representation of Psi})
\[
\Psi(x) = \Psi(x;g)
:= -\f{1}{4\pi}\int_\Om e^{-2\mu y_2}
\left[\ln \left(\f{\cosh(x_2-y_2)- \cos(x_1-y_1)}{\cosh(0-y_2)-\cos(\pi-y_1)}\right)
-x_2\right] dy,
\]
then it should hold that (see \eqref{eqn: constraint for the boundary curve in Psi})
\[
\Psi(x_1,g(x_1);g) = \partial_{x_2}\Psi(\pi,0;g)\cdot \frac{1-e^{-2\mu g(x_1)}}{2\mu}
\quad \text{for $x_1\in\BT$}.
\]

\item
Let $F(x_1,x_2;g):=\f{1}{x_2}\Psi(x_1,x_2;g)$.
Then the above condition can be rewritten as
\beq
F(x_1,g(x_1);g) = \partial_{x_2}\Psi(\pi,0;g)\cdot \frac{1-e^{-2\mu g(x_1)}}{2\mu g(x_1)}
\quad \text{for $x_1\in\BT$}.
\label{eqn: equation for fixed point g first glance}
\eeq
We can show that (see Proposition \ref{prop: monotonicity of F}), if $g$ is even on $[-\pi,\pi]$ and non-increasing on $[0,\pi]$ (plus some other additional assumptions), $F(x_1,x_2;g)$ is strictly decreasing in both $x_1$ and $x_2$ in $[0,\pi]\times (0,+\infty)$.
This monotonicity property allows us to introduce a mapping $\bfR$ as follows: given $g:\BT \to [0,+\infty)$, we can define the function $\bfR(g):\BT \to [0,+\infty)$ by (see Proposition \ref{prop: implicit mapping} and \eqref{eqn: def of R(g)})
\beq
F(x_1,\bfR(g)(x_1);g) = \partial_{x_2}\Psi(\pi,0;g)\cdot \frac{1-e^{-2\mu g(x_1)}}{2\mu g(x_1)},
\quad \forall\, x_1\in\BT.
\label{eqn: implicit mapping R first glance}
\eeq
In fact, $\bfR(g)$ is well-defined as an even function on $[-\pi,\pi]$, and it is strictly decreasing on $[0,\pi]$.
Then in view of \eqref{eqn: equation for fixed point g first glance}, the task of finding a desired $g$ becomes seeking for a continuous non-trivial fixed point of the mapping $\bfR$.

\item
To obtain the fixed point, we can prove that, for suitably large $m$, $\bfR$ preserves an a priori upper bound (see Proposition \ref{prop: a priori upper bound general m}), i.e., there is a universal $M>0$, such that $g \leq M$ on $\BT$ implies $\bfR(g) < M$ on $\BT$.
Moreover, we will show that the functions $x\mapsto \Lam(\pi^2-x^2)$ and $x\mapsto \lam(\pi^2-x^2)$ on $\BT$, with $\Lam>0$ being suitably large and $\lam>0$ being suitably small, can serve as upper and lower ``barrier functions" (see Proposition \ref{prop: upper barrier new} and Proposition \ref{prop: lower barrier}).
The role of the upper barrier can be interpreted as follows: if $g$ stays below $\Lam(\pi^2-x^2)$ and ``touches it from below", i.e., $g(x)\leq \Lam (\pi^2-x^2)$ for all $x \in \BT$ while the equality is achieved at some $x_*\in \BT$, then $\bfR(g)$ must be strictly smaller than $g$ at the touching point, i.e., $\bfR(g)(x_*)<g(x_*) = \Lam(\pi^2-x_*^2)$.
Note that this is a single-point comparison between $\bfR(g)$ and the upper barrier instead of a global one throughout $\BT$.
The lower barrier can be understood in a similar manner.
These barrier functions provide non-trivial bounds in the design of the function set on which the fixed-point argument is applied (see below).

\item With the upper bound $M$ and the barrier functions in hand, we define a class of functions that lie between the upper bound $M$, the upper barrier, and the lower barrier (see \eqref{eqt: def of function set D})
\beq
\begin{split}
\mathbb{W}:=\big\{g:\BT\to [0,\infty):&\; \text{$g$ is even and lower semi-continuous on }\BT,\\
&\;\text{$g$ is non-increasing on $[0,\pi]$},\\
&\;\lambda (\pi^2-x^2)\leq g(x)\leq \min\{\Lambda  (\pi^2-x^2), M\}\text{ on $\BT$}\big\},
\end{split}
\label{eqn: function space W first glance}
\eeq
We can apply the Schauder fixed-point theorem to show that a truncated version of the mapping $\bfR$ (see \eqref{eqn: truncated mapping hat R}) has a fixed point in $\mathbb{W}$; in fact, to be very precise, we have to work with an $L^2$-version of $\mathbb{W}$ (see Section \ref{sec: proof of existence of the fixed point} for all the technicality).
Thanks to the properties of the barrier functions, we can further argue that this fixed point is actually a fixed point of $\bfR$ in $\mathbb{W}$.

\item
The fixed point $g$ found above satisfies \eqref{eqn: equation for fixed point g first glance}, but it is not necessarily continuous on $\BT$.
We define (see \eqref{eqn: def of F_dag})
\[
F_\dag(x_1,x_2;g):= F(x_1,x_2;g) -\pa_{x_2}\Psi(\pi,0;g)\cdot \f{1-e^{-2\mu x_2}}{2\mu x_2},
\]
and observe that it is possible to prove a non-degeneracy result for $F_\dag$ (in the $x_2$-direction) at some special points of interest: if $(x_1,x_2)\in [0,\pi)\times (0,g(0))$ satisfies $F_\dag(x_1,x_2;g) = 0$ and $x_1 = g^{-1}(x_2)$, we have $\pa_{x_2} F_\dag (x_1,x_2;g) < 0$; see the quantitative statements in Proposition \ref{prop: non-degeneracy along level set} and Proposition \ref{prop: non-degeneracy along level set x_1 greater than pi over 2}.
This enables us to show the continuity of $g$.
Higher regularity of $g$ in $(-\pi,\pi)$ then follows from the implicit function theorem and the standard arguments in the elliptic free boundary problem.
By transforming this $g$ back to the quantities in the original coordinate, we  obtain an $m$-fold symmetric V-state.

\item Finally, by proving a local expansion of $\Psi$ near the point $(\pi,0)$ (see Lemma \ref{lem: local expansion}), we can further show that at the end-points $\pm \pi$, the one-sided derivative of $g$ satisfies $g'_-(\pi) = -1$ and $g_+'(-\pi) = 1$
(see Proposition \ref{prop: g slope at pi}).
This implies that the resulting V-state in the original coordinate has 90-degree corners at exactly $m$ points.
\end{enumerate}

Once the existence of the desired V-state has been obtained together with its regularity and quantitative estimates, it is not difficult to prove the other main results.
We omit the discussion.

Although the parameter $\mu = \f1m$ cannot take the value zero, it is illuminating to first study the formal limiting case $\mu=0$, which turns out to be simpler.
When $\mu = 0$, the condition \eqref{eqn: equation for fixed point g first glance} for the fixed point $g$ reduces to $F(x_1,g(x_1);g) = \partial_{x_2}\Psi(\pi,0;g)$ for all $x_1\in\BT$; note that the right-hand side is a constant, which is also the case in the construction of the Sadovskii vortex patch \cite{HuangTong2025}.
The implicit mapping in \eqref{eqn: implicit mapping R first glance} can still be well defined by $F(x_1,\bfR(g)(x_1);g) = \partial_{x_2}\Psi(\pi,0;g)$ for all $x_1\in \BT$.
By deriving a negative upper bound for $\pa_{x_2}F$, which can be viewed as a quantitative version of the monotonicity of $F$ in $x_2$ mentioned above, one can directly obtain the continuity of $\bfR(g)$ and even $C_{loc}^{1,\al}$-regularity of $\bfR(g)$ in $(-\pi,\pi)$ (under suitable conditions).
Indeed, for any $\mu \geq 0$, $\Psi$ satisfies $-\D \Psi = e^{-2\mu x_2}\mathds{1}_\Omega$ in $\BT\times \BR$, so it is automatically in $C^{1,\al}_{loc}(\BT\times \BR)$, which implies that $F = \f{1}{x_2}\Psi$ has $C^{1,\al}_{loc}$-regularity in $\BT\times (0,+\infty)$.
Then by the implicit function theorem, $\bfR(g)$ should be locally $C^{1,\al}$ in $(-\pi,\pi)$.
This argument requires relatively lower regularity of $g$.
With some extra efforts, one can derive more quantitative characterizations along this line.
This allows us to directly apply the fixed-point argument in some function class with high regularity (cf.\;\eqref{eqn: function space W first glance}, and also see the argument in \cite{HuangTong2025}), which thus has the desired compactness, and there is no need to justify the continuity of the fixed point in a separate proof.

In principle, the case of small $\mu$ can be treated as a perturbation of the limiting case $\mu = 0$.
Indeed, when $\mu \ll 1$, the function $x_2 \mapsto (1-e^{-2\mu x_2})/(2\mu x_2)$ on the right-hand sides of \eqref{eqn: equation for fixed point g first glance} and \eqref{eqn: implicit mapping R first glance} is approximately $1$ for $x_2$ being not so large, and it only decreases slowly as $x_2$ increases in $(0,+\infty)$.
As a result, the existence proof in the case $0<\mu\ll 1$ can be made very similar to that for the case $\mu=0$.
We only remark that, unlike the case $\mu = 0$, the regularity of $\bfR(g)$ in the case $\mu>0$ will be no better than that of $g$ (see \eqref{eqn: implicit mapping R first glance}), but it is still possible to use the smallness of $\mu$ to show certain higher regularity bounds for $\bfR(g)$ given that $g$ satisfies the same bounds.
Therefore, we can still apply the fixed-point argument just as before in a function class with high regularity assumptions.

A natural question is how large $\mu$ can be so that our fixed-point method is still applicable.
This directly relates to the issue that for $m$ in what range we can construct the $m$-fold symmetric rotating V-states with 90-degree corners.
We do encounter smallness constraints on $\mu$ at several places in the argument.
In order to figure out the admissible range of $\mu$, it is necessary to make those smallness constraints as explicit as possible.
For that purpose, we will track the universal constants explicitly when deriving several estimates regarding $F$ (see Section \ref{sec: estimates with explicit constants}), which leads to the explicit smallness conditions  for $\mu$ and $M$ in Corollary \ref{cor: M=4 and mu leq 1/12} and Proposition \ref{prop: upper barrier new}.
However, on the other hand, the strategy (discussed in the previous two paragraphs) of first proving a priori higher-order estimates for $\bfR(g)$ and then applying the fixed-point argument in a function class with some high-regularity bounds does not readily lead to an explicit smallness condition.
The reason is that, to derive the estimates for $\bfR(g)$, one has to work with general inputs $g$ in a suitable function class and study their corresponding $\Psi$ and $F$, which can be rather complicated.
Instead, we choose to first obtain the fixed point in a function class with weaker regularity (see \eqref{eqn: function space W first glance} and Section \ref{sec: proof of existence of the fixed point}), and justify its continuity as well as higher regularity later by using the non-degeneracy result for $F_\dag$.
Note that in the latter step, we will only need the non-degeneracy of $F_\dag$ at those points $(x_1,x_2)$ satisfying that $F_\dag(x_1,x_2;g) = 0$ and $x_1 = g^{-1}(x_2)$, which are not arbitrary points, and moreover, we will have the extra information that $F_\dag$ is generated by a fixed point $g$, so it is much more tractable to derive the desired explicit smallness condition in this way.
Eventually, these smallness conditions combined lead to an explicit admissible range of $\mu$, which will be summarized and explained in detail in Remark \ref{rmk: optimality of m=12 explained}.
That gives rise to the final conclusion that $m_0\leq 12$ in Theorem \ref{thm: main existence theorem}, i.e., $m$-fold symmetric rotating V-states with 90-degree corners exist at least for all $m\geq 12$ (see Remark \ref{rmk: claim optimality m = 12}).

\subsection{Organization of the paper}
The rest of this paper is organized as follows.
In Section \ref{sec: reformulation}, as was explained before, we will transform the problem into a new coordinate by a conformal mapping, and introduce the key quantities $\Psi$ and $F$.
After showing the monotonicity property of $F$, we will reformulate the problem into a fixed-point problem regarding the mapping $\bfR$.
In Section \ref{sec: estimates}, we will derive a priori estimates for $\Psi$, $F$, and related quantities; some of the estimates come with explicit constants.
We also establish an upper bound preserved by $\bfR$, and construct the upper and lower barrier functions.
Section \ref{sec: existence of the fixed point} is devoted to proving the existence of the fixed point of $\bfR$.
Regularity of the fixed point $g$ will be addressed in Section \ref{sec: regularity}.
We will start from establishing the non-degeneracy result for $F_\dag$, and then show the continuity and higher regularity for $g$ in $(-\pi,\pi)$.
After that, by proving a local expansion of $\Psi$ near the point $(\pi,0)$, we justify the properties of the fixed point $g$ near the end-points $\pm \pi$.
At the end of Section \ref{sec: regularity}, we will present the proof of Theorem \ref{thm: main existence theorem}.
In Section \ref{sec: flow field}, we investigate the stationary flows in the co-rotating frame induced by the V-states, and prove Theorem \ref{thm: heteroclinic trajectory} and Theorem \ref{thm: flow field}.
In Appendix \ref{sec: proof of M = 4 mu = 1/12}, we present the proof of Corollary \ref{cor: M=4 and mu leq 1/12}, which directly leads to the explicit threshold $12$ for $m$.
We prove some auxiliary calculus lemmas in Appendix \ref{sec: calculus lemmas}.
Finally, in Appendix \ref{sec: numerical}, we explain our numerical method to compute the $m$-fold symmetric V-states with 90-degree angles and present more numerical results.

\subsection*{Acknowledgement}
The authors are all supported by the National Key R\&D Program of China under the grant No.~2021YFA1001500.
DH is also supported by the National Natural Science Foundation of China under the grant NSFC No.~12288101.

\section{Reformulation of the Problem}
\label{sec: reformulation}

The main goal of this section is to reformulate the problem of finding a V-state $D_0$ that satisfies \eqref{eqn: constraint satisfied by phi along the patch boundary final} into a fixed-point problem.
We start from transforming the problem into a more convenient coordinate, and decomposing the stream function.

\subsection{Conformal mapping and decomposition of the stream function}
\label{sec: conformal mapping decomposing stream function}
Let $B_1\subset \BR^2$ denote the unit open disk in $\BR^2$ centered at the origin.
Note that the general patch $D_0$ given in \eqref{eqn: form of D_0} (not necessarily with the right angles) is a subset of $B_1$, and \eqref{eqn: constraint satisfied by phi along the patch boundary final} is a constraint along $\pa D_0\subset \overline{B_1}$.

By virtue of the $m$-fold symmetry of $D_0$, $\phi$ also enjoys the $m$-fold symmetry, i.e., for any $x\in B_1$, in the complex notation, $\phi(e^{2\pi i/m} x) = \phi(x)$.
We introduce an infinite sector
\[
S_m^\infty := \big\{(r\cos\th,r \sin \th)\in \BR^2:\,\th\in [0,2\pi/m],\,r\in (0,+\infty)\big\}
\]
and a sector of the unit circle
\[
S_m := \big\{(r\cos\th,r \sin \th)\in \BR^2:\,\th\in [0,2\pi/m],\,r\in (0,1]\big\}.
\]
Note that the restriction of $\phi$ on $S_m$ already encodes all the information of $\phi$ on $B_1$.
Also denote
\[
\BH:=\BT\times \R,\mbox{ and }\BH_+:=\BT\times [0,+\infty).
\]
We introduce a conformal mapping $\zeta:S_m^\infty\to \BH$ as follows:
\beq
\zeta:\,(r\cos\th,r\sin\th)\mapsto (m\th-\pi,-m\ln r).
\label{eqn: def of zeta}
\eeq
Equivalently, using the complex notation, we may write it as
\[
\zeta:\,z\mapsto -m\cdot i\ln z-\pi.
\]
Its inverse is given by $\zeta^{-1}:\BH\to S_m^\infty$,
\[
\zeta^{-1}:\,(x_1,x_2)\mapsto  \left(e^{-\f{x_2}{m}}\cos \f{x_1+\pi}{m},\, e^{-\f{x_2}{m}}\sin \f{x_1+\pi}{m}\right).
\]
Note that $\zeta$ also maps $S_m$ to $\BH_+$ bijectively and conformally.

Using the mapping $\zeta$, we can transform the $\phi$-problem defined on $S_m$ to an equivalent formulation on $\BH_+$.
For that purpose, let us introduce some new notations.
\begin{itemize}
\item
Define $\Phi:\BH\to \BR$ by
\beq
\Phi(x_1,x_2)
:= -m^2\big[\phi\circ\zeta^{-1}(x_1,x_2) - \phi\circ\zeta^{-1}(\pi,0) \big],\quad  \forall\, (x_1,x_2)\in \BH,
\label{eqn: def of Phi}
\eeq
or equivalently
\[
\Phi\circ\zeta(x_1,x_2)
:= -m^2\big(\phi(x_1,x_2) - \phi(1,0) \big),\quad  \forall\, (x_1,x_2)\in \BR^2.
\]
It is not difficult to derive that
\beq
\pa_{x_2}\Phi(\pm \pi,0) = m\partial_{x_1}\phi(1,0).
\label{eqn: relation between derivatives of phi and Phi}
\eeq
We note that in this formula, $\pa_{x_2}$ on the left-hand side represents the partial derivative with respect to the $x_2$-variable in the new coordinate after the transformation, while $\pa_{x_1}$ on the right-hand side denotes the partial derivative with respect to the $x_1$-variable in the physical coordinate (i.e., the co-rotating frame) before the transformation.

\item
Define $g:\BT\to \BR$ by
\beq
g(x_1):= -m\ln f\left(\f{x_1+\pi}{m}\right), \quad \forall\, x_1\in \BT = (-\pi,\pi].
\label{eqn: def of g}
\eeq
In fact, this is determined by the relation (see \eqref{eqn: def of zeta})
\[
\zeta\big(f(\th)\cos \th, f(\th)\sin\th \big) = \big(m\th-\pi,g(m\th-\pi)\big),\quad \forall\, \th \in \BT,
\]
i.e., the graph of $g$ on $\BT$ represents the image of the graph of $f = f(\th)$ in the polar coordinate under the mapping $\zeta$.
Correspondingly, let (cf.\;\eqref{eqn: function set M_0})
\beq
\begin{split}
\BM:= \Big\{g:\BT\to [0,\infty):\;& \text{$g$ is even}, \;
\text{$g$ is non-increasing on $[0,\pi]$},\\
&\mbox{$g>0$ in $(-\pi,\pi)$},\; g(\pm \pi)= \lim_{x\to\pi^-}g(x)=\lim_{x\to -\pi^+}g(x) =0 \Big\}.
\end{split}
\label{eqn: function set tilde M_0}
\eeq
It is clear that \eqref{eqn: def of g} gives a bijection from $\BM_0$ to $\BM$, whose inverse is given by
\beq
f(\th):= e^{-\f{1}{m}g(m\th-\pi)},\quad \th\in \BT. \label{eqn: transformation from g to f}
\eeq
We illustrate the conformal mapping $\zeta$ and the induced correspondence between $f$ and $g$ for $m=12$ in Figure \ref{fig:mapping}.

\item
Let
\beq
\Om := \zeta\big(S_m\setminus D_0\big) = \big\{(x_1,x_2)\in \BH_+:\, x_1\in \BT,\, x_2\in[0,g(x_1)] \big\}.
\label{eqn: def of Omega domain}
\eeq

\item
Let $h:\BT\to \BR$ be given by
\beq
\begin{split}
h(x_1):= &\; -m^2\big[\phi\circ \zeta^{-1}(x_1,0)
- \phi\circ \zeta^{-1}(\pi,0)\big]\\
= &\; -m^2\left[\phi\left(\cos\f{x_1+\pi}{m},\, \sin\f{x_1+\pi}{m}\right) -\phi(1,0)\right].
\end{split}
\label{eqn: def of h}
\eeq
By \eqref{eqn: def of Phi}, $h(x_1) = \Phi(x_1,0)$.
\end{itemize}

Using these notations as well as \eqref{eqn: relation between derivatives of phi and Phi},
we can rewrite the condition \eqref{eqn: constraint satisfied by phi along the patch boundary final} for a general rotating vortex patch as
\beq
\Phi(x_1,x_2) = \partial_{x_2}\Phi(\pi,0)\cdot \frac{1-e^{-2x_2/m}}{2/m}
\quad \mbox{on }\zeta\big(S_m\cap \pa D_0\big).
\label{eqn: constraint for Phi along the image of the patch boundary}
\eeq
For general $f\in \BM_0$ (or equivalently, $g\in \BM$),
\[
\g:= \{(x_1,g(x_1)):\,x_1\in \BT\} \subset \zeta(S_m\cap \pa D_0),
\]
so a necessary condition of \eqref{eqn: constraint for Phi along the image of the patch boundary} is that
\beq
\Phi(x_1,x_2) = \partial_{x_2}\Phi(\pi,0)\cdot \frac{1-e^{-2x_2/m}}{2/m}
\quad \mbox{along }\g.
\label{eqn: constraint for the boundary curve in Phi}
\eeq

\begin{rmk}
\label{rmk: equivalence of the conditions along boundary}
If additionally $f\in C(\BT)$ (or equivalently, $g\in C(\BT)$), we have $\g = \zeta(S_m\cap \pa D_0)$ and thus \eqref{eqn: constraint for the boundary curve in Phi} is equivalent to \eqref{eqn: constraint for Phi along the image of the patch boundary}.
Therefore, in order to have \eqref{eqn: constraint satisfied by phi along the patch boundary final} (or equivalently, \eqref{eqn: constraint for Phi along the image of the patch boundary}) hold, it is convenient to first find $g\in \BM$ such that \eqref{eqn: constraint for the boundary curve in Phi} holds, and then further justify \eqref{eqn: constraint for Phi along the image of the patch boundary} by showing the continuity of $g$.
\end{rmk}

\begin{figure} \centering

    \begin{tikzpicture}
    \node[inner sep=0pt] (img)
    {
    \includegraphics[width=0.55\textwidth]{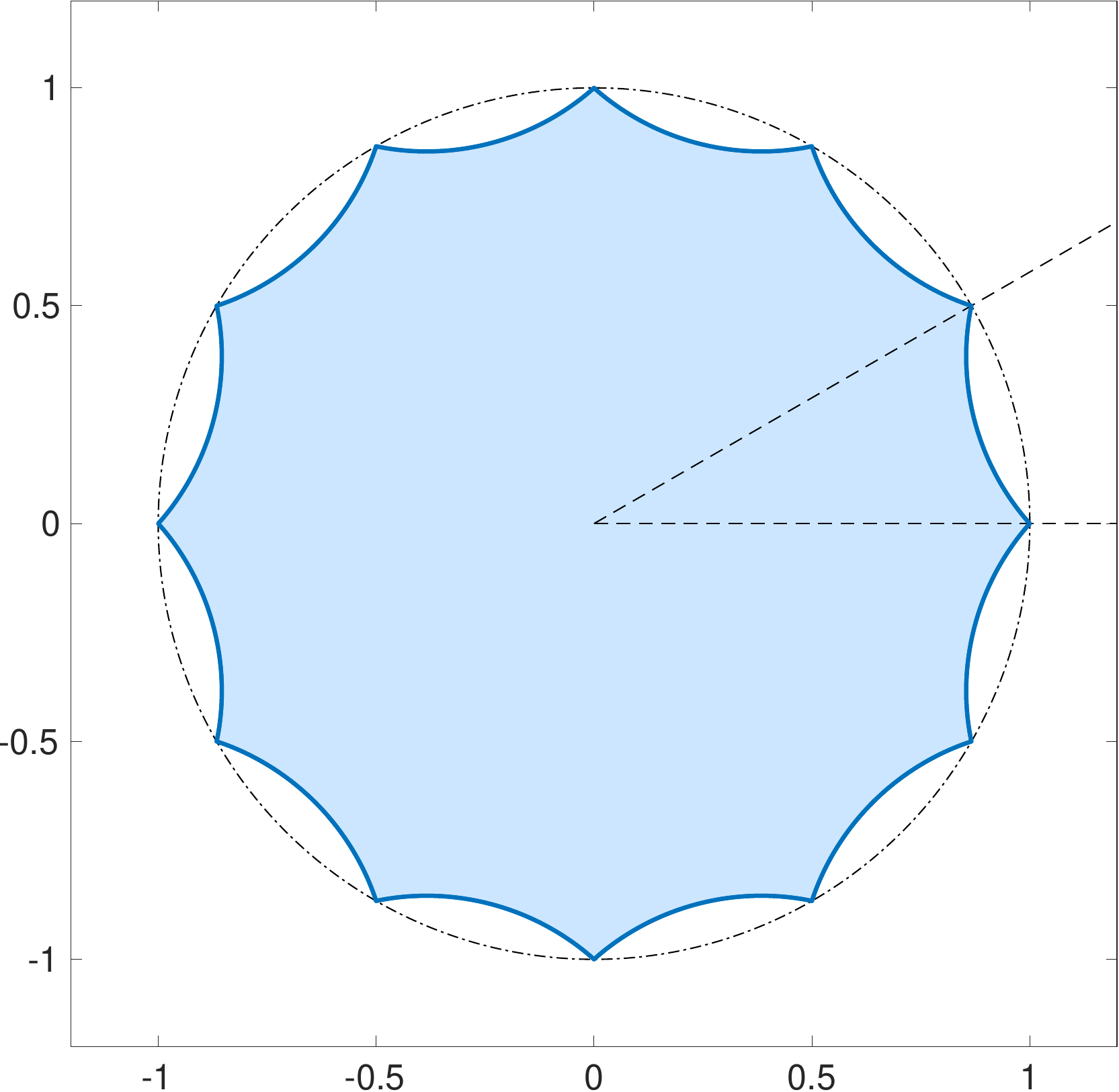}\quad
    \includegraphics[width=0.4\textwidth]{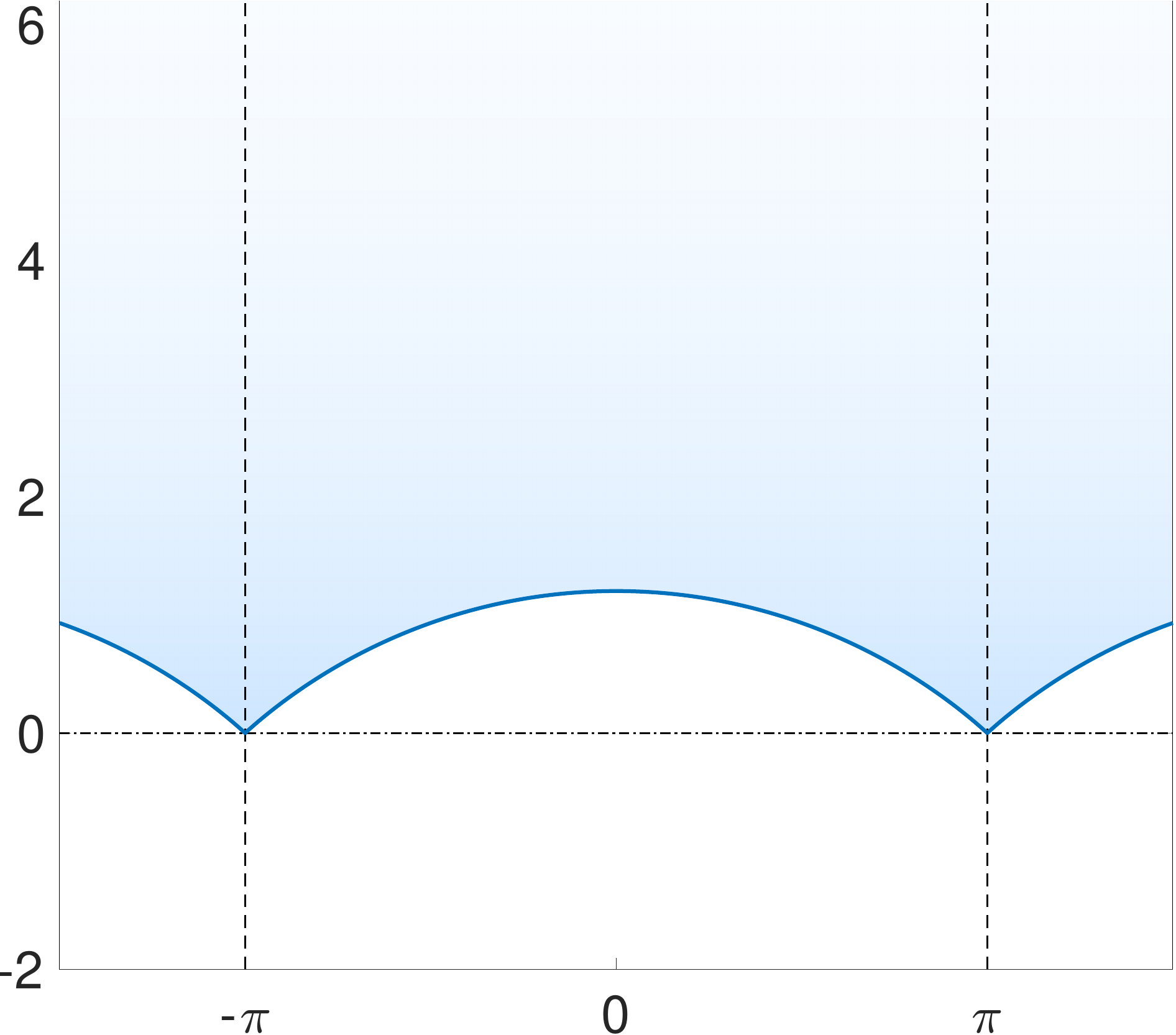}
    };

      \draw[-Stealth, thick, black]
      ($(img.west) + (0.46\textwidth, 0.06\textwidth)$)
      to[bend left=40]
      node[above = 6pt,pos=0.53,font=\large] {$\zeta:S_m^\infty\to \BH$}
      ($(img.east) + (-0.18\textwidth, 0.04\textwidth)$);

      \draw[-Stealth, thick, black]
      ($(img.west) + (0.25\textwidth, -0.1\textwidth)$)
      to[bend left=-10]
      node[above = 5pt,pos=0,font=\small] {\qquad \qquad $\big(f(\th)\cos\th,f(\th)\sin \th\big)$}
      ($(img.west) + (0.23\textwidth, -0.174\textwidth)$);

      \draw[-Stealth, thick, black]
      ($(img.east) + (-0.24\textwidth, -0.07\textwidth)$)
      to[bend left=5]
      node[above = 3pt,pos=0,font=\small] {$(x_1,g(x_1))$}
      ($(img.east) + (-0.23\textwidth, -0.122\textwidth)$);

      \draw[thick, black]
      ($(img.west) + (0.333\textwidth, 0.011\textwidth)$)
      arc[start angle=0, end angle=30, radius=0.04\textwidth];

      \node[align=center, font=\bfseries]
      at($(img.center) + (-0.11\textwidth,0.0333\textwidth)$){$\f{2\pi}{m}$};

    \end{tikzpicture}

    \caption[Mapping]{\small An illustration of the conformal mapping $\zeta$ and the induced correspondence between $f$ and $g$ in the case $m=12$.}
    \label{fig:mapping}
\end{figure}

\subsection{Integral representations}

To study the equation \eqref{eqn: constraint for the boundary curve in Phi}, it is convenient to have integral representations of $\Phi$ and related quantities in hand. To that end, we shall first derive an integral representation of $\phi$ before the transform.

\begin{lem}
\label{lem: integral presentation of Phi}
Recall that $\phi$ is defined in \eqref{eqn: equation for phi}, where $D_0$ is given by \eqref{eqn: form of D_0}.
For any $x\in \BR^2$ that can be written as $x = (r\cos \th, r\sin \th)$, where $r \geq 0$ and $\th \in \BT$, we have
\beqo
\phi(x) = \phi(x;f)
= -\f1{4\pi}\int_{0}^{2\pi/m}\int_0^{f(\al)} R\ln \big[r^{2m}+R^{2m} - 2 r^m R^m\cos(m(\th-\al))\big]\,dR\,d\al.
\eeqo

\begin{proof}
Let $\G(x) := -\f{1}{2\pi}\ln |x|$ denote the fundamental solution to the Laplace's equation $-\D \G(x) = \d(x)$ in $\BR^2$.
Then by \eqref{eqn: equation for phi}, $\phi(x) = \G*\mathds{1}_{D_0}(x)$.
In what follows, we would like to incorporate the $m$-fold symmetry of $D_0$ with the fundamental solution.
Using the complex notation, we calculate that, for any $x,y\in \BC$, \begin{align*}
\G_m(x,y) := &\; \sum_{k = 0}^{m-1}\G\big(x-ye^{2i\pi k/m}\big)\\
= &\; -\f{1}{2\pi} \sum_{k = 0}^{m-1}\ln \big|x-ye^{2i\pi k/m}\big|
= -\f{1}{2\pi}\ln \left|\prod_{k = 0}^{m-1} \big(x-ye^{2i\pi k/m}\big)\right|\\
= &\; -\f{1}{2\pi}\ln \left|y^m\prod_{k = 0}^{m-1} \left(\f{x}{y}-e^{2i\pi k/m}\right)\right|
= -\f{1}{2\pi}\ln \left|y^m \left(\f{x^m}{y^m}-1\right)\right|\\
= &\;-\f{1}{2\pi}\ln \big|x^m-y^m\big|.
\end{align*}
Let
\[
D_m := \big\{(r\cos\th,r \sin \th)\in \BR^2:\,\th\in [0,2\pi/m],\,r\in [0,f(\th))\big\}.
\]
Then for $x\in \BC$, with abuse of the complex notation, \[
\phi(x) = \int_{D_0}\G(x-y)\,dy = \int_{D_m}\G_m(x,y)\,dy
= -\f1{2\pi}\int_{D_m} \ln \big|x^m-y^m\big|\,dy.
\]
Writing $x\in \BC$ and $y\in D_m$ as $x = r e^{i\th}$ and $y = Re^{i\al}$ respectively, we can deduce that
\[
\phi(x)
= -\f1{4\pi}\int_{0}^{2\pi/m}\int_0^{f(\al)} R\ln \big[r^{2m}+R^{2m} - 2 r^m R^m\cos(m(\th-\al))\big]\,dR\,d\al,
\]
which is the desired claim.
\end{proof}
\end{lem}

\begin{lem}
\label{lem: integral formula for Psi}
Let $\Phi$ and $\Om$ be defined in \eqref{eqn: def of Phi} and \eqref{eqn: def of Omega domain}, respectively, and let $\mu:=\f{1}{m}$.
Then, for $x=(x_1,x_2)\in \BH$,
\beq
\Phi(x) = \Phi(x;g)
= \f{1}{4\pi}\int_{\mathbb{H}_+\setminus\Om} e^{-2\mu y_2}
\left[\ln \left(\f{\cosh(x_2-y_2)- \cos(x_1-y_1)}{\cosh(0-y_2)-\cos(\pi-y_1)}\right)
-x_2\right] dy.
\label{eqn: integral representation of Phi}
\eeq
Moreover, for $x_2\geq 0$, $\Phi$ has the decomposition
\beq
\Phi(x) = \frac{e^{-2\mu x_2}-1}{(2\mu)^2} + \Psi(x),
\label{eqn: decomposition of Phi}
\eeq
where
\beq
\Psi(x) = \Psi(x;g)
:= -\f{1}{4\pi}\int_\Om e^{-2\mu y_2}
\left[\ln \left(\f{\cosh(x_2-y_2)- \cos(x_1-y_1)}{\cosh(0-y_2)-\cos(\pi-y_1)}\right)
-x_2\right] dy,
\label{eqn: integral representation of Psi}
\eeq
and
\begin{equation}
\partial_{x_2}\Psi(\pi,0)
= \f{1}{4\pi}\int_\Om e^{-2\mu y_2}\left[\f{\sinh y_2}{\cosh y_2 + \cos y_1} + 1\right] dy.
\label{eqn: end point constant}
\end{equation}

\begin{proof}
Thanks to Lemma \ref{lem: integral presentation of Phi} as well as \eqref{eqn: def of Phi}, we can directly compute that\begin{align*}
&\; \Phi(x_1,x_2)\\
= &\; -m^2\left( \phi\left(e^{-\f{x_2}{m}}\cos \f{x_1+\pi}{m},\, e^{-\f{x_2}{m}}\sin \f{x_1+\pi}{m}\right)
- \phi(1,0)\right)\\
= &\; \f{m^2}{4\pi}\int_{0}^{2\pi/m}\int_0^{f(\al)} R
\ln \left(\f{e^{-2x_2}+R^{2m} - 2 e^{-x_2} R^m\cos(x_1+\pi-m\al)}{1+R^{2m} - 2R^m\cos(0-m\al)}\right) dR\,d\al
\\
= &\; \f{m^2}{4\pi}\int_{0}^{2\pi/m}\int_0^{f(\al)} R
\left[\ln \left(\f{\f{e^{-2x_2}+R^{2m}}{2 e^{-x_2} R^m} - \cos(x_1+\pi-m\al)}{\f{1+R^{2m}}{2R^m}-\cos(0-m\al)}\right)
+ \ln \left(\f{2 e^{-x_2} R^m}{2R^m}\right)\right] dR\,d\al
\\
= &\; \f{m^2}{4\pi}\int_{0}^{2\pi/m}\int_0^{f(\al)} R
\left[\ln \left(\f{\cosh(x_2+\ln R^m)- \cos(x_1+\pi-m\al)}{\cosh(\ln R^m)-\cos(0-m\al)}\right)
-x_2\right] dR\,d\al
\\
= &\; -\f{1}{4\pi}\int_{0}^{2\pi/m}\int_0^{f(\al)} R^2
\left[\ln \left(\f{\cosh(x_2+\ln R^m)- \cos(x_1+\pi-m\al)}{\cosh(\ln R^m)-\cos(0-m\al)}\right)\right.\\
&\;\qquad \qquad \qquad \qquad \qquad \quad
-x_2\bigg] \, d (-m\ln R)\,d(m\al-\pi)
\\
= &\; \f{1}{4\pi}\int_{-\pi}^{\pi} \int_{-m\ln f((y_1+\pi)/m)}^{\infty} e^{-2y_2/m}
\left[\ln \left(\f{\cosh(x_2-y_2)- \cos(x_1-y_1)}{\cosh(0-y_2)-\cos(\pi-y_1)}\right)
-x_2\right] dy_2\,dy_1
\\
= &\; \f{1}{4\pi}\int_{-\pi}^{\pi} \int_{g(y_1)}^{\infty} e^{-2y_2/m}
\left[\ln \left(\f{\cosh(x_2-y_2)- \cos(x_1-y_1)}{\cosh(0-y_2)-\cos(\pi-y_1)}\right)
-x_2\right] dy_2\,dy_1\\
= &\; \f{1}{4\pi}\int_{\mathbb{H}_+\setminus\Om} e^{-2\mu y_2}
\left[\ln \left(\f{\cosh(x_2-y_2)- \cos(x_1-y_1)}{\cosh(0-y_2)-\cos(\pi-y_1)}\right)
-x_2\right] dy,
\end{align*}
as claimed.

Next, using the fact that
\beq
\int_{-\pi}^{\pi}\ln (2\cosh x_2+ 2\cos x_1 )\, dx_1 = 2\pi|x_2|,
\label{eqn: integral of a ln function on torus}
\eeq
we can derive for $x_2\geq 0$ that
\begin{align*}
\Phi(x_1,x_2)= &\; \f{1}{4\pi}\int_{-\pi}^{\pi} \int_{g(y_1)}^{\infty} e^{-2\mu y_2}
\left[\ln \left(\f{\cosh(x_2-y_2)- \cos(x_1-y_1)}{\cosh(0-y_2)-\cos(\pi-y_1)}\right)
-x_2\right] dy_2\,dy_1\\
= &\; \frac{1}{4\pi}\int_{-\pi}^{\pi}\int_0^{\infty}e^{-2\mu y_2} \left[\ln\left(\frac{\cosh(x_2-y_2)-\cos(x_1-y_1)}{\cosh(0-y_2) - \cos(\pi-y_1)}\right) - x_2\right] dy_2 dy_1 \\
&\; - \frac{1}{4\pi}\int_{-\pi}^{\pi}\int_0^{g(y_1)}e^{-2\mu y_2} \left[\ln\left(\frac{\cosh(x_2-y_2)-\cos(x_1-y_1)}{\cosh(0-y_2) - \cos(\pi-y_1)}\right) - x_2\right] dy_2 dy_1\\
= &\; \frac{1}{4\pi}\int_0^{\infty}e^{-2\mu y_2} \int_{-\pi}^{\pi}\left[\ln\left(\frac{\cosh(x_2-y_2)-\cos(x_1-y_1)}{\cosh(0-y_2) - \cos(\pi-y_1)}\right) - x_2\right] dy_1 dy_2 \\
&\; - \frac{1}{4\pi}\int_{-\pi}^{\pi}\int_0^{g(y_1)}e^{-2\mu y_2} \left[\ln\left(\frac{\cosh(x_2-y_2)-\cos(x_1-y_1)}{\cosh(0-y_2) - \cos(\pi-y_1)}\right) - x_2\right] dy_2 dy_1\\
= &\; \frac{1}{2}\int_0^{\infty}e^{-2\mu y_2}\left(|x_2-y_2| - x_2- y_2\right)  dy_2 \\
&\; - \frac{1}{4\pi}\int_\Om e^{-2\mu y_2} \left[\ln\left(\frac{\cosh(x_2-y_2)-\cos(x_1-y_1)}{\cosh(0-y_2) - \cos(\pi-y_1)}\right) - x_2\right] dy\\
= &\; \frac{e^{-2\mu x_2}-1}{(2\mu)^2} + \Psi(x_1,x_2).
\end{align*}
The formula for $\pa_{x_2}\Psi(\pi,0)$ follows from direct calculation.
\end{proof}
\end{lem}

In the rest of the paper, we will always denote $\mu := \f{1}{m}$. Substituting \eqref{eqn: decomposition of Phi} into the solution condition \eqref{eqn: constraint for the boundary curve in Phi} yields
\beq
\Psi(x_1,g(x_1);g) = \partial_{x_2}\Psi(\pi,0;g)\cdot \frac{1-e^{-2\mu g(x_1)}}{2\mu}
\quad \text{for $x_1\in\BT$}.
\label{eqn: constraint for the boundary curve in Psi}
\eeq
In view of Remark \ref{rmk: equivalence of the conditions along boundary}, our first goal is to find some $g\in \BM$ such that this condition holds.
To better study $\Psi$ in $\mathbb{H}_+$, let us introduce the fundamental solution to $-\D$ on $\BT\times \BR$:
\beq
G(x_1,x_2) := -\f{1}{4\pi}\ln (2\cosh x_2 - 2\cos x_1).
\label{eqn: def of G}
\eeq
Then \eqref{eqn: integral representation of Psi} can be rewritten as
\beqo
\Psi(x_1,x_2)
= \int_\Om e^{-2\mu y_2}
\left[G(x_1-y_1,x_2-y_2)- G(\pi-y_1,0-y_2)+\f{1}{4\pi}x_2\right] dy.
\eeqo
It is straightforward to check that $\Psi$ solves
\beq
-\D \Psi(x) = e^{-2\mu x_2}\mathds{1}_\Omega(x)\mbox{ in $\BH_+$},\quad \Psi(x_1,0) = h(x_1) \mbox{ for }x_1\in \BT,
\label{eqn: equation for Psi}
\eeq
where
\beq
h(x_1) = h(x_1;g) =
\int_\Om e^{-2\mu y_2}
\big[G(x_1-y_1,0-y_2)-G(\pi-y_1,0-y_2)\big]\, dy.
\label{eqn: formula for h}
\eeq
We shall further split $\Psi$ as
\[\Psi = \Psi_1 + \Psi_2.\]
Here $\Psi_1$ is the solution to
\beqo
-\D \Psi_1(x) = e^{-2\mu x_2}\mathds{1}_\Omega(x) \mbox{ in $\BH_+$},\quad \Psi_1(x_1,0) = 0 \mbox{ for }x_1\in \BT,
\eeqo
while $\Psi_2$ is the solution to
\beq
-\D\Psi_2(x) = 0 \mbox{ in $\BH_+$},\quad \Psi_2(x_1,0) = h(x_1)\mbox{ for }x_1\in \BT.
\label{eqn: equation for Psi_2}
\eeq
We can use the reflection principle to obtain an integral representation of $\Psi_1$ as
\beq
\begin{split}
\Psi_1(x_1,x_2) &=
\int_\Omega e^{-2\mu y_2} \left[G(x_1-y_1,x_2-y_2)-G(x_1-y_1,x_2+y_2)\right] dy\\
&= -\f{1}{4\pi}\int_\Om e^{-2\mu y_2} \ln \left(\f{\cosh(x_2-y_2)- \cos(x_1-y_1)}{\cosh(x_2+y_2)-\cos(x_1-y_1)}\right) dy.
\end{split}
\label{eqn: representation of Psi_1}
\eeq
Since $\Psi_2 = \Psi-\Psi_1$, \beq
\begin{split}
\Psi_2(x_1,x_2) &=
\int_\Omega e^{-2\mu y_2} \left[G(x_1-y_1,x_2+y_2) - G(\pi-y_1,0-y_2)+\f{1}{4\pi}x_2\right] dy\\
&= -\f{1}{4\pi}\int_\Om e^{-2\mu y_2}
\left[\ln \left(\f{\cosh(x_2+y_2)- \cos(x_1-y_1)}{\cosh y_2-\cos(\pi-y_1)}\right)
-x_2\right] dy.
\end{split}
\label{eqn: representation of Psi_2 old}
\eeq
Alternatively, since $\Psi_2$ is harmonic in $\mathbb{H}_+$ with a Dirichlet boundary condition  $\Psi_2(x_1,0) = h(x_1)$ along $\BT\times \{0\}$ (see \eqref{eqn: equation for Psi_2}),
we immediately obtain a second representation of $\Psi_2$:
\beq
\Psi_2(x_1,x_2) = \int_\BT P(x_1-y_1,x_2)h(y_1)\,dy_1,
\label{eqn: representation of Psi_2}
\eeq
where
\beq
P(x_1,x_2) := \f{1}{2\pi}\cdot \f{\sinh x_2}{\cosh x_2- \cos x_1}
\label{eqn: Poisson kernel}
\eeq
is the Poisson kernel on $\BH=\BT\times [0,+\infty)$.

\subsection{The quantity $F$ and its monotonicity}
\label{sec: monotonicity}
Define
\beq
F(x) = F(x;g):= \f{1}{x_2} \Psi(x;g),
\label{eqn: def of F}
\eeq
and correspondingly,
\beq
F_1(x) := \f{1}{x_2} \Psi_1(x),\quad
F_2(x) := \f{1}{x_2} \Psi_2(x).
\label{eqn: def of F_1 F_2}
\eeq
The desired equality \eqref{eqn: constraint for the boundary curve in Psi} can then be rewritten as
\beq
F(x_1,g(x_1);g) = \partial_{x_2}\Psi(\pi,0;g)\cdot \frac{1-e^{-2\mu g(x_1)}}{2\mu g(x_1)}
\quad \text{for $x_1\in\BT$}.
\label{eqn: constraint for the boundary curve in F}
\eeq
From now on, we shall treat $\mu$ as a parameter that continuously ranges in $(0,\f12]$, instead of only taking the discrete values in $\{\f1m:\,m\in \BZ_+,\, m\geq 2\}$ --- although these values will still be of particular interest.
In addition, we will also allow $\mu$ to take the value $0$, which can be treated as a formal limiting case.
Since
\[
\lim_{\mu\to 0^+} \f{1-e^{-2\mu x_2}}{2\mu x_2} = \lim_{x_2\to 0^+} \f{1-e^{-2\mu x_2}}{2\mu x_2} = 1,
\]
in the case $\mu=0$, we will always interpret the function $x_2\mapsto \f{1-e^{-2\mu x_2}}{2\mu x_2}$ (e.g.\;on the right-hand side of \eqref{eqn: constraint for the boundary curve in F}) as the constant $1$ for all $x_2\geq 0$.

The motivation for introducing $F$, instead of directly working with $\Psi$, is that we can prove the following monotonicity property of $F$ in $\BH_+$.
\begin{prop}
\label{prop: monotonicity of F}
Assume that $g \in \BM$ (see \eqref{eqn: function set tilde M_0}). Then $\pa_{x_1}F(0,x_2;g)=\pa_{x_1}F(\pm\pi,x_2;g)=0$ for $x_2>0$, $\pa_{x_1}F(x_1,x_2;g)< 0$ on $(0,\pi)\times (0,+\infty)$, and $\pa_{x_2}F(x_1,x_2;g)<0$ on $[-\pi,\pi]\times (0,+\infty)$.
\end{prop}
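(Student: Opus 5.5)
The plan is to argue slice by slice in $y_2$. The key structural fact is that for $g\in\BM$, each horizontal slice $\Om\cap\{y_2=s\}$ is an interval symmetric about $y_1=0$, namely $[-a(s),a(s)]$ with $a(s):=\sup\{y_1\in[0,\pi]:\,g(y_1)\geq s\}$, and $a$ is non-increasing in $s$. The endpoint identities $\pa_{x_1}F(0,x_2)=\pa_{x_1}F(\pm\pi,x_2)=0$ are immediate. Indeed, evenness of $g$ makes $\Om$ symmetric under $y_1\mapsto -y_1$, so $\Psi(\cdot,x_2)$ is even and $2\pi$-periodic, and hence $\pa_{x_1}\Psi$ vanishes at $x_1=0$ and $x_1=\pm\pi$. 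Dividing by $x_2>0$ gives the claim for $F$.

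\emph{Monotonicity in $x_1$.} I will use the splitting $\Psi=\Psi_1+\Psi_2$. For $\Psi_1$, I use \eqref{eqn: representation of Psi_1}. For fixed $0<y_2$ and $x_2>0$, write $c=\cos t$. The function
\[
c\mapsto -\f1{4\pi}\ln\f{\cosh(x_2-y_2)-c}{\cosh(x_2+y_2)-c}
\]
has $c$-derivative $\f{1}{4\pi}\big[(\cosh(x_2-y_2)-c)^{-1}-(\cosh(x_2+y_2)-c)^{-1}\big]>0$. Therefore the kernel is even in $t=x_1-y_1$ and strictly decreasing in $|t|\in[0,\pi]$. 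Integrating this symmetric-decreasing kernel over the symmetric interval $[-a(y_2),a(y_2)]$ gives a function of $x_1$ that is even and non-increasing on $[0,\pi]$. This can be checked directly: the $x_1$-derivative equals $K(x_1+a)-K(x_1-a)$, and this is $\leq 0$ for $x_1\in(0,\pi)$, with strict inequality whenever $0<a<\pi$. Integrating over $y_2$ against $e^{-2\mu y_2}$ then yields $\pa_{x_1}\Psi_1<0$ on $(0,\pi)\times(0,\infty)$.

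The same argument applied to \eqref{eqn: formula for h} shows that $h$ is even and non-increasing on $[0,\pi]$. Here the kernel is $G(t,-y_2)$, which is also symmetric decreasing in $t$. Since the Poisson kernel \eqref{eqn: Poisson kernel} is symmetric decreasing in $x_1$, the representation \eqref{eqn: representation of Psi_2} gives $\pa_{x_1}\Psi_2\le 0$. Adding the two contributions and dividing by $x_2$ proves $\pa_{x_1}F<0$.

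\emph{Monotonicity in $x_2$.} Since $\Psi(x_1,0)=h(x_1)$ is not zero, I will not write $F$ as an average of $\pa_{x_2}\Psi$. Instead I will work directly with
\[
x_2^2\,\pa_{x_2}F=N:=x_2\pa_{x_2}\Psi-\Psi .
\]
Two facts are available at the start: $N(x_1,0)=-h(x_1)$, and $\pa_{x_2}N=x_2\pa_{x_2}^2\Psi$. A first check is that $h\ge 0$. This follows from $h(\pi)=0$ together with the monotonicity of $h$ just established, so $N\le 0$ on the boundary. It then suffices to show that $\pa^2_{x_2}\Psi<0$ for $x_2>0$, which gives $N(x_1,x_2)<N(x_1,0)\le0$.

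To compute $\pa_{x_2}^2\Psi$ slicewise, I will use the identity $\pa_{x_2}G(x)+\f1{4\pi}=\f{1}{4\pi}\big(1-2\pi\,\mathrm{sgn}(x_2)P(x_1,|x_2|)\big)$ and the Cauchy--Riemann relation $\pa_{x_2}P=-\pa_{x_1}Q$, where $Q(x_1,x_2)=\f1{2\pi}\f{\sin x_1}{\cosh x_2-\cos x_1}$ is the harmonic conjugate of $P$. With these, the $y_1$-integral over $[-a,a]$ collapses to endpoint values. This gives
\[
\pa_{x_2}^2\Psi(x)=-\f12\int_0^\infty e^{-2\mu y_2}\,\mathrm{sgn}(x_2-y_2)\big[Q(x_1+a(y_2),x_2-y_2)-Q(x_1-a(y_2),x_2-y_2)\big]\,dy_2-e^{-2\mu x_2}\mathds 1_\Om(x),
\]
where the delta mass at $y_2=x_2$ produces exactly the last term.

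The remaining task, and the step I expect to be the main obstacle, is to show that this expression is negative. The difficulty is that the bracket changes sign depending on whether $x_1\pm a$ lie on the same side of $0$ modulo $2\pi$, and the factor $\mathrm{sgn}(x_2-y_2)$ flips the sign again. My first attempt will be to integrate by parts in $y_2$. This uses the monotonicity of $a(y_2)$, i.e.\ of $g$, to rewrite the expression as an integral of a signed quantity against the measure $-da(y_2)\ge0$. I will then compare the contributions from above and below $x_2$ using $e^{-2\mu y_2}$ and the explicit form of $Q$.

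If this route fails, my fallback is a maximum-principle argument for $N$. In $\BH_+$ one has $\D N=2\pa_{x_2}^2\Psi+x_2\pa_{x_2}\D\Psi$, and $N$ is bounded at infinity with $N\to0$, since $\Psi$ tends to a constant as $x_2\to\infty$. I would then show that $N$ is subharmonic off $\pa\Om$ and handle the jump of $\pa_{x_2}\D\Psi$ across the graph of $g$ by using its sign, which comes from the monotonicity of $g$. Combined with $N\le 0$ on $\{x_2=0\}$, the strong maximum principle would give $N<0$, and hence $\pa_{x_2}F<0$ on $[-\pi,\pi]\times(0,\infty)$.
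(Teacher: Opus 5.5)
Your endpoint identities and your $x_1$-monotonicity argument are correct. They are essentially the paper's argument: horizontal slices $[-a(y_2),a(y_2)]$ with $a=g^{-1}$ for $\Psi_1$ and for $h$, then the Poisson kernel for $\Psi_2$. The genuine gap is the $x_2$-monotonicity, where both of your routes rest on claims that are false.

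\emph{Your main route fails.} The sufficient condition $\pa_{x_2}^2\Psi<0$ on $\BT\times(0,\infty)$ does not hold. For $x_2>g(0)$ the line $\BT\times\{x_2\}$ lies outside $\overline{\Om}$, so there $\pa_{x_2}^2\Psi=-\pa_{x_1}^2\Psi$. By periodicity, $\int_{\BT}\pa_{x_2}^2\Psi(x_1,x_2)\,dx_1=0$. Since $\Psi(\cdot,x_2)$ is not constant, $\pa_{x_1}^2\Psi(\cdot,x_2)\not\equiv0$, so $\pa_{x_2}^2\Psi(\cdot,x_2)$ is positive somewhere. Explicitly, differentiating \eqref{eqn: formula for Psi_x_1} gives, for $x_2>g(0)$,
\[
\pa_{x_2}^2\Psi(0,x_2)=-\pa_{x_1}^2\Psi(0,x_2)=\f{1}{2\pi}\int_0^{g(0)}e^{-2\mu y_2}\,\f{\sin a(y_2)}{\cosh(x_2-y_2)-\cos a(y_2)}\,dy_2>0 .
\]
So your $Q$-expression is positive there, and no integration by parts in $y_2$ can change that.

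\emph{Your fallback fails for the same reason.} Off $\overline{\Om}$ one has $\D N=2\pa_{x_2}^2\Psi=-2\pa_{x_1}^2\Psi$. (Your formula for $\D N$ also omits the term $-\D\Psi$, which matters inside $\Om$.) The same computation at $x_1=\pi$ gives, for all $x_2>0$,
\[
\pa_{x_1}^2\Psi(\pi,x_2)=\f{1}{2\pi}\int_0^{g(0)}e^{-2\mu y_2}\,\f{\sin a(y_2)}{\cosh(x_2-y_2)+\cos a(y_2)}\,dy_2>0 .
\]
Hence $N$ is strictly superharmonic near $\{x_1=\pi\}$, and no maximum principle for $N$ is available. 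Restricting to $\Psi_1$ does not help either: $\Psi_1$ is also bounded and harmonic above $g(0)$, so $\pa_{x_2}^2\Psi_1$ changes sign for the same reason.

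\emph{What is missing is a mechanism other than concavity.} The paper keeps the splitting $F=F_1+F_2$ in the $x_2$-direction as well.

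For $F_2$ you already have the ingredients:
\[
\pa_{x_2}F_2=\int_{\BT}\pa_{x_2}\big[P(x_1-y_1,x_2)/x_2\big]\,h(y_1)\,dy_1 .
\]
Here $\pa_{x_2}(P/x_2)<0$ by Lemma \ref{lem: monotonicity of P/x_2} (which follows from $\tanh x_2<x_2<\sinh x_2$), and $h\ge0$.

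For $F_1$ the paper uses vertical columns $\{y_1\}\times[0,g(y_1)]$ instead of horizontal slices. It writes
\[
x_2\pa_{x_2}\Psi_1-\Psi_1=\f1{4\pi}\int_{-\pi}^{\pi}\rho\big(x_2;g(y_1),\cos(x_1-y_1),\mu\big)\,dy_1 ,
\]
and removes the weight via $\rho(x;a,b,\mu)=e^{-2\mu a}\eta(x;a,b)+2\mu\int_0^a e^{-2\mu y}\eta(x;y,b)\,dy$. Then $\eta(\cdot;a,b)<0$ on $(0,\infty)$ because $\eta(0;a,b)=0$, $\lim_{x\to+\infty}\eta(x;a,b)=-a^2$, and $\eta_x$ has the sign of $b\cosh x+b^2-\cosh a-1$, which changes sign at most once, from negative to positive. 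Since $\eta_x=x\psi''$ with $\psi$ the column's contribution to $4\pi\Psi_1$, this is a ``one sign change plus endpoint values'' argument. It tolerates exactly the convexity at large $x_2$ that defeats your approach.
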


We first study $F_1$.
\begin{lem}
\label{lem: monotonicity of F_1}
For any $g \in \BM$, $\pa_{x_1}F_1(x_1,x_2)\leq 0$ on $[0,\pi]\times (0,+\infty)$ with the equality attained only when $x_1=0,\pi$, and $\pa_{x_2}F_1(x_1,x_2)<0$ on $[-\pi,\pi]\times (0,+\infty)$.

\begin{proof}
Since $g\in \BM$ is decreasing on $[0,\pi]$, we define for $x_2 \geq 0$ that
\beq
g^{-1}(x_2)
:=
\begin{cases}
\sup\{x_1\in [0,\pi]:\, g(x_1)\geq x_2\}, & \mbox{if }x_2 \leq g(0),
\\
0, & \mbox{if } x_2 > g(0).
\end{cases}
\label{eqn: def of g inverse}
\eeq
To show the monotonicity of $F_1$ in the $x_1$-direction, we derive from \eqref{eqn: representation of Psi_1} that
\begin{align*}
&\; \pa_{x_1}\Psi_1(x_1,x_2) \\
= &\;
\int_\Omega e^{-2\mu y_2} \cdot \pa_{x_1}\big[G(x_1-y_1,x_2-y_2)-G(x_1-y_1,x_2+y_2)\big]\, dy
\\
= &\;
- \int_0^{g(0)} e^{-2\mu y_2}  \int_{-g^{-1}(y_2)}^{g^{-1}(y_2)} \pa_{y_1}\big[G(x_1-y_1,x_2-y_2)-G(x_1-y_1,x_2+y_2)\big]\, dy_1\,dy_2
\\
= &\;
\int_0^{g(0)} e^{-2\mu y_2}\big[G(x_1+g^{-1}(y_2),x_2-y_2)-G(x_1+g^{-1}(y_2),x_2+y_2)\\
&\;\qquad \qquad \quad  - G(x_1-g^{-1}(y_2),x_2-y_2)
+ G(x_1-g^{-1}(y_2),x_2+y_2)\big]\, dy_2
\\
= &\;
\f1{4\pi} \int_0^{g(0)} e^{-2\mu y_2}\\
&\;\qquad \quad\cdot
\ln\left(\f{\cosh(x_2+y_2)-\cos(x_1+g^{-1}(y_2))} {\cosh(x_2-y_2)-\cos(x_1+g^{-1}(y_2))}\cdot \f{\cosh(x_2-y_2)-\cos(x_1-g^{-1}(y_2))}{\cosh(x_2+y_2)-\cos(x_1-g^{-1}(y_2))} \right) dy_2.
\end{align*}
Observe that for $x_1\in [0,\pi]$ and $x_2 \geq 0$,
\begin{equation}\label{eqn: strict estimate}
\begin{split}
&\big[\cosh(x_2+y_2)-\cos(x_1+g^{-1}(y_2))\big]
\big[\cosh(x_2-y_2)-\cos(x_1-g^{-1}(y_2))\big]
\\
&-\big[\cosh(x_2-y_2)-\cos(x_1+g^{-1}(y_2))\big] \big[\cosh(x_2+y_2)-\cos(x_1-g^{-1}(y_2))\big]
\\
&= \big[\cosh(x_2+y_2)-\cosh(x_2-y_2)\big]
\big[ \cos(x_1+g^{-1}(y_2))-\cos(x_1-g^{-1}(y_2))\big]
\\
&= -2\big[\cosh(x_2+y_2)-\cosh(x_2-y_2)\big]
\sin x_1 \sin g^{-1}(y_2) \\
&\leq 0.
\end{split}
\end{equation}
In the last inequality, we used the fact that $g^{-1}(y_2)\in [0,\pi]$.
Since $g\in \BM$ (see \eqref{eqn: function set tilde M_0}), $g(x)$ is continuous at $x=\pi$ and strictly positive on $(0,\pi)$. This implies that $\sin g^{-1}(y_2)>0$ at least for $y_2\in (0,\delta)$ for some $\delta>0$. Hence, for any $(x_1,x_2)\in (0,\pi)\times (0,+\infty)$, the inequality in the last line of \eqref{eqn: strict estimate} is strict for $y_2\in (0,\delta)$, which implies
\[
\pa_{x_1}F_1(x_1,x_2) = \f{1}{x_2}\cdot \pa_{x_1}\Psi_1(x_1,x_2)< 0.
\]

To show the monotonicity of $F_1$ in $x_2$, we derive from \eqref{eqn: representation of Psi_1} that
\begin{align*}
&x_2 \pa_{x_2}\Psi_1(x) - \Psi_1(x)
\\
&= \f{x_2}{4\pi}\int_{-\pi}^\pi \int_0^{g(y_1)}
e^{-2\mu y_2}\\
&\qquad\qquad\qquad \cdot
\left[\f{\sinh(x_2+y_2)}{\cosh(x_2+y_2)-\cos(x_1-y_1)}
-\f{\sinh(x_2-y_2)}{\cosh(x_2-y_2)-\cos(x_1-y_1)}\right] dy_2\,dy_1
\\
&\quad\, -\f{1}{4\pi}\int_{-\pi}^\pi \int_0^{g(y_1)} e^{-2\mu y_2}
\ln\left(\f{\cosh(x_2+y_2)-\cos(x_1-y_1)}{\cosh(x_2-y_2)-\cos(x_1-y_1)}\right)dy_2\,dy_1\\
&=: \f{1}{4\pi}\int_{-\pi}^\pi \r\big(x_2; g(y_1),\cos(x_1-y_1),\mu\big)\,dy_1,
\end{align*}
where
\beq
\begin{split}
&\r(x;a,b,\mu)\\
&:= \int_0^a e^{-2\mu y}\left[x\left(\f{\sinh(x+y)}{\cosh(x+y)-b}
-\f{\sinh(x-y)}{\cosh(x-y)-b}\right)- \ln\left(\f{\cosh(x+y)-b}{\cosh(x-y)-b}\right)\right]dy.
\label{eqn: def of rho a b}
\end{split}
\eeq
Note that $\r\big(x_2; g(y_1),\cos(x_1-y_1),\mu\big)$ is absolutely integrable as a function of $y_1\in[-\pi,\pi]$. In what follows, we shall prove that $\r(x;a,b,\mu)<0$ for any $x>0$, $a>0$, $b\in [-1,1)$, and $\mu \geq 0$.
Once this is confirmed, we will immediately have that, for any $\mu \geq 0$ and $x_2 > 0$,
\[
x_2 \pa_{x_2}\Psi_1(x) - \Psi_1(x) <0,
\]
and thus
\[
\pa_{x_2}F_1(x) = \f{1}{x_2^2}\left(x_2 \pa_{x_2}\Psi_1(x) - \Psi_1(x)\right)<0.
\]

For arbitrary $\mu \geq 0$, we deduce from \eqref{eqn: def of rho a b} that
\begin{align*}
\r(x;a,b,\mu)
=&\; \int_0^a e^{-2\mu y}\cdot \pa_y \r(x;y,b,0)\, dy
\\
=&\; \int_0^a e^{-2\mu a}\cdot \pa_y \r(x;y,b,0)\, dy
+ \int_0^a \big[ e^{-2\mu y}-e^{-2\mu a}\big]\cdot \pa_y \r(x;y,b,0)\, dy
\\
=&\; e^{-2\mu a}\r(x;a,b,0)
+2\mu \int_0^a e^{-2\mu y} \r(x;y,b,0)\, dy.
\end{align*}
We thus denote $\eta(x;a,b):= \r(x;a,b,0)$.
It suffices to show that $\eta(x;a,b)<0$ for any $x>0$, $a>0$, and $b\in [-1,1)$.

It is not difficult to verify that, for any fixed $a>0$ and $b\in [-1,1)$, $\eta(0;a,b)=0$ and
\[
\lim_{x\to +\infty}\eta(x;a,b) = -2\int_0^a y\,dy = -a^2 \leq 0.
\]
Moreover,
\begin{align*}
&\; \eta_x (x;a,b)\\
= &\; x \int_0^a \pa_x\left(\f{\sinh(x+y)}{\cosh(x+y)-b}
-\f{\sinh(x-y)}{\cosh(x-y)-b}\right) dy
\\
= &\; x \int_0^a \pa_y\left(\f{\sinh(x+y)}{\cosh(x+y)-b}
+\f{\sinh(x-y)}{\cosh(x-y)-b}\right) dy
\\
= &\; x\left(\f{\sinh(x+a)}{\cosh(x+a)-b}
+ \f{\sinh(x-a)}{\cosh(x-a)-b} - \f{2\sinh x}{\cosh x-b}\right)
\\
= &\;
\f{4x(\sinh \f{a}{2})^2 \sinh x}{(\cosh(x+a)-b)(\cosh(x-a)-b)(\cosh x-b)}
\cdot \big(b\cosh x + b^2 -\cosh a -1\big).
\end{align*}
Hence, the sign of $\eta_x (x;a,b)$ is determined by that of $\va_{a,b}(x):=b\cosh x + b^2 -\cosh a -1$.
We proceed in two cases.
\begin{enumerate}
\item If $b\in [-1,0]$, then $\eta_x(x;a,b)<0$ for $x\geq 0$, which implies $\eta(x;a,b)<0$ for all $x>0$;

\item If $b\in (0,1)$, $\va_{a,b}(x)$ is strictly increasing on $[0,+\infty)$.
Moreover, $\va_{a,b}(0) = b+b^2 -\cosh a -1 < 0$, and $\lim_{x\to +\infty}\va_{a,b}(x) = +\infty$.
Let $x_*$ be its unique positive root.
Then $\eta_x(x;a,b)<0$ for $x<x_*$, while $\eta_x(x;a,b)>0$ for $x>x_*$.
This together with the facts $\eta(0;a,b)=0$ and $\lim_{x\to +\infty}\eta(x;a,b)\leq 0$ implies that $\eta(x;a,b)<0$ for all $x>0$.
\end{enumerate}
This completes the proof.
\end{proof}
\end{lem}

To study $F_2$, we will need the following lemmas.
\begin{lem}
\label{lem: monotonicity of h}
Given any $g \in \BM$, let $h$ be given by \eqref{eqn: formula for h}.
Then
\begin{enumerate}
\item $h = h(x_1)$ is even on $[-\pi,\pi]$, and $h\in C^{1,\alpha}(\BT)$ for any $\al\in (0,1)$;
\item $h'(x_1)\leq 0$ for $x_1\in [0,\pi]$, with the equality only attained at $x_1 = 0,\pi$;
\item $h(x_1)\geq 0$ on $\BT$, with the equality only achieved at $x_1 = \pm \pi$.
\end{enumerate}

\begin{proof}
That $h$ is even follows from \eqref{eqn: formula for h}.
The $C^{1,\al}(\BT)$-regularity of $h$ follows from \eqref{eqn: def of h} and the fact that $\phi\in C^{1,\alpha}_{loc}(\BR^2)$ due to the standard regularity theory for elliptic equations.

Let $g^{-1}$ be defined as in \eqref{eqn: def of g inverse}.
Then by \eqref{eqn: def of G} and \eqref{eqn: formula for h},
\begin{align*}
h'(x_1)
= &\;  \int_\Om e^{-2 \mu y_2} \cdot \pa_{x_1} G(x_1-y_1,0-y_2)\, dy_1\,dy_2
\\
= &\; \f{1}{4\pi}\int_0^{g(0)} e^{-2 \mu y_2} \int_{-g^{-1}(y_2)}^{g^{-1}(y_2)} \pa_{y_1}\left( \ln \big[2\cosh y_2 -2\cos(x_1-y_1)\big]\right) dy_1\,dy_2
\\
= &\; \f{1}{4\pi}\int_0^{g(0)} e^{-2 \mu y_2} \cdot \ln \left(\f{2\cosh y_2 -2\cos(x_1-g^{-1}(y_2))}{2\cosh y_2 -2\cos(x_1+g^{-1}(y_2))}\right) dy_2.
\end{align*}
Since for $x_1\in [0,\pi]$,
\beqo
\cos(x_1-g^{-1}(y_2)) - \cos(x_1+g^{-1}(y_2))
= 2\sin x_1 \sin g^{-1}(y_2) \geq 0,
\eeqo
we conclude that $h'(x_1)\leq 0$ for $x_1\in [0,\pi]$. Since $g>0$ on $[0,\pi)$, the function $g^{-1}(y_2)$ cannot take the values $0$ and $\pi$ only; otherwise, by the continuity of $g(x_1)$ at $x_1=\pi$, $g$ must be constant zero on $(0,\pi]$, which is a contradiction. Hence, $h'(x_1)<0$ for all $x_1\in (0,\pi)$.

Finally, that strict positivity of $h$ in $(-\pi,\pi)$ follows from the strict monotonicity of $h$ in $(0,\pi)$ and the fact $h(\pm \pi) = 0$.
\end{proof}
\end{lem}

\begin{lem}
\label{lem: monotonicity of P/x_2}
Recall that the Poisson kernel $P = P(x_1,x_2)$ is given by \eqref{eqn: Poisson kernel}.

\begin{enumerate}
\item For any $x_1\in \BT$ and $x_2>0$,
\[
\f{1}{x_2}P(x_1,x_2) > 0,
\]
and
\[
\pa_{x_2}\left[\f{1}{x_2}P(x_1,x_2)\right]
\leq \f{(x_2-\sinh x_2)(1+\cosh x_2)}{2\pi x_2^2(\cosh x_2- \cos x_1)^2}
< 0;
\]
\item For any $x_1\in \BT\setminus \{0\}$,
\[
\lim_{x_2\to 0^+}\f{1}{x_2}P(x_1,x_2) = \f{1}{2\pi}\cdot \f{1}{1-\cos x_1}.
\]
\end{enumerate}

\begin{proof}
We only show the second inequality, as the others are trivial.
We calculate that, for any $x_1\in \BT$ and $x_2 > 0$,
\begin{align*}
&\; \pa_{x_2}\left[\f{1}{x_2}P(x_1,x_2)\right]
\\
= &\; \f{1}{2\pi}\cdot \f{\cosh x_2\cdot x_2(\cosh x_2- \cos x_1)-\sinh x_2 (x_2\sinh x_2 + \cosh x_2- \cos x_1)}{x_2^2(\cosh x_2- \cos x_1)^2}
\\
= &\; \f{x_2 - \sinh x_2 \cosh x_2  - (x_2-\tanh x_2)\cosh x_2\cos x_1}{2\pi x_2^2(\cosh x_2- \cos x_1)^2}
\\
\leq &\; \f{x_2 - \sinh x_2 \cosh x_2  + (x_2-\tanh x_2)\cosh x_2}{2\pi x_2^2(\cosh x_2- \cos x_1)^2}
\\
= &\; \f{(x_2-\sinh x_2)(1+\cosh x_2)}{2\pi x_2^2(\cosh x_2- \cos x_1)^2}
< 0.
\end{align*}
Here we used the fact that $\tanh x_2 < x_2 < \sinh x_2$ for any $x_2> 0$.
\end{proof}
\end{lem}

As long as $g\in \BM$, we can prove monotonicity of $F_2$ in $[0,\pi]\times [0,+\infty)$.
\begin{lem}
\label{lem: monotonicity of F_2}
For any $g \in \BM$, $\pa_{x_1}F_2(x_1,x_2)\leq 0$ on $[0,\pi]\times (0,+\infty)$ with the equality attained only when $x_1=0,\pi$, and $\pa_{x_2}F_2(x_1,x_2)< 0$ on $[-\pi,\pi]\times (0,+\infty)$.

\begin{proof}
To show the monotonicity in $x_1$, we fix $x_2 > 0$.
By \eqref{eqn: representation of Psi_2} and Lemma \ref{lem: monotonicity of h},
\begin{align*}
\pa_{x_1} F_2(x_1,x_2)
&= \f{1}{x_2}\int_\BT P(y_1,x_2)h'(x_1-y_1)\,dy_1\\
&= \f{1}{x_2}\int_\BT P(x_1-y_1,x_2)h'(y_1)\,dy_1\\
&= \f{1}{x_2}\int_0^{\pi} \left(P(x_1-y_1,x_2) - P(x_1+y_1,x_2)\right)h'(y_1)\,dy_1\\
&= \f{1}{\pi x_2}\int_0^{\pi} \frac{\sinh(x_2)\sin(x_1)\sin(y_1)}{(\cosh(x_2)-\cos(x_1-y_1))(\cosh(x_2)-\cos(x_1+y_1))}\cdot h'(y_1)\,dy_1\\
&\leq 0.
\end{align*}
Note that by Lemma \ref{lem: monotonicity of h}, $h'(y_1)<0$ for $y_1\in(0,\pi)$. Hence, $\pa_{x_1}F_2(x_1,x_2)< 0$ for $x_1 \in (0, \pi)$ and $x_2 > 0$.

By \eqref{eqn: representation of Psi_2}, we have
\[
\pa_{x_2}F_2(x_1,x_2)= \int_\BT \pa_{x_2}\left[\f{1}{x_2} P(x_1-y_1,x_2)\right] h(y_1)\,dy_1.
\]
Then the strict monotonicity in $x_2$ follows from Lemma \ref{lem: monotonicity of h} and Lemma \ref{lem: monotonicity of P/x_2}.
\end{proof}
\end{lem}

Proposition \ref{prop: monotonicity of F} is an immediate consequence of Lemma \ref{lem: monotonicity of F_1} and Lemma \ref{lem: monotonicity of F_2}.

\subsection{Reformulation into a fixed-point problem}
By virtue of the monotonicity of $F$, we can study the problem \eqref{eqn: constraint for the boundary curve in F} from an iterative perspective as in the following proposition.

\begin{prop}\label{prop: implicit mapping}
Given $g\in \BM$, there exists a unique $\tilde g \in \BM$ such that
\[F(x_1,\tilde g(x_1);g) = \partial_{x_2}\Psi(\pi,0;g)\cdot \frac{1-e^{-2\mu g(x_1)}}{2\mu g(x_1)},
\quad x_1\in\BT.\]
Moreover, $\tilde g$ is strictly decreasing on $[0,\pi]$.
\end{prop}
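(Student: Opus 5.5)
The plan is to construct $\tilde g$ pointwise, one $x_1$ at a time, using the strict monotonicity of $F$ in $x_2$ from Proposition \ref{prop: monotonicity of F}, and then to read off the structural properties required by $\BM$ from the monotonicity of $F$ in $x_1$.

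Write $c:=\pa_{x_2}\Psi(\pi,0;g)$ and $w(s):=\f{1-e^{-2\mu s}}{2\mu s}$, with $w(0):=1$ (and $w\equiv 1$ if $\mu=0$). Then $w$ takes values in $(0,1]$ and is non-increasing on $[0,+\infty)$. By \eqref{eqn: end point constant}, $c>0$: the integrand is nonnegative, and $\Om$ has positive measure because $g>0$ on $(-\pi,\pi)$. Hence the right-hand side $c\,w(g(x_1))$ lies in $(0,c]$, and it equals $c$ exactly when $g(x_1)=0$.

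\textbf{Step 1 (range of $x_2\mapsto F(x_1,x_2;g)$).} Fix $x_1$ and study this map on $(0,+\infty)$.

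For $x_2\to+\infty$, I would show from \eqref{eqn: integral representation of Psi} that $\Psi$ stays bounded. Indeed, $\ln(\cosh(x_2-y_2)-\cos(x_1-y_1))=x_2-y_2-\ln 2+o(1)$, so the bracket tends to an integrable function of $y$, and dominated convergence applies on the bounded set $\Om$. Therefore $F\to 0$.

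For $x_2\to 0^+$ there are two cases, using $\Psi(x_1,0)=h(x_1)$ and Lemma \ref{lem: monotonicity of h}. If $x_1\neq\pm\pi$, then $h(x_1)>0$, so $F\to+\infty$. If $x_1=\pm\pi$, then $h=0$ and, since $\Psi\in C^1$ up to the boundary, $F(\pm\pi,x_2)\to c$.

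Combining these limits with strict monotonicity in $x_2$ gives the following. For $x_1\in(-\pi,\pi)$ the map is a continuous strictly decreasing bijection of $(0,\infty)$ onto $(0,\infty)$, so there is a unique $\tilde g(x_1)>0$ solving the equation. For $x_1=\pm\pi$ we have $F(\pm\pi,x_2)<c=c\,w(0)$ for every $x_2>0$, so the equation, with $F$ at $x_2=0$ interpreted as its limit $c$, forces $\tilde g(\pm\pi)=0$. This gives both existence and uniqueness.

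\textbf{Step 2 (symmetry and strict monotonicity).} Evenness of $\tilde g$ follows from evenness of $g$ and of $F(\cdot,x_2;g)$. For strict decrease, take $0\le x_1<x_1'<\pi$. Since $g$ is non-increasing and $w$ is non-increasing, $c\,w(g(x_1'))\ge c\,w(g(x_1))$. By Proposition \ref{prop: monotonicity of F}, $F$ is strictly decreasing in $x_1$ on $(0,\pi)$ and $\pa_{x_1}F=0$ only at the endpoints, so
\[
F(x_1',\tilde g(x_1))<F(x_1,\tilde g(x_1))=c\,w(g(x_1))\le c\,w(g(x_1'))=F(x_1',\tilde g(x_1')).
\]
Because $F(x_1',\cdot)$ is decreasing, this yields $\tilde g(x_1')<\tilde g(x_1)$. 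The case $x_1'=\pi$ is immediate, since $\tilde g(\pi)=0$ while $\tilde g>0$ on $(-\pi,\pi)$.

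\textbf{Step 3 (continuity at $\pm\pi$).} It remains to show $\lim_{x_1\to\pi^-}\tilde g(x_1)=0$; this limit exists by monotonicity. Suppose instead that $\tilde g\ge\d>0$ near $\pi$. Then $F(x_1,\d)\ge F(x_1,\tilde g(x_1))=c\,w(g(x_1))$. As $x_1\to\pi^-$, $g(x_1)\to 0$ because $g\in\BM$, so the right-hand side tends to $c$. Since $F(\cdot,\d)$ is continuous (here $\d>0$), we get $F(\pi,\d)\ge c$. This contradicts the strict inequality $F(\pi,\d)<c$ from Step 1. Together with Steps 1 and 2, this shows $\tilde g\in\BM$.

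I expect the main technical point to be Step 1. It needs the boundary behaviour of $F$: boundedness of $\Psi$ at infinity, and $C^1$-regularity of $\Psi$ up to $\{x_2=0\}$, so that $F(\pi,x_2)\to\pa_{x_2}\Psi(\pi,0)$ and this limit matches \eqref{eqn: end point constant}. The $C^1$ regularity follows from $\Psi=\Psi_1+\Psi_2$ with bounded right-hand side and $h\in C^{1,\al}$ by Lemma \ref{lem: monotonicity of h}.
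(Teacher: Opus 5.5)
Your proposal is correct and follows the paper's proof essentially step by step. You find the unique pointwise root from the strict $x_2$-monotonicity of $F$ together with its limits as $x_2\to 0^+$ and $x_2\to+\infty$; you get strict decrease from the $x_1$-monotonicity of $F$ combined with the monotonicity of the right-hand side; and you prove $\tilde g(x_1)\to 0$ as $x_1\to\pi^-$ by contradiction, using $F(\pi,x_2)<\partial_{x_2}\Psi(\pi,0)$. Your Step 3 uses only continuity of $F(\cdot,\delta)$ at a fixed $\delta>0$ rather than passing to the limit value of $\tilde g$, which is a harmless (slightly cleaner) variant of the paper's argument.
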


\begin{proof}
Fix $g\in \BM$.
We define
\beq
H(x_1,x_2) = H(x_1,x_2;g) := F(x_1,x_2;g) - \partial_{x_2}\Psi(\pi,0;g)\cdot \frac{1-e^{-2\mu g(x_1)}}{2\mu g(x_1)}.
\label{eqn: def of H as F minus RHS in the implicit mapping}
\eeq
In particular, $H(\pm\pi,x_2)= F(\pm\pi,x_2;g) - \partial_{x_2}\Psi(\pi,0;g)$ since $g(\pm\pi)=0$. Note that the dependence of $H(x_1,x_2)$ in $x_2$ only comes from that of $F(x_1,x_2)$.  For $x_1\in [-\pi,\pi]$, it is straightforward to check by \eqref{eqn: representation of Psi_1} and \eqref{eqn: representation of Psi_2} that
\begin{align*}
\lim_{x_2\to +\infty} \Psi(x_1,x_2) &= \lim_{x_2\to +\infty} \Psi_1(x_1,x_2) + \lim_{x_2\to +\infty} \Psi_2(x_1,x_2)\\
&= \frac{1}{2\pi}\int_{\Omega}e^{-2\mu y_2} y_2\, dy + \frac{1}{2\pi}\int_{-\pi}^{\pi}h(y_1)\, dy_1 \in (0,+\infty).
\end{align*}
Hence, for all $x_1\in [-\pi,\pi]$, $\lim_{x_2\to +\infty} F(x_1,x_2) = 0$. Also note that $\lim_{x_2\to 0+}F(x_1,x_2) = +\infty$ for $x_1\in(-\pi,\pi)$, and $\lim_{x_2\to 0+}F(\pm\pi,x_2) = \partial_{x_2}\Psi(\pi,0)$. From \eqref{eqn: end point constant}, we know $\pa_{x_2}\Psi(\pi,0)>0$. It follows that
\[\lim_{x_2\to 0^+} H(x_1,x_2) = +\infty,\quad x\in(-\pi,\pi),\quad \text{and}\quad \lim_{x_2\to 0^+} H(\pm \pi,x_2) = 0,\]
while
\[\lim_{x_2\to +\infty} H(x_1,x_2) = - \partial_{x_2}\Psi(\pi,0;g)\cdot \frac{1-e^{-2\mu g(x_1)}}{2\mu g(x_1)} <0.\]
By Proposition \ref{prop: monotonicity of F}, $\pa_{x_2}H(x_1,x_2) = \pa_{x_2}F(x_1,x_2)<0$ on $[-\pi,\pi]\times (0,+\infty)$. As a consequence, for each $x_1\in[-\pi,\pi]$, $x_2\mapsto H(x_1,x_2)$ has a unique root in $[0,+\infty)$, which we denote as $\tilde g(x_1)$; in addition, $\tilde g(x_1)>0$ for $x_1\in(-\pi,\pi)$ and $\tilde g(\pm\pi)=0$.
Obviously, $\tilde g(x_1)$ is even on $[-\pi,\pi]$.

Next, we show that $\tilde g(x_1)$ is strictly decreasing in $x_1$ on $[0,\pi]$. By Proposition \ref{prop: monotonicity of F}, $\pa_{x_1}F(x_1,x_2)< 0$ on $(0,\pi)\times (0,+\infty)$, so $F(x_1,x_2)$ is strictly decreasing in $x_1$ on $[0,\pi]\times (0,+\infty)$. Also, $g(x_1)$ is non-increasing in $x_1$ on $[0,\pi]$, which implies the function
\[x_1\mapsto -\pa_{x_2}\Psi(\pi,0)\cdot \frac{1-e^{-2\mu g(x_1)}}{2\mu g(x_1)}\]
is non-increasing in $x_1$ on $[0,\pi]$. As a result, $H(x_1,x_2)$ is strictly decreasing on $[0,\pi]\times (0,+\infty)$. Therefore, for any $s,t\in[0,\pi]$ with $s<t$,
\[
H\big(s,\tilde g(t)\big) > H\big(t,\tilde g(t)\big) =0 = H\big(s,\tilde g(s)\big).
\]
This implies $\tilde g(t)< \tilde g(s)$ since $H(x_1,x_2)$ is strictly decreasing in $x_2$.

Finally, to prove $\tilde g\in \BM$, it remains to show that $\lim_{x_1\to \pi^-}\tilde g(x_1)=0$. We prove this by contradiction. Suppose $\lim_{x_1\to \pi^-}\tilde g(x_1)=\tilde x_2>0$. Since $F$ is continuous on $\BT\times (0,+\infty)$ and $\lim_{x_1\to \pi^-} g(x_1)=0=g(\pi)$, we have
\begin{align*}
H(\pi,\tilde x_2) &= F(\pi,\tilde x_2) - \partial_{x_2}\Psi(\pi,0)\\
&= \lim_{x_1\to \pi^-} F(x_1,\tilde g(x_1)) - \partial_{x_2}\Psi(\pi,0)\cdot \frac{1-e^{-2\mu g(x_1)}}{2\mu g(x_1)}\\
&= \lim_{x_1\to \pi^-} H(x_1,\tilde g(x_1)) =0.
\end{align*}
However, since $H(\pi,0) = 0$ and $\pa_{x_2}H(\pi,x_2)<0$ for all $x_2>0$, we should have $H(\pi,\tilde x_2)<0$, which is a contradiction. Therefore, $\lim_{x_1\to \pi^-}\tilde g(x_1)=0$.
By symmetry, $\lim_{x_1\to -\pi^+}\tilde g(x_1)=0$.
This completes the proof.
\end{proof}

Thanks to Proposition \ref{prop: implicit mapping}, we can define an implicit mapping $\bfR:\BM \to \BM$ as follows.
Fix $\mu \geq 0$.
For any $g\in \BM$, we define $\Psi$ and $F$ as in \eqref{eqn: integral representation of Psi} and \eqref{eqn: def of F}, respectively, and we define the image of $g$ under the mapping $\bfR$ to be the function $\bfR(g):\BT \to [0,+\infty)$ which satisfies
\beq
F(x_1,\bfR(g)(x_1);g) = \partial_{x_2}\Psi(\pi,0;g)\cdot \frac{1-e^{-2\mu g(x_1)}}{2\mu g(x_1)},
\quad \forall\, x_1\in\BT.
\label{eqn: def of R(g)}
\eeq
Proposition \ref{prop: implicit mapping} guarantees the existence and uniqueness of such $\bfR(g)\in \BM$, and moreover, $\bfR(g)$ is strictly decreasing in $[0,\pi]$.
In view of \eqref{eqn: def of R(g)}, the problem of finding $g\in \BM$ such that \eqref{eqn: constraint for the boundary curve in F} (which is equivalent to \eqref{eqn: constraint for the boundary curve in Psi} and also \eqref{eqn: constraint for the boundary curve in Phi}) holds, is equivalent to finding a fixed point of $\bfR$ in $\BM$.
We will show the existence of the fixed point in the next two sections.

\section{Estimates for $F$ and $\bfR(g)$}
\label{sec: estimates}
In this section, we shall prove some a priori estimates for $F$ and $\bfR(g)$.
They are essential for the fixed-point argument in Section \ref{sec: existence of the fixed point} as they will suggest the choice of the set on which we should apply the fixed-point theorem (e.g.\;see \eqref{eqt: def of function set D_0} and \eqref{eqt: def of function set D} below).
The main results of this section are Proposition \ref{prop: a priori upper bound general m}, Proposition \ref{prop: upper barrier new}, and Proposition \ref{prop: lower barrier}.

\subsection{An a priori upper bound}
We first show that, when $\mu$ is close to $0$, the mapping $\bfR$ can preserve an a priori upper bound for $g$.

\begin{prop}\label{prop: a priori upper bound general m}
There exist universal constants $M\geq 1$ and $\mu_0\in (0,\f12]$, such that as long as $\mu \in [0,\mu_0]$, for any $g\in \BM$, that $g(0)\leq M$ implies $\mathbf{R}(g)(0) < M$.

\begin{proof}
We will prove a stronger result: there is a universal strictly increasing positive function $M_*=M_*(c)$ defined on $[0,+\infty)$, such that, as long as $\mu\in [0,\f12]$ and $M\geq M_*(2\mu M)$, it holds that for any $g\in \BM$, that $g(0) \leq M$ would imply $\mathbf{R}(g)(0) < M$.
Once this is confirmed, we can simply take, for instance, $M= M_*(1)$ and $\mu_0 \leq (2M)^{-1}$ to achieve the desired statement.

With $M>0$ to be chosen, we take $g\in \BM$ such that $g(0) \leq M$.
For $x_2 \geq 0$, let $g^{-1}(x_2)$ be defined as in \eqref{eqn: def of g inverse}, which is a non-negative decreasing function on $[0,+\infty)$.
With $\mu\geq 0$, let $\Psi$ be defined in terms of $g$ as in Lemma \ref{lem: integral formula for Psi}.
In order to have $\mathbf{R}(g)(0) < M$, thanks to the monotonicity of $F$ (see Proposition \ref{prop: monotonicity of F}), it suffices to achieve that \beq
\f{\Psi(0, M)}{M} <
\pa_{x_2} \Psi(\pi, 0)\cdot \f{1-e^{-2\mu M}}{2\mu M}
\label{eqn: F at the peak general m}
\eeq
provided that $g(0) \leq M$.

By Lemma \ref{lem: integral formula for Psi},
\begin{align*}
\f{\Psi(0, M)}{M}
= &\; -\f{1}{4\pi M} \int_\Om e^{-2\mu y_2}
\left[\ln \left( \f{\cosh(M-y_{2})-\cos y_1}{\cosh y_2 + \cos y_{1}} \right) - M \right] dy
\\
= &\; \f{1}{4\pi M} \int_\Om e^{-2\mu y_2} \left[ \ln \left(\frac{\cosh y_2 + \cos y_{1}}{\cosh(M-y_2)-\cos y_1} \right) +  M \right] dy
\\
\leq &\; \f{1}{4\pi M}  \int_0^{g(0)} e^{-2\mu y_2} \int_{-g^{-1}(y_2)}^{g^{-1}(y_2)} \left[\ln \left(\f{\cosh y_2 + 1}{\cosh(M -y_2)-1} \right) +  M\right] dy_1\,dy_2
\\
= &\; \f{1}{2\pi M} \int_0^{g(0)} e^{-2\mu y_2} g^{-1}(y_2) \left[2\ln \left(\f{\cosh \f{y_2}{2}}{\sinh \f{M-y_2}{2}} \right) +  M \right] dy_2
\\
= &\; \f{1}{2\pi} \int_0^1 e^{-2\mu M s} g^{-1}(M s) \left[ 2\ln \left( \f{\cosh \f{M s}{2}}{\sinh \f{ M (1-s)}{2}}\right) +  M \right] ds.
\end{align*}
In the last line, we used the fact that $g(0) \leq M$ and $g^{-1}(y_2) = 0$ for all $y_2>g(0)$.
On the other hand, \begin{align*}
\pa_{x_2} \Psi (\pi, 0)
= &\; \f{1}{4\pi} \int_\Om e^{-2\mu y_2} \left[\f{\sinh y_{2}}{\cosh y_2 + \cos y_1}+1\right] dy
\\
\geq &\; \f{1}{4\pi} \int_0^{g(0)} e^{-2\mu y_2} \int_{-g^{-1}(y_2)}^{g^{-1}(y_2)} \left[\f{\sinh y_2}{\cosh y_2 + 1} + 1\right] dy_1\,dy_2
\\
= &\; \f{1}{2\pi} \int_{0}^{g(0)} e^{-2\mu y_2} g^{-1}(y_{2})\left[\tanh\left(\frac{y_{2}}{2}\right) +1\right] dy_2
\\
= &\; \f{1}{2\pi}\int_0^1 e^{-2\mu M s} g^{-1}(Ms)\cdot M\left[\tanh\left(\f{Ms}{2}\right)+1\right] ds.
\end{align*}

Let
\beq
A(s)=
A_{\mu,M}(s):= 2\ln \left( \f{\cosh \f{M s}{2}}{\sinh \f{M(1-s)}{2}} \right)
+ M - M\left[\tanh\left(\f{Ms}{2}\right) + 1\right]\f{1-e^{-2\mu M}}{2\mu M}.
\label{eqn: def of A(s)}
\eeq
In view of the estimates above, in order to achieve \eqref{eqn: F at the peak general m}, it suffices to show that
\beq
\int_0^1 g^{-1}(Ms) \cdot e^{-2\mu M s} A_{\mu,M}(s)\, ds < 0. \label{eqn: sufficient condition for the sign of F at the peak general m}
\eeq
Since the function $s\mapsto g^{-1}(Ms)$ is non-negative and decreasing on $[0,1]$, we only have to guarantee that, for any $s\in (0,1)$,
\beq
\int_0^{s} e^{-2\mu M s'} A_{\mu,M}(s')\, ds' < 0.
\label{eqn: further sufficient condition for the sign of F at the peak general m}
\eeq
Indeed, if the function $s\mapsto g^{-1}(Ms)$ is a piecewise constant decreasing function, the integral in \eqref{eqn: sufficient condition for the sign of F at the peak general m} can be written as a linear combination of the integrals $\int_0^{s} e^{-2\mu M s'}A(s')\, ds'$ with non-negative coefficients.
The general case can be approximated by the case of piecewise constant decreasing functions.

We first observe that $A_{\mu,M}(0)\leq 0$ is a necessary condition for \eqref{eqn: further sufficient condition for the sign of F at the peak general m} to hold for all $s\in(0,1)$.
We derive that
\beq
\begin{split}
A_{\mu,M}(0) &= -2\ln \left(\sinh \f{M}{2}\right)
+ M\left[1 - \f{1-e^{-2\mu M}}{2\mu M}\right]
\\
&= - 2 \ln \left(\f{1-e^{-M}}{2}\right)
- M \cdot \f{1-e^{-2\mu M}}{2\mu M} =: \z_0 (M;2\mu M),
\end{split}
\label{eqn: A(0)}
\eeq
where for $M > 0$ and $c\geq 0$, we define
\beq
\z_0(M;c):= - 2 \ln \left(\f{1-e^{-M}}{2}\right)
- M \cdot \f{1-e^{-c}}{c}.
\label{eqn: def of zeta_0}
\eeq
Clearly, $\z_0(M;c)$ is strictly decreasing in $M > 0$ and strictly increasing in $c \geq 0$.
Moreover, for any fixed $c\geq 0$,
\[
\lim_{M\to 0^+}\z_0(M;c) = +\infty,\quad
\lim_{M\to +\infty} \z_0(M;c) = -\infty,
\]
so there exists a unique $M_0 = M_0(c) > 0$, such that $\z_0(M_0(c);c)= 0$.
Besides, $c\mapsto M_0(c)$ is strictly increasing.
This implies that, for any fixed $c\geq 0$, as long as $2\mu M \in [0, c]$ and $M\geq M_0(c)$, it holds that
\[
A_{\mu,M}(0) = \z_0(M;2\mu M)\leq \z_0\big(M_0(c); c\big) \leq 0.
\]

Next, we differentiate $A$ in $s$ and find that
\[
A_{\mu,M}'(s)= M \tanh\left(\f{Ms}{2}\right) + M\coth \left(\f{M(1-s)}{2}\right)
- \f{M^2}{2} \cdot \f{1}{\cosh^2\f{M s}{2}}\cdot \f{1-e^{-2\mu M}}{2\mu M},
\]
which is strictly increasing in $[0,1)$.
Also observe that $\lim_{s\to 1^-}A(s) = +\infty$.
As a result, if $A_{\mu,M}(0)\leq 0$, there exists a unique $s_*\in [0,1)$, such that $A(s)\leq 0$ for all $s\in [0,s_*]$ while $A(s)>0$ for all $s\in (s_*,1)$.
Hence, for any $s\in (0,1)$, it holds that
\beq
\int_0^{s} e^{-2\mu M s'} A_{\mu,M}(s')\, ds' < \max\left\{0,\; \int_0^1 e^{-2\mu M s}A_{\mu,M}(s)\, ds\right\}.
\label{eqn: bounding the integral from 0 to s by the one on 0 to 1}
\eeq
Therefore, to guarantee \eqref{eqn: further sufficient condition for the sign of F at the peak general m}, it suffices to achieve that
\beq
\int_0^1 e^{-2\mu M s} A_{\mu,M}(s) \, ds \leq 0.
\label{eqn: the integral from 0 to 1 should be nonpositive}
\eeq

To better study the integral in \eqref{eqn: the integral from 0 to 1 should be nonpositive}, we perform the following derivation.
First, by integration by parts,
\begin{align*}
&\;\int_0^1 e^{-2\mu M s} \tanh\left(\f{Ms}{2}\right) ds\\
= &\;\f{2}{M}\left.\left[e^{-2\mu M s} \ln\cosh\left(\f{Ms}{2}\right)\right]\right|_{s=0}^1
- \f{2}{M} \int_0^1 \big(-2\mu M e^{-2\mu M s}  \big)\cdot \ln\cosh\left(\f{Ms}{2}\right) ds\\
= &\; \f1M e^{-2\mu M} \cdot 2\ln\cosh\left(\f{M}{2}\right)
+ 4\mu \int_0^1 e^{-2\mu M s}  \ln\cosh\left(\f{Ms}{2}\right)ds.
\end{align*}
Hence,
\beq
\begin{split}
&\;\int_0^1 e^{-2\mu M s} A_{\mu,M}(s)\, ds\\
= &\; 2\int_0^1 e^{-2\mu M s} \cdot \ln \cosh \left(\f{Ms}{2}\right) ds
-2\int_0^1 e^{-2\mu M s} \cdot \ln \sinh \left(\f{M(1-s)}{2}\right) ds \\
&\; + \int_0^1 e^{-2\mu M s} \left[M - M \cdot \f{1-e^{-2\mu M}}{2\mu M}\right] ds\\
&\; -\f{1-e^{-2\mu M}}{2\mu M}\cdot M\int_0^1 e^{-2\mu M s} \tanh\left(\f{Ms}{2}\right) ds
\\
= &\; 2\int_0^1 e^{-2\mu M s} \cdot \ln \cosh \left(\f{Ms}{2}\right) ds
-2\int_0^1 e^{-2\mu M (1-s)} \cdot \ln \sinh \left(\f{M s}{2}\right) ds \\
&\; + \left[M - M \cdot \f{1-e^{-2\mu M}}{2\mu M}\right]\f{1-e^{-2\mu M}}{2\mu M} \\
&\; -\f{1-e^{-2\mu M}}{2\mu M}\cdot e^{-2\mu M} \cdot 2\ln\cosh\left(\f{M}{2}\right)
+ 2\big(1-e^{-2\mu M}\big) \int_0^1 e^{-2\mu M s}  \ln\cosh\left(\f{Ms}{2}\right)ds
\\
= &\; 2 e^{-2\mu M} \int_0^1 \left[e^{-2\mu M s} \cdot \ln \cosh \left(\f{Ms}{2}\right)
- e^{2\mu M s} \cdot \ln \sinh \left(\f{Ms}{2}\right)\right] ds \\
&\; + \left[M - M \cdot \f{1-e^{-2\mu M}}{2\mu M}- e^{-2\mu M} \cdot 2\ln\cosh\left(\f{M}{2}\right) \right] \f{1-e^{-2\mu M}}{2\mu M}.
\end{split}
\label{eqn: calculate the integral exp -2 mu Ms A}
\eeq
We further derive by Taylor expansion that, for $\mu \in [0,\f12)$,
\begin{align*}
&\; \int_0^1 \left[e^{-2\mu M s} \cdot \ln \cosh \left(\f{Ms}{2}\right)
- e^{2\mu M s} \cdot \ln \sinh \left(\f{Ms}{2}\right)\right] ds
\\
= &\; \f1{M}\int_0^M \left[e^{-2\mu s} \cdot \ln \left(\f12\cdot  e^{\f{s}{2}} (1+e^{-s})\right)
- e^{2\mu s} \cdot \ln\left(\f12 \cdot e^{\f{s}{2}}(1-e^{-s})\right)\right] ds
\\
= &\; \f1{M}\int_0^M \left(e^{-2\mu s}
- e^{2\mu s}\right)\left(\f{s}{2}-\ln 2\right) ds
\\
&\; + \f1{M}\int_0^M \left[e^{-2\mu s}\sum_{k = 1}^\infty\f{(-1)^{k-1}}{k} (e^{-s})^k
- e^{2\mu s} \sum_{k = 1}^\infty\f{(-1)^{k-1}}{k}(-e^{-s})^k \right] ds
\\
= &\; -\f1{M}\int_0^M s\sinh (2\mu s)\, ds
+ \f{2\ln 2}{M}\int_0^M \sinh (2\mu s) \,ds
\\
&\; + \f1{M}\int_0^M \sum_{k = 1}^\infty \left[\f{(-1)^{k-1}}{k} e^{-s(k+2\mu)} + \f{1}{k} e^{-s(k-2\mu)}\right] ds
\\
= &\; -\f{1}{4\mu^2 M}\big[2\mu M\cosh (2\mu M)-\sinh (2\mu M)\big]
+ \f{2\ln 2}{2\mu M} \big[\cosh (2\mu M)-1\big]
\\
&\; + \f1{M} \left[\f{1-e^{-(1+2\mu)M}}{1+2\mu}
+  \f{1-e^{-(1-2\mu)M}}{1-2\mu} \right]
\\
&\; + \f1{M}\sum_{k = 2}^\infty \left[\f{(-1)^{k-1}}{k} \cdot \f{1-e^{-(k+2\mu)M}}{k+2\mu}
+ \f{1}{k} \cdot\f{1-e^{-(k-2\mu)M}}{k-2\mu} \right].
\end{align*}
Since $z\mapsto \f{e^{-z}}{z}$ is decreasing for $z>0$,
\begin{align*}
&\; \int_0^1 \left[e^{-2\mu M s} \cdot \ln \cosh \left(\f{Ms}{2}\right)
- e^{2\mu M s} \cdot \ln \sinh \left(\f{Ms}{2}\right)\right] ds
\\
\leq
&\; -\f{1}{4\mu^2 M}\big[2\mu M\cosh (2\mu M)-\sinh (2\mu M)\big]
+ \f{2\ln 2}{2\mu M} \big[\cosh (2\mu M)-1\big]
\\
&\;+\f1{M}\left[\f{1-e^{-(1+2\mu)M}}{1+2\mu}
+  \f{1-e^{-(1-2\mu)M}}{1-2\mu} \right]
+ \f1{M}\sum_{k = 2}^\infty  \f{1}{k} \left[\f{(-1)^{k-1}}{k+2\mu}
+\f{1}{k-2\mu}\right]
\\
=
&\; -\f{1}{8\mu^2M}\big[2\mu M (e^{2\mu M}+e^{-2\mu M}) - (e^{2\mu M}-e^{-2\mu M})\big]
+ \f{\ln 2}{2\mu M} \big( e^{2\mu M}+e^{-2\mu M}-2\big)
\\
&\; +\f1{M}\left[\f{1-e^{-(1+2\mu)M}}{1+2\mu}
+  \f{1-e^{-(1-2\mu)M}}{1-2\mu} \right]
\\
&\; + \f{1}{M}\sum_{k = 2}^\infty \f1{k} \left[\f{1+(-1)^{k-1}}{k}
+ 2\mu \left(-\f{(-1)^{k-1}}{(k+2\mu)k} +\f{1}{k(k-2\mu)}\right)\right].
\end{align*}
Since
\[
\sum_{k = 2}^\infty \f1{k}\cdot \f{1+(-1)^{k-1}}{k}
= \sum_{j = 1}^\infty \f{2}{(2j-1)^2} - 2 = \f{\pi^2}{4}-2,
\]
we find that
\begin{align*}
&\; \int_0^1 \left[e^{-2\mu M s} \cdot \ln \cosh \left(\f{Ms}{2}\right)
- e^{2\mu M s} \cdot \ln \sinh \left(\f{Ms}{2}\right)\right] ds
\\
\leq
&\; -\f{1}{2} e^{2\mu M} \cdot M \left[\f{1+e^{-4\mu M}}{2\mu M} -\f{1-e^{-4\mu M}}{(2\mu M)^2} \right]
+ \ln 2 \cdot e^{2\mu M} \cdot 2\mu M  \left(\f{1-e^{-2\mu M}}{2\mu M}\right)^2
\\
&\;
+ \f1{M}\left[\f{1-e^{-(1+2\mu)M}}{1+2\mu}
+ \f{1-e^{-(1-2\mu)M}}{1-2\mu} -2 \right]
+ \f{\pi^2}{4M}
+ \f{2\mu}{M} \sum_{k = 2}^\infty  \f{1}{k^2} \left(\f{(-1)^{k}}{k+2\mu}
+ \f{1}{k-2\mu}\right).
\end{align*}
Plugging this into \eqref{eqn: calculate the integral exp -2 mu Ms A}, we obtain that
\begin{align*}
&\;\int_0^1 e^{-2\mu M s} A_{\mu,M}(s)\, ds\\
\leq &\;
- M \left[\f{1+e^{-4\mu M}}{2\mu M} -\f{1-e^{-4\mu M}}{(2\mu M)^2} \right]
+ 2\ln 2 \cdot 2\mu M  \left(\f{1-e^{-2\mu M}}{2\mu M}\right)^2
\\
&\; + e^{-2\mu M}\cdot \f{\pi^2}{2M}
+ 2e^{-2\mu M}\cdot \f1{M} \left[\f{1-e^{-(1+2\mu)M}}{1+2\mu}
+ \f{1-e^{-(1-2\mu)M}}{1-2\mu} -2 \right]
\\
&\;
+ e^{-2\mu M}\cdot \f{4\mu}{M} \sum_{k = 2}^\infty  \f{1}{k^2} \left(\f{(-1)^{k}}{k+2\mu} +\f{1}{k-2\mu}\right)
 \\
&\; + M\left[\f{1-e^{-2\mu M}}{2\mu M} -\f{1-2e^{-2\mu M}+e^{-4\mu M}}{(2\mu M)^2} \right]
- e^{-2\mu M} \cdot 2\ln\cosh\left(\f{M}{2}\right)\f{1-e^{-2\mu M}}{2\mu M}
\\
= &\;
e^{-2\mu M}\left[\f{\pi^2}{2M} + \f{2}{M} \left(\f{1-e^{-(1+2\mu)M}}{1+2\mu}
+ \f{1-e^{-(1-2\mu)M}}{1-2\mu} -2 \right) \right.\\
&\; \qquad \quad \left. - 2M \cdot \f{1}{2\mu M}  \left(1-\f{1-e^{-2\mu M}}{2\mu M}\right)
+\f{4\mu M}{M^2} \sum_{k = 2}^\infty  \f{1}{k^2} \left(\f{(-1)^{k}}{k+2\mu} +\f{1}{k-2\mu}\right)\right]
\\
&\; + 2\ln 2\cdot \big(1-e^{-2\mu M}\big)
\cdot \f{1-e^{-2\mu M}}{2\mu M}
- e^{-2\mu M}\left[2\ln\cosh\left(\f{M}{2}\right)-M\right] \f{1-e^{-2\mu M}}{2\mu M}.
\end{align*}
Since $2\ln\cosh\f{z}{2} - z > -2\ln 2$ for all $z\geq 0$,
\beq
\begin{split}
&\;\int_0^1 e^{-2\mu M s} A_{\mu,M}(s)\, ds\\
< &\;
e^{-2\mu M}\left[\f{\pi^2}{2M}
+ \f{2}{M}
\left(\f{1-e^{-(1+2\mu)M}}{1+2\mu} + \f{1-e^{-(1-2\mu)M}}{1-2\mu} -2 \right)
\right.\\
&\; \qquad \quad +\f{4\mu M}{M^2} \sum_{k = 2}^\infty  \f{1}{k^2} \left(\f{(-1)^{k}}{k+2\mu} +\f{1}{k-2\mu}\right)
\\
&\; \qquad \quad \left. - 2M \cdot \f{1}{2\mu M}  \left(1-\f{1-e^{-2\mu M}}{2\mu M}\right)
+2\ln 2 \cdot \f{1-e^{-2\mu M}}{2\mu M}\cdot e^{2\mu M}
\right]
\\
=:&\; \r_1(M;\mu).
\end{split}
\label{eqn: def of rho_1(M,mu)}
\eeq

For $\mu\in [0,\f12]$, there exists some universal constant $B_0>0$, such that
\[
0<\sum_{k = 2}^\infty  \f{1}{k^2} \left(\f{(-1)^{k}}{k+2\mu} +\f{1}{k-2\mu}\right) \leq B_0.
\]
Since $\f{1-e^{-z}}{z}\in [0,1]$ for all $z\geq 0$,
\begin{align*}
\r_1(M;\mu)
\leq &\;
e^{-2\mu M}\left[\f{\pi^2}{2M} + \f{2}{M}(1+M-2)+  \f{4\mu M}{M^2} B_0\right. \\
&\;\qquad  \left.  - 2M \cdot \f{1}{2\mu M}  \left(1-\f{1-e^{-2\mu M}}{2\mu M}\right) + 2\ln 2 \cdot e^{2\mu M}\right]\\
=: &\; e^{-2\mu M} \zeta_1(M;2\mu M),
\end{align*}
where
\[
\zeta_1(M;c):=
\f1M \left(\f{\pi^2}{2}-2\right)+ 2 + \f{2c}{M^2} B_0 - 2M \cdot \f{1}{c}  \left(1-\f{1-e^{-c}}{c}\right)
+ 2\ln 2\cdot e^c.
\]
Clearly, $\zeta_1$ is strictly decreasing in $M>0$ and strictly increasing in $c\geq 0$.
Besides, for any $c\geq 0$,
\[
\lim_{M\to 0^+} \zeta_1(M;c) = +\infty,\quad
\lim_{M\to +\infty} \zeta_1(M;c) = -\infty,
\]
so there exists a unique $M_1 = M_1(c) > 0$, such that $\zeta_1(M_1(c);c)= 0$.
Moreover, $c\mapsto M_1(c)$ is strictly increasing.
This implies that, for any fixed $c\geq 0$, as long as $2\mu M \in [0, c]$ and $M\geq M_1(c)$, it holds that
\[
\zeta_1(M;2\mu M)\leq \zeta_1\big(M_1(c); c\big)= 0,
\]
and as a result,
\[
\int_0^1 e^{-2\mu M s} A_{\mu,M}(s)\, ds
< \r_1(M;\mu) \leq e^{-2\mu M} \zeta_1(M;2\mu M) \leq 0.
\]

Denote $M_*(c) := \max\{M_0(c),\,M_1(c)\}$, which is a strictly increasing positive function.
We conclude that as long as
\[
(M,2\mu M) \in \Big\{(\tilde{M},\tilde{c}):\;\tilde{c}\geq 0,\; \tilde{M}\geq M_*(\tilde{c})\Big\},
\]
we will have
\beq
A_{\mu,M}(0) = \z_0 (M;2\mu M) \leq 0,\quad
\int_0^1 e^{-2\mu M s} A_{\mu,M}(s)\, ds < \r_1(M;\mu)\leq 0.
\label{eqn: sign conditions related to A(s)}
\eeq
By virtue of \eqref{eqn: bounding the integral from 0 to s by the one on 0 to 1}, this implies \eqref{eqn: further sufficient condition for the sign of F at the peak general m}.
Then \eqref{eqn: sufficient condition for the sign of F at the peak general m} and thus \eqref{eqn: F at the peak general m} follow immediately.

This completes the proof.
\end{proof}
\end{prop}

With some extra efforts, we can even find an explicit pair of constants $M$ and $\mu_0$ that makes Proposition \ref{prop: a priori upper bound general m} hold.
\begin{cor}
\label{cor: M=4 and mu leq 1/12}
Proposition \ref{prop: a priori upper bound general m} holds with $M=4$ and $\mu_0 = \f1{12}$.
\end{cor}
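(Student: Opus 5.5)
The plan is to reuse the reduction already carried out in the proof of Proposition \ref{prop: a priori upper bound general m}. There it is shown that, for any $g\in\BM$ with $g(0)\le M$, the conclusion $\bfR(g)(0)<M$ follows once the two sign conditions in \eqref{eqn: sign conditions related to A(s)} hold:
\[
A_{\mu,M}(0)=\z_0(M;2\mu M)\le 0,\qquad \r_1(M;\mu)\le 0 .
\]
That reduction is valid for every $\mu\in[0,\f12]$ and does not depend on the choice of $M_*$. So it suffices to fix $M=4$ and verify both inequalities for all $\mu\in[0,\f1{12}]$, i.e.\ for $c:=2\mu M\in[0,\f23]$.

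\textbf{First condition.} Since $\z_0(M;c)$ is increasing in $c$, it is enough to check $c=\f23$. We compute
\[
\z_0(4;\tfrac23)=-2\ln\frac{1-e^{-4}}{2}-4\cdot\frac{1-e^{-2/3}}{2/3}\approx 1.423-2.921<0 .
\]
This requires only elementary numerics.

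\textbf{Second condition.} This is the main obstacle. The crude bound $\zeta_1$ is too lossy at $M=4$: the terms $2+2\ln2\, e^{c}$ alone are about $4.7$, while the negative term is only about $-2.4$. I will therefore work directly with the sharper expression $\r_1(M;\mu)$ in \eqref{eqn: def of rho_1(M,mu)}, or even with the exact formula \eqref{eqn: calculate the integral exp -2 mu Ms A} if needed. Concretely, I will show that the bracket in $\r_1(4;\mu)$ is negative for $\mu\in[0,\f1{12}]$ by bounding each term monotonically in $\mu$:
\begin{itemize}
\item The term $\f{2}{M}\big(\f{1-e^{-(1+2\mu)M}}{1+2\mu}+\f{1-e^{-(1-2\mu)M}}{1-2\mu}-2\big)$ is bounded using the exact values at $M=4$. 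It is small, since $\f1{1+2\mu}+\f1{1-2\mu}-2=O(\mu^2)$, and the exponentials give a negative contribution.
\item The series is bounded by its value at $\mu=\f1{12}$, using an explicit tail estimate.
\item The term $-\f1\mu\big(1-\f{1-e^{-c}}{c}\big)$ is increasing toward $-1\cdot 2$ as $c\to0$. It is bounded above via the inequality $1-\f{1-e^{-c}}{c}\ge \f c2-\f{c^2}6$.
\item The term $2\ln2\cdot\f{e^{c}-1}{c}$ is increasing in $c$.
\end{itemize}

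The delicate point is that several of these terms move in opposite directions in $\mu$, so evaluating everything at a single endpoint may not suffice. If the crude endpoint bound fails, I will split $[0,\f1{12}]$ into a few subintervals and, on each one, bound increasing terms at the right endpoint and decreasing terms at the left endpoint. That reduces the verification to finitely many explicit numerical inequalities.

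At $\mu=0$ the bracket becomes $\f{\pi^2}{8}+\f12(2(1-e^{-4})-2)-2+2\ln2\approx -0.41$, which is negative but with limited margin. So I expect the real work to be keeping enough precision in the $\mu$-dependent terms near $\mu=\f1{12}$. If necessary, I will fall back to the exact expression \eqref{eqn: calculate the integral exp -2 mu Ms A} instead of the lossy step that used $2\ln\cosh\f z2-z>-2\ln2$.
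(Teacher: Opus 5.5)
Your reduction matches the paper's: verify $\zeta_0(4;8\mu)\le 0$ and $\rho_1(4;\mu)\le 0$ for $\mu\in[0,\frac1{12}]$. The first follows from monotonicity in $c$; checking at $c=\frac23$ is fine, and the paper even checks $c=2$. For the second you work with $e^{8\mu}\rho_1(4;\mu)$ directly rather than $\zeta_1$, as the paper does.

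The gap is in the quantitative verification of the second condition, which is the whole content of the corollary. You have misjudged where the margin lies.

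\begin{itemize}
\item The term $-\frac1\mu\big(1-\frac{1-e^{-8\mu}}{8\mu}\big)$ tends to $-4$ ($=-M$) as $\mu\to0$, not $-2$. So at $\mu=0$ the bracket is $\frac{\pi^2}{8}-e^{-4}-4+2\ln2\approx-1.40$. Your value $-0.41$ is not even consistent with your own $-2$, which would give about $+0.60$.
\item The bracket is tight at the other end. At $\mu=\frac1{12}$ its five terms are about $1.2337$, $0.0031$, $0.0256$, $-3.2415$, $1.9708$, so $e^{8\mu}\rho_1(4;\frac1{12})\approx-0.0083$.
\item Your estimate $1-\frac{1-e^{-c}}{c}\ge\frac c2-\frac{c^2}6$ at $c=\frac23$ replaces $-3.2415$ by $-3.1111$. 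That loses about $0.13$, and the resulting bound is positive.
\item Bounding the second term by $\frac12\big(\frac1{1+2\mu}+\frac1{1-2\mu}-2\big)$, i.e.\ discarding $e^{-14/3}$ and $e^{-10/3}$, loses about $0.025$. That again exceeds the margin.
\end{itemize}

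Splitting $[0,\frac1{12}]$ into subintervals cannot repair this, because the loss occurs at $\mu=\frac1{12}$ itself.

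In fact no splitting is needed: every $\mu$-dependent term is nondecreasing in $\mu$, so your worry about terms moving in opposite directions is unfounded. The paper shows this using:
\begin{itemize}
\item convexity of $z\mapsto\frac{1-e^{-z}}{z}$ for the second term;
\item the representations $\frac1z\big(1-\frac{1-e^{-z}}z\big)=\int_0^1\int_0^t e^{-zs}\,ds\,dt$ and $\frac{e^z-1}z=\int_0^1e^{zt}\,dt$ for the last two;
\item a uniform bound on the series for $\mu\le\frac1{12}$.
\end{itemize}
A single evaluation at $\mu=\frac1{12}$ therefore suffices, but it must be done with total error well below $10^{-2}$. The paper achieves this with sharp rational bounds and ends at about $-0.0017$:
\begin{itemize}
\item $e^{2/3}<\frac{2051}{1053}$, used for both the $e^{-2/3}$ and $e^{2/3}$ terms;
\item $e^{2/3}<\frac{41}{21}$, applied to $e^{-14/3}=(e^{-2/3})^7$ and $e^{-10/3}=(e^{-2/3})^5$;
\item $\ln2\le\frac{25}{36}$;
\item an alternating-series bound for $\pi^2$;
\item the series bounded by $\frac{1258}{3861}$.
\end{itemize}

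Your plan becomes a proof once you make two changes. Replace the quadratic Taylor bound by a near-exact evaluation at $c=\frac23$ (a sharp bound on $e^{2/3}$, or Taylor terms through $c^4$ with sign-controlled remainder). Keep the exponentials in the second term, and bound $\ln2$, $\pi^2$ and the series comparably sharply. As written, the decisive estimate fails.
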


Thanks to the arguments in the proof of Proposition \ref{prop: a priori upper bound general m}, in order to show Corollary \ref{cor: M=4 and mu leq 1/12}, it suffices to verify \eqref{eqn: sign conditions related to A(s)} for any $\mu\in[0,\frac{1}{12}]$ with $M = 4$, that is, for any $\mu\in [0,\f1{12}]$, $\z_0(4; 8\mu)\leq 0$ and $\r_1(4;\mu)\leq 0$, where $\zeta_0$ and $\r_1$ were defined in \eqref{eqn: def of zeta_0} and \eqref{eqn: def of rho_1(M,mu)} respectively.
However, the proof, which is fully analytic and rigorous, involves technical handling of various formulas and lengthy calculation, which can be very distracting.
Therefore, we will leave it to Appendix \ref{sec: proof of M = 4 mu = 1/12}.

\begin{rmk}
\label{rmk: (M,mu) pair with larger mu}
It is noteworthy that $(M,\mu_0) = (4,\f{1}{12})$ here is only a convenient pair of constants that makes Proposition \ref{prop: a priori upper bound general m} hold.
If one wants to prove the existence of V-states with 90-degree corners for a larger range of $m$ in the current framework, it is natural to ask whether there are other pairs of $(M,\mu)$ with larger values of $\mu$, e.g., $\mu = \f{1}{11}, \f{1}{10}, \cdots$, etc., such that (cf.\;\eqref{eqn: sign conditions related to A(s)})
\beq
\z_0 (M;2\mu M) \leq 0,\quad \int_0^1 e^{-2\mu M s} A_{\mu,M}(s)\, ds \leq 0,
\label{eqn: conditions for a valid pair of M and mu}
\eeq
where $\zeta_0$ and $A_{\mu,M}$ are defined in \eqref{eqn: def of zeta_0} and \eqref{eqn: def of A(s)}, respectively.
Indeed, if $M>0$ satisfies \eqref{eqn: conditions for a valid pair of M and mu}, then it is an a priori upper bound preserved by $\bfR$ with this $\mu$ (but not necessarily for any smaller $\mu'$), i.e., as long as $g\in \BM$ and $g(0)\leq M$, we would have $\mathbf{R}(g)(0) < M$.
We investigate this issue by numerically plotting the functions
\[
M\mapsto \z_0 (M;2\mu M),\quad
M\mapsto \int_0^1 e^{-2\mu M s} A_{\mu,M}(s)\, ds
\]
with different values of $\mu$, and computing the set \beq
\CM(\mu):= \left\{M>0:\, \z_0 (M;2\mu M) \leq 0,\,\int_0^1 e^{-2\mu M s} A_{\mu,M}(s)\, ds \leq 0\right\},
\label{eqn: def of CM mu admissible range of M}
\eeq
i.e., the admissible range of $M$ so that it can serve as an a priori upper bound preserved by $\bfR$ with the given $\mu$.
The numerical results are listed in Table \ref{tab: smallest M for various mu}.
We will come back to them in Remark \ref{rmk: optimality of m=12 explained} after proving Theorem \ref{thm: main existence theorem} at the end of Section \ref{sec: regularity}.
At this moment, we only note that $\CM(\mu) = \varnothing$ when $\mu = \f{1}{2},\f13,\cdots,\f18$, which suggests that the current analysis is not enough for handling the cases of $m = 2,3,\cdots, 8$.

\begin{table}
\centering
\begin{tabular}{cc}
\toprule[1pt]
$\mu$ & $\CM(\mu)$
\\[3.5pt]
\toprule[0.5pt]
$\f{1}{12}$ & $\approx [3.9526, 16.8312]$ \\[2.5pt]
$\f{1}{11}$ & $\approx [4.1461, 14.4242]$ \\[2.5pt]
$\f{1}{10}$ & $\approx [4.4467, 11.9733]$ \\[2.5pt]
$\f{1}{9}$ & $\approx [5.0364, 9.3048]$ \\[2.5pt]
$\mu \in \left\{\f{1}{8},\f{1}{7},\cdots, \f{1}{2}\right\}$ & $\varnothing$
\\[2.5pt]
\bottomrule[1pt]
\end{tabular}
\caption{$\CM(\mu)$ (defined in \eqref{eqn: def of CM mu admissible range of M}) that is numerically found for various $\mu$'s of interest.
}
\label{tab: smallest M for various mu}
\end{table}
\end{rmk}

\subsection{Estimates with explicit constants}
\label{sec: estimates with explicit constants}
In this part, we would like to prove estimates for $F$ (as well as $F_1$ and $F_2$) which contain explicit constants.
This will play a crucial role later in making the smallness condition on $\mu$ explicit (e.g.\;see Proposition \ref{prop: upper barrier new} below).

\begin{lem}
\label{lem: upper bound for -F_1 x_1}
Suppose that $g\in \BM$.
Let $F_1$ be defined as in \eqref{eqn: representation of Psi_1} and \eqref{eqn: def of F}.
For any $x_1\in [0,\pi]$ and $x_2>0$,
\[
-\pa_{x_1}F_1(x_1,x_2)
\leq
\left(\sqrt{2} + \f{2+2\ln 3}{\pi}\right) \f{\sin x_1}{x_2}
+ \f{2\sin x_1}{\pi x_2} \ln \left(1+\f{x_2}{4\sin x_1}\right).
\]

\begin{proof}
Recall in the proof of Lemma \ref{lem: monotonicity of F_1}, we have shown that \begin{align*}
&\;-\pa_{x_1}\Psi_1(x_1,x_2) \\
= &\;
\f1{4\pi} \int_0^{g(0)} e^{-2\mu y_2}\\
&\;\qquad \cdot
\ln\left(\f {\cosh(x_2-y_2)-\cos(x_1+g^{-1}(y_2))}{\cosh(x_2+y_2)-\cos(x_1+g^{-1}(y_2))} \cdot \f{\cosh(x_2+y_2)-\cos(x_1-g^{-1}(y_2))}{\cosh(x_2-y_2)-\cos(x_1-g^{-1}(y_2))} \right) dy_2.
\end{align*}
Note that
\begin{align*}
&\; \big[\cosh(x_2-y_2)-\cos(x_1+g^{-1}(y_2))\big] \big[\cosh(x_2+y_2)-\cos(x_1-g^{-1}(y_2))\big]
\\
&\; -\big[\cosh(x_2+y_2)-\cos(x_1+g^{-1}(y_2))\big]
\big[\cosh(x_2-y_2)-\cos(x_1-g^{-1}(y_2))\big]
\\
= &\; -\big[\cosh(x_2+y_2)-\cosh(x_2-y_2)\big]
\big[ \cos(x_1+g^{-1}(y_2))-\cos(x_1-g^{-1}(y_2))\big]
\\
= &\; 4\sinh x_2 \sinh y_2 \sin x_1 \sin g^{-1}(y_2)
\geq 0,
\end{align*}
and
\[
\cosh x_2 - \cos x_1 = 2\sinh^2 \f{x_2}{2} + 2\sin^2 \f{x_1}{2}.
\]
So by the definition of $F_1$ in \eqref{eqn: def of F},
\begin{align*}
&\; -\pa_{x_1}F_1(x_1,x_2)\\
= &\; \f{1}{4\pi x_2}\int_0^{g(0)} e^{-2\mu y_2} \cdot \ln\left(1
+\f {\sinh x_2 \sinh y_2 \sin x_1 \sin g^{-1}(y_2)}{(\sinh^2 \f{x_2+y_2}{2}
+ \sin^2\f{x_1+g^{-1}(y_2)}{2})
(\sinh^2 \f{x_2-y_2}{2} + \sin^2\f{x_1-g^{-1}(y_2)}{2})} \right) dy_2
\\
= &\; \f{1}{4\pi x_2}\int_0^{g(0)} e^{-2\mu y_2}\cdot \mathds{1}_{\{\sin g^{-1}(y_2)\leq 2\sin x_1\}} \\
&\;\qquad\qquad \cdot \ln\left(1
+\f {\sinh x_2 \sinh y_2 \sin x_1 \sin g^{-1}(y_2)}{(\sinh^2 \f{x_2+y_2}{2}
+ \sin^2\f{x_1+g^{-1}(y_2)}{2})
(\sinh^2 \f{x_2-y_2}{2} + \sin^2\f{x_1-g^{-1}(y_2)}{2})} \right) dy_2
\\
&\; + \f{1}{4\pi x_2}\int_0^{g(0)} e^{-2\mu y_2}\cdot \mathds{1}_{\{\sin g^{-1}(y_2)> 2\sin x_1\}} \\
&\;\qquad\qquad \cdot \ln\left(1
+\f {\sinh x_2 \sinh y_2 \sin x_1 \sin g^{-1}(y_2)}{(\sinh^2 \f{x_2+y_2}{2}
+ \sin^2\f{x_1+g^{-1}(y_2)}{2})
(\sinh^2 \f{x_2-y_2}{2} + \sin^2\f{x_1-g^{-1}(y_2)}{2})} \right) dy_2
\\
=:&\; I_1+I_2.
\end{align*}

Since for any $x_2,y_2>0$,
\[
\sinh^2\f{x_2-y_2}{2} \geq \f14 |x_2-y_2|^2,
\]
and
\[
\sinh x_2 \sinh y_2 = \sinh^2 \f{x_2+y_2}{2} - \sinh^2 \f{x_2-y_2}{2},
\]
we can bound $I_1$ as follows
\begin{align*}
I_1
\leq &\;\f{1}{4\pi x_2}\int_0^{g(0)} \ln\left(1
+\f {2\sinh x_2 \sinh y_2 \sin^2 x_1}{\sinh^2 \f{x_2+y_2}{2}
\cdot \sinh^2 \f{x_2-y_2}{2}} \right) dy_2
\\
\leq &\;\f{1}{4\pi x_2}\int_0^{g(0)} \ln\left(1
+\f {2\sin^2 x_1}{\f14 |x_2-y_2|^2} \right) dy_2
\leq \f{1}{4\pi x_2}\int_{\BR} \ln\left(1
+\f {8\sin^2 x_1}{|z|^2} \right) dz
\\
= &\; \f{\sqrt{8}\sin x_1}{4\pi x_2}\int_{\BR}\ln \left(1 + \f{1}{s^2}\right)ds
= \f{\sqrt{2} \sin x_1}{x_2}.
\end{align*}

To bound $I_2$, we first observe that, under the assumption $\sin g^{-1}(y_2)> 2\sin x_1\geq 0$,
\[
2\sin \f{g^{-1}(y_2)-x_1}{2}\cos \f{g^{-1}(y_2)+x_1}{2} = \sin g^{-1}(y_2) - \sin x_1 \geq \f12 \sin g^{-1}(y_2),
\]
which implies
\[
\sin^2 \f{g^{-1}(y_2)-x_1}{2}\geq \f1{16} \sin^2 g^{-1}(y_2),
\]
Hence, \begin{align*}
I_2 \leq
&\;
\f{1}{4\pi x_2}\int_0^{g(0)}
\ln\left(1
+\f {\sinh x_2 \sinh y_2 \sin x_1 \sin g^{-1}(y_2)}
{\sinh^2 \f{x_2+y_2}{2} \cdot (\sinh^2\f{x_2-y_2}{2} +\f{1}{16}\sin^2 g^{-1}(y_2))} \right) dy_2
\\
\leq
&\;
\f{1}{4\pi x_2}\int_0^{g(0)}
\ln\left(1
+\f {\sinh x_2 \sinh y_2 \sin x_1 \sin g^{-1}(y_2)}{ \sinh^2 \f{x_2+y_2}{2} \cdot \f12
\sinh\f{|x_2-y_2|}{2}\sin g^{-1}(y_2)} \right) dy_2
\\
\leq
&\;
\f{1}{4\pi x_2}
\int_0^{2x_2}
\ln\left(1
+\f {2\sin x_1}{\sinh\f{|x_2-y_2|}{2}} \right) dy_2
\\
&\; +
\f{1}{4\pi x_2} \int_{2x_2}^\infty
\ln\left(1
+\f {2\sinh x_2 \sinh y_2 \sin x_1}{\sinh^2 \f{x_2+y_2}{2}
\cdot \sinh\f{y_2-x_2}{2}} \right) dy_2\\
=: &\; I_{2,1} + I_{2,2}.
\end{align*}
We derive that
\begin{align*}
I_{2,1} \leq &\;
\f{1}{4\pi x_2}\int_0^{2x_2}
\ln\left(1
+\f {4\sin x_1}{|x_2-y_2|} \right) dy_2
=
\f{1}{2\pi x_2}\int_0^{x_2}
\ln\left(1
+\f {4\sin x_1}{z} \right) dz
\\
= &\;
\f{2\sin x_1}{\pi x_2} \int_0^{x_2/(4\sin x_1)}
\ln\left(1+\f {1}{t} \right) dt
\\
= &\; \f{2\sin x_1}{\pi x_2}\left[\left(1+\f{x_2}{4\sin x_1}\right)\ln \left(1+\f{x_2}{4\sin x_1}\right) - \f{x_2}{4\sin x_1}\ln \left(\f{x_2}{4\sin x_1}\right) \right]
\\
\leq &\; \f{2\sin x_1}{\pi x_2}\left[1
+ \ln \left(1+\f{x_2}{4\sin x_1}\right) \right].
\end{align*}
On the other hand, since
\[
\sinh \f{x_2+y_2}{2} \sinh\f{y_2-x_2}{2} = \sinh^2\f{y_2}{2}-\sinh^2\f{x_2}{2},
\]
and \[
\sinh \f{x_2+y_2}{2} = \cosh\f{x_2}{2}\cosh\f{y_2}{2}\left(\tanh\f{x_2}{2}+\tanh\f{y_2}{2}\right)
\geq \cosh\f{x_2}{2}\sinh\f{y_2}{2},
\]
we find that
\begin{align*}
I_{2,2}
\leq &\;
\f{1}{2\pi x_2}\int_{2x_2}^\infty
\f {\sinh x_2 \sinh y_2 \sin x_1}{\sinh \f{x_2+y_2}{2}\cdot (\sinh^2\f{y_2}{2}-\sinh^2\f{x_2}{2})}\, dy_2
\\
\leq &\;
\f{\sin x_1}{\pi x_2}\int_{2x_2}^\infty
\f{4\sinh \f{x_2}{2}\cosh\f{x_2}{2} \cdot \sinh \f{y_2}{2}}
{\cosh \f{x_2}{2}\sinh\f{y_2}{2}\cdot
(\sinh^2\f{y_2}{2}-\sinh^2\f{x_2}{2})}\cdot \f12 \cosh\f{y_2}{2}\, d y_2
\\
= &\;
\f{2\sin x_1}{\pi x_2}\int_{\sinh x_2}^\infty
\f{2\sinh \f{x_2}{2}}{t^2-\sinh^2\f{x_2}{2}} \, dt
\\
= &\;
\f{2\sin x_1}{\pi x_2}\ln \left(\f{\sinh x_2 +\sinh\f{x_2}{2}}{\sinh x_2-\sinh\f{x_2}{2}}\right)
=
\f{2\sin x_1}{\pi x_2}\ln \left(\f{2\cosh \f{x_2}{2} +1}{2\cosh \f{x_2}{2}-1}\right)
\\
\leq &\; \f{2\ln 3}{\pi}\cdot \f{\sin x_1}{x_2}.
\end{align*}
Summarizing all the estimates above proves the desired bound.
\end{proof}
\end{lem}

In the next lemma, we shall bound $\pa_{x_1}F_2$ in terms of $\pa_{x_2}F_2$.
\begin{lem}
\label{lem: bounding F_2 x_1 in terms of F_2 x_2}
For any $x_1\in [\f{\pi}{2},\pi]$ and $x_2 > 0$,
\[
-\pa_{x_1} F_2(x_1,x_2)
\leq \f{x_2\sinh x_2 \sin x_1}
{x_2\cosh x_2-\sinh x_2} \cdot\left(-
\pa_{x_2} F_2(x_1,x_2)\right).
\]

\begin{proof}
Throughout the proof, we will always assume that $x_1\in [\f{\pi}{2},\pi]$ and $x_2 > 0$.
In particular, $\cos x_1 \leq 0$.

By \eqref{eqn: representation of Psi_2} and \eqref{eqn: def of F}, for $i = 1,2$,
\beq
\begin{split}
\pa_{x_i} F_2(x_1,x_2)
= &\; \int_\BT \pa_{x_i} \left[\f{1}{x_2}P(x_1-y_1,x_2)\right] h(y_1)\,dy_1
\\
= &\; \int_0^\pi \left[\pa_{x_i}\left(\f{P}{x_2}\right)(x_1-y_1,x_2)
+ \pa_{x_i}\left(\f{P}{x_2}\right)(x_1+y_1,x_2)\right] h(y_1)\,dy_1.
\end{split}
\label{eqn: formula for F_2_(x_2) new}
\eeq
We are thus motivated to study
\beq
Q_i(x_1,x_2,y_1)
:=
\pa_{x_i}\left(\f{P}{x_2}\right)(x_1-y_1,x_2)
+ \pa_{x_i}\left(\f{P}{x_2}\right)(x_1+y_1,x_2).
\label{eqn: def of Q_i}
\eeq
We will show in Lemma \ref{lem: simplification of Q_i} in Appendix \ref{sec: calculus lemmas} that
\beq
\begin{split}
Q_1(x_1,x_2,y_1) = &\; -\f{x_2\sinh x_2 \sin x_1 \cdot q_1(\cos y_1)}
{\pi x_2^2(\cosh x_2- \cos (x_1-y_1))^2(\cosh x_2- \cos (x_1+y_1))^2},
\\
Q_2(x_1,x_2,y_1)
= &\; -\f{(x_2\cosh x_2-\sinh x_2)\cdot q_2(\cos y_1)}{\pi x_2^2(\cosh x_2- \cos (x_1-y_1))^2(\cosh x_2- \cos (x_1+y_1))^2},
\end{split}
\label{eqn: formulas for Q_i}
\eeq
where we defined for $s\in [-1,1]$ that
\begin{align*}
q_1(s):= &\; s (\cosh^2 x_2 + 1 + \cos^2 x_1 - s^2)-2\cosh x_2 \cos x_1,
\\
q_2(s) := &\; (k(x_2)+s\cos x_1) (\cosh x_2 - s\cos x_1)^2
\\
&\;
+ (k(x_2) + 2\cosh x_2 - s\cos x_1) \sin^2 x_1 (1-s^2),
\end{align*}
and for $x_2 > 0$,
\[
k(x_2):= \f{\sinh x_2 \cosh x_2-x_2}{x_2\cosh x_2-\sinh x_2}.
\]
Note that both $q_1(s)$ and $q_2(s)$ are cubic polynomials of $s$.
In the proof below, we will need the fact that
\beq
k(x_2)\in (2,2\cosh x_2)\mbox{ for any }x_2 > 0.
\label{eqn: estimate for k}
\eeq
We leave its proof to Lemma \ref{lem: bound for k(x_2)}.

Define
\[
w(s) := q_1(s)(-1+\cos x_1+\cos^2 x_1) + q_2(s).
\]
By direct calculation and applying \eqref{eqn: estimate for k}, we find that for $x_2 > 0$ and $s\in [-1,1]$,
\begin{align*}
w''(s)
= &\;q_1''(s)\cos x_1 + q_2''(s) - (1-\cos^2 x_1) q_1''(s)\\
= &\; - 2\big[-k(x_2)(\cos^2 x_1-\sin^2 x_1)+2\cosh x_2\big] + 6s \sin^2 x_1
\\
\leq &\; 2\big[k(x_2)-2\cosh x_2\big] - 4k(x_2)\sin^2 x_1 + 6\sin^2 x_1
\\
< &\; -8\sin^2 x_1 + 6\sin^2 x_1 \leq 0.
\end{align*}
Hence, the function $s\mapsto w(s)$ is strictly concave.
This implies that
\[
\inf_{s\in [-1,1]}w(s) =\min\big\{w(1),\, w(-1)\big\}.
\]
By \eqref{eqn: estimate for k},
\begin{align*}
w(1)
= &\; (-1+\cos x_1+\cos^2 x_1)\left[ (\cosh^2 x_2 + \cos^2 x_1)-2\cosh x_2 \cos x_1\right]\\
&\;
+ (k(x_2)+\cos x_1) (\cosh x_2 - \cos x_1)^2
\\
= &\; (k(x_2)-1+2\cos x_1+\cos^2 x_1) (\cosh x_2 - \cos x_1)^2
\\
\geq &\; (1+2\cos x_1+ \cos^2 x_1) (\cosh x_2-\cos x_1)^2 \geq 0,
\end{align*}
and
\begin{align*}
w(-1)
= &\; (-1+\cos x_1+\cos^2 x_1)\left[-(\cosh^2 x_2 + \cos^2 x_1)-2\cosh x_2 \cos x_1\right]\\
&\;
-(-k(x_2)+\cos x_1) (\cosh x_2 + \cos x_1)^2
\\
= &\; (k(x_2)+1-2\cos x_1-\cos^2 x_1) (\cosh x_2 + \cos x_1)^2
\\
\geq &\; (3-2\cos x_1-\cos^2 x_1) (\cosh x_2 + \cos x_1)^2 \geq 0.
\end{align*}
Therefore, for any $s\in [-1,1]$,
\[
w(s)\geq 0.
\]
Using \eqref{eqn: estimate for k} once again, we see that for any $x_2 >0$ and $s\in [-1,1]$,
\[
q_2(s) \geq (2-|s|) (\cosh x_2 - |s|)^2 > 0,
\]
so the preceding inequality is equivalent to
\[
\f{q_1(s)}{q_2(s)}\leq \f{1}{1-\cos x_1-\cos^2 x_1}\leq 1.
\]
Note that $\cos x_1 \leq 0$ by assumption.
That $q_2>0$ also implies $Q_2(x_1,x_2,y_1)<0$ for any $x_1 \in [\f{\pi}{2},\pi]$ and $x_2>0$.
Combining these facts with the formulas for $Q_i$ (see \eqref{eqn: formulas for Q_i}) yields that
\begin{align*}
-Q_1(x_1,x_2,y_1)
= &\; \f{x_2\sinh x_2 \sin x_1}
{x_2\cosh x_2-\sinh x_2}
\cdot \f{q_1(\cos y_1)}{q_2(\cos y_1)}\cdot \big(-Q_2(x_1,x_2,y_1)\big)
\\
\leq &\; -\f{x_2\sinh x_2 \sin x_1}
{x_2\cosh x_2-\sinh x_2} \cdot Q_2(x_1,x_2,y_1).
\end{align*}

Now by \eqref{eqn: formula for F_2_(x_2) new}, \eqref{eqn: def of Q_i}, and the fact that $h\geq 0$ (see Lemma \ref{lem: monotonicity of h}),
\begin{align*}
-\pa_{x_1} F_2(x_1,x_2)
= &\; -\int_0^\pi Q_1(x_1,x_2,y_1) h(y_1)\,dy_1
\\
\leq &\; - \f{x_2\sinh x_2 \sin x_1}
{x_2\cosh x_2-\sinh x_2} \int_0^\pi
Q_2(x_1,x_2,y_1) h(y_1)\,dy_1\\
= &\; -\f{x_2\sinh x_2 \sin x_1}
{x_2\cosh x_2-\sinh x_2} \cdot \pa_{x_2} F_2(x_1,x_2).
\end{align*}
This completes the proof.
\end{proof}
\end{lem}

Next, we derive upper and lower bounds for $F(\pi,x_2)-\pa_{x_2}\Psi(\pi,0)$.
\begin{lem}
\label{lem: control vertical displacement general mu upper and lower bounds}
Let $\mu \geq 0$.
Then for any $x_2 \in (0,g(0)]$,
\begin{align*}
F(\pi,x_2)-\pa_{x_2}\Psi(\pi,0)
\leq &\;
-\f{1}{\pi}\int_0^{x_2} e^{-2\mu y_2} \cdot \arctan\left(\tanh\f{y_2}{2}\tan\f{g^{-1}(x_2)}{2}\right) dy_2,
\end{align*}
and \begin{align*}
F(\pi,x_2)-\pa_{x_2}\Psi(\pi,0)
\geq &\; -\f{7}{8}x_2
-\f{x_2}{4\pi} \int_{\min\{2x_2,g(0)\}}^{g(0)} \min\left\{ \tan\f{g^{-1}(y_2)}{2},\, \f{1}{\sinh \f{y_2}{2}}\right\} dy_2.
\end{align*}

\begin{proof}
By Lemma \ref{lem: integral formula for Psi} and the definition of $F$ (see \eqref{eqn: def of F}),
\begin{align*}
&\; F(\pi,x_2)\\
= &\; -\f{1}{4\pi x_2}\int_0^{g(0)} e^{-2\mu y_2} \int_{-g^{-1}(y_2)}^{g^{-1}(y_2)} \ln \left(\f{\cosh(x_2-y_2)+ \cos y_1}{\cosh y_2 +\cos y_1}\right)dy_1\,dy_2
+ \f{1}{4\pi}\int_\Om e^{-2\mu y_2} \, dy
\\
= &\; -\f{1}{4\pi x_2}\int_0^{g(0)} e^{-2\mu y_2} \int_{-g^{-1}(y_2)}^{g^{-1}(y_2)} \int_{|y_2|}^{|x_2-y_2|} \f{\sinh z}{\cosh z+ \cos y_1}\,dz\, dy_1\,dy_2
+ \f{1}{4\pi}\int_\Om e^{-2\mu y_2} \, dy
\\
= &\; -\f{1}{4\pi x_2}\int_0^{g(0)}e^{-2\mu y_2}  \int_{|y_2|}^{|x_2-y_2|} \int_{-g^{-1}(y_2)}^{g^{-1}(y_2)} \f{\sinh z}{\cosh z+ \cos y_1}\, dy_1\,dz\,dy_2
+ \f{1}{4\pi}\int_\Om e^{-2\mu y_2} \, dy.
\end{align*}
Since
\[
\f{\sinh z}{\cosh z+ \cos y_1} = \f{d}{dy_1}\left[2\arctan\left(\tanh\f{z}{2}\tan\f{y_1}{2}\right)\right],
\]
we obtain that
\begin{align*}
&\; F(\pi,x_2)\\
= &\; -\f{1}{4\pi x_2}\int_0^{g(0)} e^{-2\mu y_2} \int_{|y_2|}^{|x_2-y_2|} 4\arctan\left(\tanh\f{z}{2}\tan\f{g^{-1}(y_2)}{2}\right) dz\,dy_2
+ \f{1}{4\pi}\int_\Om e^{-2\mu y_2} \, dy.
\end{align*}
Analogously, thanks to Lemma \ref{lem: integral formula for Psi},
\begin{align*}
\pa_{x_2}\Psi(\pi,0)
= &\; \f{1}{4\pi}\int_0^{g(0)}e^{-2\mu y_2} \int_{-g^{-1}(y_2)}^{g^{-1}(y_2)} \f{\sinh y_2}{\cosh y_2+ \cos y_1} \, dy_1\,dy_2
+ \f{1}{4\pi}\int_\Om e^{-2\mu y_2} \, dy
\\
= &\; \f{1}{4\pi}\int_0^{g(0)} e^{-2\mu y_2} \cdot 4\arctan\left(\tanh\f{y_2}{2}\tan\f{g^{-1}(y_2)}{2}\right) dy_2
+ \f{1}{4\pi}\int_\Om e^{-2\mu y_2} \, dy.
\end{align*}

Using the above formulas, we split $F(\pi,x_2)-\pa_{x_2}\Psi(\pi,0)$ into two parts:
\begin{align*}
&\; F(\pi,x_2)- \pa_{x_2}\Psi(\pi,0)\\
= &\; -\f{1}{4\pi x_2}\int_0^{g(0)} e^{-2\mu y_2} \int_{|y_2|}^{|x_2-y_2|} 4\arctan\left(\tanh\f{z}{2}\tan\f{g^{-1}(y_2)}{2}\right) dz\,dy_2
\\
&\; -\f{1}{4\pi}\int_0^{g(0)} e^{-2\mu y_2} \cdot 4\arctan\left(\tanh\f{y_2}{2}\tan\f{g^{-1}(y_2)}{2}\right) dy_2
\\
=: &\; I_1 + I_2,
\end{align*}
where
\begin{align*}
I_1:= &\; -\f{1}{\pi x_2}\int_0^{x_2} e^{-2\mu y_2} \int_{y_2}^{x_2-y_2} \arctan\left(\tanh\f{z}{2}\tan\f{g^{-1}(y_2)}{2}\right) dz\,dy_2
\\
&\; -\f{1}{\pi}\int_0^{x_2} e^{-2\mu y_2} \cdot \arctan\left(\tanh\f{y_2}{2}\tan\f{g^{-1}(y_2)}{2}\right) dy_2,
\end{align*}
and
\begin{align*}
I_2:=
&\; \f{1}{\pi x_2}\int_{x_2}^{g(0)} e^{-2\mu y_2}   \int_{y_2-x_2}^{y_2} \arctan\left(\tanh\f{z}{2}\tan\f{g^{-1}(y_2)}{2}\right) dz\,dy_2
\\
&\; -\f{1}{\pi}\int_{x_2}^{g(0)} e^{-2\mu y_2} \cdot \arctan\left(\tanh\f{y_2}{2}\tan\f{g^{-1}(y_2)}{2}\right) dy_2.
\end{align*}

We first show the upper bound.
Obviously,
\begin{align*}
I_2 \leq
&\; \f{1}{\pi x_2}\int_{x_2}^{g(0)} e^{-2\mu y_2}   \int_{y_2-x_2}^{y_2} \arctan\left(\tanh\f{y_2}{2}\tan\f{g^{-1}(y_2)}{2}\right) dz\,dy_2
\\
&\; -\f{1}{\pi}\int_{x_2}^{g(0)} e^{-2\mu y_2} \cdot \arctan\left(\tanh\f{y_2}{2}\tan\f{g^{-1}(y_2)}{2}\right) dy_2
\\
= &\; 0.
\end{align*}
To bound $I_1$, we further rewrite it as follows:
\begin{align*}
I_1= &\; -\f{1}{\pi x_2}\int_0^{x_2/2} e^{-2\mu y_2} \int_{y_2}^{x_2-y_2} \arctan\left(\tanh\f{z}{2}\tan\f{g^{-1}(y_2)}{2}\right) dz\,dy_2
\\
&\; +\f{1}{\pi x_2}\int_0^{x_2/2} e^{-2\mu (x_2-y_2)} \int_{y_2}^{x_2-y_2} \arctan\left(\tanh\f{z}{2}\tan\f{g^{-1}(x_2-y_2)}{2}\right) dz\,dy_2
\\
&\; -\f{1}{\pi}\int_0^{x_2} e^{-2\mu y_2} \cdot \arctan\left(\tanh\f{y_2}{2}\tan\f{g^{-1}(y_2)}{2}\right) dy_2
\\
=:&\; I_{1,1} + I_{1,2} + I_{1,3}.
\end{align*}
Since $g^{-1}$ is decreasing, for any $y_2\in [0,\f{x_2}{2}]$,
\[
e^{-2\mu(x_2-y_2)}\leq e^{-2\mu y_2},\quad g^{-1}(x_2-y_2)\leq g^{-1}(y_2).
\]
This implies that $I_{1,1}+I_{1,2}\leq 0$.
Finally, using the facts that $g^{-1}$ is decreasing, and that $\tanh\f{y_2}{2}\leq 1$,
\begin{align*}
I_{1,3}
\leq &\;
-\f{1}{\pi}\int_0^{x_2} e^{-2\mu y_2} \cdot \arctan\left(\tanh\f{y_2}{2}\tan\f{g^{-1}(x_2)}{2}\right) dy_2.
\end{align*}
Combining all the above estimates yields the desired upper bound.

Next we show the lower bound.
A trivial bound for $I_1$ is that
\[
I_1
\geq -\f{1}{4\pi x_2}\int_0^{x_2/2} \int_{y_2}^{x_2-y_2} 2\pi \,dz\,dy_2
-\f{1}{4\pi}\int_0^{x_2} 2\pi\, dy_2
= -\f{5}{8} x_2.
\]

On the other hand, observe that, for $a\geq 0$ and $z\geq 0$,
\[
\f{d^2}{dz^2}\left[\arctan\left(a\tanh\f{z}{2}\right)\right]
= -\f{a (a^2 + 1) \sinh z}{((a^2 + 1) \cosh z - a^2 + 1)^2} \leq 0,
\]
so
\[
z\mapsto \arctan\left(a\tanh\f{z}{2}\right)\mbox{ is concave on $[0,+\infty)$}.
\]
As a result, \begin{align*}
I_2
=&\; \f{1}{\pi x_2}\int_{x_2}^{g(0)} e^{-2\mu y_2} \int_{y_2-x_2}^{y_2} \left[\arctan\left(\tanh\f{z}{2}\tan\f{g^{-1}(y_2)}{2}\right)\right.\\
&\; \qquad \qquad \qquad \qquad \quad \left.- \arctan\left(\tanh\f{y_2}{2}\tan\f{g^{-1}(y_2)}{2}\right)\right]
dz\,dy_2
\\
\geq &\; \f{1}{\pi}\int_{x_2}^{g(0)} e^{-2\mu y_2} \left[\f12 \arctan\left(\tanh\f{y_2}{2}\tan\f{g^{-1}(y_2)}{2}\right)\right.\\
&\;\qquad \qquad \qquad \quad
+ \f12\arctan\left(\tanh\f{y_2-x_2}{2}\tan\f{g^{-1}(y_2)}{2}\right)\\
&\; \qquad \qquad \qquad \quad \left.- \arctan\left(\tanh\f{y_2}{2}\tan\f{g^{-1}(y_2)}{2}\right)\right] dy_2
\\
=&\; \f{1}{2\pi}\int_{x_2}^{g(0)} e^{-2\mu y_2} \arctan\left(\f{(\tanh\f{y_2-x_2}{2} -\tanh\f{y_2}{2})\tan\f{g^{-1}(y_2)}{2}}{1+\tanh\f{y_2-x_2}{2} \tanh\f{y_2}{2}\tan^2\f{g^{-1}(y_2)}{2}}\right) dy_2.
\end{align*}
Since $y_2\geq x_2$, and
\[
\tanh\f{y_2-x_2}{2} -\tanh\f{y_2}{2}
= \f{-\sinh\f{x_2}{2}}{\cosh \f{y_2-x_2}{2}\cosh\f{y_2}{2}}
\geq
-\tanh\f{x_2}{2},
\]
we have that
\begin{align*}
\f{(\tanh\f{y_2-x_2}{2} -\tanh\f{y_2}{2})\tan\f{g^{-1}(y_2)}{2}}{1+\tanh\f{y_2-x_2}{2} \tanh\f{y_2}{2}\tan^2\f{g^{-1}(y_2)}{2}}
\geq &\;
\left(\tanh\f{y_2-x_2}{2} -\tanh\f{y_2}{2}\right)\tan\f{g^{-1}(y_2)}{2}
\\
\geq &\;
-\tanh\f{x_2}{2} \tan\f{g^{-1}(y_2)}{2}.
\end{align*}
If additionally $y_2 \geq 2x_2$, we have $y_2-x_2\geq \f{y_2}{2}\geq x_2$, so by Young's inequality,
\begin{align*}
&\;\f{(\tanh\f{y_2-x_2}{2} -\tanh\f{y_2}{2})\tan\f{g^{-1}(y_2)}{2}}{1+\tanh\f{y_2-x_2}{2} \tanh\f{y_2}{2}\tan^2\f{g^{-1}(y_2)}{2}}
\\
\geq
&\;\left(\tanh\f{y_2-x_2}{2} -\tanh\f{y_2}{2}\right)\cdot \f{\tan\f{g^{-1}(y_2)}{2}}{2\sqrt{\tanh\f{y_2-x_2}{2} \tanh\f{y_2}{2}} \cdot \tan\f{g^{-1}(y_2)}{2}}
\\
= &\;-\f{\sinh\f{x_2}{2}}{\cosh \f{y_2-x_2}{2}\cosh\f{y_2}{2}}\cdot
\f{1}{2\sqrt{\tanh\f{y_2-x_2}{2} \tanh\f{y_2}{2}}}
\\
= &\;-\f{\sinh\f{x_2}{2}}{\sqrt{2\cosh \f{y_2}{2}\cdot \sinh (y_2-x_2)   \sinh \f{y_2}{2} }}
\\
\geq &\;-\f{\sinh\f{x_2}{2}}{\sqrt{(\cosh^2 \f{y_2}{4} +1)\cdot \sinh^2 \f{y_2}{2} }}
\\
\geq &\; -\f{\sinh\f{x_2}{2}}{\cosh \f{y_2}{4} \cdot \sinh \f{y_2}{2}}
\geq -\f{\tanh\f{x_2}{2}}{\sinh \f{y_2}{2}}.
\end{align*}
With this, we can further derive that
\begin{align*}
I_2
\geq &\; -\f{1}{2\pi}\int_{x_2}^{\min\{2x_2,g(0)\}}
\arctan\left(\tanh\f{x_2}{2} \tan\f{g^{-1}(y_2)}{2}\right)
dy_2
\\
&\; -\f{1}{2\pi}\int_{\min\{2x_2,g(0)\}}^{g(0)} \arctan\left(\min\left\{\tanh\f{x_2}{2} \tan\f{g^{-1}(y_2)}{2},\,
\f{\tanh\f{x_2}{2}}{\sinh \f{y_2}{2}}
\right\}\right) dy_2
\\
\geq &\; -\f{1}{2\pi}\int_{x_2}^{2x_2}
\f{\pi}{2}\, dy_2
-\f{1}{2\pi} \tanh\f{x_2}{2}  \int_{\min\{2x_2,g(0)\}}^{g(0)} \min\left\{\tan\f{g^{-1}(y_2)}{2},\,
\f{1}{\sinh \f{y_2}{2}}
\right\} dy_2
\\
\geq &\; -\f{1}{4}x_2
-\f{x_2}{4\pi} \int_{\min\{2x_2,g(0)\}}^{g(0)} \min\left\{ \tan\f{g^{-1}(y_2)}{2},\, \f{1}{\sinh \f{y_2}{2}}\right\} dy_2.
\end{align*}
Combining the estimates above concludes the lower bound.
\end{proof}
\end{lem}

\subsection{More estimates for $\Psi$}
The following two estimates for $\Psi$ will also be useful later.

\begin{lem}
\label{lem: control horizontal displacement of Psi}
For any $x_1\in [0,\pi]$ and $x_2\geq 0$,
\begin{align*}
&\;\Psi(x_1,x_2)-\Psi(\pi,x_2)\\
\geq &\; \f{1}{2\pi} \int_0^{g(0)} e^{-2\mu y_2}\sin g^{-1}(y_2) \cdot \ln\left(1+ \f{1+\cos x_1}
{\cosh(x_2-y_2) -1 + 2 \cos^2\f{ g^{-1}(y_2)}{2}}\right) dy_2.
\end{align*}
In particular, for any $x_1\in[0,\pi]$,
\[
h(x_1) \geq \f{1}{2\pi} \int_0^{g(0)} e^{-2\mu y_2}\sin g^{-1}(y_2) \cdot \ln\left(1+ \f{1+\cos x_1}{\cosh y_2 -1 + 2 \cos^2\f{ g^{-1}(y_2)}{2}}\right) dy_2.
\]

\begin{proof}
By Lemma \ref{lem: integral formula for Psi},
\beq
\begin{split}
-\pa_{x_1}\Psi(x_1,x_2)
= &\; \f{1}{4\pi}\int_0^{g(0)} e^{-2\mu y_2}\int_{-g^{-1}(y_2)}^{g^{-1}(y_2)}
\pa_{x_1}\big[\ln (\cosh(x_2-y_2)- \cos(x_1-y_1) )\big] \, dy_1\, dy_2
\\
= &\; \f{1}{4\pi}\int_0^{g(0)} e^{-2\mu y_2}\int_{-g^{-1}(y_2)}^{g^{-1}(y_2)}
-\pa_{y_1}\big[\ln (\cosh(x_2-y_2)- \cos(x_1-y_1))\big] \, dy_1\, dy_2
\\
= &\; \f{1}{4\pi}\int_0^{g(0)} e^{-2\mu y_2} \cdot \ln \left(\f{\cosh(x_2-y_2)-\cos(x_1 + g^{-1}(y_2))}{\cosh (x_2-y_2)-\cos(x_1-g^{-1}(y_2))}\right)dy_2.
\end{split}
\label{eqn: formula for Psi_x_1}
\eeq
Observe that for any $0<a<b$,
\[
\ln \left(\f{b}{a}\right) = \int_a^b \f{1}{s} \,ds
\geq (b-a)\left(\f{a+b}{2}\right)^{-1} = \f{2(b-a)}{a+b},
\]
and thus
\begin{align*}
& \;-\pa_{x_1}\Psi(x_1,x_2)\\
\geq &\; \f{1}{4\pi}\int_0^{g(0)} e^{-2\mu y_2} \cdot \f{2(\cos(x_1-g^{-1}(y_2))-\cos(x_1+g^{-1}(y_2)))}{2\cosh (x_2-y_2) - 2\cos x_1 \cos g^{-1}(y_2)}\, dy_2
\\
= &\;\f{1}{2\pi}\int_0^{g(0)} e^{-2\mu y_2} \cdot  \f{\sin x_1 \sin g^{-1}(y_2)}{\cosh(x_2-y_2) + \cos x_1  -\cos x_1 (1+\cos g^{-1}(y_2))}\, dy_2
\\
\geq&\; \f{1}{2\pi}\int_0^{g(0)}  e^{-2\mu y_2} \cdot \f{\sin x_1 \sin g^{-1}(y_2)}{\cosh(x_2-y_2) + \cos x_1 + 2 \cos^2\f{ g^{-1}(y_2)}{2}}\, dy_2
\\
= &\;\f{1}{2\pi}\int_0^{g(0)}  e^{-2\mu y_2} \sin g^{-1}(y_2) \cdot \f{d}{dx_1}\left[-\ln \left(\cosh(x_2-y_2) + \cos x_1 + 2 \cos^2\f{ g^{-1}(y_2)}{2}\right)\right] dy_2.
\end{align*}
This implies that
\begin{align*}
& \;\Psi(x_1,x_2)-\Psi(\pi,x_2)\\
= &\;\int_{x_1}^\pi -\pa_{x_1}\Psi(x,x_2)\,dx
\\
\geq &\; \f{1}{2\pi} \int_0^{g(0)} e^{-2\mu y_2}\sin g^{-1}(y_2) \cdot \ln\left(1+ \f{1+\cos x_1}{\cosh(x_2-y_2) -1 + 2 \cos^2\f{ g^{-1}(y_2)}{2}}\right) dy_2,
\end{align*}
which proves the first claim.
Since $h(x_1) = \Psi(x_1,0)-\Psi(\pi,0)$, taking $x_2 = 0$ immediately leads to the second claim.
\end{proof}
\end{lem}

\begin{lem}
\label{lem: upper bound for F x_1 derivative}

Suppose that $g\in \BM$. For $x_1\in [0,\f{\pi}{2}]$ and $x_2>0$,
\begin{align*}
&\; -\pa_{x_1}\Psi(x_1,x_2)\\
\leq
&\; C\sin x_1 \ln \left(2 + \f{x_2}{\sin x_1}\right)
+
C\int_0^{g(0)} e^{-2\mu y_2}\cdot\ln \left(1 + \f{ \sin x_1 \sin g^{-1}(y_2)}{y_2^2 + x_2^2 + \sin^2 \f{x_1}{2} +  \sin^2 \f{g^{-1}(y_2)}{2}}\right)dy_2, \end{align*}
while for $x_1\in [\f{\pi}{2},\pi]$ and $x_2>0$,
\begin{align*}
&\; -\pa_{x_1}\Psi(x_1,x_2)\\
\leq
&\; C\sin x_1 \ln \left(2 + \f{x_2}{\sin x_1}\right)
+ C \int_0^{g(0)} e^{-2\mu y_2}\cdot \ln \left(1 + \f{ \sin x_1 \sin g^{-1}(y_2)}{y_2^2 + x_2^2 +  \cos^2 \f{x_1}{2} +  \cos^2 \f{g^{-1}(y_2)}{2}}\right) dy_2.
\end{align*}
Here $C>0$ are universal constants that are independent of $\mu$.

\begin{proof}
By \eqref{eqn: formula for Psi_x_1},
\begin{align*}
&\; -\pa_{x_1}\Psi(x_1,x_2)\\
= &\; \f{1}{4\pi}\int_0^{g(0)} e^{-2\mu y_2} \cdot \ln \left(1 + \f{\cos(x_1 - g^{-1}(y_2))-\cos(x_1 +g^{-1}(y_2))}{\cosh (x_2-y_2)-\cos(x_1 - g^{-1}(y_2))}\right)dy_2
\\
= &\; \f{1}{4\pi}\int_0^{g(0)} e^{-2\mu y_2}\cdot \mathds{1}_{\{\sin g^{-1}(y_2)\leq 2\sin x_1\}} \ln \left(1 + \f{ 2\sin x_1 \sin g^{-1}(y_2)}{2\sinh^2\f{x_2-y_2}{2}+ 2\sin^2\f{x_1 - g^{-1}(y_2)}{2}}\right)dy_2
\\
&\; + \f{1}{4\pi}\int_0^{g(0)} e^{-2\mu y_2}\cdot \mathds{1}_{\{\sin g^{-1}(y_2)> 2\sin x_1\}} \ln \left(1 + \f{ 2\sin x_1 \sin g^{-1}(y_2)}{2\sinh^2\f{x_2-y_2}{2}+ 2\sin^2\f{x_1 - g^{-1}(y_2)}{2}}\right)dy_2
\\
=:&\; I_1+I_2.
\end{align*}

Since $\sinh^2 z\geq z^2$, $I_1$ can be bounded by
\[
I_1
\leq C\int_0^{g(0)}\ln \left(1 + \f{C\sin^2 x_1}{|x_2-y_2|^2}\right)dy_2
\leq C\sin x_1\int_{\BR}\ln \left(1 + \f{1}{s^2}\right)ds
\leq C\sin x_1,
\]
where $C>0$ is a universal constant.

We further split $I_2$ into two terms
\begin{align*}
I_2 \leq
&\; C\int_0^{2x_2} e^{-2\mu y_2}\cdot \mathds{1}_{\{\sin g^{-1}(y_2)> 2\sin x_1\}}  \ln \left(1 + \f{ C\sin x_1 \sin g^{-1}(y_2)}{|x_2-y_2|^2 +\sin^2\f{x_1 - g^{-1}(y_2)}{2}}\right)dy_2
\\
&\; + C\int_{[0,g(0)]\cap [2x_2,\infty)} e^{-2\mu y_2}\cdot \mathds{1}_{\{\sin g^{-1}(y_2)> 2\sin x_1\}}
\ln \left(1 + \f{ C\sin x_1 \sin g^{-1}(y_2)}{|x_2-y_2|^2 + \sin^2\f{x_1 - g^{-1}(y_2)}{2}}\right)dy_2\\
=: &\; I_{21} + I_{22}.
\end{align*}
We will need some lower bounds for $\sin^2\f{x_1 - g^{-1}(y_2)}{2}$.
Note that, under the condition $\sin g^{-1}(y_2)> 2\sin x_1$, we must have
$x_1\in [0,\f{\pi}{6})\cup (\f{5\pi}{6},\pi]$, and
\[
g^{-1}(y_2) \in \big(\arcsin (2\sin x_1),\, \pi-\arcsin (2\sin x_1)\big).
\]
If $x_1\in [0,\f{\pi}{6})$, we find that $g^{-1}(y_2)\in (2x_1,\pi-2x_1)$,
so
\[
\left|\f{g^{-1}(y_2)-x_1}{2}\right| = \f{g^{-1}(y_2)-x_1}{2}
\in\left[\f14 g^{-1}(y_2),\,\f12 g^{-1}(y_2)\right].
\]
This gives
\[
\sin^2 \f{g^{-1}(y_2)-x_1}{2} \geq C\sin^2 \f{g^{-1}(y_2)}{2}
\geq C\sin^2 \f{x_1}{2} + C\sin^2 \f{g^{-1}(y_2)}{2},
\]
where $C>0$ is universal.
In the last inequality, we used the fact that $\sin^2 \f{x_1}{2} \leq \sin^2 \f{g^{-1}(y_2)}{2}$.
Otherwise, if $x_1\in (\f{5\pi}{6},\pi]$, we have that $g^{-1}(y_2) \in (2(\pi-x_1),\pi-2(\pi-x_1))$, so
\[
\left|\f{g^{-1}(y_2)-x_1}{2}\right|= \f{(\pi-g^{-1}(y_2))-(\pi-x_1)}{2}\in\left[\f14 (\pi-g^{-1}(y_2)),\,\f12 (\pi-g^{-1}(y_2))\right].
\]
This analogously gives
\[
\sin^2 \f{g^{-1}(y_2)-x_1}{2} \geq C\sin^2 \f{\pi-g^{-1}(y_2)}{2} \geq C\cos^2\f{x_1}{2} +  C\cos^2 \f{g^{-1}(y_2)}{2},
\]
where $C>0$ is universal.
In both cases,
\[
\sin^2 \f{g^{-1}(y_2)-x_1}{2}
\geq C\sin^2 g^{-1}(y_2).
\]

With these bounds, we further derive that
\begin{align*}
I_{21} \leq
&\; C\int_0^{2x_2} \ln \left(1 + \f{ C\sin x_1 \sin g^{-1}(y_2)}{|x_2-y_2|^2 + \sin^2 g^{-1}(y_2)}\right)dy_2\\
\leq
&\; C\int_0^{2x_2} \ln \left(1 + \f{ C\sin x_1 }{|x_2-y_2|}\right)dy_2
\leq C\sin x_1 \ln \left(2 + \f{x_2}{\sin x_1}\right).
\end{align*}
For $I_{22}$, we note that, when $y_2 \geq 2x_2$, $|x_2-y_2|^2 \geq Cy_2^2 + Cx_2^2$.
If $x_1 \in [0,\f{\pi}{2}]$,
\begin{align*}
I_{22}
\leq
&\; C\int_{0}^{g(0)} e^{-2\mu y_2}\cdot \ln \left(1 + \f{ C\sin x_1 \sin g^{-1}(y_2)}{y_2^2 + x_2^2 + \sin^2\f{x_1 - g^{-1}(y_2)}{2}}\right)dy_2
\\
\leq
&\;
C \int_{0}^{g(0)} e^{-2\mu y_2}\cdot  \ln \left(1 + \f{ \sin x_1 \sin g^{-1}(y_2)}{y_2^2 + x_2^2 + \sin^2 \f{x_1}{2} +  \sin^2 \f{g^{-1}(y_2)}{2}}\right)dy_2.
\end{align*}
If $x_1 \in [\f{\pi}{2},\pi]$, similarly, \[
I_{22}
\leq
C \int_0^{g(0)} e^{-2\mu y_2}\cdot \ln \left(1 + \f{ \sin x_1 \sin g^{-1}(y_2)}{y_2^2 + x_2^2 + \cos^2 \f{x_1}{2} +  \cos^2 \f{g^{-1}(y_2)}{2}}\right)dy_2.
\]

Summarizing all the estimates, we proved the desired claim.
\end{proof}
\end{lem}

\subsection{Barrier functions}
In this part, we shall prove two very important propositions regarding the existence of what we call the upper and lower barrier functions for the mapping $\bfR$.
As is explained in Section \ref{sec: scheme of the proof}, these barrier functions will provide non-trivial upper and lower bounds in the design of the function set on which the fixed-point argument is applied (see \eqref{eqt: def of function set D_0} and \eqref{eqt: def of function set D} below).

\begin{prop}
\label{prop: upper barrier new}
Define $u(x) := \pi^2 -x^2$.
Let $M$ be given in Proposition \ref{prop: a priori upper bound general m}.
Let $c_*>0$ be the unique positive root to $\f{1-e^{-c}}{c}=\f12$. Then for any $\mu< \f{c_*}{2M}$, there exists $\Lam_0 = \Lam_0(\mu,M)>0$, such that the following holds: for any $\Lam\geq \Lam_0(\mu,M)$, if $g\in \BM$ satisfies $g(x)\leq M$, $g(x)\leq \Lam u(x)$ for all $x\in [0,\pi]$, and $g(x_*)= \Lam u(x_*)$ at some $x_*\in [0,\pi)$, then $\bfR(g)(x_*)< g(x_*)$.

\begin{proof}
We only consider the case $g(x_*)<M$, as otherwise this can be readily proved by Proposition \ref{prop: a priori upper bound general m} and the monotonicity of $\bfR (g)$.
We also assume that $\Lam$ is suitably large so that $x_*\in [\f{\pi}2,\pi]$ and $g(x_*)>\pi-x_*$.
This can be achieved, because from $g(x_*)=\Lam u(x_*)\in [0,M]$  we can derive that
\beqo
\pi-x_*  = \f{g(x_*)}{\Lam(\pi + x_*)}\leq \f{M}{\pi \Lam}.
\eeqo

We prove by contradiction.
Suppose $\bfR(g)(x_*)\geq g(x_*)$.
Then by the definition of $\bfR(g)$ (see \eqref{eqn: def of R(g)}) and the monotonicity of $F$, it must hold that
\[
F\big(x_*,g(x_*)\big)\geq F\big(x_*,\bfR(g)(x_*)\big) = \pa_{x_2}\Psi(\pi,0)\cdot \f{1-e^{-2\mu g(x_*)}}{2\mu g(x_*)}.
\]
This can be equivalently written as
\beq
F\big(x_*,g(x_*)\big) - \pa_{x_2}\Psi(\pi,0)
\geq -\pa_{x_2}\Psi(\pi,0)\left[1-\f{1-e^{-2\mu g(x_*)}}{2\mu g(x_*)}\right].
\label{eqn: difference at x_* and the end point}
\eeq

We first bound the right-hand side of \eqref{eqn: difference at x_* and the end point}.
By Lemma \ref{lem: integral formula for Psi} and the assumption $g(x)\leq M$, \beq
0\leq \pa_{x_2}\Psi(\pi,0)
\leq \f{1}{4\pi}\int_0^M e^{-2\mu y_2}
\int_{-\pi}^{\pi}\left[ \f{\sinh y_2}{\cosh y_2+ \cos y_1}+1\right] dy_1\,dy_2
= \f{1}{2\mu}\left(1-e^{-2\mu M}\right).
\label{eqn: upper bound for Psi_x2 at pi 0}
\eeq
So
\beqo
-\pa_{x_2}\Psi(\pi,0)\left[1-\f{1-e^{-2\mu g(x_*)}}{2\mu g(x_*)}\right]
\geq -\f{1}{2\mu}\left(1-e^{-2\mu M}\right)\left[1-\f{1-e^{-2\mu g(x_*)}}{2\mu g(x_*)}\right].
\eeqo

Next we bound the left-hand side of \eqref{eqn: difference at x_* and the end point} from above.
To that end, we rewrite it as
\beq
\begin{split}
&\;F\big(x_*,g(x_*)\big) - \pa_{x_2}\Psi(\pi,0)
\\
= &\; \left[F\big(x_*,g(x_*)\big) -F\big(\pi,g(x_*)-(\pi-x_*)\big)\right]
+\left[F\big(\pi,g(x_*)-(\pi-x_*)\big)- \pa_{x_2}\Psi(\pi,0)\right].
\end{split}
\label{eqn: compare the touching point and the endpoint}
\eeq
Using the fact that $\pa_{x_2}F_1\leq 0$,
\beq
\begin{split}
&\;F\big(x_*,g(x_*)\big) -F\big(\pi,g(x_*)-(\pi-x_*)\big)\\
= &\; -\int_{x_*}^{\pi} \big[(1,-1)\cdot
\na (F_1+F_2)\big] \big(y_1,g(x_*)-(y_1-x_*)\big)\, dy_1
\\
\leq &\; \int_{x_*}^{\pi}
\big(-\pa_{x_1} F_1-\pa_{x_1} F_2+\pa_{x_2} F_2\big)\big(y_1,g(x_*)-(y_1-x_*)\big)\, dy_1.
\end{split}
\label{eqn: tilted displacement}
\eeq
In order to bound the last line, we need some preparations.
Observe that, for any $y_1\in [x_*,\pi]$, it holds that $y_1\geq x_* \geq \f{\pi}{2}$ by assumption, and with $y_2 := g(x_*)-(y_1-x_*)$,
\begin{align*}
\f{\sin y_1}{y_2}
\leq &\;\f{\pi-y_1}{g(x_*)+(\pi-y_1)-(\pi-x_*)}
\\
\leq &\; \f{\pi-x_*}{g(x_*)}
= \f{1}{\Lam (\pi+x_*)}
\leq \f{2}{3\pi\Lam}.
\end{align*}
We also claim that
\[
y_2\mapsto \f{y_2^2 \sinh y_2}{y_2 \cosh y_2 - \sinh y_2}
\mbox{ is increasing on $(0,+\infty)$.}
\]
Indeed,
\[
\f{d}{dy_2}\left[\f{y_2^2 \sinh y_2}{y_2 \cosh y_2 - \sinh y_2}\right]
=\f{y_2(y_2^2 + y_2 \sinh y_2 \cosh y_2 -2\sinh^2 y_2)}{(y_2\cosh y_2 - \sinh y_2)^2},
\]
while
\[
\f{d}{dy_2}\big[y_2^2 + y_2 \sinh y_2 \cosh y_2 -2\sinh^2 y_2\big]
= y_2 + 2y_2\cosh^2 y_2 - 3\cosh y_2 \sinh y_2,
\]
and
\[
\f{d^2}{dy_2^2}\big[y_2^2 + y_2 \sinh y_2 \cosh y_2 -2\sinh^2 y_2\big]
= 4\sinh y_2\big(y_2\cosh y_2 - \sinh y_2 \big) \geq 0.
\]
This proves the desired claim.
As a result, for $y_1\in [x_*,\pi]$ and $y_2:= g(x_*)-(y_1-x_*)$,
\begin{align*}
\f{y_2\sinh y_2 \sin y_1}{y_2\cosh y_2-\sinh y_2}
= &\; \f{y_2^2\sinh y_2}{y_2\cosh y_2-\sinh y_2}\cdot \f{\sin y_1}{y_2}\\
\leq &\; \f{M^2\tanh M}{M-\tanh M}\cdot \f{2}{3\pi\Lam}=:\f{A_M}{\Lam},
\end{align*}
where we denoted
\[
A_M:=\f{2}{3\pi}\cdot \f{M^2\tanh M}{M-\tanh M}.
\]

By Lemma \ref{lem: upper bound for -F_1 x_1} and the fact that $z\mapsto z\ln(1+\f{1}{z})$ is increasing on $(0,+\infty)$,
\begin{align*}
&\;\int_{x_*}^{\pi}
-\pa_{x_1} F_1\big(y_1,g(x_*)-(y_1-x_*)\big)\, dy_1
\\
\leq &\; \int_{x_*}^{\pi}\left.\left[
\left(\sqrt{2} + \f{2+2\ln 3}{\pi}\right) \f{\sin y_1}{y_2}
+ \f{2\sin y_1}{\pi y_2} \ln \left(1+\f{y_2}{4\sin y_1}\right)\right]\right|_{y_2 = g(x_*)-(y_1-x_*)}\, dy_1
\\
\leq &\; (\pi-x_*)\left[
\left(\sqrt{2} + \f{2+2\ln 3}{\pi}\right) \f{1}{\pi\Lam}
+ \f{2}{\pi\Lam}\ln \left(1+\f{\pi\Lam}{4}\right)\right].
\end{align*}
By Lemma \ref{lem: bounding F_2 x_1 in terms of F_2 x_2},
\begin{align*}
&\;\int_{x_*}^{\pi}
\big[-\pa_{x_1} F_2+\pa_{x_2} F_2\big]\big(y_1,g(x_*)-(y_1-x_*)\big)\, dy_1
\\
\leq &\;\int_{x_*}^{\pi}
\left.\left[\left(\f{y_2\sinh y_2 \sin y_1}{y_2\cosh y_2 - \sinh y_2} - 1\right)\cdot \big(-\pa_{x_2} F_2\big)(y_1,y_2)\right]\right|_{y_2=g(x_*)-(y_1-x_*)}\, dy_1
\\
\leq &\;\int_{x_*}^{\pi}
\left(\f{A_M}{\Lam}- 1\right)\cdot \big(-\pa_{x_2} F_2\big)\big(y_1,g(x_*)-(y_1-x_*)\big) \, dy_1.
\end{align*}
Since $\pa_{x_2}F_2\leq 0$, we may assume $\Lam \geq A_M$, so that the above estimate implies
\[
\int_{x_*}^{\pi}
\big[-\pa_{x_1} F_2+\pa_{x_2} F_2\big]\big(y_1,g(x_*)-(y_1-x_*)\big)\, dy_1 \leq 0.
\]
Combining the estimates above with \eqref{eqn: tilted displacement}, we obtain that
\beq
\begin{split}
&\;F\big(x_*,g(x_*)\big) -F\big(\pi,g(x_*)-(\pi-x_*)\big)\\
\leq &\; g(x_*)\cdot \f{\pi-x_*}{g(x_*)}\cdot \f{1}{\pi\Lam} \left[
\left(\sqrt{2} + \f{2+2\ln 3}{\pi}\right)
+ 2\ln \left(1+\f{\pi\Lam}{4}\right)\right]
\\
\leq &\; g(x_*)\cdot \f{2}{3\pi^2 \Lam^2}\left[
\sqrt{2} + \f{2+2\ln 3}{\pi}
+ 2\ln \left(1+\f{\pi\Lam}{4}\right)\right].
\end{split}
\label{eqn: bound for the tilted difference}
\eeq

To bound the last term in \eqref{eqn: compare the touching point and the endpoint}, we apply Lemma \ref{lem: control vertical displacement general mu upper and lower bounds}: \begin{align*}
&\;F\big(\pi,g(x_*)-(\pi-x_*)\big)- \pa_{x_2}\Psi(\pi,0)\\
\leq &\; -\f{1}{\pi}\int_0^{g(x_*)-(\pi-x_*)} e^{-2\mu y_2} \cdot \arctan\left(\tanh\f{y_2}{2}\tan\f{g^{-1}(g(x_*)-(\pi-x_*))}{2}\right) dy_2
\\
\leq &\; -\f{1}{\pi}\int_0^{g(x_*)-(\pi-x_*)} e^{-2\mu y_2} \cdot \arctan\left(\tanh\f{y_2}{2}\tan\f{x_*}{2}\right) dy_2
\\
= &\; -\f{1}{\pi}\int_0^{g(x_*)-(\pi-x_*)} e^{-2\mu y_2} \left[\f{\pi}{2}- \arctan\left(\f{1}{\tanh\f{y_2}{2}}\tan\f{\pi-x_*}{2}\right)\right] dy_2
\\
\leq &\; -\f{1}{2} \cdot \f{1-e^{-2\mu (g(x_*)-(\pi-x_*))}}{2\mu}
+ \f{1}{\pi}\int_0^{g(x_*)} e^{-2\mu y_2}  \min\left\{\f{\pi}{2},\, \f{1}{\tanh\f{y_2}{2}} \cdot \f{\pi-x_*}{2}\right\} dy_2.
\end{align*}
In the last line, we used the fact that, for $\b\geq 1$ and $z\geq 0$, it holds that $\arctan (\b z)\leq \min\{\f{\pi}{2},\b \arctan z\}$.
By the mean value theorem,
\[
-\f12 \cdot \f{1-e^{-2\mu (g(x_*)-(\pi-x_*))}}{2\mu }
\leq - \f{1-e^{-2\mu g(x_*)}}{4\mu} + \f{\pi-x_*}{2}
\leq g(x_*)\left[- \f{1-e^{-2\mu g(x_*)}}{4\mu g(x_*)}
+ \f{1}{3\pi\Lam}\right].
\]
Since $z\mapsto \f{\tanh z}{z}$ is decreasing on $(0,+\infty)$, we use the fact $x_*\in [\f{\pi}{2},\pi]$ to deduce that, for $z\in (0,1]$,
\begin{align*}
\f{1}{\tanh\f{g(x_*)z}{2}} \cdot \f{\pi-x_*}{2}
= &\; \f{\f{g(x_*)z}{2}}{\tanh\f{g(x_*)z}{2}} \cdot  \f{\f{\pi-x_*}{2}}{\f{g(x_*)z}{2}}
\\
\leq &\;
\f{\f{Mz}{2}}{\tanh\f{Mz}{2}} \cdot \f{1}{\Lam(\pi+x_*)z}
\leq \f{M}{3\pi \Lam\tanh\f{Mz}{2}}.
\end{align*}
Hence,
\begin{align*}
&\; \f{1}{\pi}\int_0^{g(x_*)} \min\left\{\f{\pi}{2},\, \f{1}{\tanh\f{y_2}{2}}\cdot\f{\pi-x_*}{2}\right\} dy_2
\\
= &\; \f{g(x_*)}{\pi}\int_0^1 \min\left\{\f{\pi}{2},\, \f{1}{\tanh\f{g(x_*)z}{2}}\cdot\f{\pi-x_*}{2}\right\} dz
\\
\leq &\; \f{g(x_*)}{\pi}\int_0^1 \min\left\{\f{\pi}{2},\, \f{M}{3\pi \Lam\tanh\f{M z}{2}} \right\} dz.
\end{align*}
In summary,
\begin{align*}
&\;F\big(\pi,g(x_*)-(\pi-x_*)\big)- \pa_{x_2}\Psi(\pi,0)\\
\leq &\;  g(x_*)\left[- \f{1-e^{-2\mu g(x_*)}}{4\mu g(x_*)}
+ \f{1}{3\pi\Lam}
+ \f{1}{\pi}\int_0^1 \min\left\{\f{\pi}{2},\, \f{M}{3\pi \Lam\tanh\f{Mz}{2}} \right\} dz\right].
\end{align*}
Combining this with \eqref{eqn: difference at x_* and the end point}-\eqref{eqn: compare the touching point and the endpoint} as well as \eqref{eqn: bound for the tilted difference}, we find that
\beq
\begin{split}
&\;
\f{1-e^{-2\mu g(x_*)}}{2\mu g(x_*)} -\left(1-e^{-2\mu M}\right)\cdot \f{1}{\mu g(x_*)}\left(1-\f{1-e^{-2\mu g(x_*)}}{2\mu g(x_*)}\right)
\\
\leq &\;
\f{4}{3\pi^2 \Lam^2}\left[
\sqrt{2} + \f{2+2\ln 3}{\pi}
+ 2\ln \left(1+\f{\pi\Lam}{4}\right)\right]\\
&\;
+ \f{2}{3\pi\Lam}
+ \f{2}{\pi}\int_0^1 \min\left\{\f{\pi}{2},\, \f{M}{3\pi \Lam\tanh\f{Mz}{2}} \right\} dz.
\end{split}
\label{eqn: final inequality in upper barrier prelim}
\eeq

Let
\[
\r(z) := \f{2}{1-e^{-2z} } \left(1-\f{1-e^{-2z}}{2z}\right) = 1 + \f{1}{\tanh z} - \f{1}{z}.
\]
For $z>0$,
\[
\r'(z) = \f1{z^2}-\f{1}{\sinh^2 z} >0,
\]
so $\r =\r(z)$ is strictly increasing on $(0,+\infty)$.
Also note that $\eta(z):=\f{1-e^{-z}}{z}$ is strictly decreasing on $(0,+\infty)$.
This allows us to handle the left-hand side of \eqref{eqn: final inequality in upper barrier prelim} as follows:
\begin{align*}
&\;
\f{1-e^{-2\mu g(x_*)}}{2\mu g(x_*)} -\left(1-e^{-2\mu M}\right)\cdot \f{1}{\mu g(x_*)}\left(1-\f{1-e^{-2\mu g(x_*)}}{2\mu g(x_*)}\right)
\\
= &\;\f{1-e^{-2\mu g(x_*)}}{2\mu g(x_*)}
\Big[1-\left(1-e^{-2\mu M}\right) \r\big(\mu g(x_*)\big)\Big]
\\
\geq
&\; \f{1-e^{-2\mu M}}{2\mu M}
\left[1-\left(1-e^{-2\mu M}\right) \r\big(\mu M\big)\right]
\\
= &\;
\f{1-e^{-2\mu M}}{\mu M}
\left[\f{1-e^{-2\mu M}}{2\mu M}- \f12\right].
\end{align*}
Recall that $c_*>0$ is defined to be the unique positive root to $\eta(c) = \f12$, which means $\eta(z) > \f12$ for any $z\in (0,c_*)$.
Since we assumed $2\mu M < c_*$, this immediately gives that
\begin{align*}
\f{1-e^{-2\mu M}}{\mu M}
\left[\f{1-e^{-2\mu M}}{2\mu M}- \f12\right] > 0.
\end{align*}
On the other hand, the right-hand side of \eqref{eqn: final inequality in upper barrier prelim} converges to zero if $\Lam$ is sent to $+\infty$.
Therefore, whenever $2\mu M < c_*$, we may choose $\Lam$ to be sufficiently large, such that
\begin{align*}
&\; \f{4}{3\pi^2 \Lam^2}\left[
\sqrt{2} + \f{2+2\ln 3}{\pi}
+ 2\ln \left(1+\f{\pi\Lam}{4}\right)\right]\\
&\;
+ \f{2}{3\pi\Lam}
+ \f{2}{\pi}\int_0^1 \min\left\{\f{\pi}{2},\, \f{M}{3\pi \Lam\tanh\f{Mz}{2}} \right\} dz
\\
\leq &\; \f{1-e^{-2\mu M}}{\mu M}
\left[\f{1-e^{-2\mu M}}{2\mu M}- \f12\right],
\end{align*}
which would lead to a contradiction to \eqref{eqn: final inequality in upper barrier prelim}.

This completes the proof.
\end{proof}

\begin{rmk}
\label{rmk: universality of large Lambda}
Recall that $c_*>0$ is the unique positive root to $\f{1-e^{-c}}{c}=\f12$.
Since
\[
\ln 4= 2\int_1^2 \f{1}{z}\, dz < 2\cdot\f12 \left(1+\f{1}{2}\right) = \f{3}{2},
\]
we have $e^{-3/2}< \f14$.
This together with the fact that $c\mapsto \f{1-e^{-c}}{c}$ is strictly decreasing on $(0,+\infty)$ implies that $c_*>\f32$.
It is then not difficult to see from the last part of the proof that, with $M$ and $\mu_0$ being the universal constants fulfilling the statement of Proposition \ref{prop: a priori upper bound general m}, for all $\mu \in [0, \f{3}{4M}]\cap [0,\mu_0]$, $\Lam_0(\mu,M)$ in the statement of Proposition \ref{prop: upper barrier new} may be chosen uniformly as a universal constant, which is denoted by $\Lam_*$.
Without loss of generality, we may further assume that $\Lam_*\geq 1$ and $ \Lam_* \pi^2\geq M$.

\end{rmk}
\end{prop}

\begin{prop}
\label{prop: lower barrier}
Let $\mu\in [0,\f12]$ and fix $\Lam\geq 1$.
Denote $u(x):= \pi^2-x^2$.
Then there exists $\lam_0\in (0,1]$, which depends only on $\Lam$ but not on $\mu$, such that the following holds for any $\lam\in (0,\lam_0]$: if $g\in \BM$ satisfies $\lam u(x)\leq g(x)\leq \Lam u(x)$ for all $x\in [0,\pi]$, and $g(x_*)= \lam u(x_*)$ at some $x_*\in [0,\pi)$, then $\bfR(g)(x_*)> g(x_*)$.

\begin{rmk}
\label{rmk: universality of small lambda}
If we set $\Lam \geq 1$ in the condition of this proposition to be $\Lam_*$ defined in Remark \ref{rmk: universality of large Lambda}, we can obtain $\lam_0$ here as a universal constant as well.
We denote this universal constant to be $\lam_*$.
Without loss of generality, we assume that $\lam_* \pi^2  \leq  M$.
\end{rmk}

\begin{proof} Assume that $g\in \BM$ satisfies $\lam u(x)\leq g(x)\leq \Lam u(x)$ for all $x\in [0,\pi]$, and $g(x_*)= \lam u(x_*)$ at some $x_*\in [0,\pi)$.
To show $\bfR(g)(x_*)> g(x_*)$, thanks to the monotonicity of $F$ (see Proposition \ref{prop: monotonicity of F}) and the definition of $\bfR(g)$ in \eqref{eqn: def of R(g)}, it suffices to achieve that
\beqo
F(x_*,g(x_*))-\pa_{x_2}\Psi(\pi,0)>0,
\eeqo
i.e.,
\beq
F(x_*,g(x_*))-F(\pi,g(x_*))
> - \big(F(\pi,g(x_*))-\pa_{x_2}\Psi(\pi,0)\big).
\label{eqn: sign of F at the lower touch point}
\eeq

We first study the right-hand side of \eqref{eqn: sign of F at the lower touch point}.
By Lemma \ref{lem: control vertical displacement general mu upper and lower bounds},
\begin{align*}
&\; - \big(F(\pi,g(x_*))-\pa_{x_2}\Psi(\pi,0)\big)\\
\leq &\;
Cg(x_*)\left(1
+\int_{\min\{2g(x_*),g(0)\}}^{g(0)} \min\left\{ \tan\f{g^{-1}(y_2)}{2},\, \f{1}{\sinh \f{y_2}{2}}\right\} dy_2
\right),
\end{align*}
where $C>0$ is a universal constant.
We calculate that
\begin{align*}
&\;\int_{\min\{2g(x_*),g(0)\}}^{g(0)} \min\left\{\tan\f{g^{-1}(y_2)}{2},\, \f{1}{\sinh\f{y_2}{2}} \right\} dy_2
\\
\leq &\; \int_{0}^\infty \min\left\{\tan\f{x_*}{2},\, \f{1}{\sinh\f{y_2}{2}} \right\} dy_2
\\
\leq &\; \int_0^1 \min\left\{\tan\f{x_*}{2},\, \f{2}{y_2} \right\} dy_2
+ \int_1^\infty \f{1}{\sinh\f{y_2}{2}} \, dy_2
\\
\leq &\;
C\ln \left(2+\tan\f{x_*}{2}\right).
\end{align*}
Therefore,
\beq
- \big(F(\pi,g(x_*))-\pa_{x_2}\Psi(\pi,0)\big)
\leq Cg(x_*)\ln \left(2+\tan\f{x_*}{2}\right),
\label{eqn: upper bound for RHS}
\eeq
where $C>0$ is universal.

\setcounter{case}{0}

Next let us bound the left-hand side of \eqref{eqn: sign of F at the lower touch point} from below.
By Lemma \ref{lem: control horizontal displacement of Psi},
\beq
\begin{split}
&\; F(x_*,g(x_*))-F(\pi,g(x_*))\\
\geq &\; \f{1}{2\pi g(x_*)} \int_0^{g(0)}e^{-2\mu y_2} \sin g^{-1}(y_2) \cdot \ln\left(1+ \f{1+\cos x_*}
{\cosh(g(x_*)-y_2) -1 + 2 \cos^2\f{g^{-1}(y_2)}{2}}\right) dy_2.
\end{split}
\label{eqn: F x_* g(x_*) - F pi g(x_*)}
\eeq
We proceed in two cases.
\begin{case}
We first assume $x_*\in [0,\f{2\pi}{3}]$.
Under this condition, for $y_2\in [0,\lam u(\f{2\pi}{3})]\subset [0,g(\f{2\pi}{3})]$, it holds that $y_2 \leq g(x_*)$; moreover, $g^{-1}(y_2)\in [\f{2\pi}{3},\pi]$, which implies $\sin g^{-1}(y_2)\geq \sqrt{3}\cos \f{g^{-1}(y_2)}{2}$.
Hence, \eqref{eqn: F x_* g(x_*) - F pi g(x_*)} gives
\begin{align*}
&\; F (x_*,g(x_*))-F(\pi,g(x_*))
\\
\geq &\; \f{1}{2\pi \lam u(x_*)} \int_0^{\lam u(2\pi/3)} e^{-2\mu\lam u(\f{2\pi}{3})} \cdot \sqrt{3}\cos \f{g^{-1}(y_2)}{2} \cdot \ln\left(1+ \f{\f12}
{\cosh g(x_*)-1 + 2 \cos^2\f{ g^{-1}(y_2)}{2}}\right) dy_2.
\end{align*}
Since $g(x_*) = \lam u(x_*) \leq \pi^2\lam$,
\[
\cosh g(x_*)-1\leq \cosh (\pi^2 \lam)-1 \leq C\lam^2,
\]
where $C$ is universal given that $\lam \leq 1$.
Also, $e^{-2\mu \lam u(\f{2\pi}{3})} \geq C$ for some universal $C>0$, since $\lam \leq 1$ and $\mu\in [0,\f12]$.
Therefore, for some universal $C>0$, \beq
\begin{split}
&\; F(x_*,g(x_*))-F(\pi,g(x_*))
\\
\geq &\; \f{C}{\lam} \int_0^{\lam u(2\pi/3)} \cos\f{ g^{-1}(y_2)}{2} \cdot \ln\left(1+ \f{1}
{\lam^2 + \cos^2\f{g^{-1}(y_2)}{2}}\right) dy_2
\\
=: &\; \f{C}{\lam} \int_0^{\lam u(2\pi/3)} \r_\lam\left(\cos\f{ g^{-1}(y_2)}{2}\right) dy_2,
\end{split}
\label{eqn: lower bound for LHS prelim general mu}
\eeq
where we defined
\[
\r_\lam(s):= s\ln\left(1+ \f{1}{\lam^2 + s^2}\right).
\]

We claim that there exists a unique $s_* = s_*(\lam)>0$, such that $\r_\lam=\r_\lam(s)$ is increasing in $[0,s_*(\lam)]$ and decreasing in $[s_*(\lam),+\infty)$.
Indeed,
\[
\r'_\lam(s) = \ln\left(1+ \f{1}{\lam^2 + s^2}\right)
- \f{2s^2}{(1+\lam^2+s^2)(\lam^2+s^2)},
\]
so
\[
\big(1+\lam^2+s^2\big)\r_\lam'(s)
= \f{\ln(1-\f{1}{1+\lam^2 + s^2})}{-\f{1}{1+\lam^2+s^2}}
- \f{2s^2}{\lam^2+s^2}
\]
is strictly decreasing for $s\in [0,+\infty)$.
Moreover, $\r_\lam'(0)>0$, and
\[
\r_\lam'\big(\sqrt{1+\lam^2}\big) = \ln\left(1+ \f{1}{1+2\lam^2}\right)
- \f{2(1+\lam^2)}{2(1+\lam^2)(1+2\lam^2)} <0.
\]
This implies the claim.

Now to bound the last line of \eqref{eqn: lower bound for LHS prelim general mu}, we deduce under the assumption $g(x)\leq \Lam u(x)$ that, for any $y_2\in [0,\lam u(\f{2\pi}{3})]\subset [0,g(\f{2\pi}{3})]$,
\[
\pi-g^{-1}(y_2)\leq \f{\pi}{3},
\quad
\pi-g^{-1}(y_2)\geq \f{y_2}{\Lam (\pi + g^{-1}(y_2))} \geq
\f{y_2}{2\pi \Lam},
\]
so $\cos\f{g^{-1}(y_2)}{2} \in [\f{Cy_2}{\Lam}, \f12]$ for some universal $C\in (0,1)$.
This together with the above claim gives that
\[
\r_\lam\left(\cos\f{ g^{-1}(y_2)}{2}\right)
\geq \min\left\{ \r_\lam\left(\f{Cy_2}{\Lam}\right),\, \r_\lam\left(\f{1}{2}\right)\right\}
\geq C\min\left\{\r_\lam\left(\f{y_2}{\Lam}\right), \, 1 \right\}
\geq C\r_\lam\left(\f{y_2}{\Lam}\right),
\]
where $C$ is universal.
In the last inequality, we used the fact that $\r_\lam(s) \leq C$ on $[0,+\infty)$ for some universal $C>0$ that is independent of $\lam\in [0,1]$.
Therefore,
\begin{align*}
\int_0^{\lam u(2\pi/3)} \r_\lam\left(\cos\f{ g^{-1}(y_2)}{2}\right) dy_2
\geq &\; C\int_0^{\lam u(2\pi/3)} \r_\lam\left(\f{y_2}{\Lam}\right) dy_2
\\
= &\; C\Lam \int_0^{\lam u(2\pi/3)/\Lam} s \ln\left(1+ \f{1}
{\lam^2 + s^2}\right) ds
\\
\geq &\; C\Lam \cdot \left(\f{\lam}{\Lam}\right)^2 \ln\left(1+ \f{1}
{\lam^2 + (\f{\lam}{\Lam})^2}\right)\\
\geq &\; \f{C\lam^2}{\Lam} \ln\left(1+ \f{1}{2\lam^2}\right),
\end{align*}
where $C$ is universal.
Here we used the facts that $u(2\pi/3) \geq 1$ and $\lam\leq 1\leq \Lam$.
Putting this back to \eqref{eqn: lower bound for LHS prelim general mu} yields that
\[
F(x_*,g(x_*))-F(\pi,g(x_*))
\geq \f{C\lam}{\Lam} \ln\left(1+ \f{1}
{2\lam^2}\right),
\]
where $C$ is universal.

In view of this estimate and \eqref{eqn: upper bound for RHS}, in order to achieve \eqref{eqn: sign of F at the lower touch point}, we only need to guarantee that
\beqo
\f{\lam}{\Lam} \ln\left(1+ \f{1}
{2\lam^2}\right)
> C\lam u(x_*)\ln \left(2+\tan\f{x_*}{2}\right),
\eeqo
where $C$ is universal.
Since $x_*\in [0,\f{2\pi}{3}]$, this holds when $\lam$ is taken to be suitably small, which only depends on $\Lam$.
\end{case}

\begin{case}
Next we consider the case $x_*\in [\f{2\pi}{3},\pi]$.
We still have \eqref{eqn: F x_* g(x_*) - F pi g(x_*)}.
Observe that, for $y_2\in[\lam u(x_*),\lam u(\f{\pi}{2})]\subset [ g(x_*),g(\f{\pi}{2})]$,
\[
1+\cos x_* = 2\cos^2 \f{x_*}{2}\leq 2\cos^2 \f{g^{-1}(y_2)}{2}
\leq C\big(\pi-g^{-1}(y_2)\big)^2,
\]
and
\[
\cosh (g(x_*)-y_2)-1\leq C(y_2-g(x_*))^2,
\]
where $C$ is universal.
As a result,
\[
\f{1+\cos x_*}
{\cosh(g(x_*)-y_2) -1 + 2 \cos^2\f{g^{-1}(y_2)}{2}}
\leq 1,
\]
while
\[
\f{1+\cos x_*}
{\cosh(g(x_*)-y_2) -1 + 2 \cos^2\f{g^{-1}(y_2)}{2}}\geq
\f{C(1+\cos x_*)}
{(y_2-g(x_*))^2 + (\pi-g^{-1}(y_2))^2},
\]
where $C>0$ is universal.
Therefore, we derive from \eqref{eqn: F x_* g(x_*) - F pi g(x_*)} that
\begin{align*}
&\; F\big(x_*,g(x_*)\big)-F\big(\pi,g(x_*)\big)
\\
\geq &\; \f{C}{g(x_*)} \int_{\lam u(x_*)}^{\lam u(\pi/2)} e^{-2\mu \lam u(\pi/2)} \big(\pi-g^{-1}(y_2)\big)\cdot \f{1+\cos x_*}
{\cosh(g(x_*)-y_2) -1 + 2 \cos^2\f{g^{-1}(y_2)}{2}}\, dy_2
\\
\geq &\; \f{C(1+\cos x_*)}{g(x_*)} \int_{\lam u(x_*)}^{\lam u(\pi/2)}
\f{\pi-g^{-1}(y_2)}{(y_2-g(x_*))^2 + (\pi-g^{-1}(y_2))^2}\, dy_2,
\end{align*}
where $C$ is universal.
In the last line, we used the facts that $\mu\in [0,\f12]$ and $\lam\in [0,1]$, so $e^{-2\mu \lam u(\pi/2)}> C$ for some universal $C>0$.

By the assumption $\lam u(x)\leq g(x)\leq \Lam u(x)$, for $y_2\in [\lam u(x_*),\lam u(\f{\pi}{2})]\subset [g(x_*),g(\f{\pi}{2})]$,
\beq
\pi-g^{-1}(y_2)\in\left[\pi-x_*,\,\f{\pi}{2}\right]\cap \left[\f{C_1}{\Lam}y_2,\,\f{C_2}{\lam}y_2\right],
\label{eqn: bounds for pi-g^-1}
\eeq
where $C_1$ and $C_2$ are universal constants.
Hence,
\[
(y_2-g(x_*))^2 \leq y_2^2 \leq C\Lam^2\big(\pi-g^{-1}(y_2)\big)^2,
\]
This gives that
\begin{align*}
&\; F\big(x_*,g(x_*)\big)-F\big(\pi,g(x_*)\big)
\\
\geq &\; \f{C(1+\cos x_*)}{g(x_*)} \int_{\lam u(x_*)}^{\lam u(\pi/2)} \f{\pi-g^{-1}(y_2)}{(1+\Lam^2)(\pi-g^{-1}(y_2))^2}\, dy_2
\\
\geq &\; \f{C(1+\cos x_*)}{(1+\Lam^2) \cdot \lam u(x_*)} \int_{\lam u(x_*)}^{\lam u(\pi/2)} \f{1}{\f{C_2}{\lam}y_2 }\, dy_2,
\end{align*}
where $C$ is universal.
Here we used \eqref{eqn: bounds for pi-g^-1} in the second inequality.
Calculating the integral yields that \[
F\big(x_*,g(x_*)\big)-F\big(\pi,g(x_*)\big)
\geq \f{Cu(x_*)}{\Lam^2} \cdot \ln \left(\f{u(\pi/2)}{u(x_*)}\right),
\]
where $C$ is universal.

In view of this estimate and \eqref{eqn: upper bound for RHS}, we find that in order to achieve \eqref{eqn: sign of F at the lower touch point}, we only need
\beqo
\f{u(x_*)}{\Lam^2} \ln\left(\f{u(\pi/2)}{u(x_*)}\right)
> Cg(x_*)\ln \left(2+\tan\f{x_*}{2}\right),
\eeqo
where $C$ is universal. This is equivalent to
\[
\ln\left(\f{\pi^2 - (\f{\pi}{2})^2}{\pi^2 -x_*^2}\right)
> C\lam \Lam^2\ln \left(2+\tan\f{x_*}{2}\right).
\]
Since $x_* \in [\f{2\pi}{3},\pi]$, it is clear that this holds when $\lam$ is taken to be suitably small, which only depends on $\Lam$.
\end{case}

In summary, by taking $\lam$ to be smaller than some $\lam_0\leq 1$, which only depends on $\Lam$ but not on $\mu\in [0,\f12]$, we can make \eqref{eqn: sign of F at the lower touch point} hold.
That further gives $\mathbf{R}(g)(x_*)>g(x_*)$ due to the monotonicity of $F$.
\end{proof}
\end{prop}

\section{Existence of the Fixed Point}
\label{sec: existence of the fixed point}

Recall that \eqref{eqn: def of R(g)} not only defines the mapping $\bfR$, but also reformulates the task of finding $g\in \BM$ that satisfies \eqref{eqn: constraint for the boundary curve in Phi} into a fixed-point problem for $\bfR$ in $\BM$ (see its definition in \eqref{eqn: function set tilde M_0}).
In this section, we will prove the desired existence of the fixed point of $\bfR$ on a suitably designed subset of $\BM$.
The main result of this section is Proposition \ref{prop: existence of fixed point}.

From now on, if not otherwise stated, we will always use $M\geq 1$ and $\mu_0\in (0,\f12]$ to denote the universal constants that fulfill Proposition \ref{prop: a priori upper bound general m} (also see Corollary \ref{cor: M=4 and mu leq 1/12}), and we will always assume $\mu \in[0,\mu_0]$.

\subsection{Continuity of the mapping $\bfR$}
We will first show the continuity of $\bfR(g)$ with respect to $g$ in the standard $L^2$-norm on $\BT=[-\pi,\pi]$ under suitable conditions.
To that end, we will start from proving continuous dependence of $\pa_{x_2}\Psi(\pi,0;g)$ and $F(x_1,x_2;g)$ on $g$.

\begin{lem}\label{lem: bound for Psi_x_2}
For any $g \in \BM$ with $g(0)\leq M$,  $\pa_{x_2}\Psi(\pi, 0;g)\in [0,M]$.
Moreover, for any $g_1,g_2 \in \BM$ with $g_1,g_2\leq M$ on $[-\pi,\pi]$,
\[
\big|\pa_{x_2}\Psi(\pi, 0;g_1) - \pa_{x_2}\Psi(\pi, 0;g_2)\big| \leq C_M\|g_1-g_2\|_{L^2(\BT)}^{1/2},
\]
where $C_M$ is a universal constant depending only on $M$.

\begin{proof}
That $\pa_{x_2}\Psi(\pi,0;g)\in [0,M]$ has been proved in \eqref{eqn: upper bound for Psi_x2 at pi 0}.

To justify the claimed upper bound for $|\pa_{x_2}\Psi(\pi, 0;g_1) - \pa_{x_2}\Psi(\pi, 0;g_2)|$, we denote
\[
g_l(x) := \min \{g_1(x), g_2(x)\}, \quad
g_u(x) := \max \{g_1(x), g_2(x)\}.
\]
Then \begin{align*}
&\;\big|\pa_{x_2}\Psi(\pi, 0;g_1) - \pa_{x_2}\Psi(\pi, 0;g_2)\big|
\\
= &\;\frac{1}{4\pi} \left| \int_{-\pi}^{\pi}\int_{g_2(y_1)}^{g_1(y_1)}e^{-2\mu y_2} \left[\frac{\sinh y_2}{\cosh y_2 + \cos y_1} + 1\right] dy_2\, dy_1 \right| \\
\leq &\;\frac{1}{4\pi} \int_{-\pi}^{\pi}\int_{g_l(y_1)}^{g_u(y_1)} \left[\frac{\sinh y_2}{\cosh y_2 + \cos y_1} + 1\right] dy_2\, dy_1  \\
= &\;\frac{1}{2\pi} \int_{0}^{\pi}
\left[\int_{g_l(y_1)}^{g_u(y_1)} \f{2\sinh \f{y_2}{2}\cosh \f{y_2}{2}}{2\sinh^2\f{y_2}{2} + 2\cos^2 \f{y_1}{2}} \, dy_2\right] + g_u(y_1)-g_l(y_1)\, dy_1
\\
\leq &\;\frac{1}{2\pi} \int_{0}^{\pi}
\left[\int_{g_l(y_1)}^{g_u(y_1)} \f{\sinh \f{y_2}{2}\cosh \f{y_2}{2}}{\sinh^2\f{y_2}{2} + \cos^2 \f{y_1}{2}} \, dy_2\right]dy_1 +  C\|g_1 - g_2\|_{L^2},
\end{align*}
where $C$ is a universal constant.
Since $\f{a}{a^2+b^2}\leq\f{2}{a+b}$ for all $a,b>0$,
\begin{align*}
&\;\int_{g_l(y_1)}^{g_u(y_1)} \f{\sinh \f{y_2}{2}\cosh \f{y_2}{2}}{\sinh^2\f{y_2}{2} + \cos^2 \f{y_1}{2}} \, dy_2
\\
\leq &\;
2\int_{g_l(y_1)}^{g_u(y_1)} \f{\cosh \f{y_2}{2}}{\sinh\f{y_2}{2} + \cos \f{y_1}{2}} \, dy_2
= 4\ln\left(\frac{\sinh\frac{g_u(y_1)}{2} +\cos\frac{y_1}{2}}{\sinh\frac{g_l(y_1)}{2}+\cos\frac{y_1}{2}}\right)\\
\leq &\; 4\ln\left(1 + \frac{\sinh\frac{g_u(y_1)}{2} - \sinh\frac{g_l(y_1)}{2}}{\cos\frac{y_1}{2}}\right)\\
\leq &\; C_M \ln\left(1 + \frac{g_u(y_1)-g_l(y_1)}{\pi-y_1}\right),
\end{align*}
where $C_M$ depends on $M$.
Note that $\ln(1+z^2)\leq 2z\ln (1+z)$ for all $z\geq 0$.
Indeed, if $z>1$, $\ln(1+z^2)\leq 2\ln(1+z)\leq 2z\ln(1+z)$, while if $z\in [0,1]$, $\ln(1+z^2)\leq z^2 \leq z\int_0^z\f{2}{1+s}\,ds = 2z\ln (1+z)$.
Using this inequality, we can further deduce that
\begin{equation}\label{eqt: difference inequality}
\begin{split}
&\;\int_0^{\pi} \ln\left(1 + \frac{g_u(y_1)-g_l(y_1)}{\pi-y_1}\right) dy_1\\
\leq &\; 2\int_0^{\pi} \left(\frac{g_u(y_1)-g_l(y_1)}{\pi-y_1}\right)^{1/2}\ln\left(1 + \left(\frac{g_u(y_1)-g_l(y_1)}{\pi-y_1}\right)^{1/2}\right) dy_1\\
\leq &\; C\|g_1 - g_2\|_{L^2}^{1/2} \left(\int_0^{\pi} \left|\frac{1}{(\pi-y_1)^{1/2}}\ln\left(1 + \frac{(2M)^{1/2}}{(\pi-y_1)^{1/2}}\right)\right|^{4/3} dy_1\right)^{3/4}\\
\leq &\; C_M\|g_1 - g_2\|_{L^2}^{1/2}.
\end{split}
\end{equation}
Sequencing all estimates above and using the fact $g_1,g_2\leq M$ leads to the desired bound.
\end{proof}
\end{lem}

\begin{lem}\label{lem: continuity of F}
	Given any $g_1, g_2 \in \BM$ with $g_1(0), g_2(0) \leq M$, for all $x_1 \in [-\pi, \pi]$ and $x_2\in (0,M]$,
    \[
    |F(x_1, x_2; g_1) - F(x_1, x_2; g_2)|\leq  \frac{C_M}{x_2^{1/4}}\left(1 + \frac{\pi-x_1}{x_2}\right)^{3/4} \|g_1-g_2\|_{L^2(\BT)}^{1/2},
    \]
    where $C_M$ is a universal constant depending only on $M$.

\begin{proof}
Again let us write $g_l(x) := \min \{g_1(x), g_2(x)\}$, and $g_u(x) := \max \{g_1(x), g_2(x)\}$.
We may assume $x_1\in[0,\pi]$ by symmetry.
Then by Lemma \ref{lem: integral formula for Psi} and \eqref{eqn: def of F},
    \begin{align*}
         &\; |F(x_1, x_2; g_1) - F(x_1, x_2; g_2)| \\
         =&\; \frac{1}{4\pi x_2}\left|\int_{-\pi}^{\pi}\int_{g_2(y_1)}^{g_1(y_1)}e^{-2\mu y_2}\left[\ln\left(\frac{\cosh(x_2 - y_2) - \cos(x_1 - y_1)}{\cosh y_2 + \cos y_1}\right) - x_2\right]dy_2\, dy_1\right|\\
         \leq &\;\frac{C}{x_2}\int_{-\pi}^{\pi} \int_{g_l(y_1)}^{g_u(y_1)}\left|\ln\left(\frac{\cosh(x_2 - y_2) - \cos(x_1 - y_1)}{\cosh y_2 + \cos y_1}\right)\right|dy_2\, dy_1 + C\|g_1 - g_2\|_{L^2},
    \end{align*}
    where $C>0$ is a universal constant.
    Regarding the first term above, we can derive that
    \begin{align*}
        &\;\int_{-\pi}^{\pi}\int_{g_l(y_1)}^{g_u(y_1)}\left|\ln\left(\frac{\cosh(x_2 - y_2) - \cos(x_1 - y_1)}{\cosh y_2 + \cos y_1}\right)\right|dy_2\, dy_1 \\
        \leq &\;\int_{-\pi}^{\pi}\int_{g_l(y_1)}^{g_u(y_1)}\ln\left(1 + \frac{|\cosh(x_2 - y_2) - \cos(x_1 - y_1)-(\cosh y_2 + \cos y_1)|}{\min\{\cosh y_2 + \cos y_1,\cosh(x_2 - y_2) - \cos(x_1 - y_1)\}}\right)dy_2\, dy_1 \\
        \leq &\;\int_{-\pi}^{\pi}\int_{g_l(y_1)}^{g_u(y_1)}\ln\left(1 + \frac{|\cosh(x_2 - y_2) - \cos(x_1 - y_1)-(\cosh y_2 + \cos y_1)|}{\cosh y_2 + \cos y_1}\right)dy_2\, dy_1\\
        &\; + \int_{-\pi}^{\pi}\int_{g_l(y_1)}^{g_u(y_1)}\ln\left(1 + \frac{|\cosh(x_2 - y_2) - \cos(x_1 - y_1)-(\cosh y_2 + \cos y_1)|}{\cosh(x_2 - y_2) - \cos(x_1 - y_1)}\right)dy_2\, dy_1\\
        =: &\; I_1 + I_2.
    \end{align*}

    We first handle $I_1$.
    Note that
    \begin{align*}
        &\; \cosh(x_2 - y_2) - \cos(x_1 - y_1)-(\cosh y_2 + \cos y_1) \\
        =&\; (\cosh x_2-1)\cosh y_2-(\cos x_1+1)\cos y_1 - \sinh x_2\sinh y_2 - \sin x_1\sin y_1.
    \end{align*}
    We hence have
    \begin{align*}
        I_1 \leq &\; \int_{-\pi}^{\pi}\int_{g_l(y_1)}^{g_u(y_1)}\ln\left(1 + \frac{(\cosh x_2-1)\cosh y_2 + (\cos x_1+1)|\cos y_1|}{\cosh y_2 + \cos y_1}\right)  dy_2\, dy_1
        \\
        &\;+ \int_{-\pi}^{\pi}\int_{g_l(y_1)}^{g_u(y_1)} \ln\left(1 + \frac{\sinh x_2\sinh y_2 + \sin x_1 |\sin y_1|}{\cosh y_2 + \cos y_1}\right) dy_2\, dy_1
        \\
        =:  &\; I_{1,1} + I_{1,2}.
\end{align*}
$I_{1,1}$ can be bounded as follows:
\begin{align*}
I_{1,1}
= &\;
2\int_0^{\pi}\int_{g_l(y_1)}^{g_u(y_1)}\ln\left(1 + \frac{2\sinh^2\frac{x_2}{2}\cosh y_2
+ 2\sin^2\frac{\pi-x_1}{2}|\cos y_1|}{2\sinh^2\frac{y_2}{2}+2\sin^2\frac{\pi-y_1}{2}}\right)dy_2\,dy_1
\\
\leq &\;
2\int_0^{\pi}\int_{g_l(y_1)}^{g_u(y_1)}\ln\left(1 + \frac{C_M\sinh^2\frac{x_2}{2}
+ \sin^2\frac{\pi-x_1}{2}}{\sin^2\frac{\pi-y_1}{2}}\right)dy_2\,dy_1
\\
= &\;
2\int_0^{\pi} \big(g_u(y_1)-g_l(y_1)\big)\ln\left(1 + \frac{C_M\sinh^2\frac{x_2}{2} + \sin^2\frac{\pi-x_1}{2}}
{\sin^2\frac{\pi-y_1}{2}}\right) dy_1.
\end{align*}
By the H\"{o}lder's inequality and the interpolation inequality,
\begin{align*}
I_{1,1}
\leq
&\;
C\|g_u-g_l\|_{L^4}
\left[\int_0^{\pi} \left|\ln\left(1 + \frac{C_M(x_2^2 + (\pi-x_1)^2)}{(\pi-y_1)^2}\right)\right|^{4/3} dy_1 \right]^{3/4}
\\
\leq
&\;
CM^{1/2}\|g_u-g_l\|_{L^2}^{1/2}
\left[\int_0^\infty \left|\ln\left(1 + \frac{1}{s^2}\right)\right|^{4/3} ds \cdot C_M\big(x_2^2 + (\pi-x_1)^2\big)^{1/2} \right]^{3/4}
\\
\leq &\;
C_M \|g_u-g_l\|_{L^2}^{1/2}
(x_2 + \pi -x_1)^{3/4},
\end{align*}
where $C_M$ depends on $M$.
On the other hand, using the facts that $\ln (1+z)\leq z$ for all $z\geq 0$ and that $\f{a}{a^2+b^2}\leq\f{2}{a+b}$ for all $a,b>0$, we can derive that
    \begin{align*}
        &\; I_{1,2} \\
        =
        &\; 2\int_0^{\pi}\int_{g_l(y_1)}^{g_u(y_1)} \ln\left(1+ \frac{\sinh x_2\cdot 2\sinh \f{y_2}{2}\cosh\f{y_2}{2}}{2\sinh^2\frac{y_2}{2}+2\sin^2\frac{\pi-y_1}{2}} + \frac{\sin(\pi-x_1)\cdot 2\sin\f{\pi-y_1}{2} \cos\f{\pi-y_1}{2}}{2\sinh^2\frac{y_2}{2}+2\sin^2\frac{\pi-y_1}{2}}\right) dy_2\,dy_1
        \\
        \leq
        &\; C\int_0^{\pi}\int_{g_l(y_1)}^{g_u(y_1)}
        \frac{\sinh x_2\cdot \cosh\f{y_2}{2}}{\sinh\frac{y_2}{2}+\sin\frac{\pi-y_1}{2}} + \frac{\sin(\pi-x_1)\cdot \cos\f{\pi-y_1}{2}}{\sinh\frac{y_2}{2}+\sin\frac{\pi-y_1}{2}}\, dy_2\,dy_1 \\
                                \leq
        &\; C\big(\sinh x_2 + \sin(\pi-x_1)\big) \int_0^{\pi}\int_{g_l(y_1)}^{g_u(y_1)} \frac{\cosh\f{y_2}{2}}{\sinh\frac{y_2}{2}+\sin\frac{\pi-y_1}{2}} \,dy_2\,dy_1
        \\
        =
        &\; C\big(\sinh x_2 + \sin(\pi-x_1)\big) \int_0^{\pi} 2\ln\left(\frac{\sinh\frac{g_u(y_1)}{2}+\sin\frac{\pi-y_1}{2}}{\sinh\frac{g_l(y_1)}{2}+\sin\frac{\pi-y_1}{2}} \right) dy_1
        \\
        \leq
        &\; C_M(x_2 + \pi-x_1) \int_0^{\pi} \ln\left(1+\frac{\sinh\frac{g_u(y_1)}{2}-\sinh\frac{g_l(y_1)}{2}}{\sin\frac{\pi-y_1}{2}} \right) dy_1\\
        \leq
        &\; C_M(x_2 + \pi-x_1) \int_0^{\pi} \ln\left(1+\frac{C_M(g_u(y_1)-g_l(y_1))}{\pi-y_1} \right) dy_1,
    \end{align*}
    where $C_M$ depends on $M$.
By virtue of \eqref{eqt: difference inequality}, \[
I_{1,2}
\leq  C_M(x_2 + \pi-x_1) \|g_1 - g_2\|_{L^2}^{1/2}.
\]
Putting all the estimates above together yields that
    \[
    I_1 \leq  I_{1,1}+ I_{1,2} \leq C_M(x_2 + \pi-x_1)^{3/4} \|g_1 - g_2\|_{L^2}^{1/2}.
    \]

To bound $I_2$, we introduce a change of variables $\tilde y_2 := x_2-y_2$ and $\tilde y_1 := \pi + x_1-y_1$, and rewrite $I_2$ as
    \begin{align*}
        &\; I_2 \\
        = &\; \int_{-\pi}^{\pi}\int_{g_l(y_1)}^{g_u(y_1)}\ln\left(1 + \frac{|\cosh(x_2 - y_2) - \cos(x_1 - y_1)-(\cosh y_2 + \cos y_1)|}{\cosh(x_2 - y_2) - \cos(x_1 - y_1)}\right)dy_2\, dy_1\\
        = &\; \int_{x_1}^{2\pi+x_1}\int_{x_2-g_u(\pi + x_1-\tilde y_1)}^{x_2-g_l(\pi + x_1-\tilde y_1)}\ln\left(1 + \frac{|\cosh \tilde{y}_2 + \cos \tilde{y}_1-(\cosh(x_2-\tilde{y_2}) - \cos(x_1-\tilde{y}_1))|}{\cosh \tilde{y}_2 + \cos \tilde {y}_1}\right) d\tilde {y}_2\, d\tilde y_1\\
        = &\; \int_{-\pi}^{\pi}\int_{x_2-g_u(\pi + x_1-\tilde y_1)}^{x_2-g_l(\pi + x_1-\tilde y_1)}\ln\left(1 + \frac{|\cosh \tilde{y}_2 + \cos \tilde{y}_1 -(\cosh(x_2-\tilde y_2) - \cos(x_1-\tilde y_1))|}{\cosh \tilde{y}_2 + \cos \tilde{y}_1}\right)d\tilde y_2\, d\tilde y_1.
    \end{align*}
    Therefore, we can bound $I_2$ in the same way as we did for $I_1$, which leads to
    \[
    I_2 \leq C_M(x_2 + \pi-x_1)^{3/4} \|g_1 - g_2\|_{L^2}^{1/2}.
    \]

    In summary, we obtain
    \begin{align*}
    |F(x_1, x_2; g_1) - F(x_1, x_2; g_2)|
         &\leq \frac{C}{x_2}(I_1 + I_2) + C\|g_1 - g_2\|_{L^2}\\
         &\leq \frac{C_M}{x_2^{1/4}}\left(1 + \frac{\pi-x_1}{x_2}\right)^{3/4} \|g_1 - g_2\|_{L^2}^{1/2}
    \end{align*}
    as desired.
    Here $C_M>0$ is a constant that only depends on $M$.
\end{proof}
\end{lem}

The next lemma will also be useful in showing the continuity of the mapping $\bfR$.

\begin{lem}
\label{lem: lower bound for F_2 x_2}
Let $\mu\in [0,\mu_0]$.
Assume $g\in \BM$ satisfies $g(0)\leq M$.
Let $h$ be given by \eqref{eqn: formula for h}.
Let $F_2$ be defined by \eqref{eqn: def of F_1 F_2} (also see \eqref{eqn: representation of Psi_2}).
Then for any $x_1\in [0,\pi]$ and $x_2 \in (0,M]$,
\[
-\pa_{x_2}F_2(x_1,x_2;g)
\geq \frac{C_M}{x_2^2}\cdot \min\left\{h\left(x_1-\f{x_2}{2M}\right),\,h(1)\right\},
\]
where $C_M>0$ only depends on $M$.

If $g\in \BM$ additionally satisfies $\lambda (\pi^2-x^2)\leq g(x) \leq \min \{\Lambda(\pi^2-x^2), M \}$ on $[-\pi,\pi]$ for some constants $\Lambda>\lambda>0$, then for any $x_1\in [0,\pi]$ and $x_2 \in (0,M]$,
\[
-\pa_{x_2}F_2(x_1,x_2;g)\geq C_{M,\lambda,\Lambda}\left(1+\frac{(\pi-x_1)^2}{x_2^2}\right),
\]
where $C_{M,\lambda,\Lambda}>0$ only depends on $M$, $\lambda$, and $\Lambda$.

\begin{proof}
We argue as in the proof of Lemma \ref{lem: monotonicity of P/x_2} and find that, for any $x_1\in [-\pi,\pi]$ and $x_2\in (0,M]$,
\begin{align*}
\pa_{x_2}\left[\f{1}{x_2}P(x_1,x_2)\right]
&\leq \f{x_2 - \sinh x_2 \cosh x_2  + (x_2-\tanh x_2)\cosh x_2}{2\pi x_2^2(\cosh x_2- \cos x_1)^2}
\\
&= \f{(x_2-\sinh x_2)(1+\cosh x_2)}{2\pi x_2^2(2\sinh^2\f{x_2}{2} + 2\sin^2\f{x_1}{2})^2}\\
&\leq -\f{C_Mx_2^3\cdot 1}{x_2^2(x_1^2+x_2^2)^2}
= -\f{C_Mx_2}{(x_1^2+x_2^2)^2},
\end{align*}
where $C_M$ depends on $M$. In the last line, we used the fact that for any $x_2\in [0,M]$,
\[
\sinh x_2 -x_2 \geq C_1x_2^3,
\quad
\sinh x_2 \leq C_2x_2,
\]
for some constants $C_1,C_2 > 0$ that depend on $M$.
Hence, by \eqref{eqn: representation of Psi_2},
\[
-\pa_{x_2}F_2(x_1,x_2)
= -\int_{-\pi}^{\pi} \f{\pa}{\pa x_2}\left[\f{1}{x_2}P(y_1,x_2)\right] h(x_1-y_1)\,dy_1
\geq \int_{-\pi}^{\pi} \f{C_Mx_2}{(y_1^2+x_2^2)^2} \cdot h(x_1-y_1)\,dy_1.
\]
For $x_1\in[0,\pi]$, $x_2\leq M$, and $y_1\in [x_2/(2M), x_2/M]$, we have $x_1-y_1\in[x_1-x_2/M,x_1-x_2/(2M)]\subset [-1,x_1-x_2/(2M)]$.
Recall that $h$ is even and is non-increasing on $[0,\pi]$ due to Lemma \ref{lem: monotonicity of h}, so $h(x_1-y_1)\geq \min\{h(x_1-x_2/(2M)),h(1)\}$.
Hence,
\begin{align*}
-\pa_{x_2}F_2(x_1,x_2)
&\geq C_M\int_{x_2/(2M)}^{x_2/M} \f{x_2}{(y_1^2+x_2^2)^2} \cdot h(x_1-y_1)\,dy_1\\
&\geq C_M \min\left\{h\left(x_1-\f{x_2}{2M}\right),\,h(1)\right\} \int_{x_2/(2M)}^{x_2/M} \f{x_2}{(y_1^2+x_2^2)^2}\,dy_1\\
&\geq \frac{C_M}{x_2^2}\cdot \min\left\{h\left(x_1-\f{x_2}{2M}\right),\,h(1)\right\},
\end{align*}
which proves the first claim.

Thanks to Lemma \ref{lem: control horizontal displacement of Psi}, for any $x\in[0,\pi]$,
\begin{align*}
    h(x) &\geq \f{1}{2\pi} \int_0^{g(0)} e^{-2\mu y_2}\sin g^{-1}(y_2) \cdot \ln\left(1+ \f{1+\cos x}{\cosh y_2 -1 + 2 \cos^2\f{ g^{-1}(y_2)}{2}}\right) dy_2\\
    &\geq \f{1}{2\pi} \int_0^{g(0)} e^{-2\mu M}\sin g^{-1}(y_2) \cdot \ln\left(1+ \f{2\cos^2\frac{x}{2}}{\cosh M + 1}\right) dy_2\\
    &\geq C_M (\pi-x)^2\int_0^{g(0)} \sin g^{-1}(y_2) \, dy_2.
         \end{align*}
Here we used the assumption $\mu \leq \mu_0 \leq \f12$.
If we additionally assume that $\lambda (\pi^2-x^2)\leq g(x) \leq \min\{\Lambda(\pi^2-x^2), M\}$ on $[-\pi,\pi]$ for some $\Lambda>\lambda>0$, then $\pi - y_2/(\pi\lambda) \leq g^{-1}(y_2) \leq \pi - y_2/(2\pi\Lambda)$ for $y_2\in[0,g(0)]$.
Hence, \[
\int_0^{g(0)} \sin g^{-1}(y_2)\, dy_2
\geq \int_0^{\lam \pi^2/2} \sin \left(\f{y_2}{2\pi \Lam}\right) dy_2
\geq C\int_0^{\lambda\pi^2/2} \f{y_2}{\Lambda}\, dy_2
\geq \frac{C\lambda^2}{\Lambda},
\]
which gives
\[
h(x)\geq C_{M,\lambda,\Lambda} (\pi-x)^2\mbox{ for all }x\in [0,\pi].
\]
Since $h$ is even on $[-\pi,\pi]$, this estimate also applies to all $x\in [-\f{\pi}{2},\pi]$.
Hence, for any $x_1\in[0,\pi]$ and $x_2\leq M$,
\[
  h(1) \geq C_{M,\lambda,\Lambda}\geq C_{M,\lambda,\Lambda}\left(x_2^2 + (\pi-x_1)^2\right),
\]
and since $x_1-x_2/(2M)\geq -\f12$ in this case, it also holds that
\[
h\left(x_1-\f{x_2}{2M}\right) \geq C_{M,\lambda,\Lambda}\left(\frac{x_2}{2M} + \pi-x_1\right)^2\geq C_{M,\lambda,\Lambda}\left(x_2^2 + (\pi-x_1)^2\right).
\]
As a result, \begin{align*}
-\pa_{x_2}F_2(x_1,x_2)
&\geq \frac{C_M}{x_2^2}\cdot \min\left\{h\left(x_1-\f{x_2}{2M}\right),\,h(1)\right\}\geq C_{M,\lambda,\Lambda}\left(1+\frac{(\pi-x_1)^2}{x_2^2}\right),
\end{align*}
as desired.
\end{proof}
\end{lem}

With the preparation above, we can now show that, under suitable assumptions, $\bfR(g)$ is $C^{2/5}$-continuous with respect to $g$ in the $L^2$-topology.

\begin{prop}\label{prop: continuity of R}
Let $M$ and $\mu_0$ be given in Proposition \ref{prop: a priori upper bound general m}, and fix $\mu\in[0,\mu_0]$.
Let $\lambda$ and $\Lambda$ be two constants such that $\Lambda > \lambda >0$ and $\pi^2\lambda \leq M$. Then, for any $g_1, g_2 \in \BM$ that satisfy $\lambda (\pi^2-x^2)\leq g_i(x) \leq \min \{\Lambda(\pi^2-x^2), M\}$ on $[-\pi,\pi]$ $(i=1,2)$,
\[
\|\bfR(g_1)-\bfR(g_2)\|_{L^2(\BT)}\le C_{M,\lambda,\Lambda}\|g_1-g_2\|_{L^2(\BT)}^{2/5},
\]
where the constant $C_{M,\lambda,\Lambda}>0$ only depends on $M$, $\lambda$, and $\Lambda$.

\begin{proof}
We only prove for $\mu\in (0,\mu_0]$.
The case $\mu = 0$ can be justified similarly up to minor modification.

    For $g\in \BM$, we introduce an auxiliary function (also see \eqref{eqn: def of H as F minus RHS in the implicit mapping})
    \[
    H(x_1,x_2;g) := F(x_1,x_2;g) - \pa_{x_2}\Psi(\pi,0;g)\cdot \frac{1-e^{-2\mu g(x_1)}}{2\mu g(x_1)}.
    \]
    By the definition of $\bfR$ (see \eqref{eqn: def of R(g)}), $H(x,\bfR(g_i)(x);g_i) \equiv 0$ on $[-\pi,\pi]$ $(i=1,2)$.

    Fix $x\in [0,\pi]$.
    Without loss of generality, we assume $\bfR(g_1)(x)\geq \bfR(g_2)(x)$.
    Let us first derive upper and lower bounds for $|H(x,\bfR(g_1)(x);g_2)|$.
    Thanks to Lemma \ref{lem: bound for Psi_x_2},
    \begin{align*}
         &\;\left|\pa_{x_2}\Psi(\pi,0;g_1)\cdot \f{1-e^{-2\mu g_1(x)}}{2\mu g_1(x)} - \pa_{x_2}\Psi(\pi,0;g_2)\cdot \f{1-e^{-2\mu g_2(x)}}{2\mu g_2(x)}\right|\\
         \leq &\;\big|\pa_{x_2}\Psi(\pi,0;g_1)- \pa_{x_2}\Psi(\pi,0;g_2)\big|\cdot  \f{1-e^{-2\mu g_1(x)}}{2\mu g_1(x)}\\
         &\;+\pa_{x_2}\Psi(\pi,0;g_2)\left|\frac{1-e^{-2\mu g_1(x)}}{2\mu g_1(x)} - \frac{1-e^{-2\mu g_2(x)}}{2\mu g_2(x)}\right|\\
         \leq &\;\big|\pa_{x_2}\Psi(\pi,0;g_1)- \pa_{x_2}\Psi(\pi,0;g_2)\big|  + C_M\mu|g_1(x)-g_2(x)|\\
         \leq &\;C_M\left(\|g_1-g_2\|_{L^2}^{1/2} + |g_1(x)-g_2(x)|\right).
    \end{align*}
    We also apply Lemma $\ref{lem: continuity of F}$ with $(x_1,x_2) = (x,\bfR(g_1)(x))$ to obtain that
    \begin{align*}
     &\; \big|F(x,\bfR(g_1)(x);g_1) - F(x,\bfR(g_1)(x);g_2)\big|
     \\
     \leq &\; \frac{C_M}{\bfR(g_1)(x)^{1/4}}\left(1 + \frac{\pi-x}{\bfR(g_1)(x)}\right)^{3/4} \|g_1-g_2\|_{L^2}^{1/2}\\
     \leq &\;\frac{C_M}{\bfR(g_1)(x)^{1/4}}\left(1 + \frac{(\pi-x)^2}{\bfR(g_1)(x)^2}\right) \|g_1-g_2\|_{L^2}^{1/2}.
    \end{align*}
    In the last line, we used the Young's inequality.
    Hence,     $|H(x,\bfR(g_1)(x);g_2)|$ can be bounded from above as follows:
    \begin{equation}\label{eqt: continuity step 2}
    \begin{split}
     &\; |H(x,\bfR(g_1)(x);g_2)|\\
     = &\; \big|H(x,\bfR(g_1)(x);g_1) - H(x,\bfR(g_1)(x);g_2)\big| \\
     \leq &\; \big|F(x,\bfR(g_1)(x);g_1) - F(x,\bfR(g_1)(x);g_2)\big| \\
     &\; + \left|\pa_{x_2}\Psi(\pi,0;g_1)\cdot \frac{1-e^{-2\mu g_1(x)}}{2\mu g_1(x)} - \pa_{x_2}\Psi(\pi,0;g_2)\cdot \frac{1-e^{-2\mu g_2(x)}}{2\mu g_2(x)}\right| \\
     \leq &\; \frac{C_M}{\bfR(g_1)(x)^{1/4}}\left(1 + \frac{(\pi-x)^2}{\bfR(g_1)(x)^2}\right) \left(\|g_1-g_2\|_{L^2}^{1/2} + |g_1(x)-g_2(x)|\right).
     \end{split}
    \end{equation}
    Here we used the fact that $\bfR(g_1)(x)\leq M$ due to Proposition \ref{prop: a priori upper bound general m} and the monotonicity of $\bfR(g_1)$.
    On the other hand, using the mean value theorem,         \begin{align*}
     |H(x,\bfR(g_1)(x);g_2)| &= \big|H(x,\bfR(g_1)(x);g_2) - H(x,\bfR(g_2)(x);g_2)\big| \\
     &= \big|F(x,\bfR(g_1)(x);g_2) - F(x,\bfR(g_2)(x);g_2)\big|\\
     &= |\pa_{x_2}F(x,\xi;g_2)|\cdot |\bfR(g_1)(x)-\bfR(g_2)(x)|,
     \end{align*}
    where $\xi\in[\bfR(g_2)(x),\bfR(g_1)(x)]$.
    By Lemma \ref{lem: monotonicity of F_1}, Lemma \ref{lem: monotonicity of F_2}, and Lemma \ref{lem: lower bound for F_2 x_2},
            \begin{align*}
     |H(x,\bfR(g_1)(x);g_2)|      &\geq |\pa_{x_2}F_2(x,\xi;g_2)|\cdot |\bfR(g_1)(x)-\bfR(g_2)(x)|\\
     &\geq C_{M,\lambda,\Lambda}\left(1+\frac{(\pi-x)^2}{\xi^2}\right)|\bfR(g_1)(x)-\bfR(g_2)(x)|\\
     &\geq C_{M,\lambda,\Lambda}\left(1+\frac{(\pi-x)^2}{\bfR(g_1)(x)^2}\right)|\bfR(g_1)(x)-\bfR(g_2)(x)|,
     \end{align*}
            where $C_{M,\lambda,\Lambda}>0$ depends on $M$, $\Lambda$, and $\lam$.
    Combining this with     \eqref{eqt: continuity step 2},     we obtain that
    \[
        |\bfR(g_1)(x)-\bfR(g_2)(x)| \leq \frac{C_{M,\lambda,\Lambda}}{\bfR(g_1)(x)^{1/4}}\left(\|g_1-g_2\|_{L^2}^{1/2} + |g_1(x)-g_2(x)|\right).
    \]

    We have proved that, for any $x\in[-\pi,\pi]$,
    \begin{align*}
     |\bfR(g_1)(x)-\bfR(g_2)(x)|^{5/4} &\leq |\bfR(g_1)(x)-\bfR(g_2)(x)|\cdot \max\big\{\bfR(g_1)(x),\bfR(g_2)(x)\big\}^{1/4}\\
     &\leq C_{M,\lambda,\Lambda}\left(\|g_1-g_2\|_{L^2}^{1/2} + |g_1(x)-g_2(x)|\right).
    \end{align*}
    This readily implies
    \[
        \|\bfR(g_1)-\bfR(g_2)\|_{L^2} \leq C_{M,\lambda,\Lambda}\left(\|g_1-g_2\|_{L^2}^{2/5} + \|g_1-g_2\|_{L^{8/5}}^{4/5}\right)\leq C_{M,\lambda,\Lambda}\|g_1-g_2\|_{L^2}^{2/5},
    \]
    as desired.
 \end{proof}
\end{prop}

\subsection{Proof of the existence of the fixed point}
\label{sec: proof of existence of the fixed point}

Let $M$ and $\mu_0$ be given in Proposition \ref{prop: a priori upper bound general m}.
Consider $\mu\in[0,\mu_0]\cap [0,\f{3}{4M}]$. Define (cf.\;\eqref{eqn: function set tilde M_0})
\begin{equation}\label{eqt: def of function set D_0}
\begin{split}
\mathbb{W}_0:= \big\{ g\in L^2(\BT):
&\;
\text{$g$ is even on }[-\pi,\pi], \;\text{$g$ is non-increasing on $[0,\pi]$},\\
&\;\lambda (\pi^2-x^2)\leq g(x)\leq \min\{\Lambda  (\pi^2-x^2), M\} \text{ on }\BT\big\}.
\end{split}
\end{equation}
Here we take
\[
\Lambda:=\Lambda_*,\quad \lambda:=\lambda_*,
\]
which are determined in Remark \ref{rmk: universality of large Lambda} and Remark \ref{rmk: universality of small lambda}, respectively (also see Proposition \ref{prop: upper barrier new} and Proposition \ref{prop: lower barrier}); also recall that $\lam_* \pi^2 \leq M \leq \Lam_* \pi^2$.
Note that $\BW_0$ is equipped with the standard $L^2(\BT)$-topology, so $g\in \BW_0$ being non-increasing on $[0,\pi]$ should be understood in the following sense: there exists a non-increasing function $\tilde{g}$ defined pointwise on $[0,\pi]$ such that $g = \tilde{g}$ almost everywhere on $[0,\pi]$.
Apparently, $\mathbb{W}_0$ is convex and closed in  $L^2(\BT)$.
Moreover, since any set of non-increasing $L^2$-functions on $[0,\pi]$ with a uniform $L^\infty$-bound is pre-compact in $L^2([0,\pi])$,  $\mathbb{W}_0$ is also compact in the $L^2(\BT)$-topology.

Since the elements in $\BW_0$ are $L^2$-functions on $\BT$, their pointwise values are not well-defined.
For convenience, let us define for $g\in \BW_0$ that
\[
\CT(g)(x):=
\begin{cases}
\esssup_{y\in [x,\pi]} g(y)& \mbox{if }x\in [0,\pi),\\
\esssup_{y\in [-\pi,x]} g(y)& \mbox{if }x\in (-\pi,0],\\
0& \mbox{if }x=\pm \pi.
\end{cases}
\]
Here $\CT(g)$ is a function well-defined pointwise on $\BT$.
Using the assumptions on $g\in \BW_0$, it is not difficult to verify the following claims: \begin{itemize}
\item $\CT(g) = g$ almost everywhere on $\BT$.

Indeed, by assumption, there exists $\tilde{g}:\BT\to \BR$ such that $\tilde{g}$ is even on $[-\pi,\pi]$, non-increasing on $[0,\pi]$, and $g = \tilde{g}$ almost everywhere on $\BT$.
Thanks to the monotonicity, $\tilde{g}$ is continuous almost everywhere on $\BT$.
Since $\CT(\tilde{g})(x) = \tilde{g}(x)$ holds at the continuous point of $\tilde{g}$, it holds almost everywhere on $\BT$.
On the other hand, such $\tilde{g}$ can also be understood as an element of $\BW_0$; in fact, $\tilde{g} = g$ in the $L^2$-sense, so $\CT(\tilde{g})(x)\equiv \CT(g)(x)$ on $\BT$.
Therefore, up to a measure-zero set, it holds that $\CT(g)(x) = \CT(\tilde{g})(x) = \tilde{g}(x) = g(x)$.

\item $\CT(g) \in \BM$ and it is lower semi-continuous on $\BT$.

That $\CT(g)\in \BM$ follows from the definition.
To show the lower semi-continuity, we first verify that, for $x\in (0,\pi)$, $\CT(g)(x) \leq \liminf_{x'\to x}\CT(g)(x')$.
In fact, thanks to the monotonicity, $\CT(g)(x)\leq \lim_{x'\to x^-}\CT(g)(x')$, and
\[
\lim_{x'\to x^+}\CT(g)(x') = \esssup_{y\in (x,\pi]} g(y) = \esssup_{y\in [x,\pi]} g(y) = \CT(g)(x).
\]
The lower semi-continuity at $x \in (-\pi,0)$, $x=0$ and $x=\pm \pi$ are then straightforward.

\item $\CT$ is injective from $\BW_0$ to $\BM$.

If not, suppose $\CT(g_1) = \CT(g_2)$ for some $g_1\neq g_2$ in $\BW_0$.
Then we take $\tilde{g}_1$ and $\tilde{g}_2$ that coincide with $g_1$ and $g_2$ almost everywhere on $\BT$ respectively, which are non-increasing on $[0,\pi]$.
By the argument above, $\CT(\tilde{g}_1) = \CT(g_1) = \CT(g_2) = \CT(\tilde{g}_2)$ pointwise, and thus $g_1=\tilde{g}_1 = \CT(\tilde{g}_1) = \CT(\tilde{g}_2) = \tilde{g}_2=g_2$ almost everywhere.
This contradicts the fact that $g_1 \neq g_2$ in $\BW_0$.
\end{itemize}
We thus call $\CT(g)$ the lower semi-continuous representative of $g$.
We also define
\begin{equation}\label{eqt: def of function set D}
\begin{split}
\mathbb{W}:= &\;\{ \CT(g):\; g\in \BW_0\}
\\
= &\; \big\{ g\in\BM:\; \mbox{$g$ is lower semi-continuous on }\BT,\\
&\;\qquad \qquad \;\;
\lambda (\pi^2-x^2)\leq g(x)\leq \min\{\Lambda  (\pi^2-x^2), M\}\text{ on $\BT$}\big\},
\end{split}
\end{equation}
where $M$, $\Lam$, and $\lam$ are the same as in the definition of $\BW_0$ (see the note that follows \eqref{eqt: def of function set D_0}).
$\CT$ is thus a bijection from $\BW_0$ to $\BW\subset \BM$.
We denote its inverse on $\BW$ by $\CI: \BW\to \BW_0$, which identifies each function in $\BW$ as an $L^2$-function in a natural way.
Therefore, it holds that $\CI(\CT(g))=g$ for all $g\in \BW_0$, and for any $g_\dag \in \BW$, $\CT(\CI (g_\dag))(x) \equiv g_\dag(x)$ on $\BT$.

Now we are ready to prove the existence of the fixed points of $\bfR$ in $\BW$.
\begin{prop}\label{prop: existence of fixed point}
Let $M$ and $\mu_0$ be given by Proposition \ref{prop: a priori upper bound general m}.
Let $\BW$ be defined as in \eqref{eqt: def of function set D}.
For any $\mu\in[0,\mu_0]\cap [0,\f{3}{4M}]$, the mapping $\bfR$ admits a fixed point $g_*\in\mathbb{W}$.
Here and in what follows, we call a function $g\in \BW$ a fixed point of $\bfR$ if $\bfR(g)(x)= g(x)$ for all $x\in [-\pi,\pi]$.

Moreover, for any fixed point $g$ of $\bfR$ in $\BW$, $g$ is strictly decreasing on $[0,\pi]$, and it satisfies $\lambda (\pi^2-x^2)< g(x)< \min\{\Lambda  (\pi^2-x^2), M\}$ for all $x\in(-\pi,\pi)$.

\begin{rmk}
The range of $\mu$ stems from Proposition \ref{prop: a priori upper bound general m}, Proposition \ref{prop: upper barrier new}, and also Remark \ref{rmk: universality of large Lambda}.
It is not the largest range, but a convenient one that leads to a uniform-in-$\mu$ choice of $\Lam_*$ in the definitions of $\BW_0$ and $\BW$; see Remark \ref{rmk: universality of large Lambda}.
In fact, $\f{3}{4M}$ can be replaced by any fixed number that is smaller than $\f{c_*}{2M}$, where $c_*$ was defined in Proposition \ref{prop: upper barrier new}.
\end{rmk}

\begin{proof}
Since $\BW\subset \BM$, by Proposition \ref{prop: implicit mapping}, $\bfR$ maps $\mathbb{W}$ into $\BM$.
Denote $u(x) := \pi^2-x^2$.
Let us introduce an auxiliary mapping $\hat \bfR$ defined on $\mathbb{W}$ as a truncated version of $\bfR$:
\beq
\begin{split}
\hat\bfR(g)(x):= &\;
\begin{cases}
\bfR (g)(x), &\text{if $\lambda u(x)\le \bfR (g)(x)\le \min\{\Lambda u(x), M\}$},\\
\lambda u(x), &\text{if } \bfR (g)(x)<\lambda u(x),\\
\min \{\Lambda u(x), M\}, &\text{if } \bfR (g)(x) > \min \{\Lambda u(x), M\}.
\end{cases}
\end{split}
\label{eqn: truncated mapping hat R}
\eeq
    We claim that $\hat \bfR$ maps $\mathbb{W}$ into itself.
    It is not difficult to derive the evenness and monotonicity of $\hat{\bfR}(g)$ from that of $\bfR(g)$, $\lam u(x)$, and $\min \{\Lambda u(x), M\}$.
    It suffices to verify the lower semi-continuity of $\hat{\bfR}(g)$.
    By the definition of $\bfR$,
    \[
    0 = \lim_{x'\to x} F\big(x',\bfR(g)(x');g\big) - \partial_{x_2}\Psi(\pi,0;g)\cdot \frac{1-e^{-2\mu g(x')}}{2\mu g(x')}.
    \]
    Since $z\mapsto \f{1-e^{-z}}{z}$ is decreasing for $z>0$ and $g\in \BW$ is lower semi-continuous,
    \begin{align*}
    F(x,\bfR(g)(x);g) = &\; \partial_{x_2}\Psi(\pi,0;g)\cdot \frac{1-e^{-2\mu g(x)}}{2\mu g(x)}\\
    \geq &\; \limsup_{x'\to x}\partial_{x_2}\Psi(\pi,0;g)\cdot \frac{1-e^{-2\mu g(x')}}{2\mu g(x')}
    = \limsup_{x'\to x} F\big(x',\bfR(g)(x');g\big).
    \end{align*}
    By the continuity and monotonicity of $F$, we conclude that
    \[
    \bfR(g)(x)\leq \liminf_{x'\to x} \bfR(g)(x'),
    \]
    which implies that $\bfR(g)$ is lower semi-continuous.

    Take an arbitrary $x\in \BT$.
    If $\bfR(g)(x)\leq \min \{\Lambda u(x), M\}$,
    \begin{align*}
    \hat{\bfR}(g)(x) = &\; \max\{\bfR(g)(x),\lam u(x)\}
    \\
    \leq &\; \max\left\{\liminf_{x'\to x}\bfR(g)(x'), \, \liminf_{x'\to x}\lam u(x')\right\}
                \leq \liminf_{x'\to x} \max\{\bfR(g)(x'),\lam u(x')\}.
    \end{align*}
    Since it also trivially holds that
    \[
    \hat{\bfR}(g)(x) \leq \min \{\Lambda u(x), M\} = \liminf_{x'\to x} \min \{\Lambda u(x'), M\},
    \]
    we obtain that
    \[
    \hat{\bfR}(g)(x)
    \leq \liminf_{x'\to x} \min\left\{ \max\{\bfR(g)(x'),\lam u(x')\},\, \min \{\Lambda u(x'), M\}\right\}
    = \liminf_{x'\to x} \hat{\bfR}(g)(x').
    \]
    If otherwise $\bfR(g)(x) > \min \{\Lambda u(x), M\}$, by the lower semi-continuity of $\bfR(g)$, there exists $\d>0$ such that $\bfR(g)(x') > \min \{\Lambda u(x'), M\}$ for all $x'\in (x-\d,x+\d)$, which implies that
    $\hat{\bfR}(g)(x') = \min \{\Lambda u(x'), M\}$ for all $x'\in (x-\d,x+\d)$.
    Therefore,
    \[
    \hat{\bfR}(g)(x)
    = \min \{\Lambda u(x), M\} = \lim_{x'\to x} \min \{\Lambda u(x'), M\}
    = \liminf_{x'\to x} \hat{\bfR}(g)(x').
    \]
    This proves the lower semi-continuity of $\hat{\bfR}(g)$.

    Let $\mathbf{K}:= \CI\circ \hat{\bfR}\circ \CT$, which is a mapping from $\BW_0$ to itself.
    For any $g_1,g_2\in \BW_0$, by the definition of $\hat \bfR$, for all $x\in \BT$,
    \[
    \big|\hat \bfR(\CT(g_1))(x) - \hat \bfR(\CT(g_2))(x)\big|
    \leq |\bfR(\CT(g_1))(x) - \bfR(\CT(g_2))(x)|.
    \]
    By Proposition \ref{prop: continuity of R},
    \begin{align*}
     \big\|\hat \bfR(\CT(g_1)) - \hat \bfR(\CT(g_2))\big\|_{L^2}
     \leq &\; \big\|\bfR(\CT(g_1)) - \bfR(\CT(g_2))\big\|_{L^2} \\
     \leq &\; C_{M,\lambda, \Lambda} \|\CT(g_1)-\CT(g_2)\|_{L^2}^{2/5}
     = C_{M,\lambda, \Lambda} \|g_1-g_2\|_{L^2}^{2/5}.
    \end{align*}
    This gives
    \[
    \|\mathbf{K}(g_1) - \mathbf{K}(g_2)\|_{L^2(\BT)}  \leq C_{M,\lambda, \Lambda} \|g_1-g_2\|_{L^2(\BT)}^{2/5},
    \]
    which implies that $\mathbf{K}$ is continuous on $\mathbb{W}_0$ in the standard $L^2(\BT)$-topology.
    Since $\mathbb{W}_0$ is a convex, closed, and compact subset of $L^2(\BT)$, the Schauder fixed-point theorem implies that  $\mathbf{K}$  has a fixed point $\hat{g}_*\in\mathbb{W}_0$ in the sense that $\|\mathbf{K}(\hat{g}_*) - \hat{g}_*\|_{L^2}=0$.
Let $g_* := \CT(\hat{g}_*)\in \BW$.
Then for any $x\in \BT$,
\[
g_*(x) = \CT(\hat{g}_*)(x) = \CT\big(\mathbf{K}(\hat{g}_*)\big)(x) = \CT\circ \CI \circ \hat{\bfR}\circ \CT(\hat{g}_*)(x) = \hat{\bfR}(g_*)(x).
\]
By the definition of $\hat{\bfR}$, for all $x\in [-\pi,\pi]$,
\beq
\lam u(x)\leq g_*(x)\leq \min\{\Lam u(x),M\}.
\label{eqn: lower and upper bound for the fixed-point prelim}
\eeq

    Next, we show that $\bfR(g_*)(x) = \hat \bfR(g_*)(x)$ for all $x\in \BT$.
    It suffices to prove this for $x\in (-\pi,\pi)$, as we always have $\bfR(g_*)(\pm\pi) = \hat \bfR(g_*)(\pm\pi) = 0$ by definition.
        \begin{enumerate}[(i)]
    \item
    If $\bfR(g_*)(x_*) > \hat \bfR(g_*)(x_*)$ at some $x_*\in (-\pi,\pi)$, then by the definition of $\hat \bfR$,
    \[
    \bfR(g_*)(x_*) > \min\{\Lambda u(x_*),M\} = \hat \bfR(g_*)(x_*) = g_*(x_*).
    \]
    This together with \eqref{eqn: lower and upper bound for the fixed-point prelim}, Proposition \ref{prop: a priori upper bound general m}, and Proposition \ref{prop: upper barrier new} implies that     $\bfR(g_*)(x_*)<g_*(x_*)$, which is a contradiction.
    \item
    Similarly, if $\bfR(g_*)(x_*) < \hat \bfR(g_*)(x_*) $ at some $x_*\in (-\pi,\pi)$, then we have
    \[
    \bfR(g_*)(x_*) < \lambda u(x_*) = \hat \bfR(g_*)(x_*) = g_*(x_*).
    \]
    In this case, \eqref{eqn: lower and upper bound for the fixed-point prelim} and Proposition \ref{prop: lower barrier} give that $\bfR(g_*)(x_*)>g_*(x_*)$, which is again a contradiction.
    \end{enumerate}
    Therefore, we conclude that $g_*(x) = \bfR(g_*)(x) = \hat \bfR(g_*)(x)$ for all $x\in \BT$.
    This proves the desired existence.

    In fact, the above argument also shows that, if $g\in \BW$ is a fixed point of $\bfR$ in the sense that $\bfR(g)(x) = g(x)$ for all $\BT$, then     \[
    \lambda u(x)< g(x)< \min\{\Lambda u(x), M\},
    \]
    for all $x\in(-\pi,\pi)$.
    Finally, Proposition \ref{prop: implicit mapping} implies that $g(x)= \bfR(g)(x)$ is strictly decreasing on $[0,\pi]$.
\end{proof}
\end{prop}

\section{Regularity of the Fixed Point}
\label{sec: regularity}
In view of \eqref{eqn: def of R(g)}, every fixed point $g\in \mathbb{W}\subset\BM$ of $\bfR$ provides a solution to the problem \eqref{eqn: constraint for the boundary curve in Phi}, but it does not necessarily satisfy the original condition \eqref{eqn: constraint for Phi along the image of the patch boundary} (or equivalently, \eqref{eqn: constraint satisfied by phi along the patch boundary final}) for the uniformly rotating vortex patches.
In this section, we will show that, for $\mu$ in a possibly smaller range, every fixed point of $\bfR$ in $\mathbb{W}$ is continuous in $\BT$ and enjoys even higher regularity, so that it also makes \eqref{eqn: constraint for Phi along the image of the patch boundary} hold because of Remark \ref{rmk: equivalence of the conditions along boundary}.
In addition, we will also study the behavior of the fixed point $g$ near the end-points $\pm \pi$, which eventually leads to the conclusion that the resulting rotating vortex patch has 90-degree corners.

The main results of this section are Proposition \ref{prop: first regularity result} and Proposition \ref{prop: g slope at pi}.
At the end of this section, we will present the proof of Theorem \ref{thm: main existence theorem}.

\subsection{Non-degeneracy estimates}

For $\mu>0$, we define
\beq
F_\dag(x_1,x_2) = F_\dag(x_1,x_2;g):= F(x_1,x_2;g) -\pa_{x_2}\Psi(\pi,0;g)\cdot \f{1-e^{-2\mu x_2}}{2\mu x_2},
\label{eqn: def of F_dag}
\eeq
while for $\mu=0$, we let
\[
F_\dag(x_1,x_2) = F_\dag(x_1,x_2;g):= F(x_1,x_2;g) -\pa_{x_2}\Psi(\pi,0;g).
\]
Note that this is different from $H$ defined in \eqref{eqn: def of H as F minus RHS in the implicit mapping}.
If $g\in \BW$ is a fixed point of $\bfR$, then for each $x_1\in [-\pi,\pi]$, $x_2 = g(x_1)$ should be a root of $x_2\mapsto F_\dag(x_1,x_2;g)$.
In order to rule out potential discontinuity of $g$, in view of the implicit function theorem, it is tempting to show that at any root of the function $x_2\mapsto F_\dag(x_1,x_2;g)$, its $x_2$-derivative should be negative; in fact, it suffices to consider those points additionally satisfying $x_1 = g^{-1}(x_2)$.
This motivates Proposition \ref{prop: non-degeneracy along level set} and Proposition \ref{prop: non-degeneracy along level set x_1 greater than pi over 2} below, which provide quantitative estimates for the non-degeneracy of $F_\dag(x_1,x_2;g)$ in the $x_2$-direction
in the case $x_1\in [0,\f{\pi}{2}]$ and $x_1\in [\f{\pi}{2},\pi)$, respectively.

In the rest of this section, for simplicity, we shall focus on the case $\mu>0$.
Results for $\mu = 0$ will be stated in accompanying remarks, which can be justified similarly up to minor modification.

\begin{prop}
\label{prop: non-degeneracy along level set}
Assume that $g\in \BM$ satisfies $g(0)\leq M$.
Suppose $(x_1,x_2)$ satisfies $x_2\in (0,g(0))$, $x_1 = g^{-1}(x_2)$, and $x_1\in [0,\f{\pi}{2}]$.
Then it holds that
\beq
\pa_{x_2}\Psi(x_1,x_2;g) - \ka_* F(x_1,x_2;g)
\leq
-\d(1-\ka_*)\cdot
\pa_{x_2} \Psi(\pi,0;g)\cdot \f{1-e^{-2\mu x_2}}{2\mu x_2},
\label{eqn: pa_x2 Psi less than kappa_* F quantitative}
\eeq
where
\beq
\ka_*:=\f{2\mu x_2}{e^{2\mu x_2}-1},
\label{eqn: def of kappa_*}
\eeq
and
\beq
\d:=\min\left\{\f{1}{4(e^{2\mu M}-1)},\, \left(\f{x_1}{2\pi}\cdot \f{e^{-2\mu M}}{e^{2\mu x_2}-1-2\mu x_2}\right)^{1/2}\right\}.
\label{eqn: value of delta}
\eeq

If $(x_1,x_2)$ additionally satisfies
\beq
F(x_1,x_2;g)= \pa_{x_2}\Psi(\pi,0;g)\cdot \f{1-e^{-2\mu x_2}}{2\mu x_2},
\label{eqn: x_1 x_2 lies on the level set}
\eeq
then it further holds that
\beq
\pa_{x_2}\left[ F(x_1,x_2;g) -\pa_{x_2}\Psi(\pi,0;g)\cdot\f{1-e^{-2\mu x_2}}{2\mu x_2}\right]\leq
\d\cdot \pa_{x_2}\Psi(\pi,0;g)\cdot \f{d}{dx_2}\left[\f{1-e^{-2\mu x_2}}{2\mu x_2}\right].
\label{eqn: non-degeneracy along the level set}
\eeq

\begin{rmk}
\label{rmk: equivalent formulation of the non-degeneracy}
Given \eqref{eqn: pa_x2 Psi less than kappa_* F quantitative}, the proof of \eqref{eqn: non-degeneracy along the level set} under the additional condition \eqref{eqn: x_1 x_2 lies on the level set} is straightforward.
First note that
\[
\pa_{x_2} F(x_1,x_2;g)
= \f{1}{x_2}\big[\pa_{x_2}\Psi(x_1,x_2;g) - F(x_1,x_2;g)\big].
\]
With $\ka_*$ defined in \eqref{eqn: def of kappa_*},
\begin{align*}
\pa_{x_2}\Psi(\pi,0;g)\cdot \f{d}{dx_2}\left[\f{1-e^{-2\mu x_2}}{2\mu x_2}\right]
= &\; -\f{1}{x_2} \left(1-\f{2\mu x_2}{e^{2\mu x_2}-1}\right)\cdot \pa_{x_2} \Psi(\pi,0;g)\cdot \f{1-e^{-2\mu x_2}}{2\mu x_2}
\\
= &\; -\f{1}{x_2} (1-\ka_*)\cdot \pa_{x_2} \Psi(\pi,0;g)\cdot \f{1-e^{-2\mu x_2}}{2\mu x_2}.
\end{align*}
Under the condition \eqref{eqn: x_1 x_2 lies on the level set}, this further implies
\[
\pa_{x_2}\Psi(\pi,0;g)\cdot \f{d}{dx_2}\left[\f{1-e^{-2\mu x_2}}{2\mu x_2}\right]
= -\f{1}{x_2}(1-\ka_*)F(x_1,x_2;g).
\]
Combining these identities with \eqref{eqn: pa_x2 Psi less than kappa_* F quantitative} immediately yields \eqref{eqn: non-degeneracy along the level set}.
\end{rmk}

\begin{rmk}\label{rmk: mu=0 x_1 <= pi/2}
Under the same condition, in the case $\mu = 0$, \eqref{eqn: non-degeneracy along the level set} becomes
\[
\pa_{x_2}\big[ F(x_1,x_2;g) -\pa_{x_2}\Psi(\pi,0;g) \big]\leq
-\min\left\{\f{1}{8M},\, \f{1}{x_2}\left(\f{x_1}{4\pi}\right)^{1/2}\right\}\cdot
\pa_{x_2} \Psi(\pi,0;g).
\]
\end{rmk}
\end{prop}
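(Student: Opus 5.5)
The plan is to reduce \eqref{eqn: pa_x2 Psi less than kappa_* F quantitative} to a one-dimensional comparison along the vertical segment $\{x_1\}\times[0,x_2]$, using the PDE $-\D\Psi=e^{-2\mu x_2}\mathds{1}_\Om$ from \eqref{eqn: equation for Psi}. Write $\varphi(t):=\f{1-e^{-2\mu t}}{2\mu}$. The constant $\ka_*$ in \eqref{eqn: def of kappa_*} is exactly $\ka_*=x_2\varphi'(x_2)/\varphi(x_2)$. Hence \eqref{eqn: pa_x2 Psi less than kappa_* F quantitative} is a quantitative statement that, at the corner point $(g^{-1}(x_2),x_2)$, the graph $t\mapsto\Psi(x_1,t)$ bends at least as strongly as the model profile $\varphi$, with a gain proportional to $\pa_{x_2}\Psi(\pi,0)$. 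Since $x_1=g^{-1}(x_2)$, the whole segment $\{(x_1,t):0\le t<x_2\}$ lies in $\Om$ (up to the boundary point). So on it $\pa_{x_2}^2\Psi=-e^{-2\mu t}-\pa_{x_1}^2\Psi$.

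First I integrate this identity twice in $t$. Setting $D:=\pa_{x_2}\Psi(x_1,x_2)$, for $t\in[0,x_2]$ I get
\[
\pa_{x_2}\Psi(x_1,t)=D+\int_t^{x_2}e^{-2\mu s}\,ds+\int_t^{x_2}\pa_{x_1}^2\Psi(x_1,s)\,ds .
\]
Integrating over $t\in[0,x_2]$ and using $\Psi(x_1,0)=h(x_1)\ge0$ (Lemma \ref{lem: monotonicity of h}) gives
\[
\Psi(x_1,x_2)\ge x_2 D+\int_0^{x_2}s e^{-2\mu s}\,ds+\int_0^{x_2}s\,\pa_{x_1}^2\Psi(x_1,s)\,ds .
\]
Multiplying by $\ka_*/x_2$ and subtracting $D$ turns the explicit $e^{-2\mu s}$ terms into an expression in $\ka_*$ and $\varphi$. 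The coefficient of $D$ becomes $-(1-\ka_*)$. I then need a lower bound $D\gtrsim\pa_{x_2}\Psi(\pi,0)\,\varphi(x_2)/x_2$ (up to the factor $\d$), together with control of the $\pa_{x_1}^2\Psi$ term.

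For the lower bound on $D$, I compare $\pa_{x_2}\Psi(x_1,x_2)$ with $\pa_{x_2}\Psi(\pi,0)$ through the integral formula \eqref{eqn: integral representation of Psi}. I use the bound $\pa_{x_2}\Psi(\pi,0)\le\varphi(M)$ from \eqref{eqn: upper bound for Psi_x2 at pi 0}. This comparison is where the first entry $\f{1}{4(e^{2\mu M}-1)}$ of $\d$ should come from: whatever remains of $\pa_{x_2}\Psi(\pi,0)$ after the comparison is at least a fraction $1/(4(e^{2\mu M}-1))$ of it, because $\varphi(M)/M$ is comparable to $e^{-2\mu M}$.

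The main obstacle is the term $\int_0^{x_2}s\,\pa_{x_1}^2\Psi(x_1,s)\,ds$, whose sign is not automatic. My first attempt will be to trade it, via $\pa_{x_1}\Psi(0,s)=0$, for the $x_1$-monotonicity of $\Psi$. Concretely, I write $\pa_{x_1}^2\Psi$ on $[0,x_1]$ as a derivative of $\pa_{x_1}\Psi$, and use Lemma \ref{lem: control horizontal displacement of Psi} and the explicit formula \eqref{eqn: formula for Psi_x_1} for $x_1\in[0,\f\pi2]$. This should produce a contribution of size $\gtrsim x_1 e^{-2\mu M}$ with the right sign. The defect is $e^{2\mu x_2}-1-2\mu x_2\sim\mu^2x_2^2$, coming from the mismatch between $\int s e^{-2\mu s}$ and $\ka_*x_2\varphi$. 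If the error term is quadratic in the gain parameter, then optimizing the gain $\d$ against these two quantities yields the square-root entry of $\d$.

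Having established \eqref{eqn: pa_x2 Psi less than kappa_* F quantitative}, the second claim \eqref{eqn: non-degeneracy along the level set} follows from Remark \ref{rmk: equivalent formulation of the non-degeneracy}.
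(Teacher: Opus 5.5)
Your reduction contains a sign slip, and the step that carries all the difficulty is left open. From $\Psi(x_1,x_2)\ge x_2D+\int_0^{x_2}se^{-2\mu s}\,ds+\int_0^{x_2}s\,\pa_{x_1}^2\Psi(x_1,s)\,ds$ you get $\kappa_*F-D\ge-(1-\kappa_*)D+\cdots$. Since $1-\kappa_*>0$, the \emph{lower} bound on $\kappa_*F-D$ that the proposition requires needs an \emph{upper} bound on $D=\pa_{x_2}\Psi(x_1,x_2)$. The estimate $D\gtrsim\pa_{x_2}\Psi(\pi,0)\varphi(x_2)/x_2$ that you plan to prove points the wrong way. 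It is also false in general. Take a thin column, i.e.\ $g\approx g(0)$ on $[0,x_1]$ with $x_1$ small and $g$ small beyond. Then $D<0$ for $x_2$ just below $g(0)$, because the long part of the column below the point dominates.

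Suppose instead you eliminate $D$ through the value at $t=0$. Your two integrations then give, for $\mu>0$ and with $\pa_{x_1}^2\Psi$ taken from inside $\Omega$, the exact identity
\[
\kappa_*F-D=\frac{\kappa_*}{x_2}h(x_1)+(1-\kappa_*)\Big(\frac{1}{2\mu}-\pa_{x_2}\Psi(x_1,0)\Big)+\int_0^{x_2}\Big(1-\kappa_*+\kappa_*\frac{s}{x_2}\Big)\pa_{x_1}^2\Psi(x_1,s)\,ds .
\]
In the thin-column situation $\pa_{x_1}^2\Psi\approx-e^{-2\mu s}$ along the segment. So the last term is $\approx-(1-\kappa_*)/(2\mu)$ and cancels the middle one to leading order. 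What has to survive is a lower-order margin of size $\delta(1-\kappa_*)\pa_{x_2}\Psi(\pi,0)\varphi(x_2)/x_2$, which is proportional to the weighted area of $\Omega$. The term $\int_0^{x_2}s\,\pa_{x_1}^2\Psi\,ds$ that you call the main obstacle is therefore the entire content of the proposition, and it cannot be controlled without nonlocal information about $\Omega$.

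Your suggested treatment of that term does not supply this information. The facts $\pa_{x_1}\Psi(0,s)=0$ and the $x_1$-monotonicity of $\Psi$ only give $\int_0^{x_1}\pa_{x_1}^2\Psi(y,s)\,dy=\pa_{x_1}\Psi(x_1,s)\le0$. This is an average over $[0,x_1]$ rather than the value at $y=x_1$, and it has the unfavourable sign. Relatedly, the hypothesis $x_1\le\pi/2$ never enters your argument, and the two entries of $\delta$ are read off from the target rather than derived.

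The paper's proof is nonlocal throughout. It writes $\pa_{x_2}\Psi$ and $F$ through the kernel $\tilde J(x_1,y_1;z)=\int_{-y_1}^{y_1}\frac{\sinh z}{\cosh z-\cos(x_1-\alpha)}\,d\alpha$ (Lemma~\ref{lem: integral representation of Psi_x2 and F}). In Lemma~\ref{lem: estimate for Psi_x2 - kappa F}, the contributions of $\Omega$ above height $x_2$ are either cancelled exactly, which is what singles out $\kappa_*$, or reflected onto $[x_2/2,x_2]$ via $y_2\mapsto 2x_2-y_2$. The reflection uses that $y_2\mapsto\tilde J(x_1,g^{-1}(y_2);z)$ is decreasing. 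Two kernel bounds then replace both kernels by $2g^{-1}(y_2)$ in the right direction: $\tilde J(x_1,y_1;z)\ge 2y_1$ for $x_1\le\pi/2$ and $y_1\ge x_1$ (this is where $x_1\le\pi/2$ is used), and $\tilde J(\pi,y_1;z)\le 2y_1$. Combined with $\pa_{x_2}\Psi(\pi,0)\le\frac{2}{4\pi}\int_\Omega e^{-2\mu y_2}\,dy$ and $g^{-1}(y_2)\le x_1$ for $y_2\ge x_2$, this reduces everything to a one-variable computation. The first entry of $\delta$ comes from paying the extra $2\delta$-fraction of $\int_\Omega e^{-2\mu y_2}\,dy$ demanded by the right-hand side out of the margin $x_1e^{-2\mu M}$ left over by $g(0)\le M$. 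The square-root entry comes from the layer $0<y_2<\tilde x_2=x_2\delta(1-\kappa_*)/\kappa_*$, where the weight is negative and only $g^{-1}\le\pi$ is available; this costs $\pi\delta^2(e^{2\mu x_2}-1-2\mu x_2)$.
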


We shall omit the $g$-dependence whenever it incurs no confusion.

In order to prove Proposition \ref{prop: non-degeneracy along level set}, we need some preparation.
For $z>0$ and $\th\in [-2\pi,2\pi]$, we denote \[
J(\th;z) := \int_0^\th \f{\sinh z}{\cosh z - \cos \al}\,d\al.
\]
It is not difficult to verify that
\beq
\begin{split}
J(\th;z)
=&\;
\begin{cases}
2\arctan (\coth\f{z}{2}\cdot \tan\f{\th}{2} ), & \mbox{if } \th\in (-\pi,\pi), \\
\pi+2\arctan (\tanh\f{z}{2}\cdot \tan\f{\th-\pi}{2} ), & \mbox{if } \th\in (0,2\pi).
\end{cases}
\end{split}
\label{eqn: anti-derivative of Poisson}
\eeq
For $z>0$ and $x_1,y_1\in [0,\pi]$, we denote
\beq
\begin{split}
\tilde{J} (x_1,y_1;z )
:= &\; \int_{-y_1}^{y_1} \f{\sinh z}{\cosh z- \cos (x_1-\al)} \, d\al\\
= &\; \int_{x_1-y_1}^{x_1+y_1} \f{\sinh z}{\cosh z- \cos \al} \, d\al = J(x_1+y_1;z)-J(x_1-y_1;z).
\end{split}
\label{eqn: def of tilde J}
\eeq

Let us prove a few properties of $\tilde{J}$.
\begin{lem}
\label{lem: properties of tilde J}
Let $z>0$.
\begin{enumerate}
\item For $x_1\in \BT$,
\[
y_1\mapsto \tilde{J}(x_1,y_1;z) \mbox{ is increasing on $[0,\pi]$}.
\]

\item For $y_1\in [0,\pi]$,
\[
x_1\mapsto \tilde{J}(x_1,y_1;z) \mbox{ is decreasing on $[0,\pi]$}.
\]

\item
If $x_1\in [\f{\pi}{2},\pi]$ and $y_1\in[0, x_1]$,
\[
z\mapsto \tilde{J}(x_1,y_1;z)\mbox{ is increasing on $(0,+\infty)$.}
\]

\item
For any $y_1\in [0,\pi]$, $\tilde{J}(\pi,y_1;z) \leq 2y_1$.

\item
If $x_1\in [0,\f{\pi}{2}]$, then for any $y_1\in [x_1,\pi]$,
$\tilde{J}(x_1,y_1;z) \geq 2y_1$.

\end{enumerate}

\begin{proof}
The first claim immediately follows from
\[
\pa_{y_1}\tilde{J}(x_1,y_1;z) = \f{\sinh z}{\cosh z - \cos(x_1-y_1)} + \f{\sinh z}{\cosh z - \cos(x_1+y_1)}\geq 0.
\]

To show the second claim, we calculate that
\[
\pa_{x_1}\tilde{J}(x_1,y_1;z) = \f{\sinh z}{\cosh z - \cos(x_1+y_1)} -\f{\sinh z}{\cosh z - \cos(x_1-y_1)}.
\]
This is non-positive since $\cos(x_1+y_1)-\cos(x_1-y_1) = -2\sin x_1 \sin y_1 \leq 0$.

When $x_1 \in [\f{\pi}{2},\pi]$ and $y_1\leq x_1$,
\begin{align*}
\pa_z \tilde{J}(x_1,y_1;z)
=&\; -\f{\sin (x_1+y_1)}{\cosh z -\cos(x_1+y_1)} + \f{\sin(x_1-y_1)}{\cosh z -\cos(x_1-y_1)}
\\
=&\; \f{2\sin y_1[\cos y_1-\cosh z \cos x_1]}{(\cosh z -\cos(x_1+y_1))(\cosh z -\cos(x_1-y_1))}.
\end{align*}
This is non-negative since $\cos y_1\geq \cos x_1$ and $\cos x_1 \leq 0$, which proves the third claim.

Since
\[
\pa_{y_1}\tilde{J}(\pi,y_1;z) = \f{2\sinh z}{\cosh z + \cos y_1}\mbox{ is increasing on $[0,\pi]$,}
\]
we find that
\[
y_1\mapsto \f{1}{y_1}\tilde{J}(\pi,y_1;z)\mbox{ is increasing on $[0,\pi]$,}
\]
which further implies
\[
\f{1}{y_1}\tilde{J}(\pi,y_1;z) \leq \f{1}{\pi}\tilde{J}(\pi,\pi;z) = 2.
\]
This proves the fourth claim.

To show the last claim, we first derive from \eqref{eqn: anti-derivative of Poisson} that
\begin{align*}
J(\th;z)
=&\;
\begin{cases}
2\arctan (\coth\f{z}{2}\cdot \tan\f{\th}{2} ), & \mbox{if } \th\in (-\pi,\pi), \\
\pi+2\arctan (\tanh\f{z}{2}\cdot \tan\f{\th-\pi}{2} ), & \mbox{if } \th\in (0,2\pi).
\end{cases}
\end{align*}
Hence,
\begin{align*}
&\; \tilde{J}(x_1,y_1;z) \\
= &\; J(x_1+y_1;z)-J(x_1-y_1;z)\\
= &\; \pi+2\arctan \left(\tanh\f{z}{2}\cdot \tan\f{x_1+y_1-\pi}{2} \right)
+2\arctan \left(\coth\f{z}{2}\cdot \tan\f{y_1-x_1}{2} \right).
\end{align*}
Recall that $x_1\leq \f{\pi}{2}$.
When $y_1\in[x_1, \pi-x_1]$, we find that
\[
\tan\f{x_1+y_1-\pi}{2}\leq 0,\quad
\tan\f{y_1-x_1}{2}\geq 0,
\]
so
\begin{align*}
\tilde{J}(x_1,y_1;z)
\geq &\; \pi+2\arctan \left(\tan\f{x_1+y_1-\pi}{2} \right)
+2\arctan \left(\tan\f{y_1-x_1}{2} \right)
= 2y_1.
\end{align*}
When $y_1\in (\pi-x_1,\pi)$, it must hold that $y_1 > \f{\pi}{2}$,
\beq
\tan\f{y_1-x_1}{2} \geq \tan\f{x_1+y_1 -\pi}{2} > 0,
\label{eqn: two terms in the numerator are positive}
\eeq
and
\[
\left(\tanh\f{z}{2}\cdot \tan\f{x_1+y_1-\pi}{2}\right) \left(\coth\f{z}{2}\cdot \tan\f{y_1-x_1}{2}\right)
< \tan\f{x_1+\pi-\pi}{2}\tan\f{\pi-x_1}{2} = 1.
\]
This allows us to further derive that
\begin{align*}
&\; \tilde{J}(x_1,y_1;z) \\
= &\; \pi+2\arctan \left(\f{\tanh\f{z}{2}\cdot \tan\f{x_1+y_1-\pi}{2} +\coth\f{z}{2}\cdot \tan\f{y_1-x_1}{2}}{1- \tan\f{x_1+y_1-\pi}{2} \tan\f{y_1-x_1}{2}}\right)
\\
= &\; \pi+2\arctan \left(\f{\tanh\f{z}{2}\cdot \tan\f{x_1+y_1-\pi}{2} +\coth\f{z}{2}\cdot \tan\f{y_1-x_1}{2}}{\tan\f{x_1+y_1-\pi}{2} + \tan\f{y_1-x_1}{2}}\cdot
\f{\tan\f{x_1+y_1-\pi}{2} + \tan\f{y_1-x_1}{2}}{1- \tan\f{x_1+y_1-\pi}{2} \tan\f{y_1-x_1}{2}}\right)
\\
= &\; \pi+2\arctan \left(\f{\tanh\f{z}{2}\cdot \tan\f{x_1+y_1 -\pi}{2} +\coth\f{z}{2}\cdot \tan\f{y_1-x_1}{2}}{\tan\f{x_1+y_1 -\pi}{2} + \tan\f{y_1-x_1}{2}}\cdot
\tan\left(y_1-\f{\pi}{2}\right)\right).
\end{align*}
Using \eqref{eqn: two terms in the numerator are positive} as well as the fact $\tanh\f{z}{2}<1<\coth\f{z}{2}$, we find that
\[
\f{\tanh\f{z}{2}\cdot \tan\f{x_1+y_1 -\pi}{2} +\coth\f{z}{2}\cdot \tan\f{y_1-x_1}{2}}{\tan\f{x_1+y_1 -\pi}{2} + \tan\f{y_1-x_1}{2}}
\geq \f{1}{2}\left(\tanh\f{z}{2} + \coth\f{z}{2}\right) \geq 1.
\]
Therefore,
\[
\tilde{J}(x_1,y_1;z)
\geq \pi+2\arctan \left(\tan\left(y_1-\f{\pi}{2}\right)\right)
= 2y_1.
\]

This completes the proof.
\end{proof}
\end{lem}

Motivated by \eqref{eqn: pa_x2 Psi less than kappa_* F quantitative}, we shall derive integral representations of $\pa_{x_2}\Psi$ and $F$.
\begin{lem}
\label{lem: integral representation of Psi_x2 and F}
Let $\tilde{J}(x_1,y_1;z)$ be defined in \eqref{eqn: def of tilde J}.
Let $g\in \BM$.
Then for $x_1\in [0,\pi]$ and $x_2>0$,
\begin{align*}
&\; \pa_{x_2}\Psi(x_1,x_2) - \f{1}{4\pi}\int_\Om e^{-2\mu y_2}\,dy\\
= &\; -\f{1}{4\pi}\int_0^{g(0)} e^{-2\mu y_2}\cdot \mathrm{sgn}(x_2-y_2) \tilde{J}\big(x_1,g^{-1}(y_2);|x_2-y_2|\big)\,dy_2.
\end{align*}
and
\begin{align*}
&\; F(x_1,x_2)-\f{1}{4\pi}\int_\Om e^{-2\mu y_2} \, dy
\\
=
&\; \f{1}{4\pi x_2}\int_{0}^{g(0)} \mathds{1}_{\{y_2\geq x_2/2\}}\cdot e^{-2\mu y_2}
\int_{|y_2-x_2|}^{y_2} \tilde{J}\big(x_1,g^{-1}(y_2);z\big)
\,dz\,dy_2
\\
&\; -\f{1}{4\pi x_2}\int_0^{g(0)} \mathds{1}_{\{y_2\leq x_2/2\}}\cdot e^{-2\mu y_2}
\int_{y_2}^{x_2-y_2}
\tilde{J}\big(\pi,g^{-1}(y_2);z\big)
\, dz \,dy_2
\\
&\; +\f{1}{4\pi x_2}\int_{0}^{g(0)} e^{-2\mu y_2}\int_{\max\{x_2-y_2,y_2\}}^\infty
\tilde{J}\big(x_1,g^{-1}(y_2);z\big)-\tilde{J}\big(\pi,g^{-1}(y_2);z\big)
\, dz \,dy_2.
\end{align*}

\begin{proof}
By \eqref{eqn: integral representation of Psi} in Lemma \ref{lem: integral formula for Psi},
\begin{align*}
&\; \pa_{x_2}\Psi(x_1,x_2)\\
= &\; -\f{1}{4\pi}\int_\Om e^{-2\mu y_2}
\left[ \f{\sinh(x_2-y_2)}{\cosh(x_2-y_2)- \cos(x_1-y_1)}
-1\right] dy
\\
= &\; -\f{1}{4\pi}\int_0^{g(0)} e^{-2\mu y_2}
\int_{-g^{-1}(y_2)}^{g^{-1}(y_2)} \f{\sinh(x_2-y_2)}{\cosh(x_2-y_2)- \cos(x_1-y_1)}\,dy_1\,dy_2
+ \f{1}{4\pi}\int_\Om e^{-2\mu y_2}\,dy
\\
= &\; -\f{1}{4\pi}\int_0^{g(0)} e^{-2\mu y_2}\cdot \mathrm{sgn}(x_2-y_2) \tilde{J}\big(x_1,g^{-1}(y_2);|x_2-y_2|\big)\,dy_2
+ \f{1}{4\pi}\int_\Om e^{-2\mu y_2}\,dy.
\end{align*}

For the integral representation of $F$, we derive from \eqref{eqn: integral representation of Psi} and \eqref{eqn: def of F} that
\begin{align*}
&\; F(x_1,x_2)-\f{1}{4\pi}\int_\Om e^{-2\mu y_2} \, dy
\\
= &\; -\f{1}{4\pi x_2}\int_\Om e^{-2\mu y_2}
\cdot \ln \left(\f{\cosh(x_2-y_2)- \cos(x_1-y_1)}{\cosh y_2 -\cos(x_1-y_1)}\right)dy
\\
&\; -\f{1}{4\pi x_2}\int_\Om e^{-2\mu y_2}
\cdot \ln \left(\f{\cosh y_2- \cos(x_1-y_1)}{\cosh y_2 -\cos(\pi-y_1)}\right)dy
\\
= &\; -\f{1}{4\pi x_2}\int_0^{g(0)} e^{-2\mu y_2}\int_{-g^{-1}(y_2)}^{g^{-1}(y_2)}
\int_{y_2}^{|x_2-y_2|} \f{\sinh z}{\cosh z- \cos(x_1-y_1)}\,dz\,dy_1\,dy_2
\\
&\; +\f{1}{4\pi x_2}\int_0^{g(0)} e^{-2\mu y_2}
\int_{-g^{-1}(y_2)}^{g^{-1}(y_2)} \int_{y_2}^\infty
\f{\sinh z}{\cosh z- \cos(x_1-y_1)} - \f{\sinh z}{\cosh z -\cos(\pi-y_1)}\, dz \,dy_1 \, dy_2
\\
= &\; -\f{1}{4\pi x_2}\int_0^{g(0)} e^{-2\mu y_2}
\int_{y_2}^{|x_2-y_2|} \tilde{J}\big(x_1,g^{-1}(y_2);z\big)
\,dz\,dy_2
\\
&\; +\f{1}{4\pi x_2}\int_0^{g(0)} e^{-2\mu y_2}\int_{y_2}^\infty
\tilde{J}\big(x_1,g^{-1}(y_2);z\big)-\tilde{J}\big(\pi,g^{-1}(y_2);z\big)
\, dz \,dy_2
\\
=
&\; \f{1}{4\pi x_2}\int_{0}^{g(0)} \mathds{1}_{\{y_2\geq x_2/2\}}\cdot e^{-2\mu y_2}
\int_{|y_2-x_2|}^{y_2} \tilde{J}\big(x_1,g^{-1}(y_2);z\big)
\,dz\,dy_2
\\
&\; -\f{1}{4\pi x_2}\int_0^{g(0)} \mathds{1}_{\{y_2\leq x_2/2\}}\cdot e^{-2\mu y_2}
\int_{y_2}^{x_2-y_2}
\tilde{J}\big(\pi,g^{-1}(y_2);z\big)
\, dz \,dy_2
\\
&\; +\f{1}{4\pi x_2}\int_{0}^{g(0)} e^{-2\mu y_2}\int_{\max\{x_2-y_2,y_2\}}^\infty
\tilde{J}\big(x_1,g^{-1}(y_2);z\big)-\tilde{J}\big(\pi,g^{-1}(y_2);z\big)
\, dz \,dy_2.
\end{align*}
This completes the proof.
\end{proof}
\end{lem}

The following lemma plays a crucial role in proving Proposition \ref{prop: non-degeneracy along level set}.
\begin{lem}
\label{lem: estimate for Psi_x2 - kappa F}
For $x_1\in [0,\pi]$ and $x_2>0$, with $\ka_*$ defined in \eqref{eqn: def of kappa_*}, it holds that
\begin{align*}
&\; \pa_{x_2}\Psi(x_1,x_2) -\ka_* F(x_1,x_2)
- (1-\ka_*)\cdot \f{1}{4\pi}\int_\Om e^{-2\mu y_2}\,dy\\
\leq
&\; -\f{1}{4\pi}\int_0^{g(0)} \mathds{1}_{\{y_2\leq x_2/2\}}\cdot e^{-2\mu y_2} \tilde{J}\big(x_1,g^{-1}(y_2);x_2-y_2\big)\,dy_2
\\
&\;
- \f{1}{4\pi} \left(1 + \f{\ka_*}{2\mu x_2}\right)
\int_0^{g(0)}
\left(1-e^{-2\mu (2x_2-2y_2)}\right)\\
&\;\qquad\qquad \qquad \qquad \quad \cdot
\mathds{1}_{\{\f12 x_2\leq y_2\leq x_2\}} \cdot e^{-2\mu y_2} \tilde{J}\big(x_1,g^{-1}(y_2);x_2-y_2\big)\,dy_2
\\
&\; + \f{\ka_*}{4\pi}\int_0^{g(0)} \mathds{1}_{\{y_2\leq x_2/2\}}\cdot \f{x_2-2y_2}{x_2} \cdot e^{-2\mu y_2}
\tilde{J}\big(\pi,g^{-1}(y_2);x_2-y_2\big)\, dy_2.
\end{align*}

\begin{proof}
Thanks to Lemma \ref{lem: integral representation of Psi_x2 and F},
\beq
\begin{split}
&\; \pa_{x_2}\Psi(x_1,x_2) - \ka_* F(x_1,x_2)
- (1-\ka_*)\cdot \f{1}{4\pi}\int_\Om e^{-2\mu y_2}\,dy\\
= &\; -\f{1}{4\pi}\int_0^{g(0)} \mathds{1}_{\{y_2\leq x_2/2\}}\cdot e^{-2\mu y_2} \tilde{J}\big(x_1,g^{-1}(y_2);x_2-y_2\big)\,dy_2
\\
&\; -\f{1}{4\pi}\int_0^{g(0)} \mathds{1}_{\{x_2/2\leq y_2\leq x_2\}}\cdot e^{-2\mu y_2} \tilde{J}\big(x_1,g^{-1}(y_2);x_2-y_2\big)\,dy_2
\\
&\; +\f{1}{4\pi}\int_0^{g(0)}  \mathds{1}_{\{y_2\geq x_2\}}\cdot e^{-2\mu y_2} \tilde{J}\big(x_1,g^{-1}(y_2);y_2-x_2\big)\,dy_2
\\
&\; -\f{\ka_*}{4\pi x_2}\int_{0}^{g(0)} \mathds{1}_{\{y_2\geq x_2/2\}}\cdot e^{-2\mu y_2}
\int_{|y_2-x_2|}^{y_2} \tilde{J}\big(x_1,g^{-1}(y_2);z\big)
\,dz\,dy_2
\\
&\; +\f{\ka_*}{4\pi x_2}\int_0^{g(0)} \mathds{1}_{\{y_2\leq x_2/2\}}\cdot e^{-2\mu y_2}
\int_{y_2}^{x_2-y_2}
\tilde{J}\big(\pi,g^{-1}(y_2);z\big)
\, dz \,dy_2
\\
&\; -\f{\ka_*}{4\pi x_2}\int_{0}^{g(0)} e^{-2\mu y_2}\int_{\max\{x_2-y_2,y_2\}}^\infty
\tilde{J}\big(x_1,g^{-1}(y_2);z\big)-\tilde{J}\big(\pi,g^{-1}(y_2);z\big)
\, dz \,dy_2
\\
=: &\; I_1+I_2+I_3+I_4+I_5+I_6.
\end{split}
\label{eqn: splitting Psi_x2-F}
\eeq

We first claim that
\beq
\begin{split}
I_4 \leq &\;
-\f{1}{4\pi} \int_{0}^{g(0)}
\f{\ka_*}{2\mu  x_2 }\left(e^{2\mu \min\{x_2,2y_2-2x_2\}} - 1\right) \\
&\;\qquad \qquad \quad \cdot \mathds{1}_{\{y_2\geq x_2\}}\cdot e^{-2\mu y_2} \tilde{J}\big(x_1,g^{-1}(y_2);y_2-x_2\big)\,dy_2.
\end{split}
\label{eqn: bound for I_4}
\eeq
To show this, we start by writing
\begin{align*}
&\; \int_0^{g(0)} \mathds{1}_{\{y_2\geq x_2/2\}}\cdot e^{-2\mu y_2}
\int_{|y_2-x_2|}^{y_2} \tilde{J}\big(x_1,g^{-1}(y_2);z\big)
\,dz\,dy_2
\\
=
&\; \int_0^{g(0)}
\int_0^\infty \mathds{1}_{\{y_2\geq x_2/2\}} \mathds{1}_{\{|y_2-x_2|\leq z \leq y_2\}}\cdot e^{-2\mu y_2}\tilde{J}\big(x_1,g^{-1}(y_2);z\big)
\,dz\,dy_2
\\
=
&\; \int_0^\infty
\int_0^{g(0)}\mathds{1}_{\{y_2\geq x_2/2\}}
\mathds{1}_{\{y_2 \geq z\}}
\mathds{1}_{\{x_2-z\leq y_2 \leq x_2+z\}} \cdot e^{-2\mu y_2} \tilde{J}\big(x_1,g^{-1}(y_2);z\big)
\,dy_2\,dz
\\
=
&\; \int_0^\infty
\int_{\max\{z,x_2-z\}}^{z+x_2}
\mathds{1}_{\{y_2\leq g(0)\}}\cdot e^{-2\mu y_2} \tilde{J}\big(x_1,g^{-1}(y_2);z\big)
\,dy_2\,dz.
\end{align*}
By Lemma \ref{lem: properties of tilde J},
\beq
y_2\mapsto \tilde{J}\big(x_1,g^{-1}(y_2);z\big)\mbox{ is decreasing},
\label{eqn: monotonicity of tilde J in y_2}
\eeq
so
\begin{align*}
&\; \int_0^{g(0)} \mathds{1}_{\{y_2\geq x_2/2\}}\cdot e^{-2\mu y_2}
\int_{|y_2-x_2|}^{y_2} \tilde{J}\big(x_1,g^{-1}(y_2);z\big)
\,dz\,dy_2
\\
\geq &\; \int_0^\infty
\left(\int_{\max\{z,x_2-z\}}^{z+x_2}\mathds{1}_{\{y_2\leq g(0)\}} \cdot e^{-2\mu y_2}
\,dy_2\right) \tilde{J}\big(x_1,g^{-1}(z+x_2);z\big)\,dz
\\
= &\; \int_0^\infty \mathds{1}_{\{z\leq g(0)-x_2\}}
\left(\int_{\max\{z,x_2-z\}}^{z+x_2}
e^{-2\mu y_2}
\,dy_2\right) \tilde{J}\big(x_1,g^{-1}(z+x_2);z\big)\,dz.
\end{align*}
In the last line, we used the fact that, when $z\geq g(0)-x_2$, it holds that
\[
\tilde{J}\big(x_1,g^{-1}(z+x_2);z\big)
= \tilde{J}(x_1,0;z) = 0.
\]
Now we evaluate the integral above and obtain that
\begin{align*}
&\; \int_0^{g(0)} \mathds{1}_{\{y_2\geq x_2/2\}}\cdot e^{-2\mu y_2}
\int_{|y_2-x_2|}^{y_2} \tilde{J}\big(x_1,g^{-1}(y_2);z\big)
\,dz\,dy_2
\\
\geq &\;
\int_0^\infty \mathds{1}_{\{z+x_2\leq g(0)\}}
\cdot \f{1}{2\mu}\left(e^{-2\mu \max\{z,x_2-z\}} - e^{-2\mu (z+x_2)}\right) \tilde{J}\big(x_1,g^{-1}(z+x_2);z\big)\,dz
\\
= &\;
\int_{x_2}^\infty \mathds{1}_{\{y_2\leq g(0)\}}
\cdot \f{1}{2\mu}\left(e^{-2\mu \max\{y_2-x_2,x_2-(y_2-x_2)\}} - e^{-2\mu y_2}\right) \tilde{J}\big(x_1,g^{-1}(y_2);y_2-x_2\big)\,dy_2
\\
= &\;
\int_{0}^\infty \mathds{1}_{\{y_2\leq g(0)\}}
\cdot \f{1}{2\mu}\left(e^{-2\mu \max\{-x_2,2x_2-2y_2\}} - 1\right) \\
&\;\qquad \cdot \mathds{1}_{\{y_2 \geq x_2\}} \cdot e^{-2\mu y_2} \tilde{J}\big(x_1,g^{-1}(y_2);y_2-x_2\big)\,dy_2.
\end{align*}
This proves \eqref{eqn: bound for I_4}.

Using \eqref{eqn: bound for I_4}, we deduce that
\beq
\begin{split}
&\;I_3+I_4\\
\leq
&\;
-\f{1}{4\pi} \int_{0}^{g(0)}
\left[\f{\ka_*}{2\mu  x_2 }\left(e^{2\mu \min\{x_2,2y_2-2x_2\}} - 1\right) - 1\right] \\
&\;\qquad \qquad \quad \cdot \mathds{1}_{\{y_2\geq x_2\}}\cdot e^{-2\mu y_2} \tilde{J}\big(x_1,g^{-1}(y_2);y_2-x_2\big)\,dy_2
\\
=
&\; -\f{1}{4\pi} \int_{0}^{g(0)}
\left[\f{\ka_*}{2\mu  x_2 }\left(e^{2\mu (2y_2-2x_2)} - 1\right) - 1\right] \\
&\;\qquad \qquad \quad \cdot \mathds{1}_{\{x_2\leq y_2\leq \f32 x_2\}}\cdot e^{-2\mu y_2} \tilde{J}\big(x_1,g^{-1}(y_2);y_2-x_2\big)\,dy_2
\\
&\;
-\f{1}{4\pi} \left(\ka_* \cdot \f{e^{2\mu x_2} - 1}{2\mu x_2 } - 1\right) \int_{0}^{g(0)}
\mathds{1}_{\{y_2\geq \f32 x_2\}}\cdot e^{-2\mu y_2} \tilde{J}\big(x_1,g^{-1}(y_2);y_2-x_2\big)\,dy_2.
\end{split}
\label{eqn: bound for I_3+I_4}
\eeq
Note that the last line vanishes due to the definition of $\ka_*$.
To further simplify the bound, we observe that, by a change of variable $y_2':=2x_2-y_2$,
\begin{align*}
&\;
-\f{1}{4\pi} \int_{0}^{g(0)}
\left[\f{\ka_*}{2\mu  x_2 }\left(e^{2\mu (2y_2-2x_2)} - 1\right) - 1\right] \\
&\;\qquad \qquad \quad \cdot \mathds{1}_{\{x_2\leq y_2\leq \f32 x_2\}}\cdot e^{-2\mu y_2} \tilde{J}\big(x_1,g^{-1}(y_2);y_2-x_2\big)\,dy_2
\\
=
&\;
-\f{1}{4\pi} \int_{0}^{g(0)}
\left[\f{\ka_*}{2\mu  x_2 }\left(1-e^{-2\mu (2y_2-2x_2)} \right) - e^{-2\mu (2y_2-2x_2)} \right]  \\
&\;\qquad \qquad \quad \cdot \mathds{1}_{\{x_2\leq y_2\leq \f32 x_2\}} \cdot e^{-2\mu (2x_2-y_2)} \tilde{J}\big(x_1,g^{-1}(y_2);y_2-x_2\big)\,dy_2
\\
=
&\;
\f{1}{4\pi} \int_{2x_2-g(0)}^{2x_2}
\left[e^{-2\mu (2x_2-2y_2')} - \f{\ka_*}{2\mu  x_2 }\left(1-e^{-2\mu (2x_2-2y_2')} \right)\right]  \\
&\;\qquad \qquad \quad \cdot \mathds{1}_{\{\f12 x_2\leq y_2'\leq x_2\}} \cdot e^{-2\mu y_2'} \tilde{J}\big(x_1,g^{-1}(2x_2-y_2');x_2-y_2'\big)\,dy_2'
\\
\leq
&\;
\f{1}{4\pi} \int_{2x_2-g(0)}^{2x_2}
\left[e^{-2\mu (2x_2-2y_2')} - \f{\ka_*}{2\mu  x_2 }\left(1-e^{-2\mu (2x_2-2y_2')} \right)\right]
\\
&\;\qquad \qquad \quad \cdot
\mathds{1}_{\{\f12 x_2\leq y_2'\leq x_2\}} \cdot e^{-2\mu y_2'} \tilde{J}\big(x_1,g^{-1}(y_2');x_2-y_2'\big)\,dy_2'
\\
=
&\;
\f{1}{4\pi} \int_{2x_2-g(0)}^{2x_2}
\left[e^{-2\mu (2x_2-2y_2)} - \f{\ka_*}{2\mu x_2}\left(1-e^{-2\mu (2x_2-2y_2)} \right)\right]
\\
&\;\qquad \qquad \quad \cdot
\mathds{1}_{\{\f12 x_2\leq y_2\leq x_2\}} \cdot e^{-2\mu y_2} \tilde{J}\big(x_1,g^{-1}(y_2);x_2-y_2\big)\,dy_2.
\end{align*}
In the inequality above, we used the monotonicity of $g^{-1}$, \eqref{eqn: monotonicity of tilde J in y_2}, and the definition of $\ka_*$.
We claim that
\beq
\begin{split}
&\; \int_{2x_2-g(0)}^{2x_2}
\left[e^{-2\mu (2x_2-2y_2)} - \f{\ka_*}{2\mu x_2}\left(1-e^{-2\mu (2x_2-2y_2)} \right)\right]
\\
&\;\qquad \qquad \cdot
\mathds{1}_{\{\f12 x_2\leq y_2\leq x_2\}} \cdot e^{-2\mu y_2} \tilde{J}\big(x_1,g^{-1}(y_2);x_2-y_2\big)\,dy_2
\\
\leq
&\; \int_0^{g(0)} \left[e^{-2\mu (2x_2-2y_2)} - \f{\ka_*}{2\mu x_2}\left(1-e^{-2\mu (2x_2-2y_2)} \right)\right]
\\
&\;\qquad \cdot
\mathds{1}_{\{\f12 x_2\leq y_2\leq x_2\}} \cdot e^{-2\mu y_2} \tilde{J}\big(x_1,g^{-1}(y_2);x_2-y_2\big)\,dy_2.
\end{split}
\label{eqn: bounding the integral on x_2 to 3 halves x_2}
\eeq
Indeed,
\begin{itemize}
\item
If $x_2\geq g(0)$, the left-hand side simply vanishes as $x_2\leq 2x_2 - g(0)$.

\item
If $x_2\leq \f{g(0)}{2}$, we have $[\f12x_2,x_2]\subset [0,g(0)]$ and $[\f12x_2,x_2]\subset [2x_2-g(0),2x_2]$, so the equality in \eqref{eqn: bounding the integral on x_2 to 3 halves x_2} holds.

\item
If $x_2\in [\f{g(0)}{2},g(0)]$, we have $2x_2-g(0)\geq 0$ while $x_2\leq g(0)\leq 2x_2$, so \eqref{eqn: bounding the integral on x_2 to 3 halves x_2} holds as well.

\end{itemize}
With \eqref{eqn: bounding the integral on x_2 to 3 halves x_2}, we conclude that
\begin{align*}
&\;
-\f{1}{4\pi} \int_{0}^{g(0)}
\left[\f{\ka_*}{2\mu  x_2 }\left(e^{2\mu (2y_2-2x_2)} - 1\right) - 1\right] \\
&\;\qquad \qquad \cdot \mathds{1}_{\{x_2\leq y_2\leq \f32 x_2\}}\cdot e^{-2\mu y_2} \tilde{J}\big(x_1,g^{-1}(y_2);y_2-x_2\big)\,dy_2
\\
\leq
&\;
\f{1}{4\pi} \int_0^{g(0)} \left[e^{-2\mu (2x_2-2y_2)} - \f{\ka_*}{2\mu x_2}\left(1-e^{-2\mu (2x_2-2y_2)} \right)\right]
\\
&\;\qquad \quad \cdot
\mathds{1}_{\{\f12 x_2\leq y_2\leq x_2\}} \cdot e^{-2\mu y_2} \tilde{J}\big(x_1,g^{-1}(y_2);x_2-y_2\big)\,dy_2.
\end{align*}
This combined with \eqref{eqn: bound for I_3+I_4} gives that
\begin{align*}
&\;I_2+I_3+I_4\\
\leq
&\;
- \f{1}{4\pi} \left(1 + \f{\ka_*}{2\mu x_2} \right)
\int_0^{g(0)}
\left(1-e^{-2\mu (2x_2-2y_2)}\right)\\
&\;\qquad\qquad \qquad \qquad \qquad \cdot
\mathds{1}_{\{\f12 x_2\leq y_2\leq x_2\}} \cdot e^{-2\mu y_2} \tilde{J}\big(x_1,g^{-1}(y_2);x_2-y_2\big)\,dy_2.
\end{align*}

By Lemma \ref{lem: properties of tilde J},
$z\mapsto \tilde{J}(\pi,g^{-1}(y_2);z)$ is increasing,
so we find that
\beq
\begin{split}
I_5
\leq
&\; \f{\ka_*}{4\pi x_2}\int_0^{g(0)} \mathds{1}_{\{y_2\leq x_2/2\}} \cdot e^{-2\mu y_2}
\int_{y_2}^{x_2-y_2}
\tilde{J}\big(\pi,g^{-1}(y_2);x_2-y_2\big)
\, dz \,dy_2
\\
= &\; \f{\ka_*}{4\pi}\int_0^{g(0)} \mathds{1}_{\{y_2\leq x_2/2\}}\cdot \f{x_2-2y_2}{x_2} \cdot e^{-2\mu y_2}
\tilde{J}\big(\pi,g^{-1}(y_2);x_2-y_2\big)\, dy_2.
\end{split}
\label{eqn: estimate for I_5}
\eeq
Finally, thanks to Lemma \ref{lem: properties of tilde J}, $x_1\mapsto \tilde{J}(x_1,g^{-1}(y_2);z)$ is decreasing on $[0,\pi]$, so $I_6\leq 0$.

Summarizing all the estimates, we conclude that
\begin{align*}
&\; \pa_{x_2}\Psi(x_1,x_2) - \ka_* F(x_1,x_2)
- (1-\ka_*)\cdot \f{1}{4\pi}\int_\Om e^{-2\mu y_2}\,dy\\
\leq
&\; -\f{1}{4\pi}\int_0^{g(0)} \mathds{1}_{\{y_2\leq x_2/2\}}\cdot e^{-2\mu y_2} \tilde{J}\big(x_1,g^{-1}(y_2);x_2-y_2\big)\,dy_2
\\
&\;
- \f{1}{4\pi} \left(1 + \f{\ka_*}{2\mu x_2}\right)
\int_0^{g(0)}
\left(1-e^{-2\mu (2x_2-2y_2)}\right)\\
&\;\qquad\qquad \qquad \qquad \qquad \cdot
\mathds{1}_{\{\f12 x_2\leq y_2\leq x_2\}} \cdot e^{-2\mu y_2} \tilde{J}\big(x_1,g^{-1}(y_2);x_2-y_2\big)\,dy_2
\\
&\; + \f{\ka_*}{4\pi}\int_0^{g(0)} \mathds{1}_{\{y_2\leq x_2/2\}}\cdot \f{x_2-2y_2}{x_2} \cdot e^{-2\mu y_2}
\tilde{J}\big(\pi,g^{-1}(y_2);x_2-y_2\big)\, dy_2.
\end{align*}
This completes the proof.
\end{proof}
\end{lem}

Now we are ready to prove Proposition \ref{prop: non-degeneracy along level set}.

\begin{proof}[Proof of Proposition \ref{prop: non-degeneracy along level set}]
As is noted in Remark \ref{rmk: equivalent formulation of the non-degeneracy}, it suffices to prove \eqref{eqn: pa_x2 Psi less than kappa_* F quantitative}.

Using Lemma \ref{lem: estimate for Psi_x2 - kappa F} and the definition of  $\ka_*$ in \eqref{eqn: def of kappa_*}, we find that
\begin{align*}
&\; \pa_{x_2}\Psi(x_1,x_2) - \ka_* F(x_1,x_2)
\\
\leq
&\; -\f{1}{4\pi}\int_{0}^{x_2/2} e^{-2\mu y_2} \tilde{J}\big(x_1,g^{-1}(y_2);x_2-y_2\big)\,dy_2
\\
&\;
- \f{1}{4\pi} \cdot
\int_{x_2/2}^{x_2}\f{1-e^{-2\mu (2x_2-2y_2)}}{1-e^{-2\mu x_2}}
\cdot e^{-2\mu y_2} \tilde{J}\big(x_1,g^{-1}(y_2);x_2-y_2\big)\,dy_2
\\
&\; + \f{\ka_*}{4\pi}\int_0^{x_2/2} \f{x_2-2y_2}{x_2} \cdot e^{-2\mu y_2}
\tilde{J}\big(\pi,g^{-1}(y_2);x_2-y_2\big)\, dy_2
\\
&\; +(1-\ka_*)\cdot \f{1}{4\pi}\int_\Om e^{-2\mu y_2}\,dy.
\end{align*}
On the other hand, by \eqref{eqn: end point constant} in Lemma \ref{lem: integral formula for Psi},
\begin{align*}
\pa_{x_2} \Psi(\pi,0)
\leq &\; \f{1}{4\pi}\int_0^{g(0)} e^{-2\mu y_2} \cdot 4\cdot \f{g^{-1}(y_2)}{2}\, dy_2
+ \f{1}{4\pi}\int_\Om e^{-2\mu y_2} \, dy
= 2\cdot \f{1}{4\pi}\int_\Om e^{-2\mu y_2} \, dy,
\end{align*}
which gives
\[
-\d(1-\ka_*)\cdot
\pa_{x_2} \Psi(\pi,0)\cdot \f{1-e^{-2\mu x_2}}{2\mu x_2}
\geq - 2\d(1-\ka_*) \cdot \f{1}{4\pi}\int_\Om e^{-2\mu y_2} \, dy.
\]
So in order to show \eqref{eqn: pa_x2 Psi less than kappa_* F quantitative}, we only have to prove
\begin{align*}
&\; \int_{0}^{x_2/2} e^{-2\mu y_2} \tilde{J}\big(x_1,g^{-1}(y_2);x_2-y_2\big)\,dy_2
\\
&\;
+\int_{x_2/2}^{x_2}
\f{1-e^{-2\mu (2x_2-2y_2)}}{1-e^{-2\mu x_2}} \cdot
e^{-2\mu y_2} \tilde{J}\big(x_1,g^{-1}(y_2);x_2-y_2\big)\,dy_2
\\
&\; -\ka_* \int_0^{x_2/2} \f{x_2-2y_2}{x_2} \cdot e^{-2\mu y_2}
\tilde{J}\big(\pi,g^{-1}(y_2);x_2-y_2\big)\, dy_2
\\
\geq &\; (1+2\d)(1-\ka_*)\int_\Om e^{-2\mu y_2}\,dy.
\end{align*}
Since $x_2\in (0,g(0))$ and $g^{-1}(x_2) = x_1$,
\begin{align*}
\int_\Om e^{-2\mu y_2}\,dy
= &\; \int_0^{g(0)} e^{-2\mu y_2}\cdot 2g^{-1}(y_2)\,dy_2
\\
\leq &\; \int_0^{x_2} e^{-2\mu y_2}\cdot 2g^{-1}(y_2)\,dy_2
+ \int_{x_2}^{g(0)} e^{-2\mu y_2}\cdot 2x_1\,dy_2
\\
= &\; \int_0^{x_2} e^{-2\mu y_2}\cdot 2g^{-1}(y_2)\,dy_2
+ 2x_1\cdot \f{1}{2\mu} \big(e^{-2\mu x_2}-e^{-2\mu g(0)}\big).
\end{align*}
Combining this with the preceding inequality, we find that it suffices to show
\beq
\begin{split}
W \geq &\; (1+2\d)\cdot \f{1-\ka_*}{2\mu}\cdot 2x_1 \big(e^{-2\mu x_2}-e^{-2\mu g(0)}\big),
\end{split}
\label{eqn: pa_x2 Psi less than alpha F simplified}
\eeq
where
\begin{align*}
W:=&\; \int_{0}^{x_2/2} e^{-2\mu y_2}
\left[\tilde{J}\big(x_1,g^{-1}(y_2);x_2-y_2\big)\right.\\
&\;\qquad \qquad \left.-
\ka_*\cdot \f{x_2-2y_2}{x_2} \cdot
\tilde{J}\big(\pi,g^{-1}(y_2);x_2-y_2\big)
- 2(1+2\d)(1-\ka_*) g^{-1}(y_2)
\right] dy_2
\\
&\;
+ \int_{x_2/2}^{x_2} e^{-2\mu y_2}\left[
\f{1-e^{-2\mu (2x_2-2y_2)}}{1-e^{-2\mu x_2}}\cdot
\tilde{J}\big(x_1,g^{-1}(y_2);x_2-y_2\big)\right.\\
&\; \qquad \qquad \quad
- 2(1+2\d)(1-\ka_*) g^{-1}(y_2)\Big] dy_2.
\end{align*}

For $x_1\in [0,\f{\pi}{2}]$, by Lemma \ref{lem: properties of tilde J},
\begin{align*}
W \geq &\; \int_{0}^{x_2/2} e^{-2\mu y_2}
\left[2g^{-1}(y_2) - \ka_*\cdot \f{x_2-2y_2}{x_2} \cdot
2g^{-1}(y_2) - 2(1+2\d)(1-\ka_*) g^{-1}(y_2)\right] dy_2
\\
&\;
+ \int_{x_2/2}^{x_2} e^{-2\mu y_2}\left[
\f{1-e^{-2\mu (2x_2-2y_2)}}{1-e^{-2\mu x_2}}\cdot
2g^{-1}(y_2)
- 2(1+2\d)(1-\ka_*) g^{-1}(y_2)\right] dy_2
\\
=
&\; \int_{0}^{x_2/2} e^{-2\mu y_2}\cdot 2g^{-1}(y_2)\left[ \ka_* \cdot \f{2y_2}{x_2} - 2\d(1-\ka_*) \right] dy_2
\\
&\;
+ \int_{x_2/2}^{x_2} e^{-2\mu y_2}\cdot 2g^{-1}(y_2) \left[
\f{1-e^{-2\mu (2x_2-2y_2)}}{1-e^{-2\mu x_2}} - (1+2\d)(1-\ka_*) \right] dy_2.
\end{align*}
Let
\begin{align*}
\va_1(y_2):= &\; \ka_*\cdot \f{2y_2}{x_2} - 2\d(1-\ka_*),\\
\va_2(y_2):= &\; \f{1-e^{-2\mu (2x_2-2y_2)}}{1-e^{-2\mu x_2}} -(1+2\d)(1-\ka_*).
\end{align*}
It is clear that $\va_1$ is strictly increasing on $[0,\f{x_2}{2}]$, and $\va_2$ is strictly decreasing on $[\f{x_2}{2},x_2]$.
Moreover, by \eqref{eqn: value of delta} and the fact that $x_2\in [0,M]$,
\[
\d \leq \f{1}{2(e^{2\mu M}-1)} \leq
\f{1}{2(\f{e^{2\mu M}-1}{2\mu M}-1)} \leq \f{1}{2(\f{e^{2\mu x_2}-1}{2\mu x_2}-1)} = \f{\ka_*}{2(1-\ka_*)},
\]
which implies
\[
\va_1(0) <0,\quad \va_1\left(\f{x_2}{2}\right) = \ka_*-2\d (1-\ka_*)\geq 0,
\]
and
\[
\va_2\left(\f{x_2}{2}\right)
=1-(1+2\d)(1-\ka_*) \geq 0,\quad
\va_2(x_2) = -(1+2\d)(1-\ka_*) <0.
\]
Hence, there exists a unique $\tilde{x}_2 \in [0,\f{x_2}{2}]$ and $x_2^*\in [\f{x_2}{2},x_2]$ such that $\va_1(\tilde{x}_2) = 0$ and $\va_2(x_2^*) = 0$.
In particular,
\beq
\tilde{x}_2 = \f{x_2 \d(1-\ka_*)}{\ka_*}.
\label{eqn: def of tilde x_2}
\eeq
This allows us to derive that
\begin{align*}
W
\geq
&\; \int_{0}^{\tilde{x}_2} e^{-2\mu y_2} \cdot 2\pi \left[ \ka_* \cdot \f{2y_2}{x_2} - 2\d(1-\ka_*) \right]  dy_2
\\
&\; + \int_{\tilde{x}_2}^{x_2/2} e^{-2\mu y_2}
\cdot 2g^{-1}\big(x_2^*\big) \left[ \ka_* \cdot \f{2y_2}{x_2} - 2\d(1-\ka_*) \right] dy_2
\\
&\;
+ \int_{x_2/2}^{x_2} e^{-2\mu y_2} \cdot 2g^{-1}\big(x_2^*\big) \left[
\f{1-e^{-2\mu (2x_2-2y_2)}}{1-e^{-2\mu x_2}} - (1+2\d)(1-\ka_*) \right] dy_2
\\
=
&\; 2g^{-1} \big(x_2^*\big) \int_0^{x_2/2} e^{-2\mu y_2} \left[ \ka_* \cdot \f{2y_2}{x_2} - 2\d(1-\ka_*) \right] dy_2
\\
&\;
+ 2g^{-1} \big(x_2^*\big) \int_{x_2/2}^{x_2} e^{-2\mu y_2} \left[
\f{1-e^{-2\mu (2x_2-2y_2)}}{1-e^{-2\mu x_2}} - (1+2\d)(1-\ka_*) \right]  dy_2
\\
&\; + \big[2\pi-2g^{-1}\big(x_2^*\big)\big] \int_{0}^{\tilde{x}_2} e^{-2\mu y_2} \left[ \ka_* \cdot \f{2y_2}{x_2} - 2\d(1-\ka_*) \right]  dy_2
\\
=
&\; 2g^{-1}\big(x_2^*\big)\cdot \f{2\ka_*}{(2\mu)^2 x_2}\big(1-(1+\mu x_2) e^{-\mu x_2}\big)
\\
&\;
+ 2g^{-1}\big(x_2^*\big) \left[
\f{1-e^{-\mu x_2}}{1-e^{-2\mu x_2}} - (1-\ka_*) \right] \cdot \f{1}{2\mu} \big(e^{-\mu x_2}-e^{-2\mu x_2}\big)
\\
&\;
- 2g^{-1}\big(x_2^*\big) \cdot 2\d (1-\ka_*) \cdot \f{1}{2\mu} \big(1-e^{-2\mu x_2}\big)
\\
&\; + \big[2\pi-2g^{-1}\big(x_2^*\big)\big] \left[\f{2\ka_*}{(2\mu)^2 x_2}\big(1-(1+2\mu \tilde{x}_2) e^{-2\mu \tilde{x}_2}\big)
- 2\d(1-\ka_*)
\cdot \f{1}{2\mu}\big(1-e^{-2\mu \tilde{x}_2}\big)\right].
\end{align*}
Using the definition \eqref{eqn: def of kappa_*} of $\ka_*$ as well as the fact $ \d(1-\ka_*) = \f{\tilde{x}_2\ka_*}{x_2}$ due to \eqref{eqn: def of tilde x_2}, we find that
\begin{align*}
W
\geq
&\; 2g^{-1}\big(x_2^*\big) \cdot \f{2}{e^{2\mu x_2}-1}\cdot \f{1}{2\mu} \big(1-(1+\mu x_2) e^{-\mu x_2}\big)
\\
&\;
+ 2g^{-1}\big(x_2^*\big) \cdot
\f{-e^{\mu x_2} + 1 + 2\mu x_2}{e^{2\mu x_2}-1}
\cdot  \f{1}{2\mu} \big(e^{-\mu x_2}-e^{-2\mu x_2}\big)
\\
&\;
- 2g^{-1}\big(x_2^*\big) \cdot \f{1-\ka_*}{2\mu} \cdot 2\d \big(1-e^{-2\mu x_2}\big)
\\
&\; + \big[2\pi-2g^{-1}\big(x_2^*\big)\big] \left[\f{2\ka_*}{(2\mu)^2 x_2}\big(1-(1+2\mu \tilde{x}_2) e^{-2\mu \tilde{x}_2}\big)
- \f{2\ka_*}{2\mu x_2}\cdot \tilde{x}_2
\big(1-e^{-2\mu \tilde{x}_2}\big)\right]
\\
= &\; 2g^{-1}\big(x_2^*\big)\cdot \f{1}{ 2\mu }\cdot \f{1-(1+2\mu x_2) e^{-2\mu x_2}}{e^{2\mu x_2}-1}
- 2g^{-1}\big(x_2^*\big) \cdot \f{1-\ka_*}{2\mu}\cdot 2\d \big(1-e^{-2\mu x_2}\big)
\\
&\; +\big[2\pi-2g^{-1}\big(x_2^*\big)\big] \cdot \f{2\ka_*}{(2\mu)^2 x_2} \left[1-  e^{-2\mu \tilde{x}_2} - 2\mu \tilde{x}_2 \right]
\\
=
&\; 2g^{-1}\big(x_2^*\big)\cdot \f{1-\ka_*}{ 2\mu }\left[ e^{-2\mu x_2}
- 2\d \big(1-e^{-2\mu x_2}\big)\right]
\\
&\; +\big[2\pi-2g^{-1}\big(x_2^*\big)\big] \cdot \f{1}{2\mu} \cdot \f{2 (1-2\mu \tilde{x}_2 - e^{-2\mu \tilde{x}_2} )}{e^{2\mu x_2}-1}.
\end{align*}
Observe that \eqref{eqn: value of delta} together with the fact $x_2\in [0,M]$ implies $e^{-2\mu x_2} - 2\d (1-e^{-2\mu x_2}) \geq 0$.
Also note that $2(1-t-e^{-t})\in (-t^2,0)$ for all $t>0$, and $g^{-1}(x_2^*)\geq g^{-1}(x_2)= x_1$.
So we obtain that
\begin{align*}
W
\geq &\; 2x_1 \cdot \f{1-\ka_*}{ 2\mu }\left[ e^{-2\mu x_2}
- 2\d \big(1-e^{-2\mu x_2}\big)\right]
- 2\pi \cdot \f{1}{2\mu} \cdot \f{(2\mu \tilde{x}_2)^2}{e^{2\mu x_2}-1}
\\
= &\; 2x_1 \cdot \f{1-\ka_*}{ 2\mu }\left[ e^{-2\mu x_2}
- 2\d \big(1-e^{-2\mu x_2}\big)\right]
- 2\pi \cdot \f{1}{2\mu} \cdot \f{(2\mu x_2)^2}{e^{2\mu x_2}-1}
\left(\f{\d(1-\ka_*)}{\ka_*}\right)^2
\\
= &\; 2x_1 \cdot \f{1-\ka_*}{ 2\mu }\left[ e^{-2\mu x_2}
- 2\d \big(1-e^{-2\mu x_2}\big)\right]
- 2\pi \cdot \f{1-\ka_*}{2\mu} \cdot
\d^2 \big(e^{2\mu x_2}-1-2\mu x_2\big).
\end{align*}
Here we used the definitions of $\ka_*$ and $\tilde{x}_2$ in \eqref{eqn: def of kappa_*} and \eqref{eqn: def of tilde x_2} respectively.

Now given $g(0)\leq M$, in order to achieve \eqref{eqn: pa_x2 Psi less than alpha F simplified}, it suffices to guarantee that
\begin{align*}
&\; 2x_1 \cdot \f{1-\ka_*}{ 2\mu }\left[ e^{-2\mu x_2}
- 2\d \big(1-e^{-2\mu x_2}\big)\right]
- 2\pi \cdot \f{1-\ka_*}{2\mu} \cdot
\d^2 \big(e^{2\mu x_2}-1-2\mu x_2\big)
\\
\geq &\;
(1+2\d)\cdot \f{1-\ka_*}{2\mu}\cdot 2x_1 \big(e^{-2\mu x_2}-e^{-2\mu M}\big),
\end{align*}
which is equivalent to
\[
\d \cdot 2x_1 \big(1-e^{-2\mu M}\big)
+ \d^2 \cdot \pi \big(e^{2\mu x_2}-1-2\mu x_2\big)
\leq x_1 e^{-2\mu M}.
\]
By virtue of \eqref{eqn: value of delta},
\begin{align*}
\d \cdot 2x_1 \big(1-e^{-2\mu M}\big)
\leq &\; \f12 x_1 e^{-2\mu M},
\\
\d^2\cdot \pi \big(e^{2\mu x_2}-1-2\mu x_2\big)
\leq &\; \f12 x_1 e^{-2\mu M},
\end{align*}
so the above inequality follows.

This completes the proof.
\end{proof}

For the case $x_1\in [\f{\pi}{2},\pi)$, we can prove a similar non-degeneracy result as Proposition \ref{prop: non-degeneracy along level set}.
However, instead of \eqref{eqn: x_1 x_2 lies on the level set}, we will assume some smallness condition on $\mu M$.

\begin{prop}
\label{prop: non-degeneracy along level set x_1 greater than pi over 2}
Assume that $g\in \BM$ satisfies $g(0)\leq M$.
Suppose $(x_1,x_2)$ satisfies $x_2\in (0,g(0))$, $x_1 = g^{-1}(x_2)$,
and $x_1\in [\f{\pi}{2},\pi)$.
If $\mu M < \f12 \ln 2$,
it holds that
\beq
\pa_{x_2} \left[ F(x_1,x_2) -\pa_{x_2}\Psi(\pi,0)\cdot \f{1-e^{-2\mu x_2}}{2\mu x_2}\right] \leq
\d\cdot \pa_{x_2}\Psi(\pi,0)\cdot \f{d}{dx_2}\left[\f{1-e^{-2\mu x_2}}{2\mu x_2}\right],
\label{eqn: non-degeneracy along the level set x_1 greater than pi over 2}
\eeq
where
\[
\d := \f{1}{2(1-e^{-2\mu M})}-1 >0.
\]

\begin{rmk}\label{rmk: mu=0 x_1 >= pi/2}
Under the same condition, in the case $\mu = 0$, \eqref{eqn: non-degeneracy along the level set x_1 greater than pi over 2} reduces to
\[
\pa_{x_2}\big[ F(x_1,x_2;g) -\pa_{x_2}\Psi(\pi,0;g) \big]\leq
-\f{1}{4M}\cdot
\pa_{x_2} \Psi(\pi,0;g).
\]
\end{rmk}
\end{prop}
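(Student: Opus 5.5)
Since $x_1\ge\f{\pi}{2}$, the useful parts of Lemma \ref{lem: properties of tilde J} are items (3) and (4): $z\mapsto\tilde J(x_1,y_1;z)$ is increasing when $y_1\le x_1$, and $\tilde J(\pi,y_1;z)\le 2y_1$. Item (5), which drove the proof of Proposition \ref{prop: non-degeneracy along level set}, is no longer available. So I will not try to show $\pa_{x_2}\Psi-\ka_*F\le0$ with the exact weight $\ka_*$. Instead I will prove a cruder inequality directly, without using the level-set condition \eqref{eqn: x_1 x_2 lies on the level set}. Write $K:=\pa_{x_2}\Psi(\pi,0)$ and $\eta(x_2):=\f{1-e^{-2\mu x_2}}{2\mu x_2}$. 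Since $\pa_{x_2}F=\f{1}{x_2}(\pa_{x_2}\Psi-F)$, the claim \eqref{eqn: non-degeneracy along the level set x_1 greater than pi over 2} is
\[
\pa_{x_2}\Psi(x_1,x_2)-F(x_1,x_2)\le -(1+\d)(1-\ka_*)\,K\,\eta(x_2).
\]
Using $K\eta\le K\le 2\cdot\f{1}{4\pi}\int_\Om e^{-2\mu y_2}dy$ (from the proof of Proposition \ref{prop: non-degeneracy along level set}) and $1-\ka_*\le 1-e^{-2\mu M}$ (because $\ka_*\ge e^{-2\mu x_2}$), together with $(1+\d)(1-e^{-2\mu M})=\f12$, it is enough to prove
\[
\pa_{x_2}\Psi(x_1,x_2)-F(x_1,x_2)\le -\f{1}{4\pi}\int_\Om e^{-2\mu y_2}\,dy.
\]
This inequality no longer involves $\mu$ through $\ka_*$. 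The hypothesis $\mu M<\f12\ln2$ is exactly what makes $\d>0$.

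I will prove this using Lemma \ref{lem: integral representation of Psi_x2 and F}, which expresses both $\pa_{x_2}\Psi$ and $F$ in terms of $\f{1}{4\pi}\int_\Om e^{-2\mu y_2}dy$. Subtracting the two representations, the constant terms cancel. The target then becomes: the sum of the $\tilde J$-integrals in $\pa_{x_2}\Psi$ minus those in $F$ is at most $-\f{1}{4\pi}\int_0^{g(0)}e^{-2\mu y_2}\,2g^{-1}(y_2)\,dy_2$. I will split the $y_2$-range into $y_2\le x_2/2$, $x_2/2\le y_2\le x_2$, and $y_2\ge x_2$.

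On $y_2\ge x_2$ we have $g^{-1}(y_2)\le x_1$, so item (3) applies and $z\mapsto\tilde J$ is increasing. The positive contribution $\tilde J(x_1,g^{-1};y_2-x_2)$ in $\pa_{x_2}\Psi$ is then dominated by the averaged term $\f{1}{x_2}\int_{y_2-x_2}^{y_2}\tilde J\,dz$ coming from $F$, and the difference is at most $0$. The tail term $\int(\tilde J(x_1,\cdot)-\tilde J(\pi,\cdot))\,dz$ enters with the favourable sign (it is $\ge0$ by item (2), and it appears with a minus sign after subtracting $F$). On $y_2\le x_2$ we have $g^{-1}(y_2)\ge x_1$, so item (3) is not available. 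Here I plan to use item (4), the bound $\tilde J(\pi,y_1;z)\le2y_1$, in the $-F$ part: it gives $\f{1}{x_2}\int_{y_2}^{x_2-y_2}\tilde J(\pi,\cdot)\le 2g^{-1}(y_2)\f{x_2-2y_2}{x_2}$. In the $\pa_{x_2}\Psi$ part there is $-\tilde J(x_1,g^{-1};x_2-y_2)$, which is negative; I will bound it from below by $\tilde J(\pi,\cdot)$ via item (2), and then compare it to $2g^{-1}$ through an explicit computation with \eqref{eqn: anti-derivative of Poisson}.

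The main obstacle is the region $y_2<x_2$, where $g^{-1}(y_2)>x_1\ge\pi/2$. There the negative terms must absorb both the $F$ contribution and the full $-2g^{-1}(y_2)$ target, and none of the monotonicity items in Lemma \ref{lem: properties of tilde J} is directly applicable. If the pointwise comparison fails, my fallback is to trade the tail term $\int_{\max\{x_2-y_2,y_2\}}^\infty(\tilde J(x_1)-\tilde J(\pi))\,dz$, which is strictly positive since $x_1<\pi$, against the deficit. Alternatively, I will borrow part of the $y_2\ge x_2$ region, where $g^{-1}\le x_1$ makes $2g^{-1}$ small, using the monotonicity of $g^{-1}$ as in \eqref{eqn: bounding the integral on x_2 to 3 halves x_2}. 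The $\mu=0$ case in Remark \ref{rmk: mu=0 x_1 >= pi/2} follows by the same computation with $\d$ replaced by its limiting form.
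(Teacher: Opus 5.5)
Your reduction step is where the argument fails. The sufficient inequality
\[
\pa_{x_2}\Psi(x_1,x_2)-F(x_1,x_2)\le -\f{1}{4\pi}\int_\Om e^{-2\mu y_2}\,dy
\]
is false in general, so no treatment of the region $y_2<x_2$ can establish it, and neither can your fallbacks. In the exact target $\pa_{x_2}\Psi-F\le-(1+\d)(1-\ka_*)K\eta$, both sides vanish linearly as $x_2\to0^+$. This is because $(1-\ka_*)\eta(x_2)=\eta(x_2)-e^{-2\mu x_2}=\f{1-(1+2\mu x_2)e^{-2\mu x_2}}{2\mu x_2}\sim\mu x_2$. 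Your step $1-\ka_*\le 1-e^{-2\mu M}$ replaces a quantity of size $\sim\mu x_2$ by a constant, and so discards exactly this factor of $x_2$. (The step $K\eta\le K$ is harmless.)

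Concretely, take $g(x)=\lam(\pi^2-x^2)$, or any $g\in\BM$ with $g(x)\ge c(\pi-x)$ near $\pi$; this includes the fixed points in $\BW$. As $x_2\to0^+$ you have $x_1=g^{-1}(x_2)\to\pi$ with $\pi-x_1\le Cx_2$. Since $\na\Psi$ is continuous up to $x_2=0$ and $\Psi(\pi,0)=h(\pi)=0$, $\pa_{x_1}\Psi(\pi,0)=h'(\pi)=0$, you get $\Psi(x_1,x_2)=Kx_2+o(x_2)$. Hence $F(x_1,x_2)\to K$ and $\pa_{x_2}\Psi(x_1,x_2)\to K$. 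The left-hand side therefore tends to $0$, while the right-hand side is a fixed negative number. Relatedly, on $y_2\ge x_2$ you only show the contribution is $\le 0$, whereas your target would need it to be $\le -\f{1}{4\pi}\int_{x_2}^{g(0)}e^{-2\mu y_2}\,2g^{-1}(y_2)\,dy_2$.

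The paper keeps the $x_2$-scaling on both sides. Your $y_2\ge x_2$ argument and the sign of the tail term match what it does. It also cancels the $\tilde J(\pi,\cdot)$ term of $-F$ on $y_2\le x_2/2$ against the $-\tilde J(x_1,\cdot\,;x_2-y_2)$ term of $\pa_{x_2}\Psi$, using items (2) and (3). Together these give Lemma \ref{lem: estimate for Psi_x2 - kappa F new}, which bounds $\pa_{x_2}\Psi-F$ by negative integrals over $y_2\le x_2$ only.

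The missing observation, which replaces item (5), is elementary. For $y_2\le x_2$ one has $g^{-1}(y_2)\ge x_1\ge\f\pi2$, so $[x_1-g^{-1}(y_2),x_1+g^{-1}(y_2)]\supset[0,\pi]$. Hence $\tilde J(x_1,g^{-1}(y_2);z)\ge\pi$ for every $z>0$, which yields
\[
\pa_{x_2}\Psi-F\le-\f14\int_0^{x_2}\f{2y_2}{x_2}e^{-2\mu y_2}\,dy_2=-\f{1}{4\mu}\big(\eta(x_2)-e^{-2\mu x_2}\big).
\]
Combine this with $K\le\f{1-e^{-2\mu M}}{2\mu}$; your own bound $K\le\f{2}{4\pi}\int_\Om e^{-2\mu y_2}dy$ also implies it, since $g^{-1}\le\pi$ and $g(0)\le M$. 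The common factor $\eta(x_2)-e^{-2\mu x_2}$ then cancels, and what remains is $(1+\d)(1-e^{-2\mu M})\le\f12$, which is precisely the definition of $\d$.
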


To prove this, we will need a new estimate for $\pa_{x_2}\Psi-F$ (cf.\;Lemma \ref{lem: estimate for Psi_x2 - kappa F}).
\begin{lem}
\label{lem: estimate for Psi_x2 - kappa F new}
For $x_1\in[\f{\pi}{2},\pi]$ and $x_2 > 0$ satisfying that $x_1\geq g^{-1}(x_2)$,
\begin{align*}
&\; \pa_{x_2}\Psi(x_1,x_2) - F(x_1,x_2)\\
\leq &\; -\f{1}{4\pi}\int_0^{g(0)} \mathds{1}_{\{y_2\leq x_2/2\}}\cdot \f{2y_2}{x_2}\cdot e^{-2\mu y_2} \tilde{J}\big(x_1,g^{-1}(y_2);x_2-y_2\big)\,dy_2
\\
&\; -\f{1}{4\pi}\int_0^{g(0)} \mathds{1}_{\{x_2/2\leq y_2\leq x_2\}}\cdot e^{-2\mu y_2} \tilde{J}\big(x_1,g^{-1}(y_2);x_2-y_2\big)\,dy_2
\\
&\; -\f{1}{4\pi x_2}\int_{0}^{g(0)} \mathds{1}_{\{x_2/2\leq y_2\leq x_2\}}\cdot e^{-2\mu y_2}
\int_{x_2-y_2}^{y_2} \tilde{J}\big(x_1,g^{-1}(y_2);z\big)
\,dz\,dy_2.
\end{align*}

\begin{proof}
Applying Lemma \ref{lem: integral representation of Psi_x2 and F}, we split $\pa_{x_2}\Psi - F$ as follows (cf.\;\eqref{eqn: splitting Psi_x2-F})
\begin{align*}
&\; \pa_{x_2}\Psi(x_1,x_2) - F(x_1,x_2)\\
= &\; -\f{1}{4\pi}\int_0^{g(0)} \mathds{1}_{\{y_2\leq x_2/2\}}\cdot e^{-2\mu y_2} \tilde{J}\big(x_1,g^{-1}(y_2);x_2-y_2\big)\,dy_2
\\
&\; -\f{1}{4\pi}\int_0^{g(0)} \mathds{1}_{\{x_2/2\leq y_2\leq x_2\}}\cdot e^{-2\mu y_2} \tilde{J}\big(x_1,g^{-1}(y_2);x_2-y_2\big)\,dy_2
\\
&\; +\f{1}{4\pi}\int_0^{g(0)}  \mathds{1}_{\{y_2\geq x_2\}}\cdot e^{-2\mu y_2} \tilde{J}\big(x_1,g^{-1}(y_2);y_2-x_2\big)\,dy_2
\\
&\; -\f{1}{4\pi x_2}\int_{0}^{g(0)} \mathds{1}_{\{y_2\geq x_2/2\}}\cdot e^{-2\mu y_2}
\int_{|y_2-x_2|}^{y_2} \tilde{J}\big(x_1,g^{-1}(y_2);z\big)
\,dz\,dy_2
\\
&\; +\f{1}{4\pi x_2}\int_0^{g(0)} \mathds{1}_{\{y_2\leq x_2/2\}}\cdot e^{-2\mu y_2}
\int_{y_2}^{x_2-y_2}
\tilde{J}\big(\pi,g^{-1}(y_2);z\big)
\, dz \,dy_2
\\
&\; -\f{1}{4\pi x_2}\int_{0}^{g(0)} e^{-2\mu y_2}\int_{\max\{x_2-y_2,y_2\}}^\infty
\tilde{J}\big(x_1,g^{-1}(y_2);z\big)-\tilde{J}\big(\pi,g^{-1}(y_2);z\big)
\, dz \,dy_2
\\
=: &\; \tilde{I}_1+\tilde{I}_2+\tilde{I}_3+\tilde{I}_4+\tilde{I}_5+\tilde{I}_6.
\end{align*}
Recall that $x_1\in [\f{\pi}{2},\pi]$ and $x_1\geq g^{-1}(x_2)$.
For $y_2\geq x_2$, it holds that $x_1\geq g^{-1}(y_2)$, so by Lemma \ref{lem: properties of tilde J}, $z\mapsto \tilde{J}(x_1,g^{-1}(y_2);z)$ is increasing.
This implies that
\begin{align*}
&\; \tilde{I}_3+\tilde{I}_4\\
\leq &\; \f{1}{4\pi}\int_0^{g(0)} \mathds{1}_{\{y_2\geq x_2\}}\cdot e^{-2\mu y_2} \\
&\; \qquad \cdot \left[\tilde{J}\big(x_1,g^{-1}(y_2);y_2-x_2\big)- \f{1}{x_2}
\int_{y_2-x_2}^{y_2} \tilde{J}\big(x_1,g^{-1}(y_2);y_2-x_2\big)
\,dz\right] dy_2
\\
&\; -\f{1}{4\pi x_2}\int_{0}^{g(0)} \mathds{1}_{\{x_2/2\leq y_2\leq x_2\}}\cdot e^{-2\mu y_2}
\int_{x_2-y_2}^{y_2} \tilde{J}\big(x_1,g^{-1}(y_2);z\big)
\,dz\,dy_2
\\
= &\; -\f{1}{4\pi x_2}\int_{0}^{g(0)} \mathds{1}_{\{x_2/2\leq y_2\leq x_2\}}\cdot e^{-2\mu y_2}
\int_{x_2-y_2}^{y_2} \tilde{J}\big(x_1,g^{-1}(y_2);z\big)
\,dz\,dy_2,
\end{align*}
and (cf.\;\eqref{eqn: estimate for I_5})
\[
\tilde{I}_5
\leq
\f{1}{4\pi}\int_0^{g(0)} \mathds{1}_{\{y_2\leq x_2/2\}}\cdot \f{x_2-2y_2}{x_2} \cdot e^{-2\mu y_2}
\tilde{J}\big(\pi,g^{-1}(y_2);x_2-y_2\big)\, dy_2.
\]
Using this as well as the fact that
$x_1\mapsto \tilde{J}(x_1,y_1;z)$ is decreasing on $[0,\pi]$, which also follows from \ref{lem: properties of tilde J}, we obtain that
\[
\tilde{I}_1+\tilde{I}_5
\leq
-\f{1}{4\pi}\int_0^{g(0)} \mathds{1}_{\{y_2\leq x_2/2\}}\cdot \f{2y_2}{x_2}\cdot e^{-2\mu y_2} \tilde{J}\big(x_1,g^{-1}(y_2);x_2-y_2\big)\,dy_2.
\]
Finally, the monotonicity of $x_1\mapsto \tilde{J}(x_1,g^{-1}(y_2);z)$ again implies that $\tilde{I}_6\leq 0$.

Combining these estimates yields the desired bound.
\end{proof}
\end{lem}

\begin{proof}[Proof of Proposition \ref{prop: non-degeneracy along level set x_1 greater than pi over 2}]

Thanks to the calculation in Remark \ref{rmk: equivalent formulation of the non-degeneracy}, we see that \eqref{eqn: non-degeneracy along the level set x_1 greater than pi over 2} is equivalent to
\beq
\pa_{x_2}\Psi(x_1,x_2) - F(x_1,x_2)
\leq -(1+\d)\left(\f{1-e^{-2\mu x_2}}{2\mu x_2}-e^{-2\mu x_2}\right) \pa_{x_2}\Psi(\pi,0),
\label{eqn: pa_x2 Psi less than F quantitative}
\eeq

Since $x_1 = g^{-1}(x_2)\geq \f{\pi}{2}$, for any $y_2\leq x_2$, it holds that $g^{-1}(y_2)\geq \f{\pi}{2}$, and thus
\[
[0,\pi]\subset \big[x_1-g^{-1}(y_2),x_1+g^{-1}(y_2)\big].
\]
As a result, for any $z>0$,
\[
\tilde{J}\big(x_1,g^{-1}(y_2);z\big)\geq \pi.
\]
Hence, by Lemma \ref{lem: estimate for Psi_x2 - kappa F new}, for $x_1\in[\f{\pi}{2},\pi]$ and $x_2\in (0,g(0))$,
\begin{align*}
&\; \pa_{x_2}\Psi(x_1,x_2) - F(x_1,x_2)
\\
\leq &\; -\f{1}{4\pi}\int_0^{x_2/2} \f{2y_2}{x_2}\cdot e^{-2\mu y_2} \cdot \pi \,dy_2
\\
&\;
-\f{1}{4\pi}\int_{x_2/2}^{x_2} e^{-2\mu y_2} \cdot \pi\,dy_2 -\f{1}{4\pi }\int_{x_2/2}^{x_2} e^{-2\mu y_2} \cdot \f{1}{x_2}\int_{x_2-y_2}^{y_2} \pi \,dz\,dy_2
\\
= &\; -\f{1}{4}\int_0^{x_2} \f{2y_2}{x_2}\cdot e^{-2\mu y_2} \,dy_2
\\
= &\; -\f{1}{4\mu}\cdot \f{1}{2\mu x_2}\left[1-(1+2\mu x_2)e^{-2\mu x_2}\right].
\end{align*}

On the other hand, recall \eqref{eqn: end point constant} in Lemma \ref{lem: integral formula for Psi}:
\[
\pa_{x_2}\Psi(\pi,0)
= \f{1}{4\pi}\int_0^{g(0)} e^{-2\mu y_2}\left[\tilde{J}\big(\pi, g^{-1}(y_2);y_2\big)+ 2g^{-1}(y_2) \right] dy_2,
\]
so
\[
\pa_{x_2}\Psi(\pi,0)
\leq \f{1}{4\pi}\int_0^{g(0)} e^{-2\mu y_2} (2\pi + 2\pi )\, dy_2
\leq \f{1}{2\mu}\left(1-e^{-2\mu M}\right).
\]

Combining the preceding two estimates into \eqref{eqn: pa_x2 Psi less than F quantitative}, we see that it suffices to achieve that
\[
-\f{1}{4\mu}\cdot \f{1}{2\mu x_2}\left[1-(1+2\mu x_2)e^{-2\mu x_2}\right]
\leq -(1+\d)\left(\f{1-e^{-2\mu x_2}}{2\mu x_2}-e^{-2\mu x_2}\right) \f{1}{2\mu}\left(1-e^{-2\mu M}\right),
\]
which is equivalent to
\[
e^{-2\mu M}\geq  1-\f{1}{2(1+\d)}.
\]
Then the desired statement follows.
\end{proof}

Let us summarize the results of Proposition \ref{prop: non-degeneracy along level set} and Proposition \ref{prop: non-degeneracy along level set x_1 greater than pi over 2} for later use.

\begin{cor}
\label{cor: non-degeneracy on fixed point}
Assume that $g\in \BM$ satisfies $g(0)\leq M$.
Let $\mu_1 \in (0, \f{1}{2M}\ln 2)$.
Then there exists some universal constant $C = C_{M,\mu_1}>0$ that only depends on $M$ and $\mu_1$, such that
for any $\mu\in [0,\mu_1]$ and any $(x_1,x_2)\in [0,\pi]\times (0,g(0))$ satisfying $x_1=g^{-1}(x_2)$ and
\[
F(x_1,x_2)= \pa_{x_2}\Psi(\pi,0)\cdot \f{1-e^{-2\mu x_2}}{2\mu x_2},
\]
it holds that
\begin{equation}\label{eqt: non-degeneracy on fixed point}
\pa_{x_2} \left[ F(x_1,x_2) - \pa_{x_2}\Psi(\pi,0)\cdot \f{1-e^{-2\mu x_2}}{2\mu x_2}\right]\leq - C_{M,\mu_1} x_1^{1/2}
\cdot \pa_{x_2}\Psi(\pi,0).
\end{equation}

\begin{proof}
We first consider the case $\mu>0$.
Since $\mu \leq \mu_1 < \f{1}{2M}\ln 2$, we find that for $x_2\in (0,g(0))\subset (0,M]$,
\beq
\f{d}{dx_2}\left[\f{1-e^{-2\mu x_2}}{2\mu x_2}\right] = -2\mu \cdot \f{1-(1+ 2\mu x_2)e^{-2\mu x_2}}{(2\mu x_2)^2} \leq -C\mu,
\label{eqn: bound for derivative of (1-e^(-x))/x}
\eeq
where $C>0$ is a universal constant.

If $x_1\in[0,\pi/2]$, Proposition \ref{prop: non-degeneracy along level set} and \eqref{eqn: bound for derivative of (1-e^(-x))/x} imply that
\beq
\pa_{x_2}\left[ F(x_1,x_2) -\pa_{x_2}\Psi(\pi,0)\cdot\f{1-e^{-2\mu x_2}}{2\mu x_2}\right]\leq
-\d\cdot \pa_{x_2}\Psi(\pi,0)\cdot C\mu,
\label{eqn: non-degeneracy along the level set repeat}
\eeq
where $C>0$ is universal, and where
\[
\d= \min\left\{\f{1}{4(e^{2\mu M}-1)},\, \left(\f{x_1}{2\pi}\cdot \f{e^{-2\mu M}}{e^{2\mu x_2}-1-2\mu x_2}\right)^{1/2}\right\}.
\]
Since $x_2\in (0,M]$ and $\mu \leq \mu_1$, we have $e^{-2\mu M}\geq e^{-2\mu_1 M} >\f12$ and
\[
\f{e^{-2\mu M}}{e^{2\mu x_2}-1-2\mu x_2} \geq
\f{e^{-2\mu M}}{e^{2\mu M}-1-2\mu M} = \f{e^{-4\mu M}}{1-(1+2\mu M)e^{-2\mu M}}
\geq \f{C}{(1-e^{-2\mu M})^2},
\]
where $C>0$ is universal.
Hence,
\begin{align*}
\d \mu
\geq &\; \mu\min\left\{\f{C}{1-e^{-2\mu M}},\, \f{Cx_1^{1/2}}{1-e^{-2\mu M}} \right\}
\geq
\f{C\mu x_1^{1/2}}{1-e^{-2\mu M}} \geq \f{Cx_1^{1/2}}{M},
\end{align*}
where $C>0$ is a universal constant.

If $x_1\in[\pi/2,\pi]$, then Proposition \ref{prop: non-degeneracy along level set x_1 greater than pi over 2} and \eqref{eqn: bound for derivative of (1-e^(-x))/x} imply that \eqref{eqn: non-degeneracy along the level set repeat} holds
with
\[
\delta = \frac{1}{2(1-e^{-2\mu M})} - 1.
\]
Let
\[
\r(z):= z \left(\f{1}{2(1-e^{-z})}-1\right).
\]
Observe that $\lim_{z\to 0^+}\r(z) = \f12$, and $\r(z)$ is continuous and positive on $(0,2\mu_1 M]$, so it admits a positive lower bound on $(0,2\mu_1 M]$, which only depends on $\mu_1 M$.
Hence,
\[
\d \mu
= \f{1}{2M}\cdot 2\mu M\left(\frac{1}{2(1-e^{-2\mu M})} - 1 \right)
\geq \f{1}{M}C_{\mu_1 M} \geq C_{M,\mu_1} x_1^{1/2},
\]
where $C_{M,\mu_1}>0$ depends only on $M$ and $\mu_1$.

Combining these two cases, we obtain \eqref{eqt: non-degeneracy on fixed point} from \eqref{eqn: non-degeneracy along the level set repeat} in the case $\mu>0$.

The case $\mu=0$ can be justified similarly by using Remark \ref{rmk: mu=0 x_1 <= pi/2} and Remark \ref{rmk: mu=0 x_1 >= pi/2}.
We omit the details.
\end{proof}

\begin{rmk}
\label{rmk: constraint on mu from the regularity}
Since \[
\ln 2 = \int_1^2 \f{1}{z}\,dz >  1\cdot \left(\f{3}2\right)^{-1} = \f23,
\]
it is legitimate to take $\mu_1 = \f{1}{3M}$ in this proposition.
In this case, the constant $C_{M,\mu_1}$ only depends on $M$.
\end{rmk}
\end{cor}

\subsection{Interior regularity}

Recall that the existence of a fixed point in $\mathbb{W}$ of $\bfR$ has been proved for $\mu\in[0,\mu_0]\cap [0,\f{3}{4M}]$ in Proposition \ref{prop: existence of fixed point}.
In the next proposition, we will show under the additional assumption $\mu\in [0,\f1{3M}]$ that any fixed point of $\bfR$ in $\mathbb{W}$ is in fact continuous on $\BT$ and belongs to $C_{loc}^{1,\alpha}((-\pi,\pi))$ for all $\alpha\in(0,1)$.

\begin{prop}\label{prop: first regularity result}
Let $M$ and $\mu_0$ be given by Proposition \ref{prop: a priori upper bound general m}.
Let $\BW$ be defined as in \eqref{eqt: def of function set D}.
Assume that $\mu\in [0,\mu_0]\cap [0,\f{1}{3M}]$. Let $g\in \mathbb{W}$ be a fixed point of $\bfR$, i.e., $\bfR(g)(x)=g(x)$ for all $x\in[-\pi,\pi]$.
Then $g\in C(\BT)\cap C^{1,\alpha}_{loc}((-\pi,\pi))$ for any $\alpha\in(0,1)$. Moreover, $g'(x)<0$ for $x\in(0,\pi)$, $g'(0)=0$, and $g'\in L^\infty(\BT)$ with $\|g'\|_{L^\infty(\BT)} \leq C_{M,\lam}$, where $C_{M,\lam}>0$ depends only on $M$ and $\lam$.

\begin{rmk}
One of the upper bounds $\f{1}{3M}$ in the range of $\mu$ is chosen in the spirit of Remark \ref{rmk: constraint on mu from the regularity} as a convenient choice, aiming for a uniform-in-$\mu$ bound for $\|g'\|_{L^\infty}$.
It is not the optimal one, but it may be replaced by any fixed number that is smaller than $\f{1}{2M}\ln 2$.
\end{rmk}
\begin{proof}
We only handle the case $\mu>0$.

Let $g\in \BW$ be a fixed point of $\bfR$.
We define as in \eqref{eqn: def of F_dag} that
\[
F_\dag(x_1,x_2;g) := F(x_1,x_2;g) - \pa_{x_2}\Psi(\pi,0;g)\cdot \frac{1-e^{-2\mu x_2}}{2\mu x_2}.
\]
Applying the standard elliptic theory to \eqref{eqn: equation for Psi}, we know that, for $\alpha\in(0,1)$, $\Psi\in C^{1,\alpha}_{loc}(\BT\times (0,+\infty))$, so $F\in C^{1,\alpha}_{loc}(\BT\times (0,+\infty))$ and thus $F_\dag\in C^{1,\alpha}_{loc}(\BT\times (0,+\infty))$.
By the definition of $\bfR$, that $g = \bfR(g)$ implies $F_\dag(x,g(x))\equiv 0$ on $[-\pi,\pi]$.

We first show that $g\in C(\BT)$.
Note that any function in $\mathbb{W}$ must be continuous at $x=\pm\pi$ (due to the lower and upper bounds) and at $x=0$ (due to the lower semi-continuity and the monotonicity on $[0,\pi]$).
Suppose $g$ is discontinuous at some $z_1\in(0,\pi)$.  By the monotonicity of $g$, we can define its one-sided limits at $z_1$ as
\[
g^+ := \lim_{x\to z_1^+}g(x)\quad \text{and} \quad  g^- := \lim_{x\to z_1^-}g(x).
\]
The discontinuity of $g$ at $z_1$ means $g^+ < g^-$. By the continuity of $F_\dag$,
\[
F_\dag (z_1,g^\pm) = \lim_{x\to z_1^{\pm}} F_\dag(x,g(x)) = 0.
\]
We claim that there exists some $z_2\in[g^+,g^-]\subset(0,M)$ such that
\begin{equation}\label{eqt: C1 regularity step 1}
F_\dag(z_1,z_2) = 0\quad \text{and}\quad \pa_{x_2}F_\dag(z_1,z_2) \geq  0.
\end{equation}
In fact, if $\pa_{x_2} F_\dag(z_1,g^+)\geq 0$ or $\pa_{x_2} F_\dag(z_1,g^-)\geq 0$, we can simply take $z_2 = g^+$ or $z_2=g^-$, respectively.
Otherwise, we have $\pa_{x_2} F_\dag(z_1,g^\pm)<0$.
Then it is straightforward to show that there exists some $z_2\in(g^+,g^-)$ such that \eqref{eqt: C1 regularity step 1} holds.
The claim is thus proved.
Since $g$ is strictly decreasing on $[0,\pi]$ by Proposition \ref{prop: existence of fixed point}, that $z_2\in [g^+,g^-]$ implies $z_1=g^{-1}(z_2)$ (see the definition of $g^{-1}$ in \eqref{eqn: def of g inverse}).
Now by Corollary \ref{cor: non-degeneracy on fixed point} (and also Remark \ref{rmk: constraint on mu from the regularity} as well as the assumption $\mu\leq \f{1}{3M}$), the facts $z_1=g^{-1}(z_2)$ and $F_\dag(z_1,z_2) = 0$ together imply that
\[
\pa_{x_2} F_\dag(z_1,z_2)
\leq - C_M z_1^{1/2}\cdot \pa_{x_2}\Psi(\pi,0) < 0, \]
which contradicts \eqref{eqt: C1 regularity step 1}.
Therefore, $g$ has no discontinuity point in $(0,\pi)$, so $g\in C(\BT)$.

The argument above together with Corollary \ref{cor: non-degeneracy on fixed point} and Remark \ref{rmk: constraint on mu from the regularity} in fact gives \[
\pa_{x_2}F_\dag(x,g(x)) \leq - C_{M} x^{1/2}\cdot \pa_{x_2}\Psi(\pi,0) \]
for all $x\in(0,\pi)$.
Since $g(x)\geq  \lambda (\pi^2-x^2)$ for all $x\in [-\pi,\pi]$, by Lemma \ref{lem: integral formula for Psi},
\[
\partial_{x_2}\Psi(\pi,0)
\geq \f{1}{4\pi}\int_\Om e^{-2\mu y_2}\, dy
\geq \f{1}{4\pi}\int_{-\pi}^{\pi} \int_0^{\lam(\pi^2-y_1^2)} e^{-2\mu y_2} \, dy_2\,dy_1 \geq C \lam,
\]
where $C>0$ is universal.
Here we used the fact that $\lam \pi^2 \leq M$ (see the definition of $\BW$ in \eqref{eqt: def of function set D} and also Remark \ref{rmk: universality of small lambda}) and $\mu M\leq \f13$.
Hence,
\beq
\pa_{x_2}F_\dag(x,g(x)) \leq - C_{M} \cdot \lam x^{1/2} < 0.
\label{eqn: x_2 estimate for F_dag}
\eeq
As a result, we can apply the implicit function theorem to find that
\beq
g'(x) = - \frac{\pa_{x_1}F_\dag (x,g(x))}{\pa_{x_2}F_\dag(x,g(x))}
= - \frac{\pa_{x_1}F (x,g(x))}{\pa_{x_2}F_\dag(x,g(x))}, \quad x\in(0,\pi).
\label{eqn: formula for g' implicit function thm}
\eeq
Since $\pa_{x_1}F(x_1,x_2)<0$ in $(0,\pi)\times (0,+\infty)$ by Proposition \ref{prop: monotonicity of F}, we have the strict inequality $g'(x)<0$ for $x\in(0,\pi)$.
Also, since it has been shown that $F_\dag\in C^{1,\alpha}_{loc}(\BT\times (0,+\infty))$, we can conclude that $g\in C^{1,\alpha}_{loc}((-\pi,\pi))$.

Lastly, we derive an upper bound for $|g'(x)|$.
We proceed into two cases: $x\in(0,\pi/2]$ and $x\in(\pi/2,\pi)$.
For $x\in(0,\pi/2]$, we have $g(x)\leq M$ and $g(x)\geq \lambda (\pi^2-x^2)\geq C\lambda$, where $C>0$ is universal.
By virtue of Lemma \ref{lem: upper bound for F x_1 derivative}, for $x \in (0,\pi/2]$,
\[
-\pa_{x_1}F(x,g(x))
\leq
\f{C\sin x}{g(x)} \ln \left(2 + \f{g(x)}{\sin x}\right)
+ \f{C}{g(x)} \int_0^{M} \ln \left(1 + \f{ \sin x \sin g^{-1}(y_2)}{y_2^2  + \f14\sin^2 x}\right)dy_2,
\]
where $C>0$ is universal.
To bound the integral above, we note that $g(x_1)\geq \lam (\pi^2 -x_1^2)$ for all $x_1\in [-\pi,\pi]$, so for any $y_2\geq 0$,
\[
\pi - g^{-1}(y_2)\leq \f{y_2}{\lam(\pi + g^{-1}(y_2))} \leq \f{y_2}{\lam \pi}.
\]
Hence, $\sin g^{-1}(y_2)\leq C\lam^{-1} y_2$ for any $y_2\geq 0$, where $C>0$ is universal.
This allows us to further derive that
\begin{align*}
\int_0^{M} \ln \left(1 + \f{ \sin x \sin g^{-1}(y_2)}{y_2^2  + \f14\sin^2 x}\right)dy_2
\leq &\; \int_0^{M}  \f{C\lam^{-1} \sin x \cdot y_2}{y_2^2  + \sin^2 x}\, dy_2
\\
\leq &\; C\lam^{-1} \sin x \cdot \ln\left(1+\f{M^2}{\sin^2 x}\right).
\end{align*}
Hence, for $x\in (0,\pi/2]$,
\[
-\pa_{x_1}F(x,g(x))
\leq C_{M,\lam}\cdot \sin x \cdot  \ln \left(2 + \f{M}{\sin x}\right),
\]
where $C_{M,\lam}$ depends on $M$ and $\lam$.
Combining this with \eqref{eqn: x_2 estimate for F_dag} and \eqref{eqn: formula for g' implicit function thm} yields that, for $x\in (0,\pi/2]$,
\[
|g'(x)| \leq  C_{M,\lam}\cdot x^{1/2} \ln \left(2 + \f{1}{x}\right) \leq C_{M,\lam},
\]
where $C_{M,\lam}>0$ depends on $M$ and $\lam$.
Since $g\in C^{1,\al}_{loc}((-\pi,\pi))$, this estimate applies to $x = 0$ as well, which gives $g'(0) = 0$.

As for $x\in(\pi/2,\pi)$, we decompose $\pa_{x_1}F = \pa_{x_1}F_1 + \pa_{x_1}F_2$.
By Lemma \ref{lem: upper bound for -F_1 x_1},
\begin{align*}
|\pa_{x_1}F_1(x,g(x))|
&\leq \f{C\sin x}{g(x)}
+ \f{C\sin x}{g(x)} \ln \left(1+\f{g(x)}{4\sin x}\right) \\
&\leq \frac{C\sin(\pi-x)}{\lambda(\pi^2-x^2)}  + C\sup_{z>0} z \ln \left(1+\f{1}{z}\right)
\leq C_{\lambda}.
\end{align*}
On the other hand, by Lemma \ref{lem: bounding F_2 x_1 in terms of F_2 x_2},
\begin{align*}
\left|\frac{\pa_{x_1}F_2(x,g(x))}{\pa_{x_2}F_2(x,g(x))}\right|
\leq &\; \f{g(x)\sinh g(x) \sin x}{g(x)\cosh g(x)-\sinh g(x)}
\leq C_M \cdot \frac{\sin(\pi-x)}{g(x)} \\
\leq &\; C_M \cdot \frac{\sin(\pi-x)}{\lambda(\pi^2-x^2)}\leq C_{M,\lambda}.
\end{align*}
Combining this with Lemma \ref{lem: monotonicity of F_1} and Lemma \ref{lem: monotonicity of F_2} yields that
\begin{align*}
\left|\frac{\pa_{x_1}F_2(x,g(x))}{\pa_{x_2}F(x,g(x))}\right|
\leq \left|\frac{\pa_{x_1}F_2(x,g(x))}{\pa_{x_2}F_2(x,g(x))}\right|
\leq C_{M,\lambda}.
\end{align*}
Also note that Corollary \ref{cor: non-degeneracy on fixed point} and Remark \ref{rmk: constraint on mu from the regularity} imply that
\begin{align*}
\frac{|\pa_{x_2}F(x,g(x))|}{|\pa_{x_2}F_\dag(x,g(x))|}
\leq  &\;
1 + \f{|\pa_{x_2}\Psi(\pi,0)|}{|\pa_{x_2}F_\dag(x,g(x))|}\cdot
\left|\left.\f{d}{dx_2}\right|_{x_2 = g(x)}\left[\f{1-e^{-2\mu x_2}}{2\mu x_2}\right]\right|\\
\leq &\; 1 +
\f{1}{C_{M} x^{1/2}} \cdot \mu \leq C_{M}.
\end{align*}
Hence, for $x\in(\pi/2,\pi)$, we derive from \eqref{eqn: formula for g' implicit function thm} that
\begin{align*}
|g'(x)| &\leq \frac{|\pa_{x_1}F_1(x,g(x))|}{|\pa_{x_2}F_\dag(x,g(x))|} + \frac{|\pa_{x_1}F_2(x,g(x))|}{|\pa_{x_2}F(x,g(x))|}
\cdot \frac{|\pa_{x_2}F(x,g(x))|}{|\pa_{x_2}F_\dag(x,g(x))|}\\
&\leq \frac{C_{\lambda}}{C_{M}\cdot \lam x^{1/2}} + C_{M,\lambda}\cdot C_{M}
\leq C_{M,\lambda},
\end{align*}
where $C_{M,\lambda}>0$ only depends on $M$ and $\lambda$.
This concludes the desired $L^\infty$-bound for $g'$.
\end{proof}
\end{prop}

\begin{rmk}
\label{rmk: analyticity of g}
Once the $C_{loc}^{1,\al}((-\pi,\pi))$-regularity of $g$ is achieved, some classic bootstrap argument in the elliptic free boundary problem (see e.g.\;\cite{KinderlehrerNirenbergSpruck1978}) allows us to raise the regularity of $g$ to be analytic in $(-\pi,\pi)$.
However, we are not going to justify this result here, as it can be derived more easily from the analyticity of the regular part of the patch boundary in the original coordinate; see the proof of Theorem \ref{thm: main existence theorem} at the end of Section \ref{sec: properties near the end points}.
\end{rmk}

\subsection{Properties near the end-points}
\label{sec: properties near the end points}
Let $g$ be a fixed point of $\bfR$ in $\BW$, i.e., $g(x) = \bfR(g)(x)$ for all $x\in \BT$.
In this part, we shall study properties of $g$ near the end-points $\pm \pi$.
The goal is to show that the graph of $g$ on $[-\pi,\pi]$ meets the horizontal axis at the end-points $(\pm \pi,0)$ with 45-degree angles, and moreover $g\in C^1([-\pi,\pi])$.
To achieve this, we first establish the following local expansion of $\Psi$ near the corner point $(\pi,0)$.

\begin{lem}\label{lem: local expansion}
Let $\mu\in [0,1]$ and $g\in \BW$.
Define for $r\in (0,1)$ that
    \begin{equation}\label{eqt: def of A(r;g)}
    A(r;g) := -\frac{1}{\ln r}\int_{3r}^\pi\frac{1}{z}\cdot \frac{g(\pi-z)/z}{1+(g(\pi-z)/z)^2}\, dz.
    \end{equation}
Then there exists some constant $C_\Lam>0$ that only depends on $\Lam$,
such that, for all $r\in(0,1)$ and $\alpha\in[0,\pi]$,
\begin{equation}\label{eqt: Psi local expansion}
 \left|\Psi(\pi - r\cos\alpha ,r\sin\alpha;g) - r\sin\alpha \cdot\partial_{x_2}\Psi(\pi,0;g) + \frac{\cos 2\alpha }{2\pi}A(r;g)\cdot r^2\ln r\right| \leq C_\Lam r^2,
 \end{equation}
\begin{equation}\label{eqt: Psi_x1 local expansion}
 \left|\pa_{x_1}\Psi(\pi - r\cos\alpha ,r\sin\alpha;g)  - \frac{\cos \alpha}{\pi}A(r;g)\cdot r\ln r\right| \leq C_\Lam r,
\end{equation}
and
\begin{equation}\label{eqt: Psi_x2 local expansion}
 \left|\pa_{x_2}\Psi(\pi - r\cos\alpha ,r\sin\alpha;g) - \partial_{x_2}\Psi(\pi,0;g) - \frac{\sin\alpha}{\pi}A(r;g)\cdot r\ln r\right| \leq C_\Lam r.
\end{equation}
Here, $A(r;g)$ satisfies
\beq
\liminf_{r\to 0^+} A(r;g) \geq \min\left\{\frac{2\pi\lambda}{1+(2\pi\lambda)^2}\,,\,\frac{2\pi\Lambda}{1+(2\pi\Lambda)^2}\right\}.
\label{eqn: lower bound for A r g}
\eeq
In particular, if the limit $\kappa := \lim_{r\to0^+}g(\pi-r)/r$ exists, then
\begin{equation}\label{eqn: A(r;g) limit}
\lim_{r\to 0^+} A(r;g) = \frac{\kappa}{1+\kappa^2}.
\end{equation}
\end{lem}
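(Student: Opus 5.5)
We expand $\Psi$ near the corner $(\pi,0)$ by writing $\Psi$ as the Newtonian-type potential of $e^{-2\mu y_2}\mathds{1}_\Omega$ (Lemma \ref{lem: integral formula for Psi}). The idea is to split the integral into a near-field part and a far-field part. Set $x=(\pi-r\cos\alpha, r\sin\alpha)$. Writing the kernel with $G$ from \eqref{eqn: def of G}, we have
\[
\Psi(x)=\int_\Om e^{-2\mu y_2}\big[G(x-y)-G((\pi,0)-y)+\tfrac{x_2}{4\pi}\big]dy .
\]
Recenter at $(\pi,0)$ with $y=(\pi-w_1,w_2)$. Near the corner, $G(\xi)=-\frac{1}{2\pi}\ln|\xi|+O(|\xi|^2)$, and $e^{-2\mu y_2}=1+O(y_2)$. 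The corrections from the smooth remainder of $G$ and from the weight produce terms that are at most $O(r^2)$ once the linear part is accounted for; the linear part is exactly $r\sin\alpha\,\pa_{x_2}\Psi(\pi,0)$.

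The core is therefore the model quantity
\[
-\frac{1}{2\pi}\int_{\Omega}\Big[\ln\frac{|x-y|}{|y-(\pi,0)|} - \frac{(x-(\pi,0))\cdot((\pi,0)-y)}{|y-(\pi,0)|^2}\Big]dy .
\]
By symmetry of $\Omega$ in $y_1$ on $\BT$, near $(\pi,0)$ the domain consists of two wedges, $\{0<w_2<g(\pi-w_1)\}$ with $w_1>0$ and its reflection through $y_1=-\pi\equiv\pi$. We split this integral into the regions $|w|<3r$ and $|w|>3r$.

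\textbf{Near field, $|w|<3r$.} Since $g\le\Lambda(\pi^2-x^2)$ gives $g(\pi-z)\le 2\pi\Lambda z$, the region has area $O(\Lambda r^2)$. The log kernel is integrable at scale $r$, so this part contributes $O_\Lambda(r^2)$. For the derivative formulas, the kernel $1/|x-y|$ integrates over a set of area $O(r^2)$ to give $O(r)$.

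\textbf{Far field, $|w|>3r$.} Here we Taylor expand $\ln|x-y|$ in $x$ to second order about $(\pi,0)$. The second-order term is
\[
\frac{1}{2\pi}\,\mathrm{Re}\!\left(\frac{\zeta^2}{2\,\omega^2}\right), \qquad \zeta=x-(\pi,0),\ \ \omega=y-(\pi,0),
\]
in complex notation, and the remainder is $O(r^3/|w|^3)$, whose integral over the region is $O(r^2)$. We then compute the second-order term in polar coordinates $w=\rho e^{i\theta}$ on the two symmetric wedges. After the two wedges combine, the angular integral gives
\[
\int \cos 2(\theta\pm\alpha)\,d\theta \;\propto\; \cos 2\alpha\,\sin 2\theta_0(\rho),
\]
where $\tan\theta_0(\rho)\approx g(\pi-\rho)/\rho$. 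Now $\frac12\sin 2\theta_0=\frac{t}{1+t^2}$ with $t=g(\pi-\rho)/\rho$. Integrating $\frac{d\rho}{\rho}$ over $\rho\in(3r,\pi)$ yields $-\frac{\cos2\alpha}{2\pi}A(r;g)\,r^2\ln r$, up to $O(r^2)$.

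The main obstacle is controlling the error in the wedge approximation, since the region is a graph region rather than an exact sector. Near the corner, $g$ is only bounded between $\lambda z$-type and $\Lambda z$-type barriers and need not be regular. The plan is to integrate in $w_2$ first for each fixed $w_1$, i.e., to integrate exactly over the graph region, using the exact identity
\[
\int_0^{h}\frac{w_1 w_2}{(w_1^2+w_2^2)^2}\,dw_2=\frac{1}{2}\Big(\frac{1}{w_1^2}-\frac{1}{w_1^2+h^2}\Big).
\]
With $h=g(\pi-w_1)$ this produces $\frac{1}{w_1}\cdot\frac{t}{1+t^2}$-type integrands directly. The discrepancy between cutting at $|w|>3r$ and at $w_1>3r$ lies in a bounded-aspect set, since $g\le 2\pi\Lambda w_1$ there, and so costs only $O_\Lambda(r^2)$. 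This is why the constant depends on $\Lambda$, and why the $3r$ cutoff suffices.

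\textbf{Derivative expansions.} The formulas for $\pa_{x_1}\Psi$ and $\pa_{x_2}\Psi$ follow from the same decomposition applied to the differentiated kernels. The first-order Taylor term is now exact, and the second-order term gives $\frac{\cos\alpha}{\pi}Ar\ln r$ and $\frac{\sin\alpha}{\pi}Ar\ln r$ respectively. This is consistent with taking the gradient of $-\frac{1}{2\pi}\mathrm{Re}(\zeta^2)\,A\ln r$, with $A$ treated as frozen up to $O(r)$.

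\textbf{Bounds on $A(r;g)$.} For \eqref{eqn: lower bound for A r g}, observe that $t=g(\pi-z)/z\in[\lambda(2\pi-z),\,\Lambda(2\pi-z)]\to[2\pi\lambda,\,2\pi\Lambda]$ as $z\to0$. The function $t\mapsto \frac{t}{1+t^2}$ is unimodal, so its minimum over this interval is attained at an endpoint. Hence $\liminf_{r\to0^+}A(r;g)$ is at least that endpoint minimum, since the $z$-integrals over $[3r,\pi]$ are weighted by $\frac{dz}{z}$, with total mass $\sim-\ln r$, and so are dominated by small $z$. If $t\to\kappa$, the same Cesàro-type averaging gives \eqref{eqn: A(r;g) limit}.
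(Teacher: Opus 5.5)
Your strategy is essentially the paper's. You isolate the Euclidean log kernel, cut at scale $3r$, bound the near field by scaling, expand the far field to second order, and kill the first-order and $\sin 2\theta$ contributions by the left--right symmetry of the two wedges. You then absorb the weight via $1-e^{-2\mu y_2}\le 2\mu y_2$ and integrate exactly in $y_2$. The differences from the paper are cosmetic:
\begin{itemize}
\item The paper packages the periodic correction as the harmonic function $K=W-\hat W$ and uses interior gradient estimates. You instead Taylor-expand the smooth remainder $G(\xi)+\frac{1}{2\pi}\ln|\xi|$ directly. That is fine, but it must be used on the whole range $|\xi_1|\le \pi+1$, $|\xi_2|\le M+1$ after choosing the representative $w_1\in[-\pi,\pi]$, not merely for small $\xi$ as your ``near the corner'' phrasing suggests.
\item The paper cuts at $|\tilde y_1|=3r$ from the start, which makes your discrepancy step between $|w|>3r$ and $|w_1|>3r$ unnecessary.
\end{itemize}

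There is, however, a concrete error in the one computation that actually produces $A(r;g)$. After combining the two wedges, the second-order term equals $\frac{r^2\cos 2\alpha}{2\pi}\int \frac{\cos 2\theta}{\rho^2}\,dw$ over one wedge. So the integrand you must integrate in $w_2$ is $\frac{w_1^2-w_2^2}{(w_1^2+w_2^2)^2}$.

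The identity you wrote is about $\frac{w_1w_2}{(w_1^2+w_2^2)^2}=\frac{\sin 2\theta}{2\rho^2}$, which is precisely the part that cancels between the wedges. It is also off by a factor $w_1$: the right-hand side should be $\frac{w_1}{2}\bigl(\frac{1}{w_1^2}-\frac{1}{w_1^2+h^2}\bigr)$. Even corrected, it yields $\frac{1}{2w_1}\cdot\frac{t^2}{1+t^2}$, not $\frac{1}{w_1}\cdot\frac{t}{1+t^2}$, so it would give the wrong coefficient of $r^2\ln r$.

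The correct identity is
\[
\int_0^h\frac{w_1^2-w_2^2}{(w_1^2+w_2^2)^2}\,dw_2=\frac{h}{w_1^2+h^2}=\frac{1}{w_1}\cdot\frac{t}{1+t^2},\qquad t=\frac{h}{w_1},
\]
since $\partial_{w_2}\bigl[w_2/(w_1^2+w_2^2)\bigr]=(w_1^2-w_2^2)/(w_1^2+w_2^2)^2$. This is what the paper uses. Integrating it in $w_1$ over $[3r,\pi]$ gives exactly $-\ln r\cdot A(r;g)$, hence the term $-\frac{\cos 2\alpha}{2\pi}A(r;g)\,r^2\ln r$.

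With this replacement your argument goes through, provided the derivative expansions are derived directly from the differentiated kernel, as you propose. The ``frozen $A$'' gradient remark is only a consistency check: an $O(r^2)$ error in $\Psi$ does not control the errors in $\nabla\Psi$.
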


\begin{proof}
For $x_1\in \BT$, denote $\tilde x_1 := \pi-x_1$. Note that $g$ is $2\pi$-periodic.
Applying Lemma \ref{lem: integral formula for Psi}, we write
    \begin{align*}
    -4\pi \Psi(x_1,x_2) &= \int_{-\pi}^{\pi}\int_0^{g(y_1)} e^{-2\mu y_2}
    \left[\ln \left(\f{\cosh(x_2-y_2)- \cos(x_1-y_1)}{\cosh y_2-\cos(\pi-y_1)}\right)-x_2\right] dy_2\, dy_1\\
    &= \int_{-\pi}^{\pi}\int_0^{g(\pi-\tilde y_1)} e^{-2\mu y_2}
    \left[\ln \left(\f{\cosh(x_2-y_2)- \cos(\tilde x_1-\tilde y_1)}{\cosh y_2-\cos \tilde y_1 }\right)-x_2\right] dy_2 \, d\tilde y_1\\
    &= \int_{-\pi}^{\pi}\int_0^{g(\pi-\tilde y_1)} e^{-2\mu y_2}
    \left[\ln \left(\f{\sinh^2 \frac{x_2-y_2}{2} +\sin^2 \frac{\tilde x_1-\tilde y_1}{2} }{\sinh^2\frac{y_2}{2}+\sin^2\frac{\tilde y_1}{2}}\right)-x_2\right] dy_2 \, d\tilde y_1\\
    &=: W(\tilde x_1,x_2).
    \end{align*}
    Note that $\pa_{\tilde x_1}W(\tilde x_1,x_2) = 4\pi\pa_{x_1}\Psi(\pi-\tilde x_1,x_2)$ and $\pa_{x_2}W(\tilde x_1,x_2) = -4\pi\pa_{x_2}\Psi(\pi-\tilde x_1,x_2)$. We also define
    \[
    \hat W(\tilde x_1,x_2) := \int_{-\pi}^{\pi}\int_0^{g(\pi-\tilde y_1)} e^{-2\mu y_2}\left[\ln \left(\f{(x_2-y_2)^2+(\tilde x_1-\tilde y_1)^2}{y_2^2+\tilde y_1^2}\right)-x_2\right] dy_2 \, d\tilde y_1
    \]
    as a counterpart of $W$.
    Let $K(\tilde x_1,x_2) := W(\tilde x_1,x_2) - \hat W(\tilde x_1,x_2) $. It is not hard to check that:
    \begin{itemize}
    \item
    $\Delta K(\tilde x_1,x_2) = 0$ in $(-\pi, \pi)\times \BR$;

    \item
    $\pa_{\tilde x_1}K(0,0) = 0$ thanks to the even symmetry of $\tilde y_1\mapsto g(\pi-\tilde y_1)$.
    \item
    Moreover, since $|g(x)|\leq \Lam(\pi^2-x^2)$ for $x\in [-\pi,\pi]$, $|K(\tilde{x}_1,x_2)|\leq C_\Lam$ on $[-2,2]\times [-2,2]$, where $C_\Lam>0$ depends on $\Lam$.

    \end{itemize}
    For $r:=|(\tilde{x}_1,x_2)| < 1$, the standard gradient estimates for harmonic functions give that
    \[
    \big|K(\tilde{x}_1,x_2) - x_2 \pa_{x_2}K(0,0)\big| \leq C_\Lam r^2 ,
    \]
    and
    \[
    \big|\pa_{\tilde x_1}K(\tilde{x}_1,x_2)\big| +\big|\pa_{x_2}K(\tilde{x}_1,x_2) - \pa_{x_2}K(0,0) \big| \leq C_\Lam r.
    \]
    Therefore, it suffices to study the behavior of $\hat W$ near $(\tilde x_1,x_2)=(0,0)$.

    We first prove \eqref{eqt: Psi local expansion}. Let
    \begin{align*}
    \hat I(\tilde x_1,x_2) &:= \hat W(\tilde x_1,x_2) - x_2\pa_{x_2}\hat W(0,0)\\
    &= \int_{-\pi}^{\pi}\int_0^{g(\pi-\tilde y_1)}e^{-2\mu y_2}\left[\ln\left(\frac{(x_2-y_2)^2+(\tilde x_1-\tilde y_1)^2}{y_2^2+\tilde y_1^2}\right)+\frac{2x_2 y_2}{y_2^2 +\tilde y_1^2}\right] dy_2\, d\tilde y_1.
    \end{align*}
        Adopting the polar coordinates $(\tilde x_1,x_2) = (r\cos\alpha,r\sin\alpha)$ and $(\tilde y_1,y_2)=(\rho\cos\beta,\rho\sin\beta)$, we find
    \begin{align*}
    &\; \hat I(r\cos\alpha,r\sin\alpha) \\
    = &\; \int_{\tilde \Omega} e^{-2\mu \rho\sin\beta}\left[\ln\left(\frac{r^2+\rho^2 - 2r\rho \cos(\alpha-\beta)}{\rho^2}\right)+\frac{2r\rho\sin\alpha\sin\beta}{\rho^2}\right] \rho\, d\rho\, d\beta\\
    = &\; \int_{\tilde \Omega\cap S_{3r}} e^{-2\mu \rho\sin\beta}\left[\ln\left(1+\frac{r^2 - 2r\rho \cos(\alpha-\beta)}{\rho^2}\right)+\frac{2r\sin\alpha\sin\beta}{\rho}\right] \rho \, d\rho\, d\beta \\
    &\; + \int_{\tilde \Omega \setminus S_{3r}} e^{-2\mu \rho\sin\beta}\left[\ln\left(1+\frac{r^2 - 2r\rho \cos(\alpha-\beta)}{\rho^2}\right)+\frac{2r\sin\alpha\sin\beta}{\rho}\right] \rho \,d\rho \,d\beta \\
    =: &\; \hat I_1(r\cos\alpha,r\sin\alpha) + \hat I_2(r\cos\alpha,r\sin\alpha),
    \end{align*}
    where
    \[
    \tilde \Omega := \big\{(\rho,\beta):\; \r\geq 0,\, \b\in [0,\pi],\, \rho\cos\beta\in[-\pi,\pi],\, \rho\sin\beta \in[0,g(\pi-\rho\cos\beta)]\big\},
    \]
    and
    \[
    S_t :=\big\{(\rho,\beta):\; \r\geq 0,\, \b\in [0,\pi],\,\rho\cos\beta\in[-t,t],\,\rho\sin\beta\in[0,2\pi\Lambda t]\big\}.
    \]
    Note that for $g\in \mathbb{W}$, $g(\pi-\tilde y_1)\leq \Lambda u(\pi-\tilde y_1) \leq 2\pi\Lambda |\tilde y_1|$ for all $\tilde y_1\in[-\pi,\pi]$, so we have
    \[
    \tilde \Omega \cap S_{3r}
    \subset \big\{(\rho,\beta): \rho\in [0,C_\Lam r],\, \beta \in[0,\pi]\big\},
        \]
    and
    \begin{align*}
    \tilde \Omega \setminus S_{3r}
    = &\; \big\{(\rho,\beta): \r\geq 0,\, \b\in [0,\pi],\, \rho\cos\beta\in[-\pi,-3r]\cup[3r,\pi],\, \rho\sin\beta \in[0,g(\pi-\rho\cos\beta)]\big\}\\
    \subset &\; \big\{(\rho,\beta): \rho\in[3r,C_\Lam],\, \beta \in[0,\pi]\big\}.
    \end{align*}
    Here $C_\Lam>0$ only depends on $\Lam$.

    It then follows that     \begin{align*}
    &\; \big|\hat I_1(r\cos\alpha,r\sin\alpha)\big| \\
    \leq &\; \int_0^{C_\Lam r} \int_0^\pi\left|\r \ln\left(1+\frac{r^2 - 2r\rho \cos(\alpha-\beta)}{\rho^2}\right)+ 2r\sin\alpha\sin\beta\right| d\beta\, d\rho \\
    \leq &\; \pi\int_0^{C_\Lam r} \r \ln\left(1+\frac{r^2 + 2r\rho}{\rho^2}\right) + \r \left|\ln\left(1+\frac{r^2 - 2r\rho}{\rho^2}\right)\right|+ 2r\, d\rho \\
    = &\; \pi r^2 \int_0^{C_\Lam} s\ln\left(1+\frac{1 + 2s}{s^2}\right) + s\left|\ln\left(1+\frac{1 - 2s}{s^2}\right)\right| +2\, ds\\
    \leq &\;  C_\Lam r^2.
    \end{align*}
    As for $\hat I_2$, we have $\r \geq |\r\cos \b| \geq 3r$, so
    \[
    \left|-\f{2r}{\r}\cos (\al-\b) + \f{r^2}{\r^2}\right|\leq \f{2r}{\r} + \f{r^2}{\r^2} \leq \f79.
    \]
    By the Taylor expansion, $|\ln (1+z) - z + \f12z^2|\leq C|z|^3$ for all $|z|\leq \f79$, where $C>0$ is universal.
    Hence, we rewrite $\hat{I}_2$ as
    \begin{align*}
    &\;\hat I_2(r\cos\alpha,r\sin\alpha) \\
        = &\;\int_{\tilde \Omega \setminus S_{3r}} e^{-2\mu\rho\sin\beta}\\
    &\;\quad\cdot \left[\frac{2r}{\rho}\sin\alpha\sin\beta - \frac{2r}{\rho}\cos(\alpha-\beta) + \frac{r^2}{\rho^2} - \frac{2r^2}{\rho^2}\cos^2(\alpha-\beta) + \frac{r^3}{\rho^3}\cdot L\left(\frac{r}{\rho},\alpha,\beta\right) \right] \rho \, d\rho \, d\beta,
                        \end{align*}
    where $|L(\f{r}{\rho},\alpha,\beta)|\leq C$ for some universal $C>0$  for all $\rho\geq 3r$ and all $\alpha,\beta\in[0,\pi]$.
    Since $\tilde{y}_1\mapsto g(\pi-\tilde{y}_1)$ is even, we can further derive that
    \begin{align*}
    &\;\hat I_2(r\cos\alpha,r\sin\alpha) \\
                =  &\; \int_{\tilde \Omega \setminus S_{3r}} e^{-2\mu\rho\sin\beta}\\
     &\; \quad \cdot \left[- \frac{2r}{\rho}\cos\alpha \cos\beta - \frac{r^2}{\rho^2} \big(\cos 2\alpha\cos 2\beta+\sin 2\alpha \sin 2\beta\big) + \frac{r^3}{\rho^3}\cdot L\left(\frac{r}{\rho},\alpha,\beta\right) \right] \rho \, d\rho \, d\beta\\
    =  &\; \int_{\tilde \Omega \setminus S_{3r}} e^{-2\mu\rho\sin\beta} \cdot \frac{r^3}{\rho^2}\cdot L\left(\frac{r}{\rho},\alpha,\beta\right) d\rho \, d\beta
    \\
    &\; + r^2 \cos 2\alpha \int_{\tilde \Omega \setminus S_{3r}} \big(1-e^{-2\mu\rho\sin\beta} \big) \cdot \f{1}{\rho}\cdot\cos 2\beta\, d\rho \,d\beta\\
    &\;  - r^2\cos 2\al \int_{\tilde \Omega \setminus S_{3r}} \cos 2\beta \cdot \frac{1}{\rho}\, d\rho \, d\beta \\
    =:  &\; \hat I_{2,1}(r\cos\alpha,r\sin\alpha) + \hat I_{2,2}(r\cos\alpha,r\sin\alpha) + \hat I_{2,3}(r\cos\alpha,r\sin\alpha),
    \end{align*}
        It is clear that
    \[
    \big|\hat I_{2,1}(r\cos\alpha,r\sin\alpha)\big| \leq  Cr^3 \int_{3r}^{C_\Lam}\int_0^\pi \frac{1}{\rho^2}\,  d\beta \, d\rho \leq C r^2,
    \]
    and
    \[
    \big|\hat I_{2,2}(r\cos\alpha,r\sin\alpha)\big|
    \leq Cr^2 \int_{3r}^{C_\Lam}\int_0^\pi \mu \,  d\beta \, d\rho \leq C_\Lam r^2.
    \]
    Here we used the fact that $\mu\in[0,1]$.
    For $\hat{I}_{2,3}$, we use the left-right symmetry of $\tilde \Omega$ and $S_{3r}$ to get
    \begin{equation}\label{eqt: local expansion step 1}
    \begin{split}
    \int_{\tilde \Omega \setminus S_{3r}} \frac{\cos 2\beta}{\rho}\, d\rho \,d\beta
    &=2\int_{3r}^\pi\int_0^{g(\pi-\tilde y_1)}\frac{\tilde y_1^2-y_2^2}{(\tilde y_1^2+y_2^2)^2}\, dy_2 \, d\tilde y_1\\
        &=2\int_{3r}^\pi\frac{1}{\tilde y_1}\cdot \frac{g(\pi-\tilde y_1)/\tilde y_1}{1+(g(\pi-\tilde y_1)/\tilde y_1)^2}\, d\tilde y_1.
            \end{split}
    \end{equation}
    Hence, by \eqref{eqt: def of A(r;g)},      \[
    \hat I_{2,3}(r\cos\alpha,r\sin\alpha)
    = - r^2 \cos 2\al \int_{\tilde \Omega \setminus S_{3r}} \frac{\cos 2\beta}{\rho}\, d\rho\, d\beta
    = 2\cos 2\al\cdot r^2\ln r \cdot A(r;g).
    \]
Putting all the above estimates together leads to
\[
\left|\hat{I}(r\cos \al, r\sin \al) - 2\cos 2\al\cdot r^2\ln r \cdot A(r;g)\right|\leq C_\Lam r^2.
\]
Therefore,
    \begin{align*}
    &\;\left|\Psi(\pi - r\cos\alpha ,r\sin\alpha;g) - r\sin\alpha \cdot\partial_{x_2}\Psi(\pi,0;g) + \frac{\cos 2\al}{2\pi}A(r;g) \cdot r^2\ln r \right|\\
    = &\; \f{1}{4\pi} \left|W(r\cos\alpha ,r\sin\alpha) - r\sin\alpha \cdot\partial_{x_2}W(0,0) -  2\cos 2\alpha \cdot r^2\ln r\cdot A(r;g) \right|\\
    = &\; \f{1}{4\pi}\left|K(r\cos\alpha ,r\sin\alpha) - r\sin \al\cdot \partial_{x_2}K(0,0)  + \hat I(r\cos\alpha ,r\sin\alpha) -  2\cos 2\al\cdot r^2\ln r \cdot A(r;g)\right|\\
    \leq &\; \f{1}{4\pi}\big|K(\tilde{x}_1 ,x_2) - x_2\partial_{x_2}K(0,0)\big|  + \f{1}{4\pi}\left|\hat I(r\cos\alpha ,r\sin\alpha) -  2\cos 2\al\cdot r^2\ln r \cdot A(r;g)\right|\\
    \leq &\; C_\Lam r^2,         \end{align*}
    as desired.

    To prove \eqref{eqt: Psi_x1 local expansion}, we shall study $\pa_{\tilde x_1}\hat W(\tilde x_1,x_2)$.
    Still adopting the polar coordinates
    $(\tilde x_1,x_2) = (r\cos\alpha,r\sin\alpha)$ and $(\tilde y_1,y_2)=(\rho\cos\beta,\rho\sin\beta)$, we analogously calculate that
    \begin{align*}
    \pa_{\tilde x_1}\hat W(\tilde x_1,x_2) =&\; \int_{-\pi}^{\pi}\int_0^{g(\pi-\tilde y_1)} e^{-2\mu y_2}\cdot  \frac{2(\tilde x_1-\tilde y_1)}{(x_2-y_2)^2 +(\tilde x_1-\tilde y_1)^2}\, dy_2 \, d\tilde y_1\\
    =&\; 2\int_{\tilde \Omega} e^{-2\mu \rho\sin\beta}\cdot \frac{r\cos\alpha - \rho\cos\beta}{r^2+\rho^2 - 2r\rho \cos(\alpha-\beta)} \cdot \rho \, d\rho \, d\beta\\
    =&\; 2\int_{\tilde \Omega\cap S_{3r}} e^{-2\mu \rho\sin\beta}\frac{r\cos\alpha - \rho\cos\beta}{r^2+\rho^2 - 2r\rho \cos(\alpha-\beta)} \cdot \rho\, d\rho\, d\beta\\
    &\;+ 2\int_{\tilde \Omega\setminus S_{3r}} e^{-2\mu \rho\sin\beta} \frac{r\cos\alpha - \rho\cos\beta}{r^2+\rho^2 - 2r\rho \cos(\alpha-\beta)} \cdot \rho \, d\rho\, d\beta\\
    =: &\;\hat J_1(r\cos\alpha,r\sin\alpha) +  \hat J_2(r\cos\alpha,r\sin\alpha) ,
    \end{align*}
    where $\tilde \Omega $ and $S_{3r}$ are defined as above. Under the change of variable $s=\rho/r$, it follows that
    \begin{align*}
    \big|\hat J_1(r\cos\alpha,r\sin\alpha)\big|
    \leq
    &\; 2\int_0^\pi \int_0^{C_\Lam r} \frac{1}{(r^2+\rho^2 - 2r\rho \cos(\alpha-\beta))^{1/2}} \cdot \rho\, d\rho\, d\beta\\
    \leq
    &\; 2r \int_0^\pi \int_0^{C_\Lam} \frac{s}{(1+s^2 - 2s\cos(\alpha-\beta))^{1/2}}\, ds\, d\beta\\
    \leq
    &\; C_\Lam r \int_0^\pi \int_0^{C_\Lam} \frac{1}{|\sin (\al-\b)| + |s - \cos(\alpha-\beta)|}\, ds\, d\beta\\
    \leq
    &\; C_\Lam r \int_0^\pi \ln \left(\frac{|\sin (\al-\b)| + C_\Lam + 1}{|\sin (\al-\b)|} \right) d\beta\\
                    \leq &\; C_\Lam r.
    \end{align*}
    As for $\hat J_2$, we use the Taylor expansion as well as the left-right symmetry of $\tilde \Omega$ and $S_{3r}$ to derive that
    \begin{align*}
    &\;\hat J_2(r\cos\alpha,r\sin\alpha) \\
    =&\;2\int_{\tilde \Omega\setminus S_{3r}} e^{-2\mu \rho\sin\beta}\cdot
    \frac{-\cos\beta + \frac{r}{\rho}\cos\alpha}{1 - \frac{2r}{\rho} \cos(\alpha-\beta) +(\frac{r}{\rho})^2} \, d\rho\, d\beta\\
    =&\;2\int_{\tilde \Omega\setminus S_{3r}} e^{-2\mu \rho\sin\beta} \left[- \cos\beta + \frac{r}{\rho}\left(\cos\alpha - 2\cos\beta\cos(\alpha-\beta)\right) + \frac{r^2}{\rho^2} \cdot L_1\left(\frac{r}{\rho},\alpha,\beta\right)\right]d\rho\, d\beta\\
    =&\;2\int_{\tilde \Omega\setminus S_{3r}} e^{-2\mu \rho\sin\beta} \left[-\frac{r}{\rho}\cos\alpha \cos(2\beta) + \frac{r^2}{\rho^2} \cdot L_1\left(\frac{r}{\rho},\alpha,\beta\right)\right]d\rho \,d\beta\\
    =&\;2r^2\int_{\tilde \Omega\setminus S_{3r}} e^{-2\mu \rho\sin\beta}\cdot \frac{1}{\rho^2}\cdot L_1\left(\frac{r}{\rho},\alpha,\beta\right) d\rho\, d\beta \\
    &\;+ 2r\cos\alpha \int_{\tilde \Omega\setminus S_{3r}} \big(1-e^{-2\mu \rho\sin\beta}\big)\cdot \frac{1}{\rho}\cdot \cos 2\beta\, d\rho\, d\beta \\
    &\; - 2r\cos\alpha \int_{\tilde \Omega\setminus S_{3r}} \frac{\cos 2\beta}{\rho} \, d\rho\, d\beta\\
    =: &\; \hat J_{2,1}(r\cos\alpha,r\sin\alpha) + \hat J_{2,2}(r\cos\alpha,r\sin\alpha) + \hat J_{2,3}(r\cos\alpha,r\sin\alpha),
    \end{align*}
    where $|L_1(\f{r}{\rho},\alpha,\beta)|\leq C$ for some universal $C>0$  for all $\rho\geq 3r$ and all $\alpha,\beta\in[0,\pi]$.
    It is clear that
    \begin{align*}
    \big|\hat J_{2,1}(r\cos\alpha,r\sin\alpha)\big| \leq &\; C r^2\int_0^\pi\int_{3r}^{C_\Lam} \frac{1}{\rho^2} \, d\rho\, d\beta \leq Cr,\\
    \big|\hat J_{2,2}(r\cos\alpha,r\sin\alpha)\big| \leq &\; Cr \int_0^\pi\int_{3r}^{C_\Lam}  \mu\, d\rho\, d\beta \leq C_\Lam r.
    \end{align*}
    As for $\hat J_{2,3}$, we follow \eqref{eqt: def of A(r;g)} and \eqref{eqt: local expansion step 1} to obtain
    \[
    \hat J_{2,3}(r\cos\alpha,r\sin\alpha) = - 2r\cos \alpha\int_{\tilde \Omega \setminus S_{3r}} \frac{\cos 2\beta}{\rho}\, d\rho \, d\beta = 4\cos\alpha\cdot r\ln r \cdot A(r;g).
    \]
    Therefore, we have
    \[
    \left|\pa_{\tilde x_1}\hat W(r\cos\alpha,r\sin\alpha) - \hat J_{2,3}(r\cos\alpha,r\sin\alpha)\right| \leq C_\Lam r,
    \]
    and thus
    \begin{align*}
    &\;\left|\pa_{x_1}\Psi(\pi-r\cos\alpha,r\sin\alpha) - \frac{\cos\alpha}{\pi} A(r;g)\cdot r\ln r\right| \\
    = &\; \f{1}{4\pi}\left|\pa_{\tilde x_1}W(r\cos\alpha,r\sin\alpha) - 4\cos\alpha \cdot r\ln r\cdot A(r;g)\right|\\
    = &\; \f{1}{4\pi}\left|\pa_{\tilde x_1}K(r\cos\alpha,r\sin\alpha) + \pa_{\tilde x_1}\hat W(r\cos\alpha,r\sin\alpha) - \hat J_{2,3}(r\cos\alpha,r\sin\alpha)\right|\\
        \leq &\; C_\Lam r,
    \end{align*}
    as desired.

    One can show \eqref{eqt: Psi_x2 local expansion} in a similar way by studying $\pa_{x_2}\hat W(\tilde x_1,x_2) - \pa_{x_2}\hat W(0,0)$.         We omit the tedious details for brevity as the argument is largely the same.
    It remains to verify the claimed properties of $A(r;g)$.
    Since $g\in \mathbb{W}$, we have $\lambda u(x)\leq g(x)\leq \Lambda u(x)$ for all $x\in [-\pi,\pi]$, where $u(x) = \pi^2-x^2$ satisfies
    $\lim_{r\to 0^+} u(\pi-r)/r = 2\pi$.
    Also observe that the function $z\mapsto z/(1+z^2)$ is increasing on $[0,1]$ and decreasing on $[1,+\infty)$.
    It then follows from \eqref{eqt: def of A(r;g)} and the l'Hospital's rule that
    \begin{align*}
    \liminf_{r\to 0^+} A(r;g)
    \geq &\;
    \lim_{r\to 0^+} -\frac{1}{\ln r} \int_{3r}^\pi \frac{1}{z}\cdot \min\left\{\frac{\lam u(\pi-z)/z}{1+(\lam u(\pi-z)/z)^2},\, \frac{\Lam u(\pi-z)/z}{1+(\Lam u(\pi-z)/z)^2}\right\} dz
    \\
    = &\;
    \lim_{r\to 0^+}
    \min\left\{\frac{\lam u(\pi-3r)/(3r)}{1+(\lam u(\pi-3r)/(3r))^2},\, \frac{\Lam u(\pi-3r)/(3r)}{1+(\Lam u(\pi-3r)/(3r))^2} \right\}
    \\
    = &\;\min\left\{\frac{2\pi\lambda}{1+(2\pi\lambda)^2}\,,\,\frac{2\pi\Lambda}{1+(2\pi\Lambda)^2}\right\}.
    \end{align*}
    Moreover, if the limit $\kappa := \lim_{r\to0^+}g(\pi-r)/r$ exists, then also by the l'Hospital's rule,     \[\lim_{r\to 0^+} A(r;g) = \frac{\kappa}{1+\kappa^2}.\]
    The proof is thus completed.
\end{proof}

Recall the decomposition $\Psi=\Psi_1+\Psi_2$, where $\Psi_1$ and $\Psi_2$ are given in \eqref{eqn: representation of Psi_1} and \eqref{eqn: representation of Psi_2}, respectively.
In fact, one can show that $\Psi_2$ admits similar local expansions as those for $\Psi$ proved in Lemma \ref{lem: local expansion}.
In particular, those terms containing $\ln r$ in the expansion of $\Psi$ (see Lemma \ref{lem: local expansion}) are solely contributed by $\Psi_2$.
This result will be used in Section \ref{sec: flow field} later.

\begin{lem}\label{lem: local expansion of Psi2}
Let $\mu\in [0,1]$ and $g\in\mathbb{W}$.
Let $A(r;g)$ be defined in \eqref{eqt: def of A(r;g)} in Lemma \ref{lem: local expansion}. Then there exists some constant $C_\Lam>0$ that only depends on $\Lam$, such that, for all $r\in(0,1)$ and $\alpha\in[0,\pi]$,
\[
\left|\Psi_2(\pi - r\cos\alpha ,r\sin\alpha;g) - r\sin\alpha \cdot\partial_{x_2}\Psi_2(\pi,0;g) + \frac{\cos 2\alpha}{2\pi}A(r;g)\cdot r^2\ln r\right| \leq C_\Lam r^2,
\]
\[
\left|\pa_{x_1}\Psi_2(\pi - r\cos\alpha ,r\sin\alpha;g)  - \frac{\cos \alpha}{\pi}A(r;g)\cdot r\ln r\right| \leq C_\Lam r,
\]
and
\[
\left|\pa_{x_2}\Psi_2(\pi - r\cos\alpha ,r\sin\alpha;g) - \partial_{x_2}\Psi_2(\pi,0;g) - \frac{\sin\alpha}{\pi}A(r;g)\cdot r\ln r\right| \leq C_\Lam r.
\]

\begin{proof}
The proof is completely parallel to that of Lemma \ref{lem: local expansion}.
One only needs to replace the integral representation of $\Psi$ (see \eqref{eqn: integral representation of Psi}) by that of $\Psi_2$ (see \eqref{eqn: representation of Psi_2 old}) and derive analogously. We omit the details.
\end{proof}
\end{lem}

\begin{prop}\label{prop: g slope at pi}
Let $M$ and $\mu_0$ be given by Proposition \ref{prop: a priori upper bound general m}.
Let $\BW$ be defined as in \eqref{eqt: def of function set D}.
Assume $\mu\in [0,\mu_0]\cap [0,\f{1}{3M}]$, and let $g\in\mathbb{W}$ be a fixed point of $\bfR $.
Then $g'_-(\pi) := \lim_{x\to \pi^-}g(x)/(x-\pi) =-1$, with
\beq
\limsup_{x\to \pi^-}\left|\left(1+\f{g(x)}{x-\pi}\right) \ln(\pi-x)\right| \leq C_{\Lam},
\label{eqn: asymptotic behavior of g near end points}
\eeq
where $C_{\Lam}>0$ depends only on $\Lam$.
Moreover, $g'(\pi^-):= \lim_{x\to \pi^-}g'(x) = -1$.

By symmetry, $g_+'(-\pi) := \lim_{x\to -\pi^+} g(x)/(x+\pi) = 1$, and $g'(-\pi^+):=\lim_{x\to -\pi^+}g'(x) = 1$.
As a result, $g\in C^1([-\pi,\pi])$.

\begin{proof}
Let $g\in\mathbb{W}$ be a fixed point of $\bfR $.
By definition,
    \begin{equation}\label{eqt: corner regularity step 1}
 	\Psi(x,g(x);g)=\partial_{x_2}\Psi(\pi,0;g)\cdot \frac{1-e^{-2\mu g(x)}}{2\mu}
 	\end{equation}
  for all $x\in[-\pi,\pi]$, so for $x\in (0,\pi)$,
    \beq
 	\big|\Psi(x,g(x);g) - g(x) \partial_{x_2}\Psi(\pi,0;g) \big|
  = \partial_{x_2}\Psi(\pi,0;g) \cdot g(x)\left|\frac{1-e^{-2\mu g(x)}}{2\mu g(x)} -1\right|   \leq M\cdot \mu g(x)^2.
  \label{eqn: bound from the fixed-point equation}
 \eeq
In the last inequality, we used Lemma \ref{lem: bound for Psi_x_2}.
For $x\in (0,\pi)$, we define
\[
r(x):= \sqrt{(\pi-x)^2+g(x)^2},\quad \alpha(x) := \arctan \f{g(x)}{\pi-x}.
\]
Note that for $g\in \BW$, $r(x)\in [\pi-x,C_\Lam(\pi-x)]$ for some $C_\Lam>0$ which only depends on $\Lam$, and $\al(x) \in [\arctan (\pi\lam), \arctan(2\pi\Lam)]\subset (0,\f{\pi}{2})$ is well-defined.
By Lemma \ref{lem: local expansion}, for $x\in (0,\pi)$ satisfying that $r(x)<1$, \[
\left|\Psi(x,g(x);g) - g(x)\partial_{x_2}\Psi(\pi,0;g)
+ \frac{\cos 2\alpha(x)}{2\pi}A(r(x);g)\cdot r(x)^2\ln r(x)\right| \leq C_\Lam r(x)^2.
\]
Combining this with \eqref{eqn: bound from the fixed-point equation} yields that, whenever $r(x)<1$,
\[
\big| \cos 2\alpha(x) \cdot A(r(x);g)\cdot r(x)^2\ln r(x) \big| \leq (C_{\Lam} + \mu M) r(x)^2,
\]
i.e.,
\beq
\left|\f{1-\tan^2 \al(x)}{1+\tan^2\al(x)}\right| =
\big| \cos 2\alpha(x) \big| \leq \f{C_{\Lam}}{|A(r(x);g)\ln r(x)|}.
\label{eqn: bound for cos 2 alpha(x)}
\eeq
Here we used the assumption that $\mu M \leq \f13$.
Using \eqref{eqn: lower bound for A r g} in Lemma \ref{lem: local expansion} and the fact $\lim_{x\to \pi^-} r(x)= 0$, we find that $\lim_{x\to \pi^-} \cos 2\alpha(x) = 0$, so
\[
\lim_{x\to \pi^-} \alpha(x) = \f{\pi}{4},
\]
and
\[
g'_-(\pi) := \lim_{x\to \pi^-} \f{g(x)}{x-\pi} = -\lim_{x\to \pi^-} \tan \al(x) = -1.
\]
This further implies that $\lim_{x\to \pi^-} \f{r(x)}{\pi-x} = \sqrt{2}$, and by \eqref{eqn: A(r;g) limit} in Lemma \ref{lem: local expansion}, $\lim_{x\to \pi^-} A(r(x);g) = \f12$.
Combining them with \eqref{eqn: bound for cos 2 alpha(x)} yields that
\[
\limsup_{x\to \pi^-}\big|(1-\tan\al(x)) \ln (\pi-x)\big| \leq C_{\Lam},
\]
i.e.,
\[
\limsup_{x\to \pi^-}\left|\left(1+\f{g(x)}{x-\pi}\right) \ln (\pi-x)\right| \leq C_{\Lam}.
\]

    By Proposition \ref{prop: first regularity result}, we differentiate \eqref{eqt: corner regularity step 1} in $(0,\pi)$ to obtain
    \[
 	\pa_{x_1}\Psi(x,g(x);g) + g'(x)\pa_{x_2}\Psi(x,g(x);g) = \partial_{x_2}\Psi(\pi,0;g)\cdot g'(x) \cdot e^{-2\mu g(x)},
 	\]
    which gives
    \beq
 	g'(x) = - \frac{\pa_{x_1}\Psi(x,g(x);g)}{\pa_{x_2}\Psi(x,g(x);g) - \pa_{x_2}\Psi(\pi,0;g)\cdot e^{-2\mu g(x)}}.
    \label{eqn: formula for g' in terms of Psi}
 	\eeq
    Indeed, this is valid because, by Corollary \ref{cor: non-degeneracy on fixed point} as well as the fact $\Psi = x_2 F$, for $x\in(0,\pi)$,
    \begin{align*}
    &\;\pa_{x_2}\Psi(x,g(x);g) - \partial_{x_2}\Psi(\pi,0;g)\cdot e^{-2\mu g(x)} \\
    = &\;g(x)\pa_{x_2}F(x,g(x);g) + F(x,g(x);g) - \partial_{x_2}\Psi(\pi,0;g)\cdot e^{-2\mu g(x)} \\
    =&\; g(x)\left( \pa_{x_2} F(x,g(x);g) + \partial_{x_2}\Psi(\pi,0;g)\cdot \frac{1-(1+2\mu g(x))e^{-2\mu g(x)}}{2\mu g(x)^2} \right)\\
    =&\; g(x) \left.\pa_{x_2}\left(F(x_1,x_2;g) -\partial_{x_2}\Psi(\pi,0;g)\cdot \frac{1-e^{-2\mu x_2}}{2\mu x_2} \right)\right|_{(x_1,x_2) = (x,g(x))}\\
    \leq &\; - C_{M,\mu_0}  x^{1/2} g(x)\cdot \pa_{x_2}\Psi(\pi,0;g)\\
    <&\; 0.
    \end{align*}
    It has been proved that     $\lim_{x\to\pi^-}\alpha(x) = \pi/4$. We apply \eqref{eqt: Psi_x1 local expansion}, \eqref{eqt: Psi_x2 local expansion}, and \eqref{eqn: A(r;g) limit} in Lemma \ref{lem: local expansion} to derive that
    \[
    \lim_{x\to \pi^-} \frac{\pa_{x_1}\Psi(x,g(x);g)}{r(x) |\ln r(x)| }= \lim_{x\to \pi^-} \frac{\pa_{x_1}\Psi(\pi-r(x)\cos\alpha(x),r(x)\sin\alpha(x);g)}{r(x) |\ln r(x)| } = - \frac{\cos \f{\pi}{4}}{2\pi},
    \]
    and
    \begin{align*}
    &\;\lim_{x\to \pi^-} \frac{\pa_{x_2}\Psi(x,g(x);g) - \pa_{x_2}\Psi(\pi,0;g)}{ r(x) |\ln r(x)| } \\
    = &\;\lim_{x\to \pi^-} \frac{\pa_{x_2}\Psi(\pi-r(x)\cos\alpha(x),r(x)\sin\alpha(x);g) - \pa_{x_2}\Psi(\pi,0;g)}{ r(x)|\ln r(x)| }= - \frac{\sin \f{\pi}{4}}{2\pi}.
    \end{align*}
    Then it follows from \eqref{eqn: formula for g' in terms of Psi} that
    \begin{align*}
    \lim_{x\to \pi^-} g'(x)
        = &\; - \frac{\displaystyle \lim_{x\to \pi^-}  \frac{\pa_{x_1}\Psi(x,g(x);g)}{r(x)|\ln r(x)|}}{\displaystyle \lim_{x\to \pi^-}\frac{\pa_{x_2}\Psi(x,g(x);g)-\partial_{x_2}\Psi(\pi,0;g)}{r(x)|\ln r(x)|} + \partial_{x_2}\Psi(\pi,0;g)\cdot \frac{1- e^{-2\mu g(x)}}{r(x)|\ln r(x)|}}\\
        = &\; -\f{-\frac{\cos \f{\pi}{4}}{2\pi}}{-\frac{\sin \f{\pi}{4}}{2\pi} + 0}
    = - 1,
    \end{align*}
    as claimed.

    By symmetry, one can readily show that $g_+'(-\pi) = g'(-\pi^+) = 1$.

    Recall that by Proposition \ref{prop: first regularity result}, $g\in C_{loc}^1((-\pi,\pi))$.
    Combining this with the facts proved above,     we can conclude that $g\in C^1([-\pi,\pi])$.
\end{proof}

\begin{rmk}
The justification of $g'_-(\pi) = -1$, $g_+'(-\pi) = 1$ and \eqref{eqn: asymptotic behavior of g near end points} does not rely on the continuity or the higher regularity of $g$.
Besides, it is also clear from \eqref{eqn: lower bound for A r g} (also see \eqref{eqn: A(r;g) limit}) and \eqref{eqn: bound for cos 2 alpha(x)} that the statements
$g'_-(\pi) = g'(\pi^-) = -1$ and $g_+'(-\pi) = g'(-\pi^+) = 1$ proved here are consequences of the local geometry of $\Omega$ near the corner points $(\pm \pi,0)$ instead of the global behavior of $g$ in $\BT$.
\end{rmk}
\end{prop}
	 	
We conclude this section with the proof of Theorem \ref{thm: main existence theorem}, which states the existence of $m$-fold symmetric V-states with 90-degree corners and also characterizes their boundaries and angular velocities.

\begin{proof}[Proof of Theorem \ref{thm: main existence theorem}]
Let $M$ and $\mu_0$ be a pair of universal constants that makes Proposition \ref{prop: a priori upper bound general m} hold.
Define $m_0:=\lceil\max\{3M,\mu_0^{-1}\}\rceil$.
For any $m \geq m_0$, we have $\mu:=\f{1}{m}\in (0,\mu_0] \cap [0,\f{1}{3M}]$.
By Proposition \ref{prop: existence of fixed point}, there exists a fixed point of the mapping $\bfR$ in $\BW$, where $\bfR$ was defined in \eqref{eqn: def of R(g)} and where $\BW$ was given in \eqref{eqt: def of function set D}.
Take an arbitrary fixed point and denote it by $g\in \BW$.
In view of \eqref{eqn: def of g}, we define $f$ in terms of $g$ by \eqref{eqn: transformation from g to f}, i.e.,
\beq
f(\th):= e^{-\f{1}{m}g(m\th-\pi)}\quad (\th\in \BT). \label{eqn: def of f in terms of g}
\eeq
It is then straightforward to verify the statements of the theorem.
\begin{itemize}
\item That $f\in \BM_0$ follows from the definition of $\BM_0$ in \eqref{eqn: function set M_0}, definition of $\BM$ in \eqref{eqn: function set tilde M_0}, and \eqref{eqn: def of f in terms of g}.
Thanks to Proposition \ref{prop: first regularity result} on the regularity of $g$, we find $f\in C(\BT)$ and $f\in C^{1,\al}_{loc}((0,\f{2\pi}{m}))$ for any $\al\in (0,1)$.

\item
The fixed-point relation $g\equiv \bfR(g)$ implies \eqref{eqn: constraint for the boundary curve in Phi}.
Since $g\in C(\BT)$, this further implies \eqref{eqn: constraint for Phi along the image of the patch boundary}, which is equivalent to \eqref{eqn: constraint satisfied by phi along the patch boundary final} due to transformation \eqref{eqn: def of Phi}.
Hence, the statement \eqref{statement: rotating patch} follows.

\item
The property \eqref{statement: lower and upper bound for f} follows from the \eqref{eqn: def of f in terms of g} and the fact $g\in \BW$, where $M$, $\Lam$, and $\lam$ are the universal constants given in the definition of $\BW$ (also see the remark that follows \eqref{eqt: def of function set D_0}).

\item The $C^1$-regularity of $f$ in $[0,\f{2\pi}{m}]$ follows from Proposition \ref{prop: g slope at pi} and \eqref{eqn: def of f in terms of g},
which is the first part of the property \eqref{statement: local analyticity of f and piecewise C^1}.
The analyticity of $f$ in $(0,\f{2\pi}{m})$ will be justified later in this proof.

\item The property \eqref{statement: upper bound for f'} follows from Proposition \ref{prop: first regularity result} and \eqref{eqn: def of f in terms of g}.
In particular, $\|f'\|_{L^\infty(\BT)}\leq C_{M,\lam}$, where $C_{M,\lam}>0$ only depends on $M$ and $\lam$ and is thus universal.

\item The property \eqref{statement: derivative at the end points} follows from Proposition \ref{prop: g slope at pi} and \eqref{eqn: def of f in terms of g}.
\end{itemize}

In addition, for any $f\in \BM_0\cap C(\BT)$ that satisfies the stated properties in the theorem, by \eqref{eqn: constraint satisfied by phi along the patch boundary final}, the angular velocity of the rotating vortex patch is $-\pa_{x_1}\phi(1,0)$.
If we define $g$ in terms of $f$ by \eqref{eqn: def of g}, then $g\in \BW$, with the parameters $M$, $\Lam$, and $\lam$ in the definition of $\BW$ here given by the property \eqref{statement: lower and upper bound for f}.
Let $\Phi$ and $\Psi$ be given as in Lemma \ref{lem: integral formula for Psi}.
Thanks to \eqref{eqn: def of Phi} and Lemma \ref{lem: integral formula for Psi},
\[
-\pa_{x_1}\phi(1,0) = -\f{1}{m}\pa_{x_2}\Phi(\pi,0) = \f{1}{2\mu m}-\f{1}{m}\pa_{x_2}\Psi(\pi,0).
\]
Note that $\mu m = 1$.
By \eqref{eqn: end point constant}, $\pa_{x_2}\Psi(\pi,0) \geq C\lam$ for some universal constant $C>0$.
On the other hand, $\pa_{x_2}\Psi(\pi,0)\leq M$ due to Lemma \ref{lem: bound for Psi_x_2}.
Therefore,
\[
-\pa_{x_1}\phi(1,0) -\f12 = -\f{1}{m}\pa_{x_2}\Psi(\pi,0) \in \left[-\f{M}{m},-\f{C\lam}{m}\right].
\]
This proves \eqref{eqn: estimate for angular velocity}.
Since $D_0\subset \overline{B_1}$, \[
a = -\pa_{x_1}\phi(1,0) = -\pa_{x_1}\big(\G*\mathds{1}_{D_0}\big)(1,0)
= \f{1}{2\pi}\int_{D_0} \f{1-y_1}{|(1,0)-(y_1,y_2)|^2}\,dy > 0.
\]
We remark that the claim $a>0$ is also implied by \cite{GomezSerranoParkShiYao2021}.

The analyticity of $f$ in $(0,\f{2\pi}{m})$ can be justified by using standard bootstrap argument in the elliptic free boundary problem (see e.g.\;\cite{KinderlehrerNirenbergSpruck1978}, and also \cite{WangZhangZhou2026} which is more specifically on the boundary regularity of rotating vortex patches).
For completeness, we sketch the proof here by following \cite[Theorem 5.2]{HuangTong2025}.
Define \[
\psi_\dagger(x) := \phi(x) - \phi(1,0)
+ \f12 \pa_{x_1}\phi(1,0)\big(1-|x|^2\big).
\]
By \eqref{eqn: equation for phi} and \eqref{eqn: constraint satisfied by phi along the patch boundary final}, it satisfies
\[
-\Delta \psi_\dag = \mathds{1}_{D_0} + 2\pa_{x_1}\phi(1,0),\quad
\psi_\dag \equiv 0\mbox{ along }\pa D_0,
\]
which is in the form of an unstable free boundary problem \cite{AnderssonShahgholianWeiss2012,MonneauWeiss2007}.
Here the free boundary is exactly the patch boundary $\pa D_0 = \{(f(\th)\cos\th,f(\th)\sin \th):\, \th\in \BT\}$.
We have shown that $f\in C^{1,\al}_{loc}((0,\f{2\pi}{m}))$ for any $\al\in (0,1)$.
Fix $\th_0\in (0,\f{2\pi}{m})$ and denote $x_0:=(f(\th_0)\cos\th_0,f(\th_0)\sin\th_0)$.
To show $f$ is analytic near $\th_0$, in view of \cite[Theorem 3.1']{KinderlehrerNirenbergSpruck1978}, it suffices to verify that for some $\d>0$, \begin{enumerate}[(a)]
\item \label{claim: non-degeneracy of psi_dag along the patch boundary} $\na \psi_\dag \neq 0$ along $B_\d(x_0)\cap\pa D_0$;
\item \label{claim: psi_dag is C^1} $\psi_\dag\in C^1(B_\d(x_0))$;
\item \label{claim: psi_dag is C^2 on both sides up to the boundary} In $B_\d(x_0)$, $\psi_\dag$ is $C^2$ on either side $\pa D_0$ up to $\pa D_0$.
\end{enumerate}

To achieve \eqref{claim: non-degeneracy of psi_dag along the patch boundary}, we will actually prove
\beq
\na \psi_\dagger(x)\neq 0\quad \mbox{for all }x\in \left\{\big(f(\th)\cos\th,f(\th)\sin \th\big):\, \th\in \left(0,\f{2\pi}{m}\right)\right\}.
\label{eqn: non-degeneracy of psi_dag along the patch boundary}
\eeq
We derive by \eqref{eqn: def of zeta}, \eqref{eqn: def of Phi}, \eqref{eqn: relation between derivatives of phi and Phi}, and Lemma \ref{lem: integral formula for Psi} that, for $x = (x_1,x_2)\in \BH_+$ with $x_2>0$,
\begin{align*}
\psi_\dagger\circ \zeta^{-1}(x)
= &\; -\f{1}{m^2}\Phi(x)
+ \f1{2m} \pa_{x_2}\Phi(\pi,0) \big(1-|\zeta^{-1}(x)|^2\big)\\
= &\; -\f{1}{m^2}\Psi(x)
+ \f1{2m} \pa_{x_2}\Psi(\pi,0) \big(1-e^{-2x_2/m}\big)\\
= &\; -\f{x_2}{m^2}\left[F(x_1,x_2) - \pa_{x_2}\Psi(\pi,0) \cdot \f{1-e^{-2x_2/m}}{2x_2/m}\right].
\end{align*}
Thanks to the range of $\mu = \f{1}{m}$, we can apply Corollary \ref{cor: non-degeneracy on fixed point} to find that, if $x_1\in (-\pi,\pi)\setminus \{0\}$,
\[
\left.\f{\pa}{\pa x_2}\right|_{x_2 = g(x_1)}\left[\psi_\dagger\circ \zeta^{-1}(x_1,x_2)\right] >0,
\]
which implies $\na \psi_\dagger(x)\neq 0$ for all $x\in \{(f(\th)\cos\th,f(\th)\sin \th):\, \th\in (0,\f{\pi}{m})\cup (\f{\pi}{m},\f{2\pi}{m})\}$.
For $x_* = (f(\f{\pi}{m})\cos\f{\pi}{m},f(\f{\pi}{m})\sin \f{\pi}{m})$, we consider $\psi_\dag$ in the disk $B_{r_*}$ with $r_*:=|x_*| = f(\pi/m)$.
Since $\psi_\dag >0$ in $D_0$ by the maximum principle, we apply the Hopf lemma in $B_{r_*}$ to find $\f{\pa\psi_\dag}{\pa n}(x_*)<0$, where $n=n(x_*)$ denotes the outer unit normal vector at $x_*$ with respect to $D_0$.
This concludes \eqref{eqn: non-degeneracy of psi_dag along the patch boundary}.

The fact \eqref{claim: psi_dag is C^1} is trivial since $\psi_\dag\in C^{1,\al}_{loc}(\BR^2)$ for any $\al\in (0,1)$ due to the regularity theory for elliptic equations.

To verify \eqref{claim: psi_dag is C^2 on both sides up to the boundary}, it suffices to show that, for some $\d>0$, the original stream function $\phi$ defined by \eqref{eqn: equation for phi} is uniformly $C^2$ in $B_\d(x_0)\cap D_0$ and $B_\d(x_0)\setminus \overline{D_0}$; note that the uniform continuity of $\na^2\phi$ on one side of $\pa D_0$ allows us to extend it continuously up to $\pa D_0$ on that side.
The argument is similar to that in the proof of \cite[Theorem 5.2]{HuangTong2025}, so we only sketch it here.

Take $\d_1\in (0,1)$ to be sufficiently small so that $(\cos \f{2k\pi}{m},\sin \f{2k\pi}{m})\not \in B_{4\d_1}(x_0)$ for all $k= 0,\cdots, m-1$, and
moreover, for any $x\in B_{\d_1}(x_0)$, both $B_{2\d_1}(x)\cap D_0$ and $B_{2\d_1}(x)\setminus \overline{D_0}$ are simply connected, with $B_{2\d_1}(x)\cap \pa D_0$ being a $C^{1,\al}$-curve; the latter can be achieved since $\pa D_0$ has $C^{1,\al}$-regularity near $x_0$.
By a direct calculation, for any $x\in B_{\d_1}(x_0)$,
\[
\na^2 \phi(x) = -\f12 \r_{D_0}(x)\cdot Id + \f{1}{2\pi}\pv\int_{D_0}\s(x-y)\,dy,
\]
where
\[
\r_{D_0}(x) :=
\begin{cases}
1,&\mbox{if }x\in D_0\cap B_{\d_1}(x_0),\\
\f12 ,&\mbox{if }x\in \pa D_0\cap B_{\d_1}(x_0),\\
0, &\mbox{if }x\in B_{\d_1}(x_0)\setminus \overline{D_0},
\end{cases}
\]
and with $z = (z_1,z_2)\in \BR^2$,
\[
\s(z) := \f{1}{|z|^4}
\begin{pmatrix}
z_1^2-z_2^2& 2z_1z_2\\
2z_1z_2 & z_2^2-z_1^2
\end{pmatrix}.
\]
Here we used the fact that $\pa D_0$ in $B_{\d_1}(x_0)$ has $C^{1,\al}$-regularity.
Hence, for any $x\in B_{\d_1}(x_0)$,
\begin{align*}
|\na^2 \phi(x)|
\leq &\; C + \f{1}{2\pi}\int_{D_0\setminus B_{2\d_1}(x)}|\s(x-y)|\,dy
+ \f{1}{2\pi}\left|\pv\int_{D_0\cap B_{2\d_1}(x)}\s(x-y)\,dy\right|.
\end{align*}
We shall use the facts $D_0\subset B_2(x)$ and $|\s(z)|\leq C|z|^{-2}$ to bound the second term on the right-hand side, while we will apply \cite[Lemma 5.3]{HuangTong2025} to handle the last term. This leads to
\[
|\na^2 \phi(x)| \leq C + C|\ln \d_1|,
\]
where $C$ depends on $\al$ and the local $C^{1,\al}$-norm of $\pa D_0$.
See the details in the proof of \cite[Theorem 5.2]{HuangTong2025}.
Hence, $\na^2 \phi$ is uniformly bounded in $B_{\d_1}(x_0)$. This together with the implicit function theorem further implies that $\pa D_0\cap B_{\d_1}(x_0)$ has $C^{1,1}$-regularity.

Next we show the uniform continuity of $\na^2\phi$ on either side of $\pa D_0$ in $B_\d(x_0)$ for some $\d>0$.
Take $\d_2:=\d_1^2 \ll \d_1\ll 1$ and consider arbitrary $x,x'\in B_{\d_2}(x_0)\cap D_0$.
Denote $l:= |x-x'|$, $z:=(x+x')/2$, and let $r := l^{1/2}\gg l$.
In fact, $r\leq (2\d_2)^{1/2} < 2\d_1$.
We then bound $|\na^2 \phi(x)-\na^2\phi(x')|$ as follows:
\begin{align*}
&\; \big|\na^2 \phi(x)-\na^2 \phi(x')\big| \\
\leq &\; \f{1}{2\pi}\int_{D_0\setminus B_r(z)}|\s(x-y)-\s(x'-y)|\,dy
\\
&\; + \f{1}{2\pi}\int_{D_0\cap (B_r(z) \Delta B_r(x))}|\s(x-y)|\,dy
+ \f{1}{2\pi}\int_{D_0\cap (B_r(z)\Delta B_r(x'))}|\s(x'-y)|\,dy
\\
&\; + \f{1}{2\pi}\left|\pv\int_{D_0\cap B_r(x)} \s(x-y)\,dy - \pv\int_{D_0\cap B_r(x')} \s(x'-y)\,dy\right|.
\end{align*}
Here $A_1\Delta A_2$ denotes the symmetric difference of the two sets $A_1$ and $A_2$.
Using the $C^{1,1}$-regularity of $\pa D_0$ in $B_{2\d_2}(x_0)$ (note that $2\d_2\ll \d_1$ by assumption) and applying \cite[Lemma 5.3]{HuangTong2025} again, we can eventually obtain that
\[
\big|\na^2 \phi(x)-\na^2 \phi(x')\big|\leq C|x-x'|^{1/2},
\]
where $C$ depends on the local $C^{1,1}$-norm of $\pa D_0$.
Again, the details can be found in the proof of \cite[Theorem 5.2]{HuangTong2025}.
The case $x,x'\in B_{\d_2}(x_0)\setminus \overline{D_0}$ can be handled similarly.
This proves the desired claim in \eqref{claim: psi_dag is C^2 on both sides up to the boundary}.

Now we can apply \cite[Theorem 3.1']{KinderlehrerNirenbergSpruck1978} to conclude that $\pa D_0$ is analytic in a neighborhood of $x_0$, and thus $f$ is analytic in a neighborhood of $\th_0$.
Since $\th_0\in (0,\f{2\pi}{m})$ is arbitrary, $f$ is analytic in $(0,\f{2\pi}{m})$.
This proves the second part of the property \eqref{statement: local analyticity of f and piecewise C^1} of Theorem \ref{thm: main existence theorem}.
Let us mention that, the analyticity of $f$ together with \eqref{eqn: def of g} immediately implies the analyticity of $g$, which was claimed in Remark \ref{rmk: analyticity of g}.

Lastly, by Corollary \ref{cor: M=4 and mu leq 1/12}, $(M,\mu_0) = (4,\f{1}{12})$ makes Proposition \ref{prop: a priori upper bound general m} hold.
So the results of this theorem hold for all $m\geq \max\{3M, \mu_0^{-1}\} = 12$ with $M=4$.
Therefore, $m_0\leq 12$.
\end{proof}

\begin{rmk}
\label{rmk: optimality of m=12 explained}

As is mentioned in Remark \ref{rmk: claim optimality m = 12}, let us remark on the claimed threshold $12$ for $m_0$.

In our argument, we have encountered three constraints related to $\mu$ (or equivalently, on $m$) and $M$ for different purposes.
\begin{enumerate}
\item The first one comes from Proposition \ref{prop: a priori upper bound general m}, which is for establishing an a priori upper bound $M$ for $g$ preserved by $\bfR$.
There, $M$ and $\mu$ need to satisfy an implicit nonlinear relation, essentially \eqref{eqn: conditions for a valid pair of M and mu}.
In Remark \ref{rmk: (M,mu) pair with larger mu}, we numerically studied for each $\mu$ of interest the range of $M$ so that it can be a valid upper bound; see Table \ref{tab: smallest M for various mu}.

\item The second one lies in Proposition \ref{prop: upper barrier new}, which is to prove existence of an upper barrier.
There we need $2\mu M < c_*$.
It has been shown in Remark \ref{rmk: universality of large Lambda} that $c_*>\f{3}{2}$.

\item The third one stems from Proposition \ref{prop: non-degeneracy along level set x_1 greater than pi over 2}, which is needed for proving continuity (and higher regularity) of the fixed point $g\in \BW$ of $\bfR$, so that not only \eqref{eqn: constraint for the boundary curve in Phi} but also the original condition \eqref{eqn: constraint for Phi along the image of the patch boundary} for the rotating vortex patch is satisfied.
In this constraint, we need $2\mu M < \ln 2$.
Since $\ln 2 < 1$, this is more restrictive than the previous one.
\end{enumerate}

We summarize these constraints in Table \ref{tab: M for various mu plus the threshold due to regularity} for $\mu\in \{\f{1}{12},\f1{11}, \f{1}{10},\f{1}{9}\}$.
The middle column shows the first constraint for $M$ to become an a priori upper bound preserved by $\bfR$, which is directly taken from Table \ref{tab: smallest M for various mu}, while the right column shows the third constraint mentioned above due to Proposition \ref{prop: non-degeneracy along level set x_1 greater than pi over 2}.
We do not include the case $\mu\in \{\f18,\cdots, \f12\}$ here because one cannot even obtain a valid upper bound for $g$ in those cases.
Clearly, among the cases listed here, only when $\mu = \f{1}{12}$, the constraints combined give a non-empty admissible range of $M$, which allows us to complete the proof.
It is noteworthy that the upper bound $M = 4$ we proved for $\mu\in [0,\f{1}{12}]$ in Corollary \ref{cor: M=4 and mu leq 1/12} lies in the valid range of $M$ in the case $\mu = \f1{12}$, namely from $3.9526$ to $4.1589$
(see the second line of Table \ref{tab: M for various mu plus the threshold due to regularity}).
\end{rmk}

\begin{table}
\centering
\begin{tabular}{ccc}
\toprule[1pt]
& Constraints for $M$ & Constraints on $M$
\\ $\mu$& to serve as an upper bound & for proving the regularity of $g$
\\ & (Remark \ref{rmk: (M,mu) pair with larger mu}) & (Proposition \ref{prop: non-degeneracy along level set x_1 greater than pi over 2})
\\[3.5pt]
\toprule[0.5pt]
$\f{1}{12}$ & $3.9526 \lessapprox M \lessapprox 16.8312$ & $M< \f{\ln 2}{2\mu} \approx 4.1589$
\\[2.5pt]
$\f{1}{11}$ & $4.1461 \lessapprox M \lessapprox 14.4242$ & $M< \f{\ln 2}{2\mu} \approx 3.8123$
\\[2.5pt]
$\f{1}{10}$ & $4.4467 \lessapprox M \lessapprox 11.9733$ & $M< \f{\ln 2}{2\mu} \approx 3.4657$
\\[2.5pt]
$\f{1}{9}$ & $5.0364  \lessapprox M \lessapprox 9.3048$ & $M< \f{\ln 2}{2\mu} \approx 3.1192$
\\[2.5pt]
\bottomrule[1pt]
\end{tabular}
\caption{Constraints on $M$ for various $\mu$'s of interest.
The middle column shows the ranges of $M$ so that it can serve as a valid upper bound (see Remark \ref{rmk: (M,mu) pair with larger mu}) in the fixed-point argument, while the right column provides the range of $M$ in order to prove the regularity of the fixed point $g\in \BW$ (see Proposition \ref{prop: non-degeneracy along level set x_1 greater than pi over 2}).
Only in the case $\mu = \f{1}{12}$, the two constraints combined give a non-empty admissible range of $M$.}
\label{tab: M for various mu plus the threshold due to regularity}
\end{table}

\section{The Flow Field in the Co-rotating Frame}
\label{sec: flow field}

In this section, we study the modified stream function as well as the stationary flow field generated by the rotating vortex patch in the co-rotating frame.
The goal is to prove Theorem \ref{thm: heteroclinic trajectory} and Theorem \ref{thm: flow field}.

Let us start from the conclusion of Theorem \ref{thm: main existence theorem}.
Take an arbitrary $m\geq m_0$ ($m\in \BZ_+$), and assume that $f\in \BM_0\cap C(\BT)$ satisfies all the conditions of Theorem \ref{thm: main existence theorem}. Define $D_0 = D_0(f)$ by \eqref{eqn: form of D_0} and define $\phi$ by \eqref{eqn: equation for phi}.
By Theorem \ref{thm: main existence theorem}, $D_0(f)$ gives a uniformly rotating vortex patch in the sense that \eqref{eqn: constraint satisfied by phi along the patch boundary final} is satisfied, with the angular velocity being $-\pa_{x_1}\phi(1,0)$.
Then the modified stream function is defined as in \eqref{eqn: modified stream function repeat}, i.e.,
\beqo
\phi_\dag(x)  =\phi(x) - \frac{1}{2}\pa_{x_1}\phi(1,0)|x|^2.
\eeqo
In view of \eqref{eqn: flow field in the corotating frame}, its level sets correspond to streamlines in the co-rotating frame. Motivated by this, for given $V\in \BR$, we define $\s_V$ to be the set of all points $x\in \BR^2$ such that \beq
\phi(x) -\phi(1,0) + \frac{1}{2}\partial_{x_1}\phi(1,0)(1-|x|^2) = V, \label{eqn: general level set}
\eeq
i.e., $\s_V$ is the level set of $\phi_\dag$ with the value $\phi(1,0) - \frac{1}{2}\partial_{x_1}\phi(1,0) + V$ (also see \eqref{eqn: general level set in main thm}).

It is convenient to adopt the transform introduced in Section \ref{sec: conformal mapping decomposing stream function} once again.
Let $\mu := \f{1}{m}\in (0,1]$, and define $g$ in terms of $f$ by \eqref{eqn: def of g}.
Then
\begin{itemize}
\item $g\in \BM\cap C^1(\BT)$ (see the definition in \eqref{eqn: function set tilde M_0}). Also, $g\in \BW$ (see the definition in \eqref{eqt: def of function set D}), where the parameters $M$, $\Lam$, and $\lam$ in the definition of $\BW$ are given by those in Theorem \ref{thm: main existence theorem}.

\item $g$ is a fixed point of $\bfR$.

\item In addition,
\beq
g'_-(\pi)  = g'(\pi^-) = -1,\mbox{ and }g_+'(-\pi) = g'(-\pi^+) = 1.
\label{eqn: end-point properties of g}
\eeq
\end{itemize}
Note that such $g$ is not necessarily obtained by the fixed-point argument as above.

Let $\Phi(x_1,x_2) = \Phi(x_1,x_2;g)$ be given as in \eqref{eqn: def of Phi}.
For $(x_1,x_2)\in \BH = \BT\times \BR$, denote
\beq
\begin{split}
\Phi_\dagger(x_1,x_2) := &\;\Phi(x_1,x_2) - \partial_{x_2}\Phi(\pi,0)\cdot \frac{1-e^{-2\mu x_2}}{2\mu} \\
= &\; -m^2 \left[\phi_\dag\circ \zeta^{-1}(x_1,x_2)-\left(\phi(1,0) - \frac{1}{2}\pa_{x_1}\phi(1,0)\right)\right].
\end{split}
\label{eqn: def of Phi_dag}
\eeq
Then \eqref{eqn: general level set} becomes
\beq\label{eqn:level set on plane}
\Phi_\dag(x_1,x_2) = -\f{1}{\mu^2} V \quad \text{if and only if $(x_1,x_2)\in \Sigma_V:= \zeta(\s_V)$}.
\eeq

In what follows, we shall first study the solution set $\Sigma_V$ in the lower half-plane.
Denote the lower half-plane by $\BH_- := \BT\times (-\infty,0]$.
For this purpose, we consider $\Phi(x_1,-x_2;g)$ with $x_2\geq 0$ and relate it with $\Psi_2$ in the following lemma.

\begin{lem}
\label{lem: Phi in the lower half plane}
Let $\mu>0$ and $g\in \BM$.
For $x_2\geq 0$,
\[
\Phi(x_1,-x_2;g) = \Psi_2(x_1,x_2;g) + Bx_2,
\]
where $\Psi_2$ was given in \eqref{eqn: representation of Psi_2 old} (also see \eqref{eqn: equation for Psi_2}), and where
\[
B = B_{\mu,g} := \frac{1}{4\pi \mu} \int_{-\pi}^{\pi} e^{-2\mu g(y_1)} \, dy_1.
\]
As a result, $\partial_{x_2}\Phi(\pi,0) = -\partial_{x_2}\Psi_2(\pi,0) - B$.
\begin{proof}
Note that \eqref{eqn: integral representation of Phi} in Lemma \ref{lem: integral formula for Psi} is still valid.
Starting from that and using \eqref{eqn: integral of a ln function on torus}, we derive that
\begin{align*}
&\;\Phi(x_1,-x_2;g) \\
= &\; \frac{1}{4\pi}\int_{-\pi}^{\pi}\int_{g(y_1)}^{\infty}e^{-2\mu y_2} \left[\ln\left(\frac{\cosh(x_2+y_2)-\cos(x_1-y_1)}{\cosh(0-y_2) - \cos(\pi-y_1)}\right) + x_2\right] dy_2\, dy_1 \\
= &\;\frac{1}{4\pi}\int_{-\pi}^{\pi}\int_0^{\infty} e^{-2\mu y_2} \left[\ln\left(\frac{\cosh(x_2+y_2)-\cos(x_1-y_1)}{\cosh(0-y_2) - \cos(\pi-y_1)}\right) - x_2\right] dy_2 \, dy_1 \\
&\;- \frac{1}{4\pi}\int_{-\pi}^{\pi}\int_0^{g(y_1)} e^{-2\mu y_2} \left[\ln\left(\frac{\cosh(x_2+y_2)-\cos(x_1-y_1)}{\cosh(0-y_2) - \cos(\pi-y_1)}\right) - x_2\right] dy_2 \, dy_1 \\
&\; + \frac{x_2}{2\pi}\int_{-\pi}^{\pi}\int_{g(y_1)}^{\infty}e^{-2\mu y_2} \, dy_2\, dy_1
\\
=&\; - \frac{1}{4\pi}\int_{-\pi}^{\pi}\int_0^{g(y_1)}e^{-2\mu y_2} \left[\ln\left(\frac{\cosh(x_2+y_2)-\cos(x_1-y_1)}{\cosh(0-y_2) - \cos(\pi-y_1)}\right) - x_2\right] dy_2 \, dy_1 \\
&\; + \frac{x_2}{4\pi \mu} \int_{-\pi}^{\pi} e^{-2\mu g(y_1)}\, dy_1\\
= &\;\Psi_2(x_1,x_2;g) + x_2\cdot \frac{1}{4\pi \mu} \int_{-\pi}^{\pi} e^{-2\mu g(y_1)} \, dy_1.
\end{align*}
In the last equality, we used \eqref{eqn: representation of Psi_2 old}.
\end{proof}
\end{lem}

For $(x_1,x_2)\in \BH_+$, define
\beq
\Psi_\dagger(x_1,x_2) := \Phi_\dag(x_1,-x_2) = \Psi_2(x_1,x_2) + Bx_2 - \big(\partial_{x_2}\Psi_2(\pi,0) +  B\big)\cdot \f{e^{2\mu x_2}-1}{2\mu}.
\label{eqn: def of Psi_dag}
\eeq
With Lemma \ref{lem: Phi in the lower half plane}, \eqref{eqn:level set on plane} restricted to $\BH_-$ can be rewritten as
\beq
\Psi_\dag(x_1,x_2) = -\f{1}{\mu^2} V, \quad  (x_1,-x_2)\in \Sigma_V \cap \BH_-.
\label{eqn: level set on lower plane further simplify}
\eeq

\begin{rmk}
\label{rmk: the limit case mu = 0 in H_-}
It is also legitimate to consider the formal limiting case $\mu \to 0^+$, where $\Psi_\dag$ reduces to $\Psi_\dag(x_1,x_2) := \Psi_2(x_1,x_2) - x_2\partial_{x_2}\Psi_2(\pi,0)$.
\end{rmk}

In order to study the monotonicity of $\Psi_\dag$, we prove the following lemma.
\begin{lem}\label{lem: derivative comparison in lower plane}
Let $g\in \BM$ and let $\Psi_2$ be given in \eqref{eqn: representation of Psi_2 old}.
Then
$\partial_{x_2}\Psi_2(x_1,x_2) \leq \partial_{x_2}\Psi_2(\pi,x_2)< \partial_{x_2}\Psi_2(\pi,0)$ for all $x_1\in[-\pi,\pi]$ and $x_2>0$.
Besides, $\partial_{x_2}\Psi_2(x_1,0) < \partial_{x_2}\Psi_2(\pi,0)$ for all $x_1\in (-\pi,\pi)$.

\begin{proof}
To show $\partial_{x_2}\Psi_2(x_1,x_2)\leq \partial_{x_2}\Psi_2(\pi,x_2)$,
we derive from \eqref{eqn: representation of Psi_2 old} that
\begin{align*}
    \partial_{x_2}\Psi_2(x_1,x_2) &= - \frac{1}{4\pi}\int_{-\pi}^{\pi}\int_0^{g(y_1)}e^{-2\mu y_2} \left(\frac{\sinh(x_2+y_2)}{\cosh(x_2+y_2)- \cos(x_1-y_1)} - 1\right) dy_2 \, dy_1\\
    &= - \frac{1}{4\pi}\int_0^{g(0)}e^{-2\mu y_2} \int_{-g^{-1}(y_2)}^{g^{-1}(y_2)}\left(\frac{\sinh(x_2+y_2)}{\cosh(x_2+y_2)- \cos(x_1-y_1)} - 1\right) dy_1 \, dy_2
    \\
    &= - \frac{1}{4\pi}\int_0^{g(0)}e^{-2\mu y_2} \left(\tilde{J}\big(x_1,g^{-1}(y_2);x_2+y_2\big) - 2g^{-1}(y_2)\right)dy_2.
\end{align*}
where $\tilde{J}$ was defined in \eqref{eqn: def of tilde J}.
Since $g^{-1}(y_2)\in [0,\pi]$, by Lemma \ref{lem: properties of tilde J},
$x_1\mapsto \tilde{J}(x_1,g^{-1}(y_2);x_2+y_2)$ is decreasing on $[0,\pi]$, so $\partial_{x_2}\Psi_2(x_1,x_2)\leq \partial_{x_2}\Psi_2(\pi,x_2)$.
One can similarly justify $\pa_{x_2}\Psi_2(x_1,0) < \pa_{x_2}\Psi_2(\pi,0)$ for all $x_1\in (-\pi,\pi)$ since $g^{-1}(y_2)$ is not identically equal to $0$ or $\pi$ for $y_2\in [0,g(0)]$.

Next, we show that $\partial_{x_2}\Psi_2(\pi,x_2)< \partial_{x_2}\Psi_2(\pi,0)$ for all $x_2>0$. Let us now use another representation of $\Psi_2$ (see \eqref{eqn: representation of Psi_2} and \eqref{eqn: Poisson kernel}):
\[\Psi_2(x_1,x_2) = \int_{-\pi}^{\pi} P(x_1-y_1,x_2)h(y_1) \, dy_1 = \frac{1}{2\pi}\int_{-\pi}^{\pi} \frac{\sinh x_2}{\cosh x_2-\cos(x_1-y_1)}\cdot h(y_1) \, dy_1.\]
We then have
\[\partial_{x_2}\Psi_2(\pi,x_2) = \frac{1}{2\pi}\int_{-\pi}^{\pi} \frac{1+\cos y_1 \cosh x_2}{(\cosh x_2+\cos y_1)^2}\cdot h(y_1) \, dy_1,\]
and in particular, \[\partial_{x_2}\Psi_2(\pi,0) = \frac{1}{2\pi}\int_{-\pi}^{\pi} \frac{1}{1+\cos y_1}\cdot h(y_1)\, dy_1.\]
Note that the latter integral is well-defined, since by Lemma \ref{lem: monotonicity of h}, $h(\pm\pi)=h'(\pm\pi)=0$ and $h\in C^{1,\alpha}(\BT)$ for any $\alpha\in(0,1)$.
It follows that
\begin{align*}
&\; \partial_{x_2}\Psi_2(\pi,x_2)- \partial_{x_2}\Psi_2(\pi,0) \\
= &\; \frac{1}{2\pi}\int_{-\pi}^{\pi} \left(\frac{1+\cos y_1\cosh x_2}{(\cosh x_2 +\cos y_1)^2}- \frac{1}{1+\cos y_1}\right)h(y_1) \, dy_1\\
= &\; -\frac{1}{2\pi}\int_{-\pi}^{\pi} \frac{(\cosh x_2-1)(\cosh x_2+ 1 + \cos y_1 - \cos^2 y_1)}{(\cosh x_2+\cos y_1)^2(1+\cos y_1)}\cdot h(y_1) \, dy_1\\
< &\; 0.
\end{align*}
The lemma is thus proved.
\end{proof}
\end{lem}

The monotonicity of $\Psi_\dag$ then follows immediately.
\begin{lem}\label{lem: monotonicity in lower plane}
$\partial_{x_1}\Psi_\dagger(x_1,x_2)<0$ for $(x_1,x_2)\in(0,\pi)\times [0,+\infty)$, and $\partial_{x_1}\Psi_\dagger(x_1,x_2)=0$ for $(x_1,x_2)\in\{0,\pi\}\times [0,+\infty)$.

$\partial_{x_2}\Psi_\dagger(x_1,x_2)<0$ for $(x_1,x_2)\in\BT\times (0,+\infty)$, and $\partial_{x_2}\Psi_\dagger(x_1,0)<0$ for $x_1\in (-\pi,\pi)$.

As a result, $\Psi_\dag$ has no critical points in $\BT\times [0,+\infty)\setminus \{(\pm \pi,0)\}$.
\begin{proof}
Since $\partial_{x_1}\Psi_\dagger(x_1,x_2) = \partial_{x_1}\Psi_2(x_1,x_2)$, the claims on $\partial_{x_1}\Psi_\dagger$ follow from Lemma \ref{lem: monotonicity of F_2}.

We use Lemma \ref{lem: derivative comparison in lower plane} to directly calculate that, for $x_2>0$,
\begin{align*}
\partial_{x_2}\Psi_\dagger(x_1,x_2) &= \partial_{x_2}\Psi_2(x_1,x_2) + B - \left(\partial_{x_2}\Psi_2(\pi,0) +  B\right)\cdot e^{2\mu x_2}\\
&\leq
\partial_{x_2}\Psi_2(x_1,x_2) - \partial_{x_2}\Psi_2(\pi,0)
< 0,
\end{align*}
as desired.
The case $x_2 = 0$ and $x_1\in (-\pi,\pi)$ can be justified similarly using Lemma \ref{lem: derivative comparison in lower plane}.
\end{proof}
\end{lem}

In view of \eqref{eqn: constraint for the boundary curve in Phi} and \eqref{eqn:level set on plane},  the graph of $g$ (which was denoted by $\g$) is a subset of $\Sigma_V$ with $V = 0$.
It forms a heteroclinic streamline that connects the two end-points $(\pm \pi,0)$ in the upper half-plane $\BH_+$, and thanks to Proposition \ref{prop: g slope at pi}, it meets the horizontal axis at $(\pm \pi,0)$ with 45-degree angles.
In the next proposition, we will show that in the lower half-plane $\BH_-$, $\Sigma_0$ can be described by another heteroclinic streamline, which also connects $(\pm \pi,0)$ and meets the horizontal with 45-degree angles.

\begin{prop}\label{prop: 0-level set in lower plane}
Let $\mu>0$ and $g\in \mathbb{W}$ be defined as in the beginning of this section.
Let $\Phi$ and $\Phi_\dag$ be given as in \eqref{eqn: def of Phi} and \eqref{eqn: def of Phi_dag}, respectively.
For $V= 0$, there exists a unique function $\hat g\in \BM$ (see the definition of $\BM$ in \eqref{eqn: function set tilde M_0}) such that
$\Sigma_0 \cap \BH_- = \{(x_1,-\hat{g}(x_1)):\, x_1\in \BT\}$ (see the definition of $\Sigma_0$ in \eqref{eqn:level set on plane}).
Moreover, $\hat g$ satisfies the following:
\begin{enumerate}[(i)]
\item $\hat g'(0)=0$, and $\hat g'(x_1)<0$ for $x_1\in(0,\pi)$.
\item $\hat g'_-(\pi) = \hat g'(\pi^-) = -1$, $\hat g'_+(-\pi) = \hat g'(-\pi^+) = 1$, and
\beq
\limsup_{x\to \pi^-}\left|\left(1+\f{\hat{g}(x)}{x-\pi}\right) \ln(\pi-x)\right| \leq C_{M,\Lam},
\label{eqn: asymptotic behavior of hat g near end points}
\eeq
where $C_{M,\Lam}>0$ depends on $M$ and $\Lam$.
\item $\hat g\in C^1([-\pi,\pi])$ and $\hat{g}$ is analytic in $(-\pi,\pi)$.
\end{enumerate}

\begin{proof}
First observe that $\Psi_\dag(x_1,0) = \Psi_2(x_1,0) = h(x_1)$.
By Lemma \ref{lem: monotonicity of h}, $h(x_1)>0$ for $x_1\in(-\pi,\pi)$ and $h(\pm \pi)=0$.
On the other hand, it is straightforward to verify that $\lim_{x_2\to+\infty}\Psi_\dag(x_1,x_2) = -\infty$.
Moreover, Lemma \ref{lem: monotonicity in lower plane} states that $\partial_{x_2}\Psi_\dagger(x_1,x_2)<0$ for all $x_1\in [-\pi,\pi]$ and $x_2>0$. As a consequence, for each $x_1\in[-\pi,\pi]$, there is a unique $\hat g(x_1)\in[0,+\infty)$ such that $\Psi_\dagger(x_1,\hat g(x_1)) = 0$, i.e., \eqref{eqn: level set on lower plane further simplify} holds with $V=0$. This proves the existence of the desired function $\hat g$.
It is clear that $\hat{g}(\pm \pi) = 0$, and when $x_1\neq \pm \pi$, $\hat{g}(x_1)>0$.
Moreover, for any given $\e>0$, $\Psi_\dag(\pi,\e)<0$.
By the continuity of $\Psi_\dag$, there exists $\d>0$, such that $\Psi_\dag(x_1,\e)<0$ for all $x_1\in [\pi-\d,\pi]$.
Hence, $\hat{g}(x_1)\in [0,\e]$ whenever $x_1\in [\pi-\d,\pi]$, which implies that $\lim_{x_1\to \pi^-}\hat{g}(x_1) = 0$.
By evenness, we also have $\lim_{x_1\to -\pi^+}\hat{g}(x_1) = 0$.

By the implicit function theory and Lemma \ref{lem: monotonicity in lower plane}, \[\hat g'(x_1) = - \frac{\partial_{x_1}\Psi_\dagger(x_1,\hat g(x_1))}{\partial_{x_2}\Psi_\dagger(x_1,\hat g(x_1))} \leq 0,\quad x_1\in [0,\pi].\]
This means $\hat g\in \BM$.
In particular, since $\partial_{x_1}\Psi_\dagger(0,x_2) = 0$ for all $x_2>0$ and $\partial_{x_1}\Psi_\dagger(x_1,x_2) <0$ for all $(x_1,x_2)\in(0,\pi)\times (0,+\infty)$, we have $\hat g'(0)=0$ and $\hat g'(x_1)<0$ for $x_1\in(0,\pi)$.

Since $\Psi_2(x)$ is harmonic in $\BH_+$ (see \eqref{eqn: equation for Psi_2}), it is analytic in $x=(x_1,x_2)$ for $x_2>0$, and so is $\Psi_\dagger(x)$. Hence, by the implicit function theory, we can conclude that $\hat g$ is analytic in $(-\pi,\pi)$.

One can use \eqref{eqn: end-point properties of g}, Lemma \ref{lem: local expansion of Psi2}, and the level set condition \eqref{eqn: level set on lower plane further simplify} to show $\hat g'_-(\pi)  = \hat g'(\pi^-) = -1$, $\hat g'_+(-\pi)  = \hat g'(-\pi^+) = 1$ and \eqref{eqn: asymptotic behavior of hat g near end points}, following exactly the same idea of proving Proposition \ref{prop: g slope at pi} by Lemma \ref{lem: local expansion}. We omit the details for the sake of brevity.
This further implies $\hat g\in C^1([-\pi,\pi])$.
\end{proof}

\begin{rmk}
One may also consider the case $\mu=0$ (see Remark \ref{rmk: the limit case mu = 0 in H_-}), whose proof would be completely parallel.
We omit the details. \end{rmk}
\end{prop}

We are ready to prove Theorem \ref{thm: heteroclinic trajectory}.

\begin{proof}[Proof of Theorem \ref{thm: heteroclinic trajectory}]
As is stated above, we define $D_0 = D_0(f)$ by \eqref{eqn: form of D_0} and define $\phi$ by \eqref{eqn: equation for phi}.
Let $g$ be defined in terms of $f$ by \eqref{eqn: def of g}; $g$ satisfies the properties listed at the beginning of this section.
Let the modified stream function $\phi_\dag$ be defined as in \eqref{eqn: modified stream function repeat}.
Let $\hat{g} = \hat{g}(\th)$ be uniquely determined in Proposition \ref{prop: 0-level set in lower plane}, and let (cf.~\eqref{eqn: transformation from g to f})
\[
\hat{f}(\th) :=  e^{\f{1}{m}\hat{g}(m\th-\pi)}\quad (\th \in \BT).
\]
Then the existence, uniqueness, and the claimed properties of $\hat{f}$ follow immediately from Proposition \ref{prop: 0-level set in lower plane} as well as \eqref{eqn: def of zeta} and \eqref{eqn: def of Phi}.
We omit the details, but only note that
\beq
\phi_\dag(x) - \phi_\dag(1,0)
= 0 \quad \mbox{along }\pa D_0\cup \pa \hat{D}_0.
\label{eqn: the streamlines are the level sets of phi dag}
\eeq
Note that by definition, $\phi_\dag(1,0) = \phi(1,0) - \frac{1}{2}\pa_{x_1}\phi(1,0)$.
One can also use \eqref{eqn: def of Phi_dag}, \eqref{eqn: def of Psi_dag}, and Lemma~\ref{lem: monotonicity in lower plane} to show that $x\cdot \na \phi_\dag >0$ for all $|x|>1$, which further implies that
\beq
\phi_\dag(x) - \phi_\dag(1,0)
> 0 \quad \mbox{for all }x\in \BR^2\setminus \overline{\hat{D}_0}.
\label{eqn: phi dag is positive outside the streamlines}
\eeq

It is straightforward to derive from \eqref{eqn: equation for phi}, \eqref{eqn: estimate for angular velocity}, and the fact that $a= -\pa_{x_1}\phi(1,0)>0$ that
\beq
-\D\phi_\dag = \mathds{1}_{D_0}+2\pa_{x_1}\phi(1,0)
\begin{cases}
>0,& \mbox{if }x\in D_0,\\
<0,& \mbox{if }x\in \big(\overline{D_0}\big)^c.
\end{cases}
\label{eqn: equation for phi_dag}
\eeq
Then by the maximum principle and \eqref{eqn: the streamlines are the level sets of phi dag}, we find that
\beq
\phi_\dag(x) - \phi_\dag(1,0)
\begin{cases}
> 0, & \mbox{if }x\in D_0,\\
< 0, & \mbox{if }x\in \hat{D}_0\setminus\overline{D_0}.
\end{cases}
\label{eqn: sign of phi dag in and out the two streamlines}
\eeq
Combining this with \eqref{eqn: the streamlines are the level sets of phi dag} and \eqref{eqn: phi dag is positive outside the streamlines}, we proved the desired claim on the sign of $\phi_\dag(x) - \phi_\dag(1,0)$.
\end{proof}

Next we turn to investigate the flow field $v$ in the co-rotating frame.
For that purpose, we also need to study \eqref{eqn:level set on plane} in the upper half-plane $\BH_+$.
Thanks to \eqref{eqn: decomposition of Phi} in Lemma \ref{lem: integral formula for Psi}, $\Phi_\dag$ defined in \eqref{eqn: def of Phi_dag} can be equivalently written as
\beq
\Phi_\dag (x_1,x_2;g)
= \Psi(x_1,x_2;g) - \partial_{x_2}\Psi(\pi,0;g) \cdot \frac{1-e^{-2\mu x_2}}{2\mu}.
\label{eqn: equivalent representation of Phi dag}
\eeq
We can show the monotonicity of $\Phi_\dag$ as follows.

\begin{lem}
\label{lem: monotonicity of Psi dag tilde}
Let $g\in \BM$.
Let $M$ and $\Lam$ be the constants given in Theorem \ref{thm: main existence theorem}.
Let $\Phi_\dag$ be defined as in \eqref{eqn: def of Phi_dag}.
Then the following holds.
\begin{enumerate}
\item
$\partial_{x_1}\Phi_\dag(x_1,x_2)<0$ for $(x_1,x_2)\in(0,\pi)\times [0,+\infty)$,
and $\partial_{x_1}\Phi_\dag(x_1,x_2)=0$ for $(x_1,x_2)\in\{0,\pi\}\times [0,+\infty)$.

\item $\partial_{x_2}\Phi_\dag(0,x_2)<0$ for any $x_2 \geq g(0)$.

\item
Suppose $m\geq 4$.
If $\pa_{x_2}\Phi_\dag(\pi,x_2^*;g)\leq 0$ for some $x_2^*\geq g(0)$, then $\pa_{x_2}\Phi_\dag(x_1,x_2;g)< 0$ in $\BT\times (x_2^*,+\infty)$.
If the strict inequality holds in the condition, i.e.,  $\pa_{x_2}\Phi_\dag(\pi,x_2^*;g)< 0$ for some $x_2^*\geq g(0)$, then
$\pa_{x_2}\Phi_\dag(x_1,x_2;g)< 0$ in $\BT\times [x_2^*,+\infty)$.

\end{enumerate}

\begin{proof}
Observe that $\partial_{x_1}\Phi_\dag(x_1,x_2) = \partial_{x_1}\Psi(x_1,x_2)$.
When $x_2>0$, the first claim follows from Proposition \ref{prop: monotonicity of F}.
If $x_2 = 0$, by definition, $\Phi_\dag(x_1,0) = h(x_1)$, so the claim follows from Lemma \ref{lem: monotonicity of h}.

To justify the second statement, we calculate with \eqref{eqn: equivalent representation of Phi dag} that
\[
\pa_{x_2} \Phi_\dag (x_1,x_2;g) = \pa_{x_2}\Psi(x_1,x_2;g) - \partial_{x_2}\Psi(\pi,0;g)e^{-2\mu x_2}.
\]
It suffices to show $\pa_{x_2}\Psi(0,x_2;g)\leq 0$ for all $x_2 \geq g(0)$.
We derive from \eqref{eqn: integral representation of Psi} that
\begin{align*}
\pa_{x_2}\Psi(0,x_2)
= &\; -\f{1}{4\pi}\int_0^{g(0)}\int_{-g^{-1}(y_2)}^{g^{-1}(y_2)} e^{-2\mu y_2}
\left[\f{\sinh(x_2-y_2)}{\cosh(x_2-y_2)- \cos y_1}
-1\right] dy_1\,dy_2
\\
= &\; -\f{1}{4\pi}\int_0^{g(0)} e^{-2\mu y_2}
\left[4\arctan\left(\coth\f{x_2-y_2}{2}\tan\f{g^{-1}(y_2)}{2}\right) - 2g^{-1}(y_2)\right] dy_2 < 0.
\end{align*}
Here we used the fact $x_2 \geq g(0)$ and the calculation in \eqref{eqn: anti-derivative of Poisson}.

To show the last claim, we start from deriving an upper bound for $\pa_{x_2}\Psi(\pi,x_2)$.
Since $\pa_{x_2}\Psi(x_1,x_2)$ is harmonic in $\BT\times (g(0),+\infty)$ and even in $x_1$, we find that for all $x_2 > x_2^*\geq g(0)$,
\begin{align*}
\pa_{x_2}\Psi(\pi,x_2)
= &\; \int_\BT P(\pi-y_1,x_2-x_2^*)\cdot \pa_{x_2}\Psi(y_1,x_2^*)\,dy_1\\
= &\; \int_\BT P(\pi-y_1,x_2-x_2^*)\big[\pa_{x_2}\Psi(y_1,x_2^*)-\pa_{x_2}\Psi(\pi,x_2^*)\big]\,dy_1 \\
&\;
+ \int_\BT P(\pi-y_1,x_2-x_2^*)\cdot \pa_{x_2}\Psi(\pi,x_2^*)\,dy_1.
\end{align*}
Here $P$ is the Poisson kernel defined in \eqref{eqn: Poisson kernel}.
We will show in Lemma \ref{lem: monotonicity of Psi_x_2 in x_1} below that, as long as $x_2 \geq g(0)$, $x_1\mapsto \pa_{x_2}\Psi(x_1,x_2)$ is increasing on $[0,\pi]$, so by the evenness, $\pa_{x_2}\Psi(y_1,x_2^*)-\pa_{x_2}\Psi(\pi,x_2^*)\leq 0$ for all $y_1\in \BT$.
On the other hand, $P(\pi-y_1,x_2-x_2^*) \geq P(\pi,x_2-x_2^*) >0$.
Hence,
\[
\pa_{x_2}\Psi(\pi,x_2)
\leq \int_\BT P(\pi,x_2-x_2^*)\big[\pa_{x_2}\Psi(y_1,x_2^*)-\pa_{x_2}\Psi(\pi,x_2^*)\big]\,dy_1
+ \pa_{x_2}\Psi(\pi,x_2^*).
\]
Observe that (see \eqref{eqn: integral representation of Psi})
\[
\int_\BT \pa_{x_2}\Psi(x_1,x_2^*)\,dx_1
= -\f{1}{4\pi}\int_\Omega e^{-2\mu y_2}\int_\BT \left[\f{\sinh(x_2^*-y_2)}{\cosh(x_2^*-y_2)-\cos(x_1-y_1)}-1\right] dx_1\, dy
= 0.
\]
We thus obtain that
\[
\pa_{x_2}\Psi(\pi,x_2)
\leq \big[1- 2\pi P(\pi,x_2-x_2^*)\big]\pa_{x_2}\Psi(\pi,x_2^*)
= \f{2}{e^{x_2-x_2^*}+1}\cdot \pa_{x_2}\Psi(\pi,x_2^*).
\]

By \eqref{eqn: equivalent representation of Phi dag},
\begin{align*}
\pa_{x_2}\Phi_\dag (\pi,x_2)
=&\; \pa_{x_2}\Psi(\pi,x_2) - \partial_{x_2}\Psi(\pi,0) \cdot e^{-2\mu x_2}
\\
\leq &\; \f{2}{e^{x_2-x_2^*}+1}\cdot \pa_{x_2}\Psi(\pi,x_2^*) - \partial_{x_2}\Psi(\pi,0) \cdot e^{-2\mu x_2}
\\
= &\; \f{2}{e^{x_2-x_2^*}+1}\left[\pa_{x_2}\Psi(\pi,x_2^*)
-\partial_{x_2}\Psi(\pi,0) \cdot e^{-2\mu x_2^*}\right]\\
&\;+ \partial_{x_2}\Psi(\pi,0) \cdot e^{-2\mu x_2}\left[\f{2e^{2\mu (x_2-x_2^*)}}{e^{x_2-x_2^*}+1}- 1\right].
\end{align*}
Notice that $\pa_{x_2}\Psi(\pi,0)>0$,
\[
\pa_{x_2}\Psi(\pi,x_2^*)
-\partial_{x_2}\Psi(\pi,0) \cdot e^{-2\mu x_2^*} = \pa_{x_2}\Phi_\dag (\pi,x_2^*) \leq 0,
\]
and, with $m\geq 4$ (i.e., $\mu \leq \f{1}{4}$) and $x_2-x_2^*>0$,
\[
\f{2e^{2\mu (x_2-x_2^*)}}{e^{x_2-x_2^*}+1} - 1
\leq \f{2e^{\f12 (x_2-x_2^*)}}{e^{x_2-x_2^*}+1} - 1 < 0.
\]
Hence, we can conclude that $\pa_{x_2}\Phi_\dag (\pi,x_2) < 0$ for all $x_2 > x_2^*$.
The case of the strict inequality $\pa_{x_2}\Phi_\dag (\pi,x_2^*) < 0$ can be handled similarly, which we omit here.
Finally, applying Lemma \ref{lem: monotonicity of Psi_x_2 in x_1} again yields that $\pa_{x_2}\Phi_\dag (x_1,x_2) < 0$ for all $x_1\in \BT$ and $x_2 > x_2^*$.
\end{proof}
\end{lem}

We used the following result in the proof above.
\begin{lem}\label{lem: monotonicity of Psi_x_2 in x_1}
If $x_2\geq g(0)$, $x_1\mapsto \pa_{x_2}\Psi(x_1,x_2)$ is increasing on $[0,\pi]$.
\begin{proof}

Recall that we have shown in \eqref{eqn: formula for Psi_x_1} that
\[
\pa_{x_1}\Psi(x_1,x_2)
= \f{1}{4\pi}\int_0^{g(0)} e^{-2\mu y_2} \cdot \ln \left(\f{\cosh (x_2-y_2)-\cos(x_1-g^{-1}(y_2))}{\cosh(x_2-y_2)-\cos(x_1 + g^{-1}(y_2))}\right)dy_2.
\]
So we can calculate that
\begin{align*}
&\; \partial_{x_1 x_2}^2\Psi (x_1, x_2) \\
= &\;
\frac{1}{4\pi}\int_{0}^{g(0)} e^{-2\mu y_2} \left(\frac{\sinh(x_2-y_2)}{\cosh(x_2-y_2)-\cos(x_1-g^{-1}(y_2))}-\frac{\sinh(x_2-y_2)}{\cosh(x_2-y_2)-\cos(x_1+g^{-1}(y_2))} \right)dy_2\\
= &\; \frac{1}{4\pi}\int_{0}^{g(0)} e^{-2\mu y_2}\cdot\frac{\sinh(x_2-y_2)(\cos(x_1-g^{-1}(y_2))-\cos(x_1+g^{-1}(y_2)))}{(\cosh(x_2-y_2)-\cos(x_1-g^{-1}(y_2)))(\cosh(x_2-y_2)-\cos(x_1+g^{-1}(y_2)))}\, dy_2\\
= &\; \frac{1}{2\pi}\int_{0}^{g(0)} e^{-2\mu y_2}\cdot\frac{\sinh(x_2-y_2)\sin x_1 \sin g^{-1}(y_2)}{(\cosh(x_2-y_2)-\cos(x_1-g^{-1}(y_2)))(\cosh(x_2-y_2)-\cos(x_1+g^{-1}(y_2)))}\, dy_2.
\end{align*}
Since $x_2\geq g(0)$ and $y_2\in [0,g(0)]$, the integrand is non-negative, so $\pa_{x_1x_2}^2 \Psi(x)\geq 0$, which implies that $x_1\mapsto \pa_{x_2}\Psi(x_1,x_2)$ is increasing on $[0,\pi]$.
\end{proof}
\end{lem}

In the proof of Theorem \ref{thm: flow field}, we will also need the following  geometric lemma for reflecting part of the patch $D_0$ in a suitable way.

\begin{lem}
\label{lem: reflected point still lies in D_0}
Let $m\in \BZ_+$, $f$, and $D_0$ be given as in the condition of Theorem \ref{thm: flow field} (also see Theorem \ref{thm: heteroclinic trajectory}).
Then the following holds.
\begin{enumerate}
\item There exists a universal $r_*\in(0,1)$ that is independent of $m$, such that for any $r\in [r_*,1)$, if $(x_1,x_2)\in \overline{D_0}$ satisfies $x_1> r$, then $(2r-x_1,x_2)\in D_0$.

\item
Let $r_m := f(\f{\pi}{m})\cos\f{\pi}{m}\in (0,1)$.
If $(4\cos^2 \f{\pi}{m}-1)f(\f{\pi}{m}) > 1$, then for any
\beq
(x_1,x_2)\in \left\{(r\cos\th,r\sin\th)\in \BR^2:\, \th \in \left[-\f{\pi}{m},\f{\pi}{m}\right],\, r\leq f(\th),\, r\cos \th > r_m \right\},
\label{eqn: points in one petal}
\eeq
it holds that $(2r_m-x_1,x_2) \in D_0$.
\end{enumerate}

\begin{proof}
To show the first statement, we note that, by the assumption, $f$ satisfies all the conditions of Theorem \ref{thm: main existence theorem}; in particular, $f$ is Lipschitz on $\BT$, with $\|f'\|_{L^\infty(\BT)}\leq C_0$ for some universal $C_0>0$.
Assume $r\in[\f34,1)$ and take $(x_1,x_2)\in \overline{D_0}$ satisfying $x_1> r$.
Then it must hold that $x_1 \in (r,1]$, $x_2\in [0,\sqrt{1-r^2}]$, and $2r-x_1 \geq 2r-1\geq \f12$.
Since $D_0$ is symmetric with respect to the horizontal axis, we may assume $x_2\geq 0$ without loss of generality.
If $x_2 = 0$, it is trivial to derive $(2r-x_1,0)\in D_0$ from $(x_1,0)\in \overline{D_0}$.
If $x_2 > 0$, we define
\[
\al:=\arctan \f{x_2}{x_1},\quad \b:=\arctan \f{x_2}{2r-x_1}.
\]
Then
\beq
0 < \al < \b \leq \arctan\f{\sqrt{1-r^2}}{2r-1}.
\label{eqn: range of alpha and beta}
\eeq
In order to guarantee $(2r-x_1,x_2)\in D_0$, we only have to confirm that
\beq
|(2r-x_1,x_2)| < f(\b).
\label{eqn: condition for the reflected point in D_0}
\eeq
Since $x_2 = |(x_1,x_2)|\sin \al = |(2r-x_1,x_2)|\sin \b$,
\[
|(2r-x_1,x_2)| = |(x_1,x_2)|\cdot \f{\sin \al }{\sin \b}
\leq f(\al)\left[1-\f{\sin \b - \sin \al }{\sin \b}\right].
\]
In the last inequality, we used the fact that $(x_1,x_2)\in \overline{D_0}$.
Using the Lipschitz bound for $f$ and the fact $\b\geq \al$, we find that $f(\al) \leq f(\b) + C_0(\b-\al)$.
Putting this into the preceding estimate and applying the mean-value theorem, we obtain that
\begin{align*}
|(2r-x_1,x_2)|
\leq &\; \big(f(\b) + C_0(\b-\al)\big)-f(\al)\cdot \f{\cos \b}{\sin \b}\cdot (\b-\al)
\\
= &\; f(\b) - \f{\b-\al}{\tan \b}\cdot \big(f(\al)- C_0\tan \b\big).
\end{align*}
Since $f(\al) \geq |(x_1,x_2)| > r$ while $\b$ satisfies \eqref{eqn: range of alpha and beta}, we find that there exists a universal $r_*\in[\f34,1)$, such that as long as $r\in [r_*,1)$, we have $f(\al) >  C_0\tan \b$.
This implies \eqref{eqn: condition for the reflected point in D_0} and thus $(2r-x_1,x_2)\in D_0$.
This proves the first statement.

To prove the second statement, it suffices to show that for any $(x_1,x_2)$ satisfying \eqref{eqn: points in one petal}, $|(2r_m-x_1,x_2)|<f(\f{\pi}{m})$.
We derive that, for $x = (x_1,x_2)$ satisfying \eqref{eqn: points in one petal},
\begin{align*}
|(2r_m-x_1,x_2)|^2
= &\; 4r_m^2 -4x_1r_m + |x|^2\\
\leq &\; 4r_m^2 -4x_1 r_m + \min\left\{x_1^2 \cos^{-2} \left(\f{\pi}{m}\right),\, 1\right\}.
\end{align*}
Note that,
as a function of $x_1$, the last line is decreasing when $x_1\in [\cos \f{\pi}{m},1]$, while it is a quadratic function with the leading coefficient being positive when $x_1\in (r_m,\cos \f{\pi}{m}]$.
So in order to bound it from above, it suffices to compare its values when $x_1 \to r_m^+$ and when $x_1 = \cos \f{\pi}{m}$.
By the assumption,
\begin{align*}
&\; \left[4r_m^2 -4r_m\cdot r_m + r_m^2 \cos^{-2} \left(\f{\pi}{m}\right)\right]-
\left[4r_m^2 -4\cos\left(\f{\pi}{m}\right)r_m + 1\right]\\
= &\; \left(1-f\left(\f{\pi}{m}\right)\right)\left[4f\left(\f{\pi}{m}\right)\cos^2\left(\f{\pi}{m}\right) - f\left(\f{\pi}{m}\right) - 1\right] >0,
\end{align*}
so \[
|(2r_m-x_1,x_2)|^2
< 4r_m^2 -4r_m\cdot r_m + r_m^2 \cos^{-2} \left(\f{\pi}{m}\right) = f\left(\f{\pi}{m}\right)^2,
\]
which proves the desired claim.
\end{proof}
\end{lem}

Now let us prove Theorem \ref{thm: flow field} on characterizations of the flow field $v$ in the co-rotating frame.

\begin{proof}[Proof of Theorem \ref{thm: flow field}]
Recall that we denoted $\mathcal{S}:= \{(\cos \f{2k\pi}{m},\,\sin \f{2k\pi}{m}):\,k = 0,1,\cdots, m-1\}$.
That $v(x) = (0,0)$ for $x = (0,0)$ follows from the symmetry of $\phi_\dag$.
For $x\in \CS$, that $v(x) = (0,0)$ follows from the facts that $\phi_\dag$ is constant along $\pa D_0$ due to Theorem \ref{thm: heteroclinic trajectory}, and that $\pa D_0$ is piecewise $C^1$ and admits a 90-degree angle at $x$.

For convenience, for $x\in \BR^2\setminus \{0\}$, we denote
\[
\pa_r \phi_\dag(x) := \f{x}{|x|}\cdot \na \phi_\dag(x),\quad
\pa_\th \phi_\dag(x) := \f{x^\perp}{|x|}\cdot \na \phi_\dag(x).
\]
Note that $\pa_\th$ is not the differentiation with respect to the angle component in the polar coordinate.
Apparently, $v_r = \pa_\th \phi_\dag$ and $v_\th = -\pa_r \phi_\dag$.
By Lemma \ref{lem: monotonicity in lower plane}, \eqref{eqn: def of Phi_dag}, and \eqref{eqn: def of Psi_dag}, $\pa_r \phi_\dag (x)>0$ for all $x\in \BR^2$ such that $|x|\geq 1$ and $x\not \in \mathcal{S}$.
By Lemma \ref{lem: monotonicity in lower plane}, Lemma \ref{lem: monotonicity of Psi dag tilde}, \eqref{eqn: def of zeta} and \eqref{eqn: def of Phi}, we can deduce that
\beq
\pa_\th \phi_\dag (x)
\begin{cases}
<0,&\mbox{if }x\in \left\{\left(r\cos \th,\,r\sin\th\right):\, r>0,\, \th \in \left(0,\f{\pi}{m}\right)\right\},\\
>0,&\mbox{if }x\in \left\{\left(r\cos \th,\,r\sin\th\right):\, r>0,\, \th \in \left(\f{\pi}{m},\f{2\pi}{m}\right)\right\},\\
=0,&\mbox{if }x\in \left\{\left(r\cos \th,\,r\sin\th\right):\, r>0,\, \th = 0,\f{\pi}{m}\right\},
\end{cases}
\label{eqn: sign of pa_theta phi_dag}
\eeq
and moreover, $\pa_r \phi_\dag (x) < 0$ if
\[
x\in \left\{\left(r\cos \f{\pi}{m},\,r\sin\f{\pi}{m}\right):\, r\in \left(0,f\left(\f{\pi}{m}\right)\right]\right\}.
\]
Let $r_*\in (0,1)$ be the universal constant defined in Lemma \ref{lem: reflected point still lies in D_0}, and take an arbitrary $r\in [r_*,1)$.
Define an open set
\[
Q_r := \{(2r-x_1,x_2):\, (x_1,x_2)\in D_0,\, x_1 > r\},
\]
and let
\[
\tilde{\phi}_\dag(x_1,x_2):= \phi_\dag(2r-x_1,x_2),\quad (x_1,x_2)\in \BR^2.
\]
By Lemma \ref{lem: reflected point still lies in D_0}, $Q_r\subset D_0$, so
\[
-\D\big(\phi_\dag -\tilde{\phi}_\dag\big) = 0 \mbox{ on }Q_r,\quad
\big(\phi_\dag -\tilde{\phi}_\dag\big)\big|_{\pa Q_r\cap \{x_1 = r\}} = 0.
\]
Observe that $(x_1,x_2)\in \pa Q_r\cap \{x_1<r\}$ if and only if $(2r-x_1,x_2)\in \pa D_0 \cap \{x_1>r\}$.
So whenever $r<1$, $\pa Q_r\cap \{x_1<r\}$ is non-empty and it is contained in a Lipschitz curve, and
by Lemma \ref{lem: reflected point still lies in D_0}, $\pa Q_r\cap \{x_1<r\} \subset D_0$.
By \eqref{eqn: the streamlines are the level sets of phi dag} and \eqref{eqn: sign of phi dag in and out the two streamlines}, \[
\big(\phi_\dag -\tilde{\phi}_\dag\big)\big|_{\pa Q_r\cap \{x_1 < r\}} > 0.
\]
Now applying the maximum principle, we find that $\phi_\dag -\tilde{\phi}_\dag \geq 0$ on $Q_r$, and it is not identically zero.
By the Hopf lemma, for any $(r,x_2)\in D_0$,
\[
2\pa_{x_1}\phi_\dag(r,x_2) = \pa_{x_1}(\phi_\dag -\tilde{\phi}_\dag)(r,x_2) < 0.
\]
Since $r\in [r_*,1)$ is arbitrary, we conclude that
$\pa_{x_1} \phi_\dag < 0$ for all $x \in \{(x_1,x_2)\in D_0:\, x_1 \in [r_*,1)\}$.
Observe that, for $(x_1,x_2) = (r\cos\th,r\sin \th)$ with $x_1 =r\cos \th> 0$, it holds that
\[
\pa_r \phi_\dag(x) = \f{1}{\cos\th} \cdot \pa_{x_1}\phi_\dag(x) + \tan \th \cdot \pa_\th \phi_\dag(x).
\]
Combining this with the preceding conclusion and \eqref{eqn: sign of pa_theta phi_dag} yields that $\pa_r\phi_\dag(x)<0$ for all $x$ in
\beq
\left\{(x_1,x_2) = (r\cos\th,r\sin\th)\in D_0:\, x_1 \in [r_*,1),\, \th\in \left[-\f{\pi}{m},\f{\pi}{m}\right]\right\}.
\label{eqn: the region near corners in which pa_r phi_dag is negative}
\eeq
In particular, $\pa_r \phi_\dag (x)< 0$ for all $x \in \{(x_1,0):\, x_1 \in [r_*,1)\}$.
Finally, using the $m$-fold symmetry and the facts that $v_r = \pa_\th \phi_\dag$ and $v_\th = -\pa_r \phi_\dag$, we obtain the desired claims on the sign properties of $v_r$ and $v_\theta$.

The set $\CC$ of the critical points of $\phi_\dag$ is well-defined since $\phi_\dag$ has $C^{1,\al}_{loc}$-regularity for all $\al\in (0,1)$ due to the standard elliptic regularity theory.
Apparently, $\CC$ is a closed set having $m$-fold symmetry with respect to the origin, and $\CS\cup\{(0,0)\}\subset \CC$.
Denote
\begin{align*}
\mathcal{Y} := &\; \bigcup_{k = 0}^{m-1} \left\{\left(r\cos\f{2k\pi}{m},\, r\sin \f{2k\pi}{m}\right):\, r\in (0,r_*)\right\}\\
&\; \cup \bigcup_{k = 0}^{m-1} \left\{\left(r\cos\f{(2k+1)\pi}{m},\, r\sin \f{(2k+1)\pi}{m}\right):\,r\in \left(f\left(\f{\pi}{m}\right),1\right)\right\},
\end{align*}
where $r_*\in (0,1)$ is defined as above.
It follows from the analysis in the previous paragraph that $\mathcal{C}\subset \mathcal{Y}\cup \CS\cup\{(0,0)\}$.
It remains to show that $\CC$ is a finite set.
Since $\CS\cup\{(0,0)\}$ is finite and $\pa_\th \phi_\dag = 0$ in $\mathcal{Y}$, it suffices to show that $\pa_r \phi_\dag$ has at most finitely many zeros in $\mathcal{Y}$.

By \eqref{eqn: equation for phi_dag}, $\phi_\dag$ is analytic in $\BR^2\setminus \pa D_0$.
\begin{itemize}
\item Observe that $(s,0)\in D_0$ when $s\in (-f(\f{\pi}{m}),1)$, so the function $\va_1(s):=\pa_{x_1}\phi_\dag(s,0)$ is analytic in the interval $(-f(\f{\pi}{m}),1)$.
It is not identically zero since for $s\in [r_*,1)$, $\va_1(s) = \pa_r\phi_\dag(s,0) < 0$ as is shown in the previous paragraph.
As a result, $\va_1$ must have isolated zeros in the interval $(-f(\f{\pi}{m}),1)$, which implies that $\pa_r\phi_\dag$ has at most finitely many zeros in the set $\{(r,0):\, r\in (0,r_*)\}$.

\item Similarly, consider the function
\[
\va_2(s):= \pa_r \phi_\dag\left(s\cos\f{\pi}{m},\, s\sin \f{\pi}{m}\right),\quad s\in \left[f\left(\f{\pi}{m}\right),+\infty\right).
\]
It is analytic in the interval $(f(\f{\pi}{m}),+\infty)$.
Recall that $\Phi_\dag$ was defined in \eqref{eqn: def of Phi_dag}, and by Lemma \ref{lem: monotonicity of Psi dag tilde}, $\pa_{x_2}\Phi_\dag(0,-m\ln f(\frac{\pi}{m}))=\pa_{x_2}\Phi_\dag(0,g(0))<0$.
By \eqref{eqn: def of Phi_dag} and the conformal mapping \eqref{eqn: def of zeta}, this translates to $\va_2(f(\frac{\pi}{m})) = \pa_r\phi_\dag(f(\frac{\pi}{m})\cos\frac{\pi}{m},f(\frac{\pi}{m})\sin\frac{\pi}{m})<0$.
As a consequence, $\va_2(s)<0$ for $s$ in a neighborhood of $f(\f{\pi}{m})$ thanks to the continuity of $\na \phi_\dag$.
This further means that $\va_2$ is not identically zero.
Combining the facts above, we find that $\va_2$ can only have isolated zeros in $[f(\f{\pi}{m}),+\infty)$, which implies that $\pa_r\phi_\dag$ can have at most finitely many zeros in the set $\{(r\cos\f{\pi}{m},r\sin\f{\pi}{m}):\, r\in (f(\f{\pi}{m}),1)\}$.
\end{itemize}
By virtue of the $m$-fold symmetry, we can conclude that $\pa_r \phi_\dag$ has at most finitely many zeros in $\mathcal{Y}$, and thus $\CC$ is a finite set.

When \eqref{eqn: condition for folding one petal} holds, i.e.,  $2\cos\f{2\pi}{m}+1> e^{M/m}$, we can use the estimate $f(\f{\pi}{m})\geq e^{-M/m}$ in Theorem \ref{thm: main existence theorem} to find $(4\cos^2 \f{\pi}{m}-1)f(\f{\pi}{m}) > 1$.
Then the second statement of Lemma \ref{lem: reflected point still lies in D_0} applies, so with $r_m := f(\f{\pi}{m})\cos \f{\pi}{m}$, we can reflect
\[
\left\{(r\cos\th,r\sin\th)\in \BR^2:\, \th \in \left[-\f{\pi}{m},\f{\pi}{m}\right],\, r\leq f(\th),\, r\cos \th > r_m \right\}
\]
with respect to the axis $\{x_1= r_m\}$ and argue as above to analogously obtain that $\pa_{x_1}\phi_\dag(r_m,0)<0$.
If we define $\Phi_\dag$ as in \eqref{eqn: def of Phi_dag}, then this implies $\pa_{x_2}\Phi_\dag(\pi,-m\ln r_m) <0$ (see \eqref{eqn: def of zeta}).
Note that $-m\ln r_m \geq -m\ln f(\f{\pi}{m}) = g(0)$, and the condition \eqref{eqn: condition for folding one petal} automatically requires $m> 4$.
Hence, by Lemma \ref{lem: monotonicity of Psi dag tilde}, $\pa_{x_2}\Phi_\dag(x_1,x_2)< 0$ in $\BT\times [-m\ln r_m,+\infty)$, which then translates to $\pa_r \phi_\dag(x) < 0$ for all $x\in \overline{B_{r_m}}\setminus \{(0,0)\}$.

Finally, recall that we have shown $\pa_r \phi_\dag (x)< 0$ in the set \eqref{eqn: the region near corners in which pa_r phi_dag is negative}; in particular, this holds for all $x\in D_0$ that satisfies $|x|\geq \f{r_*}{\cos (\pi/m)}$.
Since $f(\f{\pi}{m}) \geq e^{-M/m}$ with $M$ being universal, there exists a universal $m_1 \geq 4$, such that $f(\f{\pi}{m}) \geq \f{r_*}{\cos(\pi/m)}$ as long as $m\geq m_1$.
In this case, $\pa_{x_1}\phi_\dag(f(\f{\pi}{m}),0)<0$.
Then the same argument as in the preceding paragraph leads to the conclusion that $\pa_r \phi_\dag(x) < 0$ for all $x\in \overline{B_{f(\pi/m)}}\setminus \{(0,0)\}$.
Therefore, $\pa_r \phi_\dag(x) < 0$ for all $x\in D_0\setminus\{(0,0)\}$, which allows us to improve the characterizations of $v_\theta$ and $\CC$.

This completes the proof.
\end{proof}

\appendix

\section{Proof of Corollary \ref{cor: M=4 and mu leq 1/12}}
\label{sec: proof of M = 4 mu = 1/12}

Recall that Corollary \ref{cor: M=4 and mu leq 1/12} states that
Proposition \ref{prop: a priori upper bound general m} holds with $M=4$ and $\mu_0 = \f1{12}$.
We present its proof as follows, which is purely analytic.

\begin{proof}[Proof of Corollary \ref{cor: M=4 and mu leq 1/12}]
Thanks to the arguments in the proof of Proposition \ref{prop: a priori upper bound general m}, it suffices to verify \eqref{eqn: sign conditions related to A(s)} with $M=4$, i.e., for any $\mu\in [0,\f1{12}]$,
\beq
\z_0(4; 8\mu)\leq 0,\quad \r_1(4;\mu)\leq 0,
\label{eqn: sign conditions related to A(s) simplified}
\eeq
where $\zeta_0$ and $\r_1$ were defined in \eqref{eqn: def of zeta_0} and \eqref{eqn: def of rho_1(M,mu)}, respectively.

To verify the first inequality in \eqref{eqn: sign conditions related to A(s) simplified}, we note that
\beq
\begin{split}
\ln 2 = &\; \ln\left(1+\f13\right) - \ln\left(1-\f13\right)
\\
= &\; \sum_{k=1}^\infty \f{(-1)^{k-1}}{k}\cdot \f{1}{3^k} - \f{(-1)^{k-1}}{k}\cdot \f{(-1)^k}{3^k}
= \sum_{k=1}^\infty \f{2}{2k-1}\cdot \f{1}{3^{2k-1}}\\
\leq &\; \f{2}{3}+\f{2}{3\cdot 3^3}\sum_{j = 0}^\infty\f{1}{9^{j}}
=\f{25}{36}.
\end{split}
\label{eqn: bound for ln 2}
\eeq
Then by \eqref{eqn: A(0)} and the monotonicity of $\z_0(M;c)$, for any $\mu \in [0,\f14]$,
\begin{align*}
\z_0(4;8\mu) \leq &\; \z_0(4;2)
=
- 2 \ln \left(\f{1-e^{-4}}{2}\right)
- 4 \cdot \f{1-e^{-2}}{2}
\\
\leq &\;
2\int_{1-e^{-4}}^1 \f{1}{s}\,ds + 2\ln 2 - 2 + 2e^{-2}
\\
\leq &\;
\f{2e^{-4}}{1-e^{-4}} + 2\ln 2 - 2 + 2e^{-2}
< 0.
\end{align*}
The last inequality follows from \eqref{eqn: bound for ln 2} and the fact that $e^2>(\sum_{k=0}^2 \f{1}{k!})^2 >6$.

Next we justify the second inequality in \eqref{eqn: sign conditions related to A(s) simplified}.
By the definition of $\r_1$ in \eqref{eqn: def of rho_1(M,mu)}, \beq
\begin{split}
e^{8\mu}\r_1(4;\mu)
= &\;
\f{\pi^2}{8}
+ \f12
\left(\f{1-e^{-4(1+2\mu)}}{1+2\mu} + \f{1-e^{-4(1-2\mu)}}{1-2\mu} -2 \right)
\\
&\;+ \mu \sum_{k = 2}^\infty \f{1}{k^2} \left(\f{(-1)^{k}}{k+2\mu} +\f{1}{k-2\mu}\right)\\
&\; - \f{1}{\mu}  \left(1-\f{1-e^{-8\mu}}{8\mu}\right)
+2\ln 2 \cdot \f{e^{8\mu}-1}{8\mu}.
\end{split}
\label{eqn: exp times rho_1 with M=4}
\eeq
The goal is to confirm that it is negative for all $\mu\in [0,\f1{12}]$.

We start from studying the third term on the right-hand side.
For $\mu\in [0,\f12]$,
\begin{align*}
&\; \sum_{k = 2}^\infty  \f{1}{k^2} \left(\f{(-1)^{k}}{k+2\mu} +\f{1}{k-2\mu}\right)\\
= &\; \sum_{j = 1}^\infty  \f{1}{(2j)^2} \left(\f{1}{2j+2\mu} +\f{1}{2j-2\mu}\right)
+ \sum_{j = 1}^\infty  \f{1}{(2j+1)^2} \left(\f{-1}{2j+1+2\mu} +\f{1}{2j+1-2\mu}\right)
\\
= &\; \sum_{j = 1}^\infty  \f{4j}{(2j)^2[(2j)^2 - (2\mu)^2]}
+ \sum_{j = 1}^\infty  \f{4\mu}{(2j+1)^2[(2j+1)^2 - (2\mu)^2]}
\\
\leq &\; \f{1}{4-4\mu^2} - \f{1}{3} + \sum_{j = 1}^\infty  \f{2}{2j[(2j)^2 - 1]}
+ 4\mu \sum_{j = 1}^\infty  \f{1}{(2j+1)^2[(2j+1)^2 - 1]}\\
= &\; \f{1}{4-4\mu^2} - \f{1}{3} + \sum_{j = 1}^\infty \left(\f{1}{2j-1}+\f{1}{2j+1}-\f{2}{2j}\right)
+ 4\mu \sum_{j = 1}^\infty \left(\f{1}{(2j+1)^2 - 1} - \f{1}{(2j+1)^2}\right)
\\
< &\; \f{1}{4-4\mu^2} - \f{1}{3} + 2\ln 2 - 1 + 4\mu \cdot \left(\f{1}{8}-\f19 + \f{1}{24}\right).
\end{align*}
In the last line, we used the identity $\sum_{n = 1}^\infty\f{(-1)^{n-1}}{n} = \ln 2$ as well as the fact that $(2j+1)^2 < (2j+3)^2-1$.
Hence, for $\mu\in [0,\f1{12}]$,
\[
\sum_{k = 2}^\infty  \f{1}{k^2} \left(\f{(-1)^{k}}{k+2\mu} +\f{1}{k-2\mu}\right)
< \f{36}{143} - \f13 + 2\cdot\f{25}{36}-1 + \f{4}{12}\cdot \f{1}{18} = \f{1258}{3861}.
\]
Here we also used \eqref{eqn: bound for ln 2}.

Next we claim that
\[
\mu \mapsto \f{1-e^{-4(1+2\mu)}}{1+2\mu} + \f{1-e^{-4(1-2\mu)}}{1-2\mu} \mbox{ is increasing for $\mu \in[0,\f12)$}.
\]
Indeed, for $z>0$,
\[
\f{d^2}{dz^2}\left[\f{1-e^{-z}}{z}\right]
= \f{2e^{-z}}{z^3} \left(e^z - 1-z-\f12 z^2\right)\geq 0,
\]
so
\begin{align*}
&\; \f{d}{d\mu}\left[\f{1-e^{-4(1+2\mu)}}{1+2\mu} + \f{1-e^{-4(1-2\mu)}}{1-2\mu} \right]\\
= &\; 32 \left[\left.\f{d}{dz}\right|_{z = 4(1+2\mu)}\left(\f{1-e^{-z}}{z}\right) -\left.\f{d}{dz}\right|_{z = 4(1-2\mu)}\left(\f{1-e^{-z}}{z}\right)\right] \geq 0.
\end{align*}
Moreover,
\[
\mu \mapsto - \f{1}{\mu} \left(1-\f{1-e^{-8\mu}}{8\mu}\right) + \f{e^{8\mu}-1}{8\mu}
\mbox{ is increasing for $\mu > 0$}.
\]
This is because
\begin{align*}
\f{1}{z}\left(1-\f{1-e^{-z}}{z}\right)
= \f{1}{z}\int_0^1 \big(1-e^{-zt}\big)\,dt = \int_0^1 \int_0^t e^{-zs}\,ds\,dt,
\end{align*}
which is decreasing for $z\in (0,+\infty)$, and
\[
\f{e^z-1}{z} =\int_0^1 e^{zt}\,dt,
\]
which is increasing for $z\in (0,+\infty)$.

Combining the estimates above with \eqref{eqn: exp times rho_1 with M=4} yields that, for $\mu \in [0,\f1{12}]$,
\beq
\begin{split}
e^{8\mu}\r_1(4;\mu)
< &\;
\f{\pi^2}{8}
+ \f12
\left(\f{1-e^{-14/3}}{7/6} + \f{1-e^{-10/3}}{5/6} -2 \right)
\\
&\;+ \f{1}{12}\cdot \f{1258}{3861}
- 12 \left(1-\f{1-e^{-2/3}}{2/3}\right)
+2\ln 2 \cdot \f{e^{2/3}-1}{2/3}.
\end{split}
\label{eqn: exp times rho_1 with M=4 bound 2}
\eeq
To show the right-hand side is negative, we need some additional numerical facts.
\begin{itemize}
\item
Observe that
\begin{align*}
\f{\pi}{6}
= &\; \arctan \f{\sqrt{3}}{3} = \sum_{k = 0}^\infty \f{(-1)^k}{2k+1}\cdot 3^{-\f{2k+1}{2}}\\
< &\; 3^{-\f12}
- \f{1}{3} \cdot 3^{-\f{3}{2}}
+ \f{1}{5} \cdot 3^{-\f{5}{2}}
- \f{1}{7} \cdot 3^{-\f{7}{2}}
+ \f{1}{9} \cdot 3^{-\f{9}{2}}.
\end{align*}
The last inequality follows from the fact that the power series here is an alternating series, with $k\mapsto \f{1}{2k+1}\cdot 3^{-(2k+1)/2}$ being decreasing in $k$.
Hence,
\[
\pi^2 < 36\cdot \f13\left(1-\f19 + \f1{45} - \f{1}{189} + \f{1}{729}\right)^2 = \f{2143134436}{217005075}.
\]
\item
By Taylor expansion,
\[
e^{2/3} = \sum_{k = 0}^\infty \f{1}{k!} \left(\f{2}{3}\right)^k
<\sum_{k = 0}^2 \f{1}{k!} \left(\f{2}{3}\right)^k + \sum_{k = 3}^\infty \f{1}{2!} \left(\f{2}{3}\right)^2\cdot \left(\f{1}{3}\cdot \f{2}{3}\right)^{k-2} =  \f{41}{21}.
\]
and similarly, a sharper estimate is that
\[
e^{2/3} = \sum_{k = 0}^\infty \f{1}{k!} \left(\f{2}{3}\right)^k
<\sum_{k = 0}^4 \f{1}{k!} \left(\f{2}{3}\right)^k + \sum_{k = 5}^\infty \f{1}{4!} \left(\f{2}{3}\right)^4\cdot \left(\f{1}{5}\cdot \f{2}{3}\right)^{k-4} =  \f{2051}{1053}.
\]
\end{itemize}
Plugging these inequalities as well as \eqref{eqn: bound for ln 2} into \eqref{eqn: exp times rho_1 with M=4 bound 2}, we finally obtain that
\begin{align*}
e^{8\mu}\r_1(4;\mu)
< &\;
\f{1}{8} \cdot \f{2143134436}{217005075}
+ \f12
\left(\f{1-(21/41)^7}{7/6} + \f{1-(21/41)^5}{5/6} -2 \right)
\\
&\;+ \f{1}{12}\cdot \f{1258}{3861}
- 12 \left(1-\f{1-\f{1053}{2051}}{2/3}\right)
+2\cdot \f{25}{36} \cdot \f{\f{2051}{1053}-1}{2/3}
\\
= &\;  -\f{6105170933824130733679}{3541526869556179847192850} < 0,
\end{align*}
which proves the second inequality in \eqref{eqn: sign conditions related to A(s) simplified}.

This completes the proof.
\end{proof}

\section{Some Calculus Lemmas}
\label{sec: calculus lemmas}
In the proof of Lemma \ref{lem: bounding F_2 x_1 in terms of F_2 x_2}, we used the following two auxiliary calculus lemmas.
\begin{lem}
\label{lem: simplification of Q_i}
For $i = 1,2$, we denote
\[
Q_i(x_1,x_2,y_1) := \pa_{x_i}\left(\f{P}{x_2}\right)(x_1-y_1,x_2)
+ \pa_{x_i}\left(\f{P}{x_2}\right)(x_1+y_1,x_2),
\]
where $P$ is the Poisson kernel given in \eqref{eqn: Poisson kernel}.
Then
\begin{align*}
Q_1(x_1,x_2,y_1)
= &\; -\f{x_2\sinh x_2 \sin x_1}
{\pi x_2^2(\cosh x_2- \cos (x_1-y_1))^2(\cosh x_2- \cos (x_1+y_1))^2}
\\
&\;\cdot \big[
\cos y_1 (\cosh^2 x_2 + 1 + \cos^2 x_1 - \cos^2 y_1)
-2\cosh x_2 \cos x_1\big],
\end{align*}
and
\begin{align*}
Q_2(x_1,x_2,y_1)
= &\; -\f{x_2\cosh x_2-\sinh x_2}{\pi x_2^2(\cosh x_2- \cos (x_1-y_1))^2(\cosh x_2- \cos (x_1+y_1))^2}
\\
&\;\cdot \big[(k(x_2)+\cos x_1\cos y_1) (\cosh x_2 - \cos x_1 \cos y_1)^2
\\
&\; \quad
+ (k(x_2) + 2\cosh x_2 - \cos x_1 \cos y_1) \sin^2 x_1 \sin^2 y_1\big],
\end{align*}
where
\beq
k(x_2) := \f{\sinh x_2 \cosh x_2-x_2}{x_2\cosh x_2-\sinh x_2}.
\label{eqn: def of k(x_2)}
\eeq
\begin{proof}
The proof is only a direct calculation.
In fact, by \eqref{eqn: Poisson kernel},
\[
\pa_{x_1}\left[\f{1}{x_2}P(x_1,x_2)\right]
= -\f{x_2\sinh x_2 }{2\pi x_2^2}\cdot \f{\sin x_1}{(\cosh x_2- \cos x_1)^2}.
\]
Also recall that, in the proof of Lemma \ref{lem: monotonicity of P/x_2}, we showed that
\begin{align*}
\pa_{x_2}\left[\f{1}{x_2}P(x_1,x_2)\right]
= &\; \f{x_2\cosh x_2-\sinh x_2}{2\pi x_2^2}\cdot \f{-k(x_2) - \cos x_1}{(\cosh x_2- \cos x_1)^2},
\end{align*}
where $k(x_2)$ was defined as in \eqref{eqn: def of k(x_2)}.
Then
\begin{align*}
&\; Q_1(x_1,x_2,y_1)\\
= &\; -\f{x_2\sinh x_2}{2\pi x_2^2}
\left[ \f{\sin (x_1-y_1)}{(\cosh x_2- \cos (x_1-y_1))^2} + \f{\sin (x_1+y_1)}{(\cosh x_2- \cos (x_1+y_1))^2}\right]
\\
= &\; -\f{x_2\sinh x_2}
{2\pi x_2^2(\cosh x_2- \cos (x_1-y_1))^2(\cosh x_2- \cos (x_1+y_1))^2}
\\
&\;\cdot \big[ \cosh^2 x_2 (\sin (x_1-y_1)+ \sin (x_1+y_1))\\
&\;\quad -2\cosh x_2 (\sin(x_1-y_1)\cos(x_1+y_1)+\sin(x_1+y_1)\cos(x_1-y_1))\\
&\;\quad +\sin(x_1-y_1)\cos^2(x_1+y_1) +\sin(x_1+y_1)\cos^2(x_1-y_1)\big]
\\
= &\; -\f{x_2\sinh x_2}
{2\pi x_2^2(\cosh x_2- \cos (x_1-y_1))^2(\cosh x_2- \cos (x_1+y_1))^2}
\\
&\;\cdot \big[2\cosh^2 x_2 \sin x_1 \cos y_1
-2\cosh x_2 \sin 2x_1\\
&\;\quad +\sin(x_1-y_1)(1-\sin^2(x_1+y_1)) +\sin(x_1+y_1)(1-\sin^2(x_1-y_1))\big]
\\
= &\; -\f{x_2\sinh x_2}
{2\pi x_2^2(\cosh x_2- \cos (x_1-y_1))^2(\cosh x_2- \cos (x_1+y_1))^2}
\\
&\;\cdot \big[ 2\cosh^2 x_2 \sin x_1 \cos y_1
-4\cosh x_2 \sin x_1\cos x_1\\
&\;\quad +2\sin x_1\cos y_1 (1-\sin(x_1+y_1)\sin(x_1-y_1))\big]
\\
= &\; -\f{2x_2\sinh x_2 \sin x_1}
{2\pi x_2^2(\cosh x_2- \cos (x_1-y_1))^2(\cosh x_2- \cos (x_1+y_1))^2}
\\
&\;\cdot \big[
\cos y_1 (\cosh^2 x_2 + 1 + \cos^2 x_1 - \cos^2 y_1)
-2\cosh x_2 \cos x_1\big].
\end{align*}
Similarly,
\begin{align*}
&\;Q_2(x_1,x_2,y_1)\\
= &\; \f{x_2\cosh x_2-\sinh x_2}{2\pi x_2^2}
\left[ \f{-k(x_2) - \cos (x_1-y_1)}{(\cosh x_2- \cos (x_1-y_1))^2} + \f{-k(x_2) - \cos (x_1+y_1)}{(\cosh x_2- \cos (x_1+y_1))^2}\right]
\\
= &\; \f{x_2\cosh x_2-\sinh x_2}
{2\pi x_2^2(\cosh x_2- \cos (x_1-y_1))^2(\cosh x_2- \cos (x_1+y_1))^2}
\\
&\;\cdot \big[(-2k(x_2)-\cos(x_1-y_1)-\cos(x_1+y_1))\cosh^2 x_2\\
&\;\quad + 2 \cosh x_2 (\cos(x_1+y_1)+\cos(x_1-y_1))k(x_2)
\\
&\;\quad + 4\cosh x_2 \cos (x_1-y_1) \cos(x_1+y_1)
\\
&\;\quad + (-k(x_2) - \cos (x_1-y_1))\cos^2(x_1+y_1)
+ (-k(x_2) - \cos (x_1+y_1))\cos^2(x_1-y_1)\big]
\\
= &\; \f{x_2\cosh x_2-\sinh x_2}{2\pi x_2^2(\cosh x_2- \cos (x_1-y_1))^2(\cosh x_2- \cos (x_1+y_1))^2}
\\
&\;\cdot \big[-2k(x_2)\cosh^2 x_2
+ 4k(x_2) \cosh x_2 \cos x_1 \cos y_1
-2\cos x_1\cos y_1\cosh^2 x_2
\\
&\;\quad - k(x_2)(\cos^2(x_1+y_1) + \cos^2(x_1-y_1))
\\
&\;\quad
+ 4\cosh x_2 (\cos^2 x_1\cos^2 y_1 - \sin^2 x_1 \sin^2 y_1)\\
&\;\quad - \cos (x_1-y_1) \cos(x_1+y_1)[\cos(x_1+y_1)+\cos(x_1-y_1)]
\big]
\\
= &\; \f{x_2\cosh x_2-\sinh x_2}{2\pi x_2^2(\cosh x_2- \cos (x_1-y_1))^2(\cosh x_2- \cos (x_1+y_1))^2}
\\
&\;\cdot \big[-2k(x_2)\cosh^2 x_2
+ 4k(x_2) \cosh x_2 \cos x_1 \cos y_1
\\
&\;\quad - 2k(x_2)(\cos^2 x_1\cos^2y_1 + \sin^2x_1\sin^2y_1)
-2\cos x_1\cos y_1\cosh^2 x_2
\\
&\;\quad
+ (\cos^2 x_1\cos^2 y_1 - \sin^2 x_1 \sin^2 y_1)
(4\cosh x_2 - 2\cos x_1\cos y_1) \big]
\\
= &\; -\f{2(x_2\cosh x_2-\sinh x_2)}{2\pi x_2^2(\cosh x_2- \cos (x_1-y_1))^2(\cosh x_2- \cos (x_1+y_1))^2}
\\
&\;\cdot \big[(k(x_2)+\cos x_1\cos y_1) (\cosh x_2 - \cos x_1 \cos y_1)^2
\\
&\; \quad
+ (k(x_2) + 2\cosh x_2 - \cos x_1 \cos y_1) \sin^2 x_1 \sin^2 y_1\big].
\end{align*}
This completes the proof.
\end{proof}
\end{lem}

\begin{lem}
\label{lem: bound for k(x_2)}
Let $k = k(x_2)$ be defined in \eqref{eqn: def of k(x_2)}.
Then $k(x_2)\in (2,2\cosh x_2)$ for any $x_2 > 0$.
\begin{proof}
Since for any $x_2 >0$,
\begin{align*}
&\; \f{d}{dx_2}\big[\sinh x_2 \cosh x_2-x_2 - 2(x_2\cosh x_2-\sinh x_2)\big]
\\
= &\; \cosh^2 x_2 + \sinh^2 x_2 - 1 - 2\cosh x_2
- 2x_2\sinh x_2 +2\cosh x_2
\\
= &\; 2\sinh^2 x_2 - 2x_2\sinh x_2 > 0,
\end{align*}
we find that
\[
\sinh x_2 \cosh x_2-x_2 > 2(x_2\cosh x_2-\sinh x_2)
\]
for any $x_2 > 0$.
This implies the desired lower bound.

We similarly note that
\begin{align*}
&\; \f{d}{dx_2}\big[2\cosh x_2(x_2\cosh x_2-\sinh x_2) - (\sinh x_2\cosh x_2 - x_2)\big]\\
= &\; 2\cosh^2 x_2 + 4x_2 \cosh x_2 \sinh x_2 - 3\big(\cosh^2 x_2 + \sinh^2 x_2\big) + 1
\\
= &\; 4x_2 \cosh x_2 \sinh x_2 - 4\sinh^2 x_2
> 0,
\end{align*}
which, by the same argument, gives the upper bound.
\end{proof}
\end{lem}

\section{Numerical Visualizations}\label{sec: numerical}
In this section, we introduce an empirically efficient approach for numerically computing a fixed point $g=\bfR(g)$ for $\mu \geq 0$, and then we present some numerical results on the corresponding V-states with 90-degree corners.

Recall that a fixed point $g$ solves \eqref{eqn: constraint for the boundary curve in Psi}, i.e.,
\[
\Psi(x,g(x);g) = \partial_{x_2}\Psi(\pi,0;g)\cdot \frac{1-e^{-2\mu g(x)}}{2\mu},
\quad x\in\BT.
\]
In the case $\mu=0$, the term $(1-e^{-2\mu g(x)})/(2\mu)$ should be understood as $g(x)$. Inspired by this, we consider an induced dynamic equation
\begin{equation}\label{eqt:numerical dynamic}
\pa_t g(x,t) = \Psi\big(x,g(x,t);g(\cdot,t)\big) - \partial_{x_2}\Psi\big(\pi,0;g(\cdot,t)\big)\cdot \frac{1-e^{-2\mu g(x,t)}}{2\mu},
\end{equation}
with the initial condition $g(x,0) \in\BM$. Apparently, a steady state of \eqref{eqt:numerical dynamic} in $\BM$ is a fixed point of $\bfR$.
Since $\Psi$ is even in $x_1$ and $\pa_{x_1}\Psi(x_1,x_2)\leq 0$ for $(x_1,x_2)\in[0,\pi]\times [0,+\infty)$ (see Proposition \ref{prop: monotonicity of F} and Lemma \ref{lem: monotonicity of h}), the above evolution preserves for all time $t\geq 0$ the properties that $x\mapsto g(x,t)$ is even on $[-\pi,\pi]$ and non-increasing on $[0,\pi]$, and that $g(\pm \pi,t)=0$. Then one can solve \eqref{eqt:numerical dynamic} numerically for $x\in[-\pi,\pi]$ and $t\geq 0$ with suitable initial condition $g(x,0)\in\BM$ using some standard numerical methods.
It is observed that, with suitable space-time discretization and numerical schemes, the numerical solution to \eqref{eqt:numerical dynamic} converges rapidly to an approximate steady state as $t$ increases, in the sense that the residual
\[
\mathrm{Re}(g(\cdot,t)) := \left\|\Psi\big(x,g(x,t);g(\cdot,t)\big) - \partial_{x_2}\Psi\big(\pi,0;g(\cdot,t)\big)\cdot \frac{1-e^{-2\mu g(x,t)}}{2\mu}\right\|_{L^\infty_x(\BT)}
\]
numerically converges to $0$.
Hence, given $\mu$, we can find an approximate fixed point of $\bfR$ by solving \eqref{eqt:numerical dynamic} numerically with a prescribed tolerance on the residual $\mathrm{Re}(g(\cdot,t))$.
We omit the details of the implementation, but let us make a few remarks on the numerical results.
\begin{itemize}
\item Although we have only proved analytically the existence of a fixed point $g\in \BM\cap C(\BT)$ for $\mu\in[0,\mu_0]$ with a possibly small $\mu_0$, our numerical method manages to find such fixed points for all $\mu\in[0,\f12]$.

\item
We observe that, for each $\mu\in[0,\f12]$, the numerical solutions to \eqref{eqt:numerical dynamic} starting from different initial data in $\BM$ all converge to the same numerical steady state (up to a very small discretization error).
This suggests the uniqueness of the fixed point $g=\bfR(g)$ in $\BM$ for each $\mu$, although the uniqueness remains open.
We shall denote the fixed point of $\bfR$ for each $\mu$ by $g_\mu = g_\mu(x)$.

\item It is also numerically observed that the graph of $g_\mu(x)$ on $[-\pi,\pi]$ meets the horizontal axis at $(\pm \pi,0)$ with 45-degree angles.
This can be heuristically explained as follows.
Let $g(x,t)$ be a solution to \eqref{eqt:numerical dynamic} with some suitable initial condition $g(x,0)\in \BM$.
Denote $r = r(x,t) := \sqrt{(\pi-x)^2+g(x,t)^2}$. Combining \eqref{eqt:numerical dynamic} and \eqref{eqt: Psi local expansion} in Lemma \ref{lem: local expansion}, for $r(x,t)\ll 1$ (which gives $|\pi-x|\ll 1$), we find that
\[
\left|\pa_t g(x,t) - \frac{(\pi-x)^2-g(x,t)^2}{2\pi} \cdot A(r(x,t);g(\cdot,t)) \cdot |\ln r(x,t)|\right| \leq C r(x,t)^2,
\]
where $A(r;g)$ was defined in \eqref{eqt: def of A(r;g)}. With slight abuse of the notation, let $\kappa = \kappa(x,t) := g(x,t)/(\pi-x)$.
Suppose that for all $t$ of interest and for all $x\in(0,\pi)$ sufficiently close to $\pi$, $\kappa(x,t)/(1+\kappa(x,t)^2)>\delta$ for some constant $\delta>0$.
This implies $A(r(x,t);g(\cdot,t))>C_\delta$ for some $C_\delta>0$ (see \eqref{eqt: def of A(r;g)} and also \eqref{eqn: lower bound for A r g}) whenever $|\pi-x|\ll 1$.
It follows from the preceding inequality and a direct calculation that
\[
\pa_t(1-\kappa)^2 \leq -\tilde C_\delta(1-\kappa)^2\cdot r|\ln r| + C r|1-\kappa^2|,
\]
where $\tilde C_\delta,\, C>0$.
The first term on the right-hand side dominates when $\tilde C_\delta|1-\kappa||\ln r| \geq 2 C (1+\kappa)$, or equivalently,
\[
\left|\f{1-\kappa}{1+\kappa}\right| \geq \f{2C}{\tilde C_\delta |\ln r|}.
\]
This drives $\kappa$ to approach $1$ whenever $r\ll 1$, which explains our numerical observation.
\end{itemize}
Once such $g_\mu \in \BM\cap C(\BT)$ is obtained, in the case $\mu = \f{1}{m}$ with $m\in \BZ_+$, we can use \eqref{eqn: transformation from g to f} to define $f$ and then use \eqref{eqn: form of D_0} to obtain the corresponding V-state $D_0 = D_0(f)$.
Thanks to the above-mentioned property of $g_\mu$ near the end-points $\pm \pi$, the resulting V-state $D_0$ has 90-degree corners.

Below, let us illustrate the fixed points $g_\mu$ for different values of $\mu=\f{1}m$ along with the corresponding V-states with 90-degree corners. All the results are produced by the numerical method introduced above with a stopping criterion that the residual $\mathrm{Re}(g(\cdot,t))$ should drop below $10^{-6}$ under sufficiently fine discretization.

In Figure \ref{fig:multiple}, we plot the graphs of the fixed points $g_\mu$ on $[-\pi,\pi]$ for several different values of $\mu=\f{1}m$ (including $\mu=0$, which formally corresponds to $m=\infty$). The numerical result suggests that $g_\mu$ depends continuously and monotonically on the parameter $\mu$, and that $g_\mu$ converges to $g_0$ as $\mu\to 0^+$. However, rigorous justification of such observations has not been achieved.

\begin{figure} \centering
\includegraphics[width=0.9\textwidth]{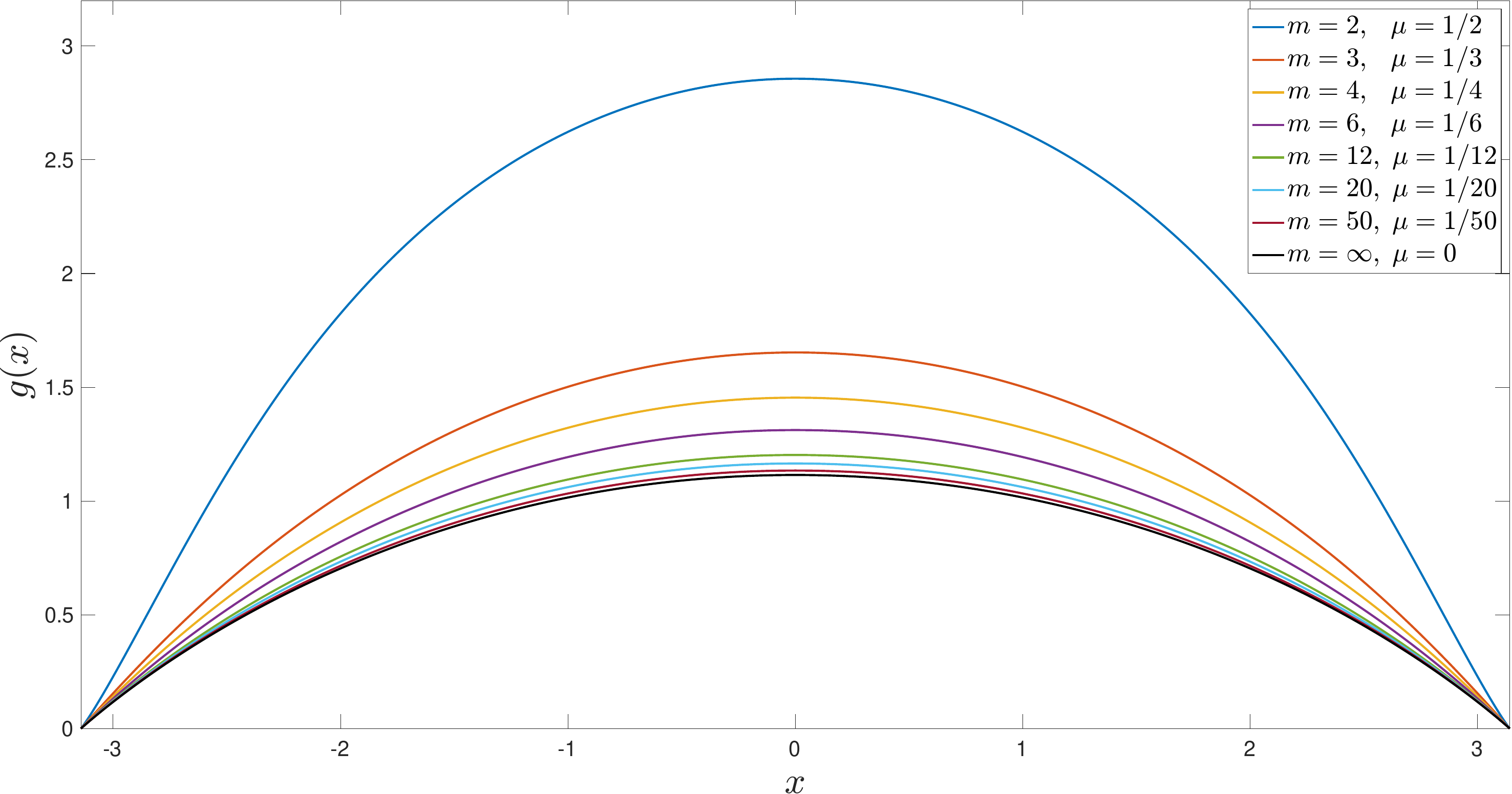}
\caption[Multiple fixed points]{\small Graphs of the fixed points $g_\mu(x)$ on $[-\pi,\pi]$ for different values of $\mu=\f{1}m$.}
\label{fig:multiple}
\end{figure}

Figure \ref{fig:12-fold} shows (in the case $m=12$) a 12-fold V-state with 90-degree corners corresponding to the fixed point $g_{1/12}$, together with multiple level sets (i.e., streamlines) of the corresponding modified stream function $\phi_\dagger$ (see the definition in \eqref{eqn: modified stream function repeat}). One can clearly see that the corners are of 90 degrees by zooming in sufficiently around one of the corner points. Figure \ref{fig:2-fold} exhibits a similar set of plots in the case $m=2$ in order to illustrate that our numerical method also applies to the case with a relatively large $\mu$ ($\mu = \f12$, which is beyond our analytic result in Theorem \ref{thm: main existence theorem}). Again, one needs to zoom in around a corner point to see the 90-degree angle.
It is worth mentioning that this vortex patch was reported before in e.g.\;\cite{CerretelliWilliamson2003,LuzzattoFegizWilliamson2010}.

Figure \ref{fig:m-fold} displays the $m$-fold V-states with 90-degree corners corresponding to the fixed points $g_{1/m}$ for $m\in \{3,4,5,6,20,50\}$. In each sub-figure, the dashed black curve represents the unit circle; the solid blue curve is the boundary of the V-state and also the part of $\s_0$ (which is a special level set of the modified stream function $\phi_\dag$ introduced in \eqref{eqn: general level set in main thm}) lying inside the unit circle; and the solid red curve is the part of $\s_0$ outside the unit circle.
The numerical results for $m = 3,4,5,6$ agree very well with those in \cite{HassainiaMasmoudiWheeler2020} (also see the earlier results in \cite{WuOvermanZabusky1984}).


\begin{figure}[!p] \centering
    \begin{subfigure}[b]{0.42\textwidth}
        \includegraphics[width=0.98\textwidth]{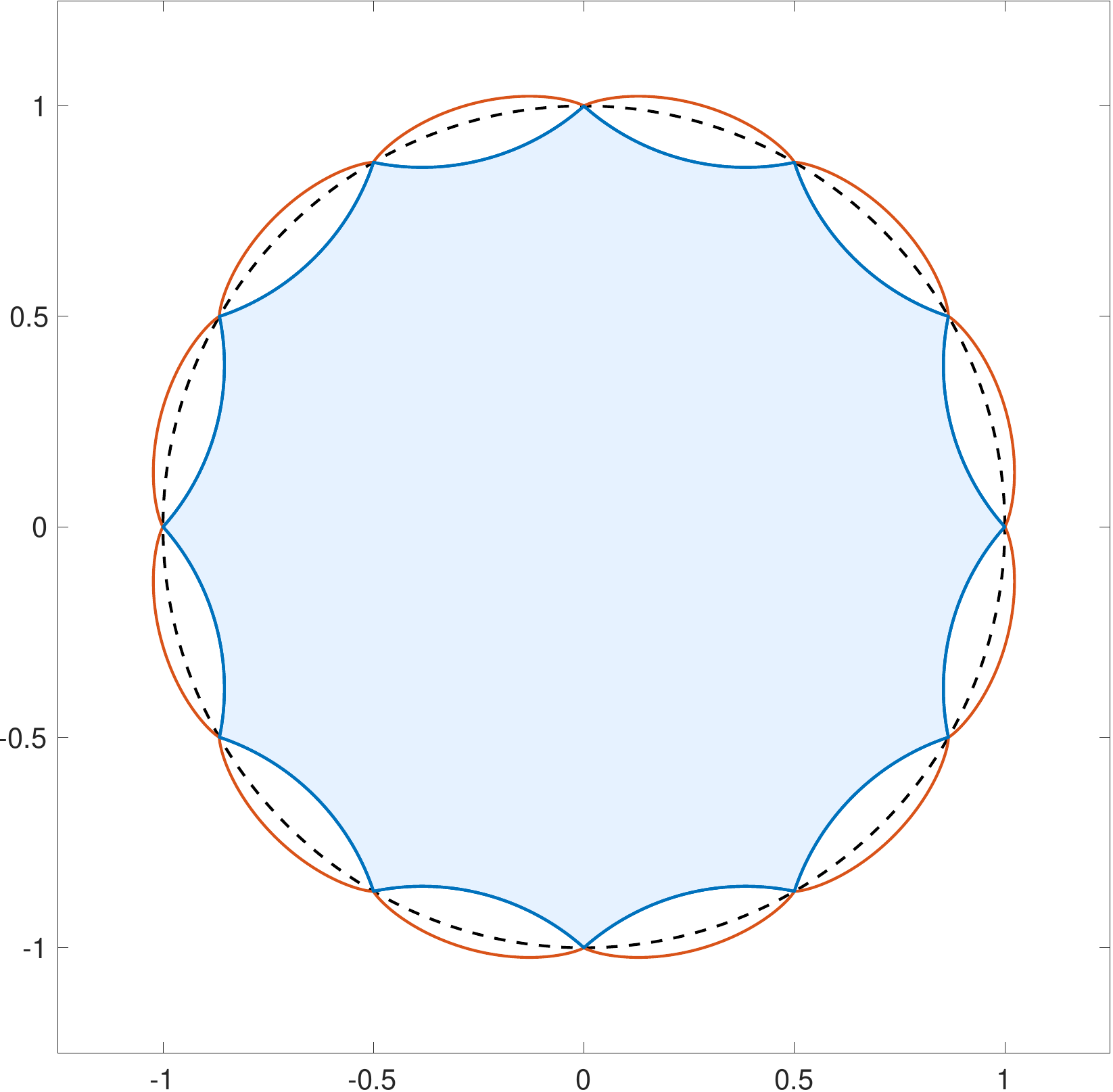}
        \caption{\small A $12$-fold V-state with $90$-degree corners}
    \end{subfigure}
    \begin{subfigure}[b]{0.42\textwidth}
        \includegraphics[width=0.98\textwidth]{contour_m=12}
        \caption{\small Level sets of $\phi_{\dagger}$}
    \end{subfigure}\\
    \vspace{3mm}
    \begin{subfigure}[b]{0.42\textwidth}
        \includegraphics[width=1\textwidth]{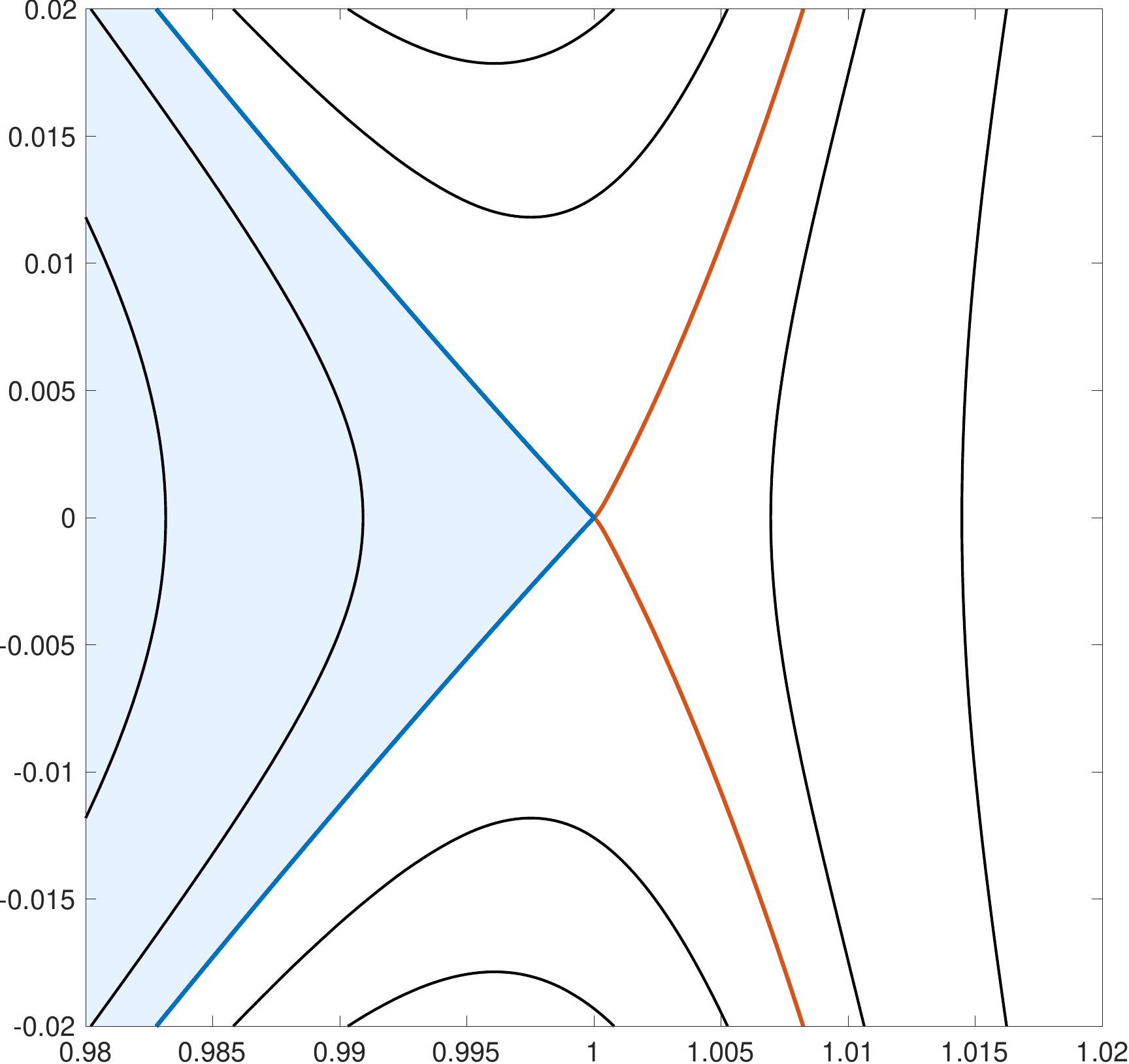}
        \caption{\small A zoom-in around $(1,0)$}
    \end{subfigure}
    \begin{subfigure}[b]{0.41\textwidth}
        \includegraphics[width=1\textwidth]{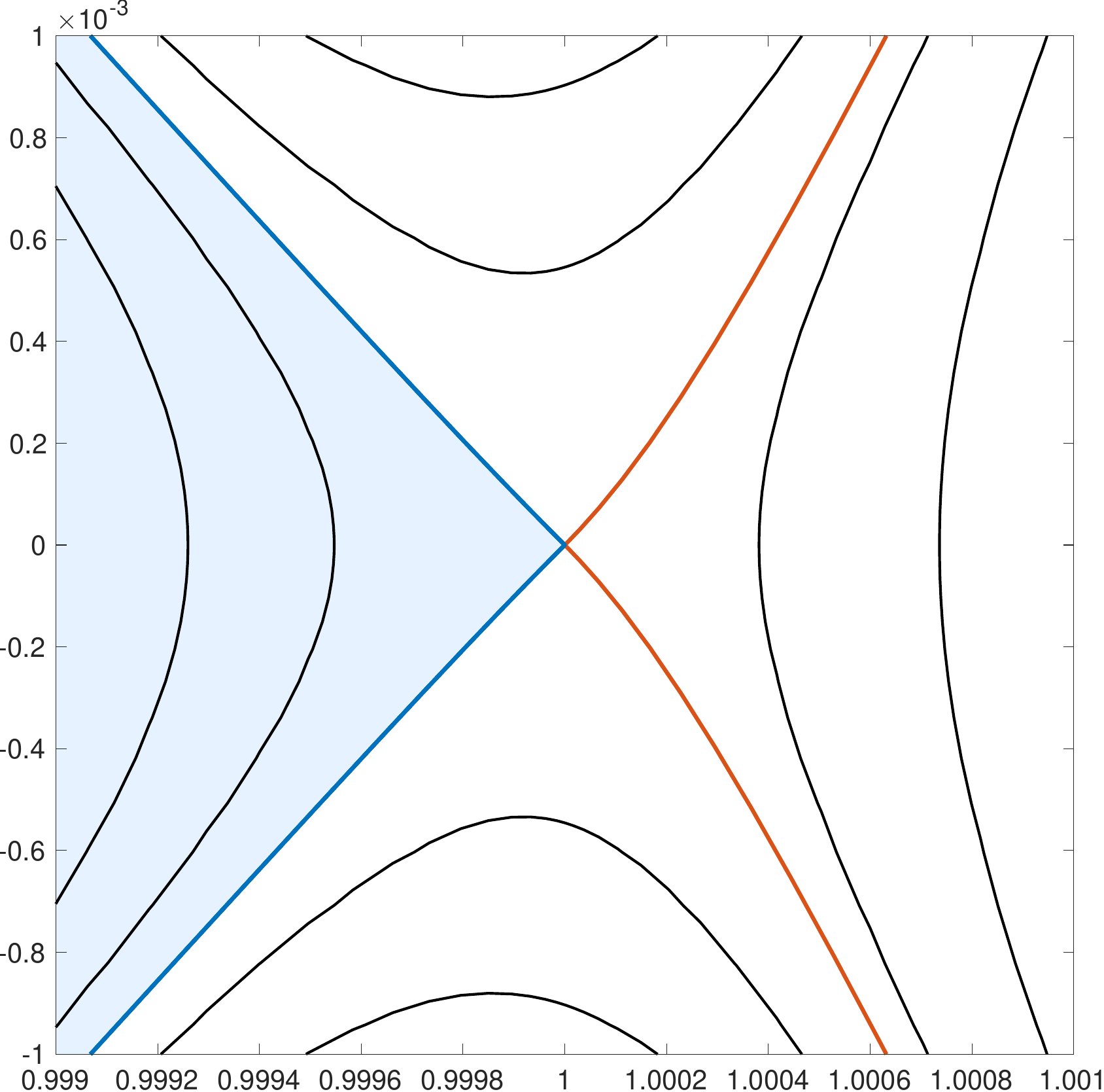}
        \caption{\small A further zoom-in around $(1,0)$}
    \end{subfigure}
    \caption[12-fold]{\small (A) A 12-fold uniformly rotating V-state with 90-degree corners. The dashed black curve represents the unit circle; the light blue region indicates the interior of the patch $D_0$ where $\om\equiv 1$; the solid blue curve is the boundary of the V-state and also the part of $\s_0$ (defined in \eqref{eqn: general level set in main thm}) inside the unit circle; and the solid red curve is the part of $\s_0$ outside the unit circle. (B) Level sets of $\phi_\dagger$. The solid black curves stand for selected level sets of $\phi_\dagger$ other than $\s_0$. (C) A zoom-in view around the point $(1,0)$, one of the corner points of the patch. (D) A further zoom-in view at a smaller scale around the point $(1,0)$. One can see that the blue curve and the red curve meet at the corner point and form four 90-degree angles (see Remark \ref{rmk: 4 90-degree angles at corner}).}
    \label{fig:12-fold}
\end{figure}

\begin{figure}[!p] \centering
    \begin{subfigure}[b]{0.42\textwidth}
        \includegraphics[width=0.98\textwidth]{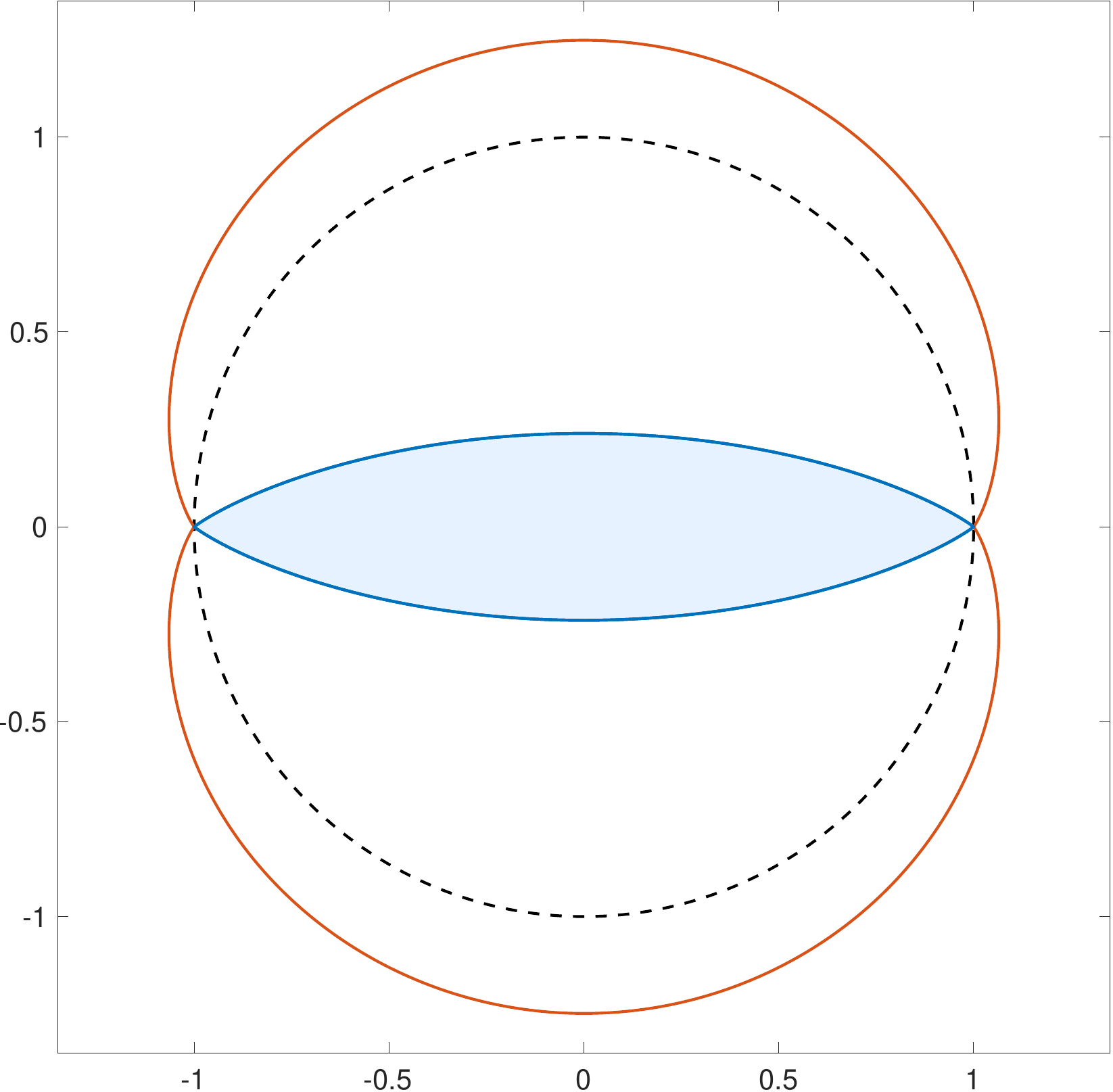}
        \caption{\small A 2-fold V-state with 90-degree corners}
    \end{subfigure}
    \begin{subfigure}[b]{0.42\textwidth}
        \includegraphics[width=0.98\textwidth]{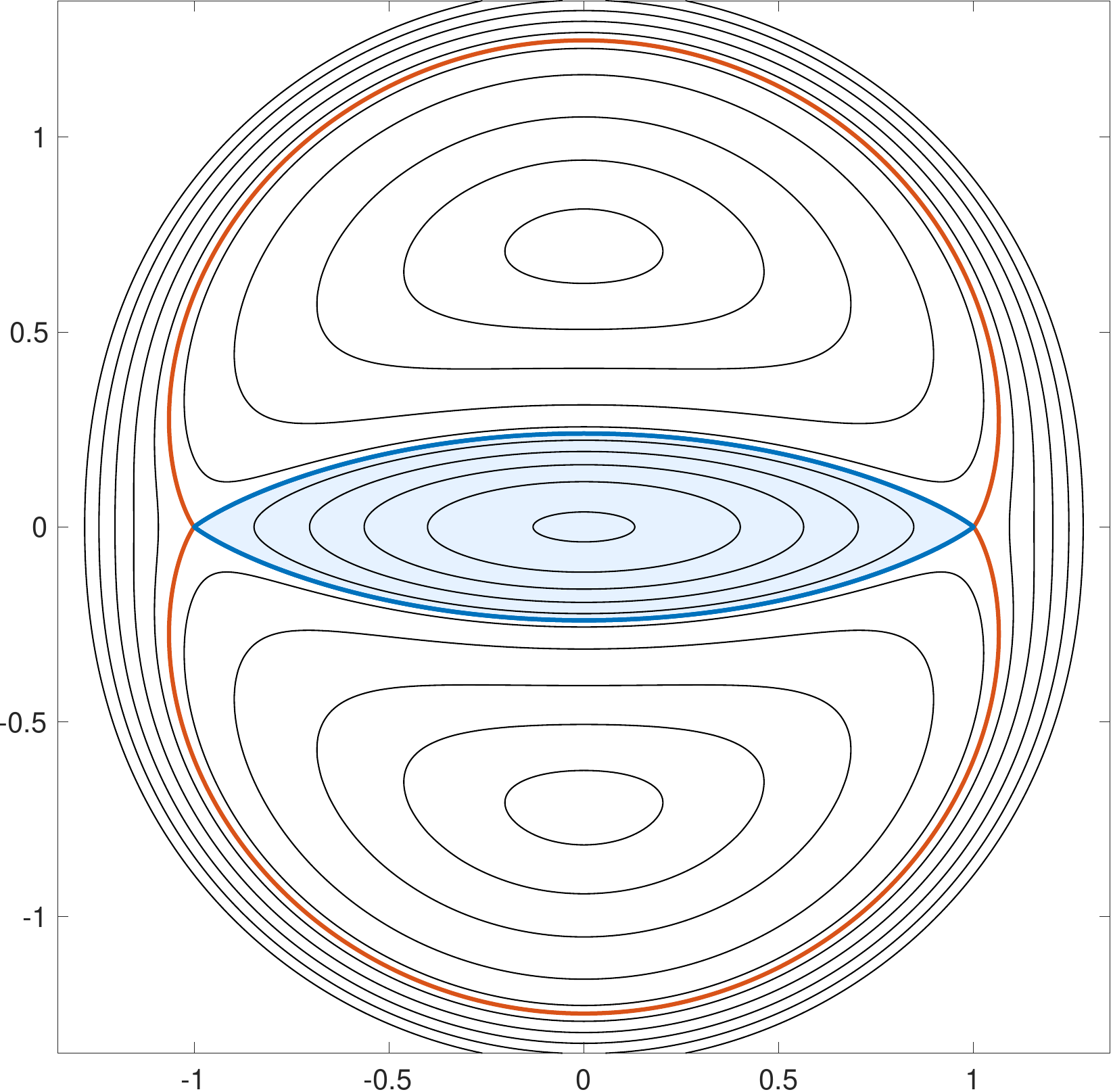}
        \caption{\small Level sets of $\phi_{\dagger}$}
    \end{subfigure}\\
    \vspace{3mm}
    \begin{subfigure}[b]{0.42\textwidth}
        \includegraphics[width=1\textwidth]{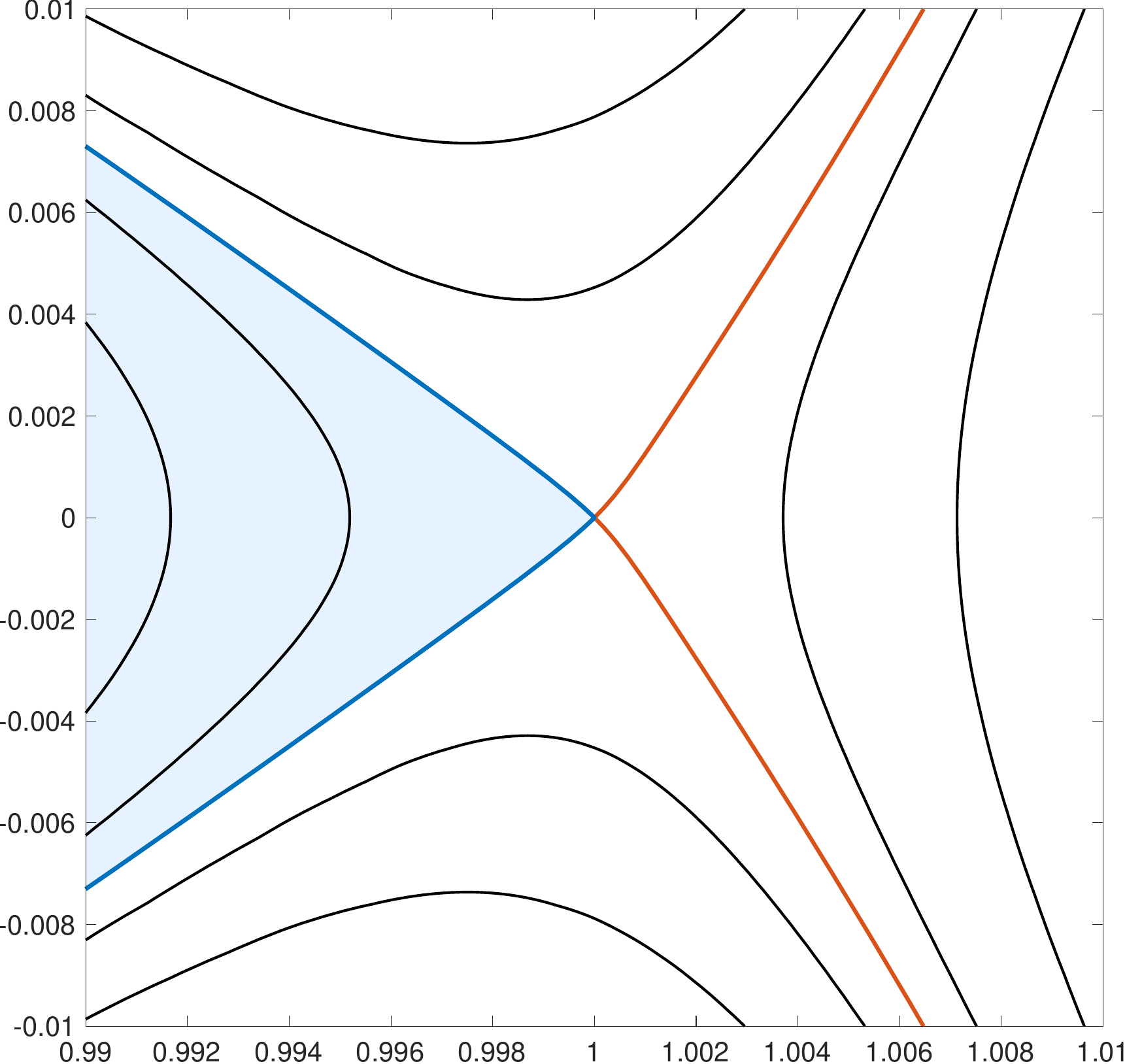}
        \caption{\small A zoom-in around $(1,0)$}
    \end{subfigure}
    \begin{subfigure}[b]{0.41\textwidth}
        \includegraphics[width=1\textwidth]{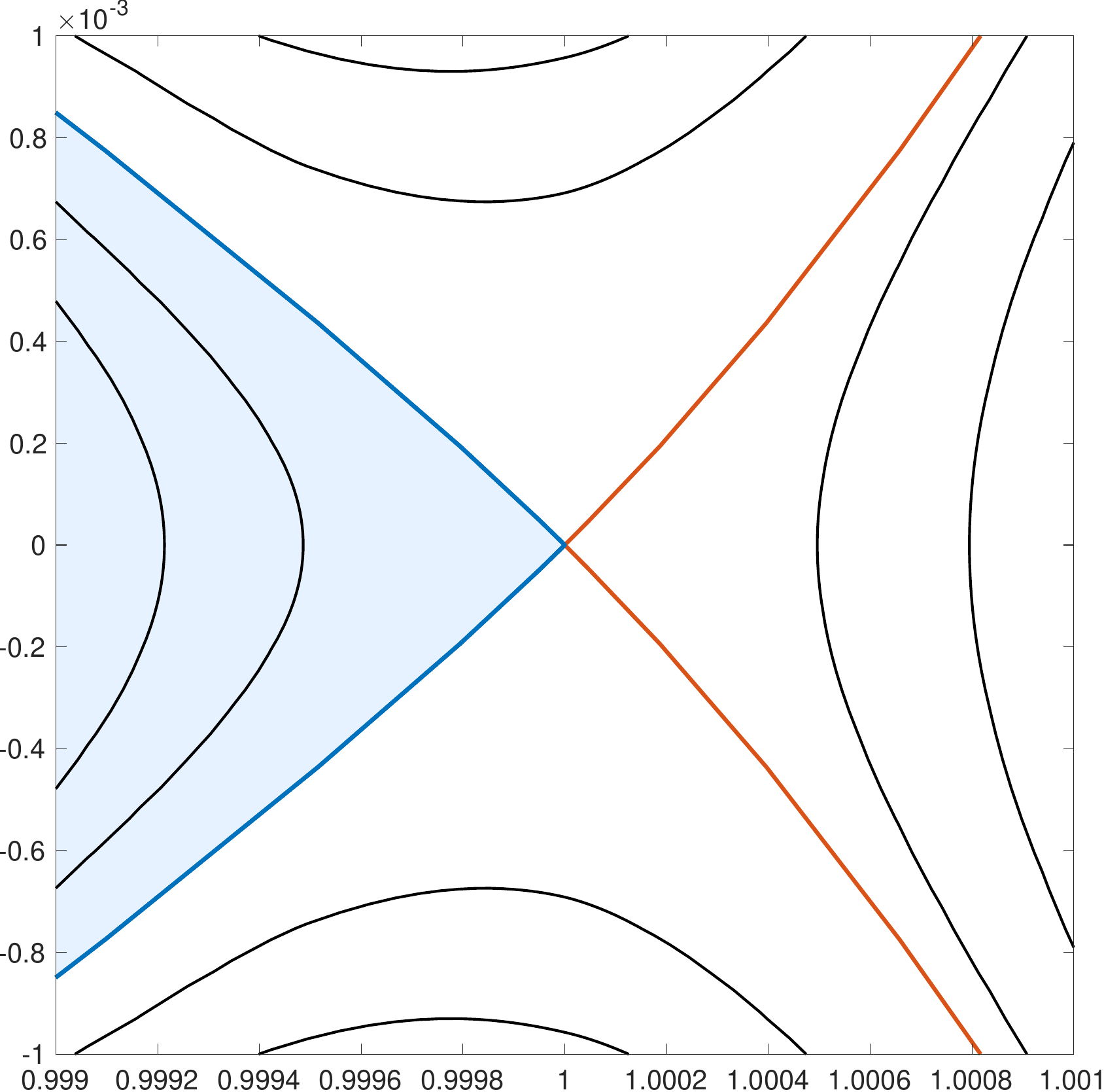}
        \caption{\small A further zoom-in around $(1,0)$}
    \end{subfigure}
    \caption[2-fold]{\small (A) A 2-fold uniformly rotating V-state with 90-degree corners. The dashed black curve represents the unit circle; the light blue region indicates the interior of the patch $D_0$ where $\om\equiv 1$; the solid blue curve is the boundary of the V-state and also the part of $\s_0$ (defined in \eqref{eqn: general level set in main thm}) inside the unit circle; and the solid red curve is the part of $\s_0$ outside the unit circle. (B) Level sets of $\phi_\dagger$. The solid black curves stand for selected level sets of $\phi_\dagger$ other than $\s_0$. (C) A zoom-in view around the point $(1,0)$, one of the corner points of the patch. (D) A further zoom-in view at a smaller scale around the point $(1,0)$.}
    \label{fig:2-fold}
\end{figure}

\begin{figure}[!p] \centering
    \begin{subfigure}[b]{0.35\textwidth}
        \includegraphics[width=1\textwidth]{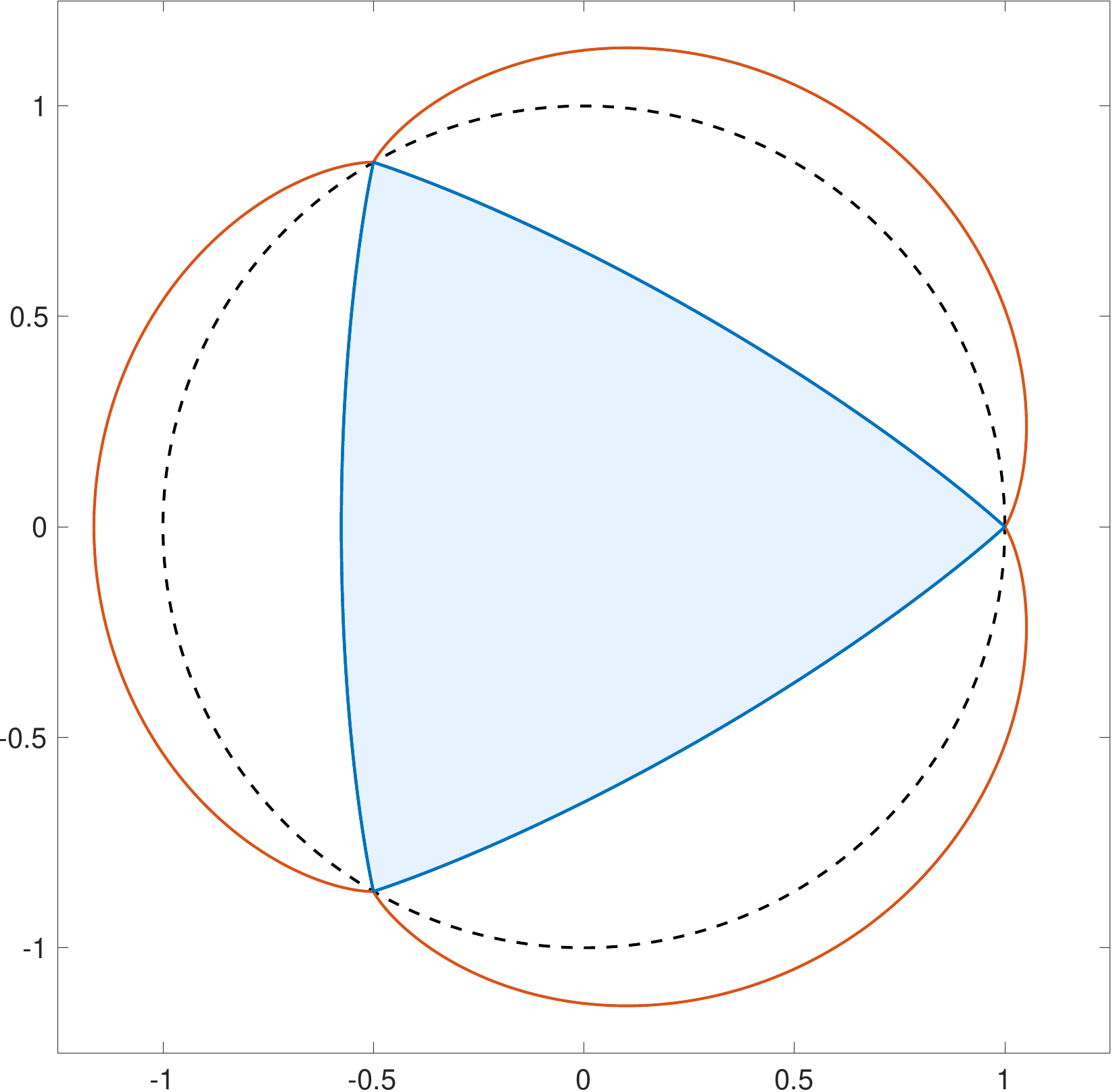}\vspace{-1.5mm}
        \caption{\small $m=3$}
    \end{subfigure}\qquad\quad
    \begin{subfigure}[b]{0.35\textwidth}
        \includegraphics[width=1\textwidth]{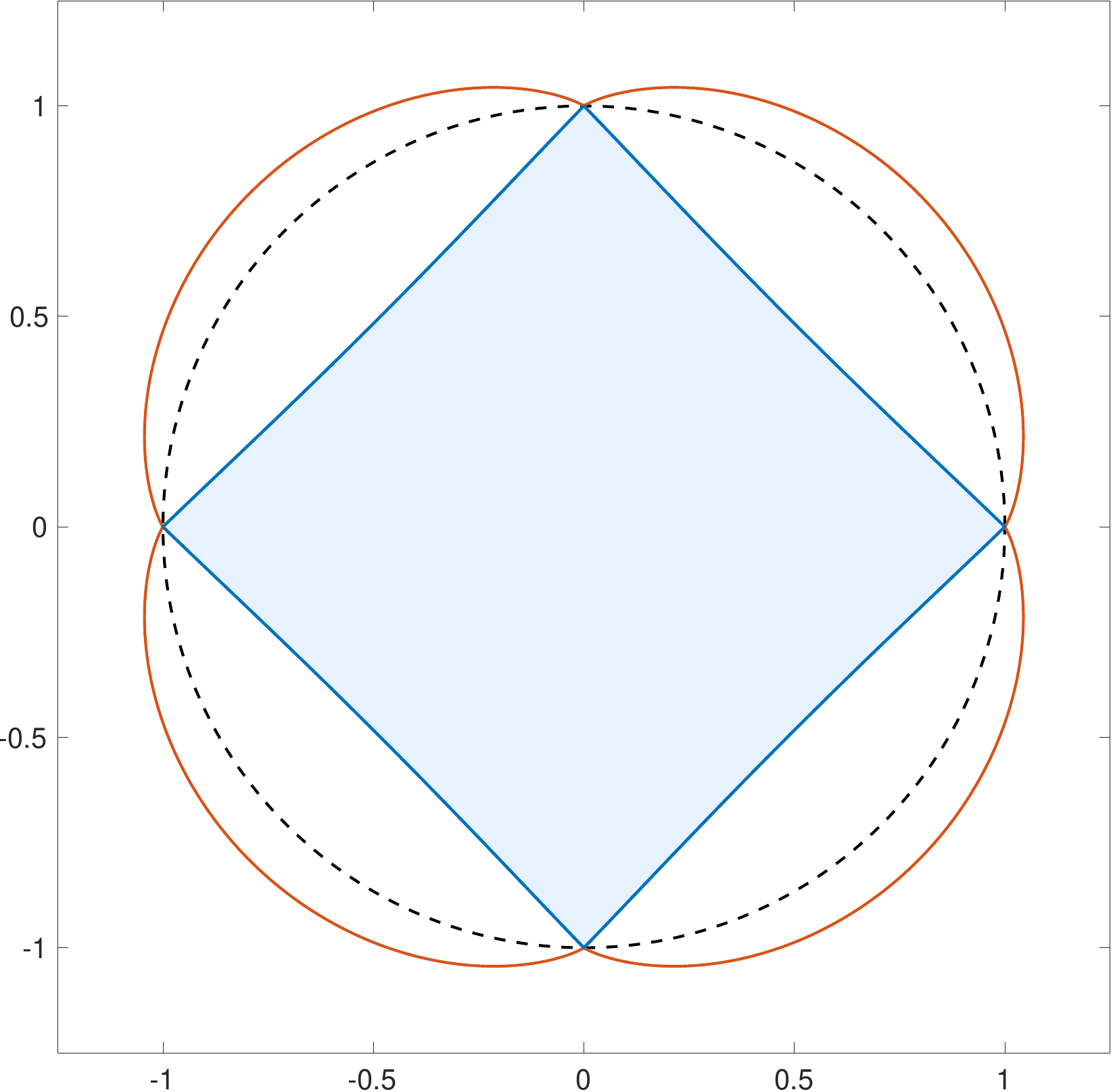}\vspace{-1.5mm}
        \caption{\small $m=4$}
    \end{subfigure}\\
    \vspace{2mm}
    \begin{subfigure}[b]{0.35\textwidth}
        \includegraphics[width=1\textwidth]{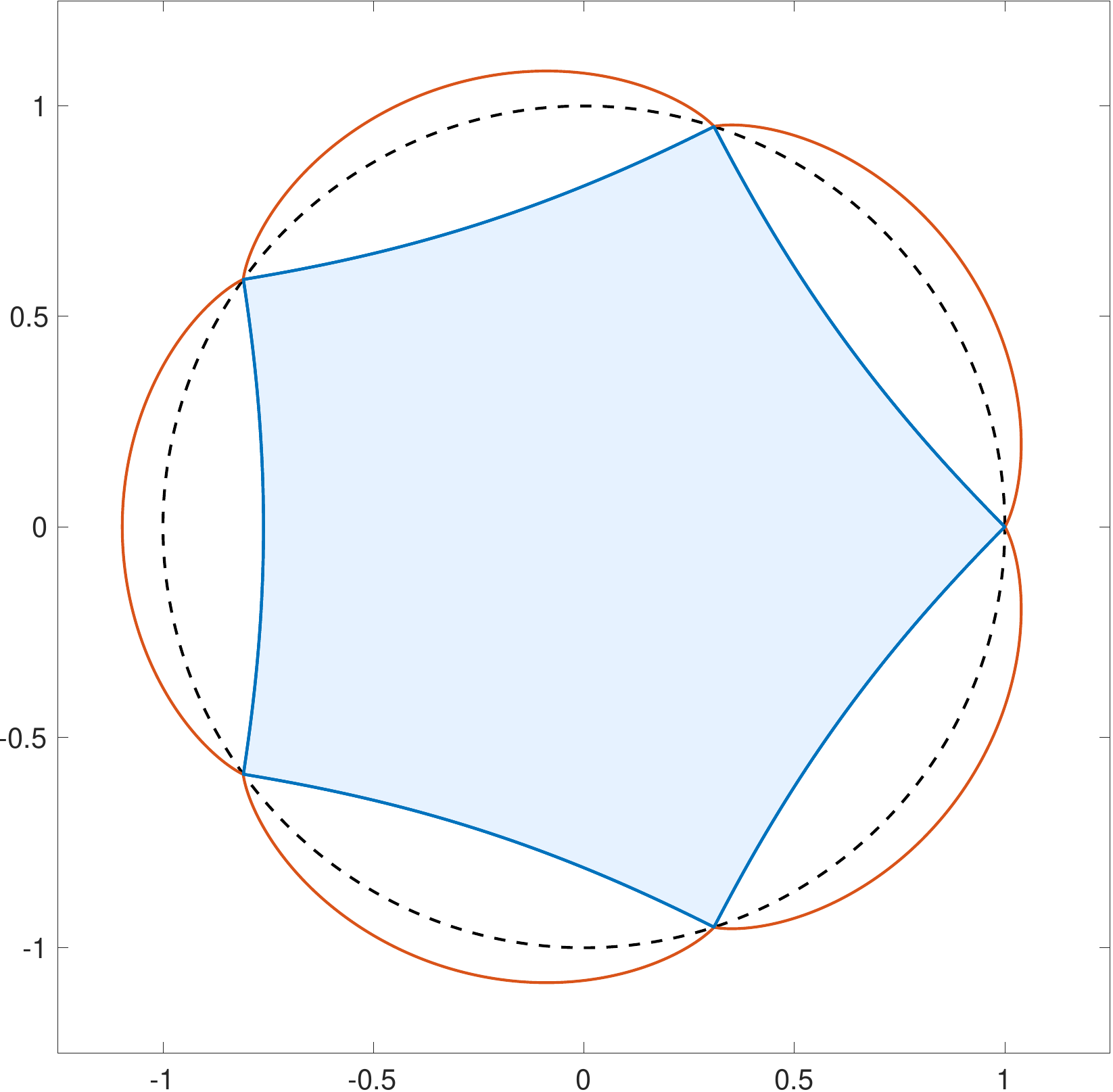}\vspace{-1.5mm}
        \caption{\small $m=5$}
    \end{subfigure}\qquad\quad
    \begin{subfigure}[b]{0.35\textwidth}
        \includegraphics[width=1\textwidth]{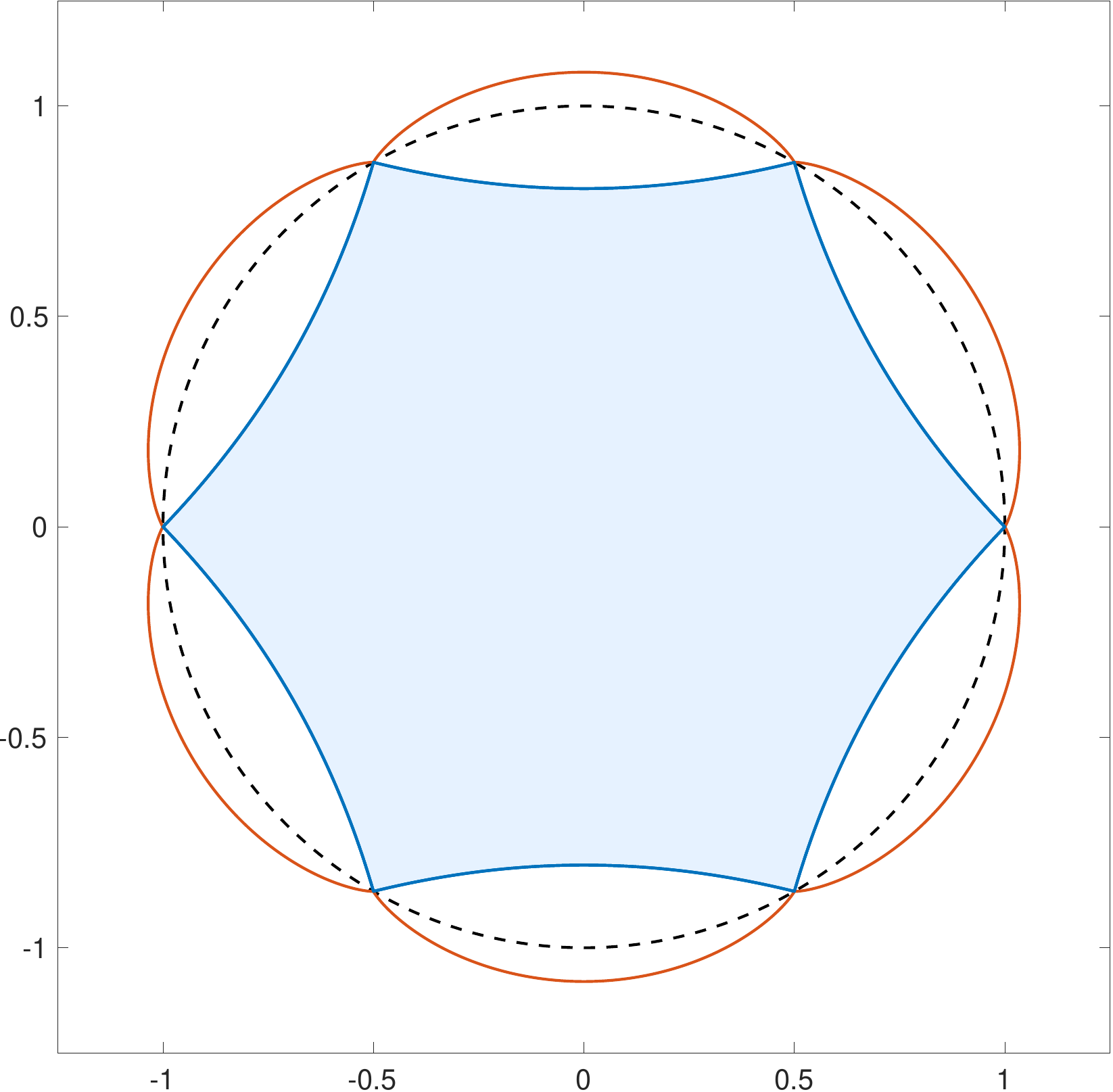}\vspace{-1.5mm}
        \caption{\small $m=6$}
    \end{subfigure}\\
    \vspace{2mm}
    \begin{subfigure}[b]{0.35\textwidth}
        \includegraphics[width=1\textwidth]{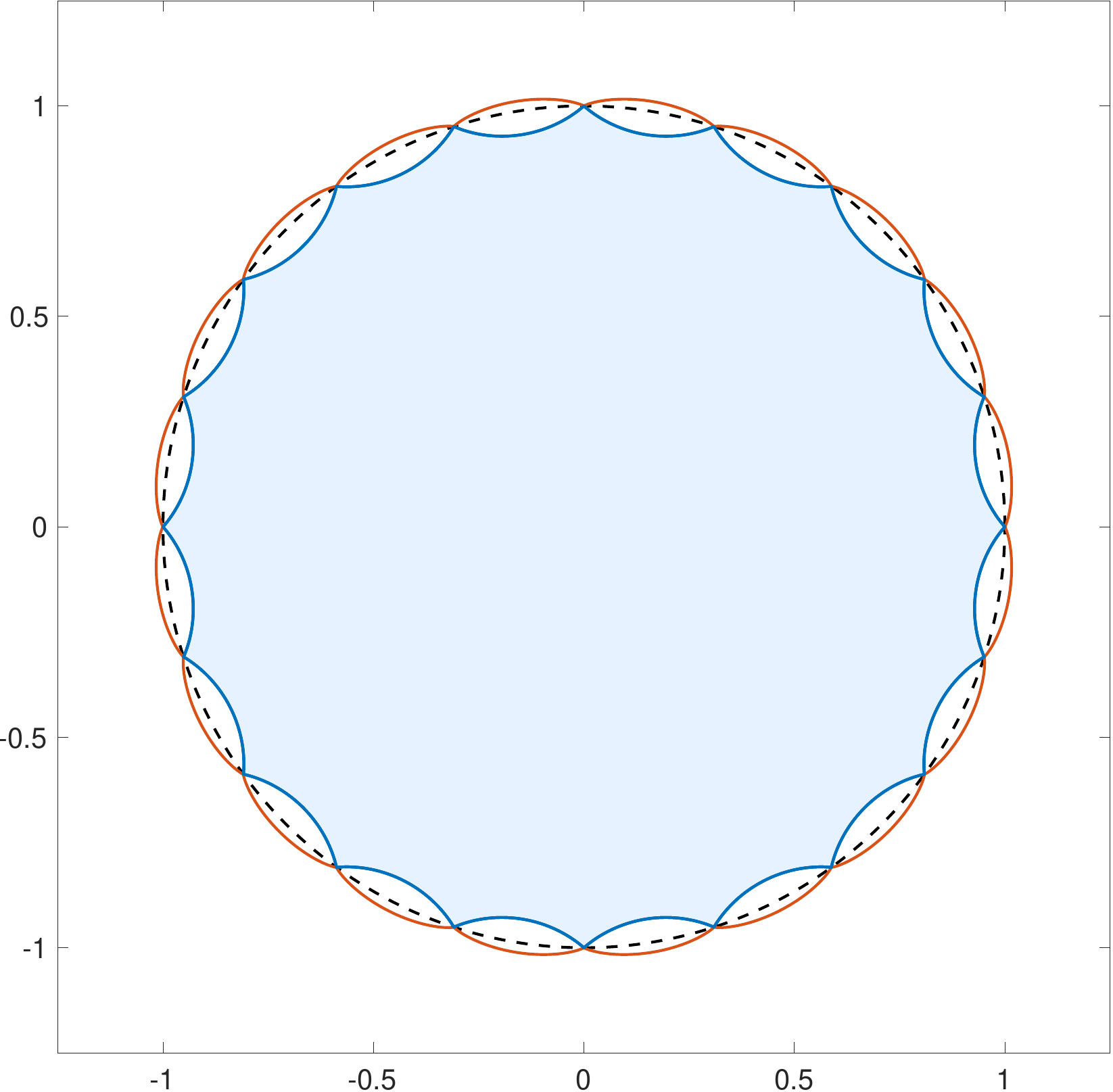}\vspace{-1.5mm}
        \caption{\small $m=20$}
    \end{subfigure}\qquad\quad
    \begin{subfigure}[b]{0.35\textwidth}
        \includegraphics[width=1\textwidth]{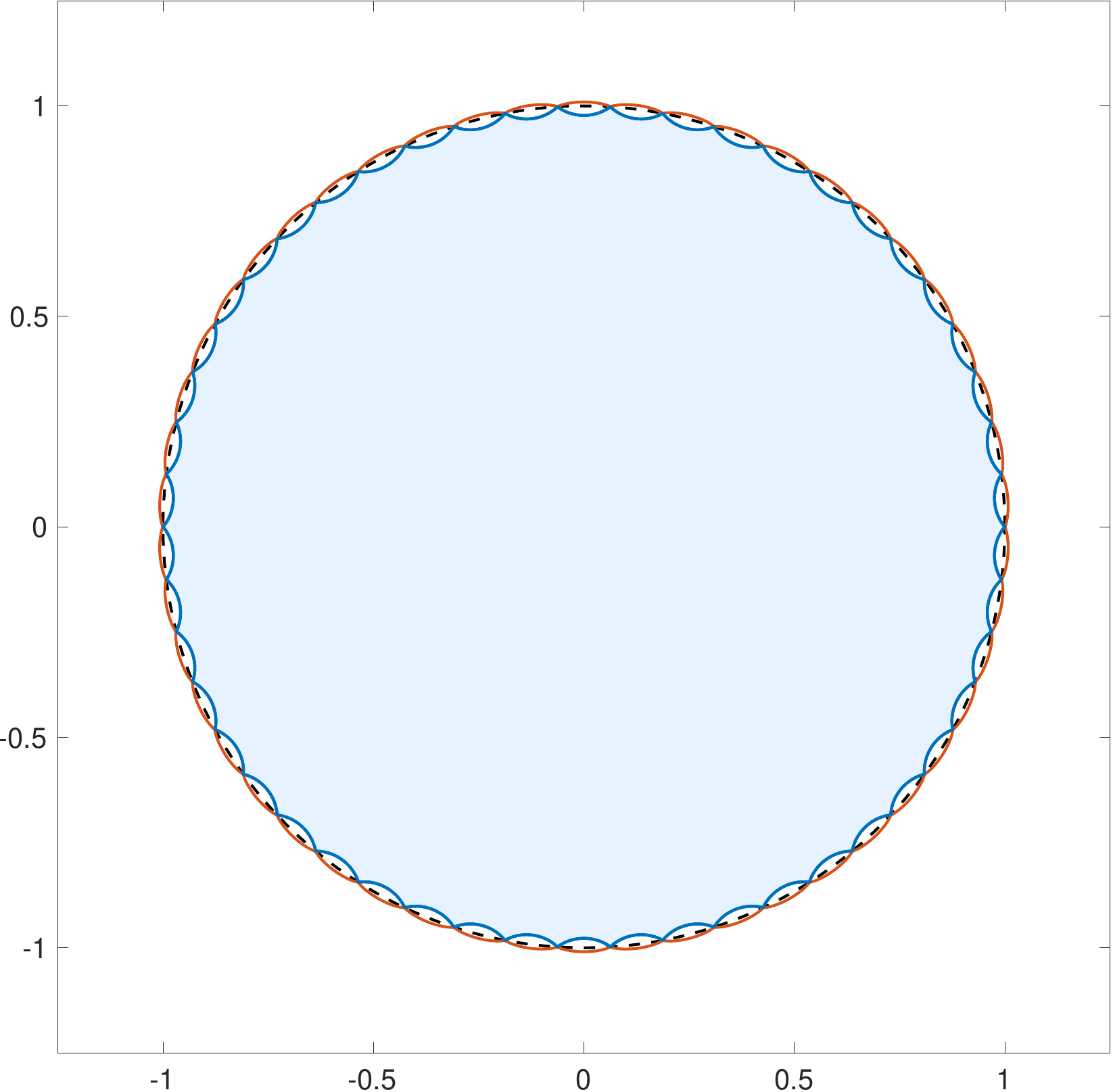}\vspace{-1.5mm}
        \caption{\small $m=50$}
    \end{subfigure}
    \vspace{-3mm}
    \caption[M-fold]{\small $m$-fold symmetric V-states with 90-degree corners for different $m$'s.
    In each sub-figure, the dashed black curve represents the unit circle; the light blue region indicates the interior of the patch $D_0$ where $\om\equiv 1$; the solid blue curve is the boundary of the V-state and also the part of $\s_0$ (defined in \eqref{eqn: general level set in main thm}) inside the unit circle; and the solid red curve is the part of $\s_0$ outside the unit circle.
    }
    \label{fig:m-fold}
\end{figure}

\end{document}